\newcommand{\notinsubfile}[1]{}
\DeclarePairedDelimiter\floor{\lfloor}{\rfloor}
\numberwithin{equation}{section}
\theoremstyle{remark}
\let\oldproofname=\proofname
\renewcommand{\proofname}{\rm\bf{\oldproofname}}
\newcommand{\mcA}{\mathcal{A}}
\newcommand{\mcB}{\mathcal{B}}
\newcommand{\mcC}{\mathcal{C}}
\newcommand{\mcE}{\mathcal{E}}
\newcommand{\mcF}{\mathcal{F}}
\newcommand{\mcG}{\mathcal{G}}
\newcommand{\mcH}{\mathcal{H}}
\newcommand{\mcI}{\mathcal{I}}
\newcommand{\mcL}{\mathcal{L}}
\newcommand{\mcM}{\mathcal{M}}
\newcommand{\mcN}{\mathcal{N}}
\newcommand{\mcP}{\mathcal{P}}
\newcommand{\mcQ}{\mathcal{Q}}
\newcommand{\mcS}{\mathcal{S}}
\newcommand{\mcT}{\mathcal{T}}
\newcommand{\mcW}{\mathcal{W}}
\newcommand{\mcX}{\mathcal{X}}
\newcommand{\indic}{\mathds{1}}
\newcommand{\mbE}{\mathbb{E}}
\newcommand{\mbF}{\mathbb{F}}
\newcommand{\mbI}{\mathbb{I}}
\newcommand{\mbN}{\mathbb{N}}
\newcommand{\mbP}{\mathbb{P}}
\newcommand{\mbQ}{\mathbb{Q}}
\newcommand{\mbR}{\mathbb{R}}
\newcommand{\bP}{\mathbf{P}}
\newcommand{\bW}{\mathbf{W}}
\newcommand{\mfD}{\mathfrak{D}}
\newcommand{\mfH}{\mathfrak{H}}
\newcommand{\mfJ}{\mathfrak{J}}
\newcommand{\mfK}{\mathfrak{K}}
\newcommand{\mfh}{\mathfrak{h}}
\newcommand{\mfv}{\mathfrak{v}}
\newcommand{\msA}{\mathscr{A}}
\newcommand{\msB}{\mathscr{B}}
\newcommand{\msD}{\mathscr{D}}
\newcommand{\msH}{\mathscr{H}}
\newcommand{\msI}{\mathscr{I}}
\newcommand{\msM}{\mathscr{M}}
\newcommand{\msP}{\mathscr{P}}
\newcommand{\msR}{\mathscr{R}}
\newcommand{\msW}{\mathscr{W}}
\newcommand{\msX}{\mathscr{X}}
\newcommand{\cA}{\mathcal{A}}
\newcommand{\cC}{\mathcal{C}}
\newcommand{\cF}{\mathcal{F}}
\newcommand{\cS}{\mathcal{S}}
\newcommand{\cT}{\mathcal{T}}
\newcommand{\EE}{\mathbb{E}}
\newcommand{\E}{\EE}
\newcommand{\FF}{\mathbb{F}}
\newcommand{\NN}{\mathbb{N}}
\newcommand{\PP}{\mathbb{P}}
\newcommand{\RR}{\mathbb{R}}
\DeclareMathOperator*{\esssup}{ess\,sup}
\newcommand{\RN}[1]{%
	\textup{\uppercase\expandafter{\romannumeral#1}}%
}
\newcommand{\supp}{\text{supp}}
\newcommand{\eps}{\varepsilon}
\newcommand{\dd}{\mathop{}\!\mathrm{d}}
\newcommand{\var}{\text{var}}
\newcommand{\tint}{\mcI}
\newcommand{\llmws}{L^m_{\Omega;w,s}}
\newcommand{\llmw}{L^m_{\Omega;w}}
\newcommand{\lldmw}{L^{2m}_{\Omega;w}}
\newcommand{\llm}{L^{m}_{\Omega}}
\newcommand{\wh}{W^H}
\newcommand{\ls}{\lesssim}
\newcommand{\inv}[1]{\frac{1}{#1}}
\newcommand{\absv}[1]{\left| #1 \right|}
\numberwithin{equation}{section}
\numberwithin{figure}{section}
\theoremstyle{plain}
\newtheorem{thm}{\protect\theoremname}
\theoremstyle{definition}
\newtheorem{defn}[thm]{\protect\definitionname}
\theoremstyle{remark}
\newtheorem{rem}[thm]{\protect\remarkname}
\theoremstyle{plain}
\newtheorem{lem}[thm]{\protect\lemmaname}
\theoremstyle{plain}
\newtheorem{prop}[thm]{\protect\propositionname}
\theoremstyle{plain}
\newtheorem{cor}[thm]{\protect\corollaryname}
\theoremstyle{plain}
\theoremstyle{plain}
\newtheorem{ass}[thm]{\protect\assumptionname}
\numberwithin{thm}{section}
\providecommand{\corollaryname}{Corollary}
\providecommand{\definitionname}{Definition}
\providecommand{\lemmaname}{Lemma}
\providecommand{\propositionname}{Proposition}
\providecommand{\remarkname}{Remark}
\providecommand{\theoremname}{Theorem}
\providecommand{\examplename}{Example}
\providecommand{\assumptionname}{Assumption}
\newcommand{\vertiii}[1]{{\left\vert\kern-0.25ex\left\vert\kern-0.25ex\left\vert #1 
		\right\vert\kern-0.25ex\right\vert\kern-0.25ex\right\vert}}
\newcommand{\norm}[1]{\left\lVert #1\right\rVert}
\newcommand{\TV}{\text{TV}}
\title[Ergodicity of Singular fBm Driven SDEs]{Ergodic Theory for Fractional SDE with Singular Coefficients}
\author{{\L}ukasz M\c{a}dry }
\address{Univ. Brest, LMBA - UMR 6205, Brest, France}
\email{lmadry@univ-brest.fr}
\author{Avi Mayorcas}
\address{Department of Mathematical Sciences, University of Bath, UK}
\email{am2735@bath.ac.uk}
\date{\today}
\begin{document}
\begin{abstract}
We show existence and uniqueness of invariant measures for SDE of the form
\begin{equation}\label{eq:SDE}\tag{SDE}
    \dd X_t = g(X_t)\dd t + u(X_t)\dd t + \dd W^H_t,\quad X_0\in \mbR^d,
\end{equation}
     where $W^H$ is a fractional Brownian motion (fBm) with Hurst parameter $H\in (0,\nicefrac{1}{2})$, $u$ is a linearly dispersive term and $g$ is any $B^\alpha_{\infty,\infty}(\mbR^d)$ is distribution in the class treated by Catellier--Gubinelli `16, i.e. $\alpha>1-\frac{1}{2H}$. The significant challenge is to combine the regularizing effect of the fBm with an ergodic theory suited to non-Markovian SDE. Concerning the latter our first main contribution is to construct a bona fide stochastic dynamical system (SDS) (Hairer `05 and Hairer--Ohashi `07) associated to \eqref{eq:SDE}. Since the solution map is only continuous in the support of the stationary noise process we weaken the definitions introduced by Hairer `05 and Hairer--Ohashi `07 but manage to retain the Doob--K'hashminksii provided by Hairer--Ohashi `07. Our second innovation is to introduce a family of flexible local-global stochastic sewing lemmas, in the vein of L\^e `20, which allows us to efficiently treat small and large scales simultaneously. By tuning the local scale as a function of $\|g\|_{B^\alpha_{\infty,\infty}}$ we are able to obtain the necessary continuity of the semi-group and stability estimates to show unique ergodicity for all $g\in B^{\alpha}_{\infty,\infty}(\mbR^d)$. We believe that these local-global sewing lemmas may be of independent interest.\\[1ex]
	\textbf{MSC2020 Subject Classification:} 37A50, 37A25, 60H10, 60H50, 60G22.\\[1ex]
	\textbf{Keywords:} Stochastic dynamical systems, ergodic theory, mixing, fractional Brownian motion, singular coefficients, stochastic sewing, regularization by noise, non-Markovian processes.
\end{abstract}
\maketitle
\setcounter{tocdepth}{1}
\tableofcontents
\vspace{-1em}

\section{Introduction}
Existence, uniqueness and convergence towards ergodic invariant measures of diffusion equations are questions of significant importance in machine learning \cite{roberts_stramer_02_langevin,jastzebski_kenton_arpit_ballas_fischer_bengio_storkey_18_factors,simsekli_gurbuzbalaban_nguyen_richard_sagun_19_heavy_tailed}, homogenization of fast/slow systems \cite{pavliotis_stuart_08_multiscale,hairer_li_20_averaging,hairer_li_22_generating,LS22} and statistical inference problems \cite{daems_opper_crevecouer_birdal_23_variatonal,amorino_nourdin_shevchenko_25_fractional,amorino_nualart_panloup_sieber_24_fast}. We are concerned with analytic criteria on the drift and noise which ensure existence and uniqueness of ergodic invariant measures for additive noise SDEs of the form
\begin{equation}\label{eq:intro_sde}
X_t = x + \int_0^t b(X_r) \dd r + w_t, \quad t\geq 0,
\end{equation}
where $b:\mbR^d\to \mbR^d$ is a possibly generalised map and $t\mapsto w_t$ is an $\mbR^d$ valued stochastic process. In the case of degenerate and possibly multiplicative noise with regular drift and variance, \cite{hormander_67,malliavin_78_stochastics} give criteria under which the strong Feller property holds, a key ingredient in uniqueness of the invariant measure. This technique has been extended to equations driven by jump processes \cite{cass_09_smooth_jumps,takeuchi_10_BEL_jumps}, infinite dimensional equations \cite{hairer_mattingly_06_ergodicity,bakhtim_mattingly_07_malliavin,gerasimovics_hairer_19_hormander,schonbauer_23_malliavin} and equations driven by continuous Gaussian processes outside the Brownian setting \cite{baudoin_hairer_07_hormander_fractional,cass_friz_10_densities,hairer_pillai_11_ergodicity,hairer_pillai_11_regularity,cass_hairer_litterer_tindel_15_smoothness}. We refer the reader to \cite[Sec.~1]{hairer_16_advanced} for further general references as well as \cite{daprato_zabczyk_96_ergodicity,meyn_tweedie_09_markov,khasminskii_12_stochastic} for surveys of more classical ergodic theory, random dynamical systems and Markov processes.

The focus of this work is instead on the relationship between singularity of the drift coefficient $b$ and irregularity of the noise term, leading to unique ergodicity. The first question therefore, is even well-posedness of the SDE \eqref{eq:intro_sde}. In the case of Brownian noise, this is a long standing question and we only mention a small selection of more general works here, \cite{Zvo74,Ver81,krylov_rockner_05_strong,krylov_21_strong_critical,flandoli_issoglio_russo_17_multidimensional,zhang_zhao_18_singular}, referring to the introductions of \cite{galeati_gerencser_22_subcritical,butkovsky_le_mytnik_23_stochastic,butkovsky_mytnik_24_weak} for a more comprehensive overview. Before addressing ergodic theorems, one can go beyond pure well-posedness to look for strong completeness and regularity of the associated flow. In this direction we refer the reader to \cite{chen_li_14_completeness} for results on strong completeness in the case of non-Lipschitz drift (and non-degenerate diffusion coefficients), \cite{federizzi_flandoli_13_holder,zhang_16_sobolev}  for H\"older regularity of the stochastic flow and Malliavin differentiability in the setting of \cite{krylov_rockner_05_strong} (the second also treating Sobolev regular diffusion coefficients), \cite{meoukeupamen_meyerbrandis_nilssen_proske_zhang_13_variational,mohammed_nilssen_proske_15_sobolev} for proofs of H\"older and Sobolev  regularity of the flow and Malliavin differentiability in the case of bounded measurable drifts, see also \cite{davie_07} for path-by-path uniqueness in this case. Finally, \cite{zie_zhang_16_sobolev} obtained the strong-Feller property for Brownian SDE with locally Sobolev coefficients of super-linear growth.

Concerning existence, uniqueness and convergence towards invariant measures, one typically requires structural assumptions on $b$ as well as regularity conditions. To this end, we specify \eqref{eq:intro_sde} to the case 
\begin{equation}\label{eq:b_decomp}
	b = g+u,
\end{equation}
where $g$ is potentially singular but non-explosive at infinity and $u$ is regular and coercive. \cite{wang_17_integrability} obtains existence, uniqueness and integrability of invariant measures under Novikov type conditions on $g$ and the diffusion, while \cite{zhang_zhao_18_singular} obtains existence and uniqueness of invariant measures under either a  Bessel regularity assumption on $g$, generalizing those of \cite{krylov_rockner_05_strong} beyond the $L^p$ scale, or in a Kato class regime, see also \cite{legall_84_one_dimensional,bass_chen_01_dirichlet,bass_chen_05_one_dimensional} for related well-posedness results. Beyond the scope of this work are results concerning SDE driven by general martingales, for some references on ergodic properties of such equations we refer to \cite{masuda_07_ergodicity,masuda_09_erratum,kulik_09_exponential,xie_zhang_20_ergodicity}.

In recent years it has become well understood that replacing either Brownian or general martingale noise by more oscillatory processes can have an improved regularizing effect on equations with singular coefficients, \cite{nualart_ouknine_02,nualart_ouknine_03, catellier_gubinelli_16,HP21,galeati_gubinelli_20_Prevalence,galeati_gubinelli_20_Noiseless,butkovsky_le_mytnik_23_stochastic,butkovsky_mytnik_24_weak}. This work asks the same question at the level of ergodic invariant measures; asking whether for a given singularity of the drift $g$ (recall \eqref{eq:b_decomp}) there exists a sufficiently oscillatory noise such that \eqref{eq:intro_sde} possesses a unique  ergodic invariant measure. The work \cite{marie_2015_singular} gives an instructive \emph{no-go} theorem in the one dimensional case. Considering the deterministic equation
\begin{equation}\label{eq:marie_de}
    \dd x_t = g(x_t)\dd t + f_t, \quad x_0 \in (0,+\infty), \quad t\in [0,T],
\end{equation}
with $f:[0,T]\to \mbR$ a $\gamma$-H\"older continuous path and $g:(0,+\infty)\to (0,+\infty)$ that is continuously $1+ \floor{\nicefrac{1}{\gamma}}$ differentiable on $(0,+\infty)$ and has bounded derivatives on every interval $[\eps,+\infty)$ for $\eps>0$, \cite[Prop.~2.2]{marie_2015_singular} shows that \eqref{eq:marie_de} has a unique $(0,+\infty)$ valued solution if
\begin{equation}\label{eq:marie_condition}
    \lim_{a\searrow 0} \frac{1}{a^\gamma} \int_0^a g(\iota a^\gamma)\dd a = +\infty\quad \text{for all }\, \iota >0.
\end{equation}
The equivalent result on $(-\infty,0)$ clearly holds by symmetry. As such, if \eqref{eq:marie_condition} holds then an SDE of the form \eqref{eq:marie_de}, with an additional confining potential and $f$ replaced by a stochastic process with almost surely $\gamma$-H\"older continuous paths, cannot have a unique invariant measure on $\mbR$. Notably, if we specify to $g(x) = x|x|^{\alpha-1}$ for $p<0$ the condition \eqref{eq:marie_condition} resolves to $\alpha < 1-\frac{1}{\gamma}$. That is, for more oscillatory driver $f$, one requires the drift to be more singular in order to prevent the solution \emph{tunneling} from $(0,+\infty)$ to $(-\infty,0)$. In fact, if $t\mapsto f_t$ is self-similar of order $\gamma$ (either pathwise or in law) this condition is exactly the scaling super-criticality condition. One has
\begin{equation}\label{eq:scalings}
    \begin{cases}
        \alpha>1-\frac{1}{\gamma}, & \text{scaling sub-critical},\\
        \alpha= 1-\frac{1}{\gamma}, & \text{scaling critical},\\
        \alpha<1-\frac{1}{\gamma}, & \text{scaling super-critical}.
    \end{cases}
\end{equation}
In the scaling sub-critical regime the small scale behavior is dominated by the driver $f$, in the super-critical regime it is dominated by the drift and they are equally balanced in the critical regime. The heuristics are reversed for the large scale behaviors. See also \cite[Sec.~1.1]{galeati_gerencser_22_subcritical} for a discussion of the same ideas in the context of SDEs with drifts in a wide variety of function spaces. See also \cite[Thm.~2.6]{butkovsky_le_mytnik_23_stochastic} for a more general statement of non-uniqueness for a particular family of super-critical drifts.

The fractional Brownian motions (fBm) $W^H$ on a given probability space $(\Omega,\mcF,\mbP)$ are a family of Gaussian processes, indexed by $H\in (0,+\infty)\setminus \mbN$ which generalize the standard Brownian. When $H=\nicefrac{1}{2}$, $W^H$ is a standard Brownian motion, while for $H\neq \nicefrac{1}{2}$ the process $\{t\mapsto W^H_t\}$ is neither a semi-martingale nor a Markov process. As a result one does not have access to PDE analytic methods such as the Feynman--Kac formula without introducing infinitely many degrees of freedom, see \cite{VZ19} for example. However, the processes are self-similar in law at scale $H$, i.e. the family $\{t\mapsto c^{-H} W^H_{ct}\}_{c> 0}$ are all equal in law. In addition, due to Gaussianity and the Kolmogorov continuity criterion one has $\mbP\left(W^H\in \mcC^{\gamma}([0,T];\mbR^d)\right) =1$ for all $\gamma\in (0,H)$. Replacing $f$ with $W^H$ and considering a generic $g\in B^{\alpha}_{\infty,\infty}(\mbR^d;\mbR^d)$ (for discussion of the H\"older--Besov spaces see Section~\ref{sec:notations}) then the criticality conditions \eqref{eq:scalings} are exactly mirrored with $\gamma \mapsto H$, see \cite[Ex.~1.1]{galeati_gerencser_22_subcritical}.  While \eqref{eq:scalings} provides an indication of the regimes where one may expect well-posedness, at the time of writing sharp results at these boundaries are not available in full generality. Some special cases are treated for example in \cite[Ch.~II]{sznitman_91_topics}, \cite{bossy_talay_96}. 

 Existence and uniqueness results, in the Brownian case are extremely extensive. Of most relevance here are those works which treat only regularity assumptions on the drift $b$ (equally $g$) without extensive structural or geometric assumptions. The closest works in this regard begin with \cite{Zvo74,Ver81} and have been more recently developed by \cite{krylov_rockner_05_strong,bass_chen_03_singular} and related works. Crucially, \cite{krylov_rockner_05_distribution} obtained a sharp (see \cite{krylov_21_Ld_drift,beck_flandoli_gubinelli_maurelli_19_critical}) criterion for strong well-posedness, that is $b \in L^q([0,T];L^p(\mbR^d;\mbR^d))$ for $\frac{d}{p}+\frac{2}{q}<1$ for $p\geq 2$. The critical cases were treated by \cite{bass_chen_05_one_dimensional} ($d=1$) and \cite{krylov_21_Ld_drift} ($d\geq 3$). Very recently \cite{lu_rockner_25_sharp,lu_rockner_zhu_25_lagrangian,rockner_zhao_25_critical} studied examples of both uniqueness and non-uniqueness of strong solutions in the critical case $\frac{d}{p}+\frac{2}{q}=1$ for $d=2$. Results in the Kato class have been obtained by \cite{bass_chen_03_singular} and have recently been comprehensively treated by \cite{zhang_zhao_18_singular} along with a generalization of the $L^q_T L^p_x$ condition from \cite{krylov_rockner_05_strong} to Bessel potential spaces $\mathcal{H}^{-\alpha,p}(\mbR^d;\mbR^d)$ (Bessel potential class) for $\alpha \in (0, \inv{2}], p \in \big( \frac{d}{1-\alpha}, +\infty \big)$. For drifts measured in the H\"older--Besov scale, $b\in \mcC^{\alpha}(\mbR^d;\mbR^d)$, \cite{flandoli_issoglio_russo_17_multidimensional} obtained weak well-posedness for $\alpha>-\nicefrac{1}{2}+$ while \cite{delarue_diel_16_rough,cannizzaro_chouk_18_multidimensional} obtained a weaker notion of well-posedness for special classes of drifts $b\in \mcC^{\alpha}(\mbR^d;\mbR^d)$ for $\alpha>-\nicefrac{2}{3}$. See \cite{kremp_perkowski_22_multidimensional,kremp_perkowski_23_rough} for similar result in the case of Lévy noise. Concerning ergodic results in the case of singular drifts, \cite{zhang_zhao_18_singular} gives the most general results to date, in particular showing uniqueness of ergodic invariant measures for $g$ in the class treated by \cite{krylov_rockner_05_strong}.

Concerning well-posedness of \eqref{eq:intro_sde} the most pertinent results in our context are those concerning singular drifts and fractional Brownian noise. In this regard, \cite{nualart_ouknine_02,catellier_gubinelli_16} are the earliest results in this direction, with our work being more in the spirit of \cite{catellier_gubinelli_16}. Without focussing on specific cases (e.g. one dimensional problems or specific structural assumptions \cite{anzeletti_R._tanre_23_regularisation}) weak well-posedness of \eqref{eq:intro_sde} with $w$ replaced by an fBm $W^H$ has been shown by \cite{butkovsky_mytnik_24_weak} provided $b\in \mcC^{\alpha}(\mbR^d;\mbR^d)$ with $\alpha >\frac{1}{2}-\frac{1}{2H}$. On the other hand, strong well-posedness, in the same setting has been obtained by \cite{galeati_gerencser_22_subcritical} provided $\alpha>1-\frac{1}{2H}$ (given as a subcase of their more general condition which allows for time dependent drifts and $H>1$, see \cite[Thm.~1.4]{galeati_gerencser_22_subcritical}). This latter work provides a significant number of results which we appeal to here, namely existence of a regular stochastic flow and Malliavin differenitability of the solution map. There exist a number of equally general results beyond the $L^q_T\mcC^\alpha_x$ scale, we refer the reader, for example, to \cite[Sec.~6]{le_20_stochSewing} \cite[Rem.~1.10]{galeati_gerencser_22_subcritical} and \cite{butkovsky_le_mytnik_23_stochastic}. More broadly, McKean--Vlasov equations and associated propagation of chaos have been treated in \cite{galeati_harang_mayorcas_23_fractional,galeati_le_mayorcas_24_quantitative}, convergence rates of numerical schemes in \cite{BDG19, GHR23}, equations with multiplicative noise by \cite{dareiotis_gerencser_24_multiplicative, catellier_duboscq_25_regularisation}, extensions to SPDEs in \cite{ABLM24} and  zero noise limits by \cite{GM24}.

With these well-posedness results available, it is natural to ask for ergodic properties of diffusions under the same conditions. However, this introduces new challenges. As fBm is not a Markov process (as well as failing to be a semi-martingale) the random dynamical systems (RDS) framework, \cite{arnold_98_rds,daprato_zabczyk_96_ergodicity}, is ill suited to obtaining uniqueness of ergodic invariant measures.\footnote{For existence results using this approach we refer to \cite{maslowski_schmalfuss_04_random,marie_2015_singular}.}   Informally speaking, the deterministic perspective of an RDS approach, where sampled noise is fed into the update of a system forwards in time, does not reliably exclude the invariant measures which are not \emph{measurable with  respect to the past} (see \cite[Sec.~1.4]{arnold_98_rds} for this terminology). Instead, the works \cite{Hai05,hairer_ohashi_07_ergodic} developed a more probabilistic approach based on the notion of a stochastic dynamics system (SDS). Here, the noise updates are carried along with the dynamics appropriate measurability of the invariant measures. We refer to the introductions of \cite{hairer_ohashi_07_ergodic} for more discussion. The notion of an SDS was first introduced in \cite{Hai05} where an additive noise equation, of the form \eqref{eq:intro_sde} with locally Lipschitz and contractive drift was treated by a hands on coupling approach, obtaining existence and uniqueness of an ergodic invariant measure (understood in the appropriate sense) as well as algebraic ergodicity. As a follow up work, \cite{hairer_ohashi_07_ergodic} treated a multiplicative noise equation and obtained a generalization of the Doob--K'hashminksii theorem for SDS, \cite[Thm.~3.10]{hairer_ohashi_07_ergodic}. This gives uniqueness of ergodic invariant measures for \emph{quasi-Markovian systems} (see \cite[Def.~3.7]{hairer_ohashi_07_ergodic}) enjoying natural generalizations of the strong Feller and irreducibility properties. Our work takes this latter approach, building on the regularity of solutions obtained in \cite{galeati_gerencser_22_subcritical}, see Theorem~\ref{th:main_intro} for an informal statement of our setting and main result. Beyond these initial results, \cite{li_panloup_seiber_23_non_stationary} obtained Gaussian upper and lower bounds on the densities associated to fractional diffusions, under appropriate conditions, along with a number of other results. While \cite{panloup_R._20_sub_exponential} demonstrated sub-exponential convergence to the unique invariant measure for fractional diffusions with  semi-contractive and locally Lipschitz drifts. This was upgraded to exponential convergence by \cite[Thm.~1.3]{LS22} in the case of globally Lipschitz semi-contractive drifts with small non-contractive perturbation. Meanwhile, \cite{amine_coffie_harang_proske_20_bismut} obtained a Bistmut--Ellworthy--Li formula  for SDE driven by additive fBm with bounded and measurable drifts. Our Section~\ref{subsec:strong_feller} extends this formula to the case of distributional drifts.

To informally state our main result, we fix the following equation
\begin{equation}\label{eq:intro_fbm_sde}
     X_t =x + \int_0^t g(X_r)\,\dd r + \int_0^t u(X_r)\, \dd r + W^H_t,
\end{equation}
for $W^H$ an $\mbR^d$ valued fBm of Hurst parameter $H\in (0,\nicefrac{1}{2})$. Since we will allow $g$ to be a generalised function well-posedness, even well-definition, of the dynamics \eqref{eq:intro_fbm_sde} is somewhat delicate and we postpone this discussion until Section~\ref{sec:sds_construction}. Equally, the precise notion of unique ergodicity is postponed here to Section~\ref{sec:sds}, although it remains essentially unchanged from that of \cite{Hai05,hairer_ohashi_07_ergodic}.
Our main results can are summarized as follows, with reference to precise statements given below.
\begin{thm}\label{th:main_intro}
Let $H\in (0,\nicefrac{1}{2})$, $\lambda>0$, $g \in B^\alpha_{\infty,\infty}(\mbR^d;\mbR^d)$ and $u:\mbR^d\to \mbR^d$ be such that 
\begin{equation*}
    |\nabla u(x)| \,\leq\,  \lambda \quad \text{and}\quad \langle u(x)-u(y),x-y\rangle \leq -\lambda |x-y|^2\quad \text{for all }  x,\,y\in \mbR^d.
\end{equation*}
Then the following statements hold:
\begin{enumerate}
    \item \label{it:intro_existence} If $\alpha >\frac{1}{2}-\frac{1}{2H}$ there exists a weakly stationary solution to \eqref{eq:intro_fbm_sde} (Corollary~\ref{cor:weak_existence}). 
    \item  \label{it:intro_uniqueness} If $\alpha >1-\frac{1}{2H}$ then there exists unique global strong solution to \eqref{eq:intro_fbm_sde} and associated  stochastic dynamical system. In addition  there exists a unique ergodic invariant measure (Theorem~\ref{th:sds_well_defined} and Theorem~\ref{th:unique_ergodicity}). 
    \item \label{it:intro_gaussian} In either case, for any $\gamma \in (0,H)$, there exists a $\kappa_0\coloneqq \kappa_0(\gamma,\alpha,H,\|g\|_{\mcC^{\alpha}_x},\lambda,d)>0$ (which vanishes as $\gamma \to H$ or $\|g\|_{\mcC^{\alpha}_x} \to +\infty$) such that (Theorem~\ref{th:tightness_main})
    \begin{equation*}
     \sup_{t \geq 0} \EE \left[ \exp\left( \kappa \llbracket X\rrbracket_{\mcC^{\gamma}_{[t,t+1]}}^2 \right) \right] \ls 1 \quad \text{for all } \kappa < \kappa_0.
    \end{equation*}
\end{enumerate}
\end{thm}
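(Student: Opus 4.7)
I would tackle the three parts in the order (3) $\Rightarrow$ (1) $\Rightarrow$ (2), since the uniform Gaussian tail of (3) supplies the tightness needed for (1) and the moment bounds feed the SDS construction in (2).

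For (3), the natural decomposition on $[t, t+1]$ is
$$ X_\cdot - X_t \,=\, (W^H_\cdot - W^H_t) \,+\, \int_t^\cdot u(X_r)\,\dd r \,+\, \int_t^\cdot g(X_r)\,\dd r. $$
The fBm term has a Gaussian tail for its $\mcC^\gamma_{[t,t+1]}$-seminorm by Fernique applied to the stationary increment process; since $u$ is Lipschitz, the $u$-term is dominated by $1 + \sup_{r \in [t,t+1]}|X_r|$; and the singular $g$-term is bounded via the local-global stochastic sewing lemma, giving a Hölder estimate in terms of $\|g\|_{B^\alpha_{\infty,\infty}}$ and a suitable power of the interval length. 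The crucial input is a uniform-in-time exponential moment of $|X_t|$. I would obtain this by iterating a one-step contraction estimate on unit intervals: the dissipativity of $u$ yields an $e^{-\lambda}$ factor per unit time, while the unit-interval excursions from $g$ and $W^H$ are controlled with Gaussian tails independent of $t$ by the sewing lemma and Fernique respectively.

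The main obstacle is exactly this a priori exponential bound on $|X_t|$ uniformly in $t$: neither Itô nor pathwise Gronwall is directly available, since $W^H$ is not a semi-martingale and $g$ is distributional. A naive Gronwall absorbing $\int_0^t g(X_r)\,\dd r$ would yield constants growing exponentially in $t$. The local-global sewing lemmas announced in the abstract are designed precisely for this: on a local scale chosen as a function of $\|g\|_{B^\alpha_{\infty,\infty}}$ one obtains sharp increment bounds, while on the global scale the coercivity of $u$ absorbs the local excursions with $t$-independent constants. This explains why $\kappa_0$ must vanish as $\|g\|_{B^\alpha_{\infty,\infty}} \to +\infty$ or $\gamma \to H$: the feasible sewing scale shrinks in either limit.

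For (1), the uniform bound from (3) implies tightness of the time-averaged marginals $T^{-1}\int_0^T \mathrm{Law}(X_t)\,\dd t$ on $\mbR^d$, and the Krylov--Bogolyubov procedure combined with the weak well-posedness of Butkovsky--Mytnik '24 (available for $\alpha > \tfrac{1}{2} - \tfrac{1}{2H}$) produces a weakly stationary solution. For (2), in the subcritical regime $\alpha > 1 - \tfrac{1}{2H}$ the strong well-posedness and regular stochastic flow of Galeati--Gerencsér '22 are fed into the SDS construction of Section~\ref{sec:sds_construction}; uniqueness of the ergodic invariant measure then follows from the SDS version of the Doob--Khasminskii theorem (Hairer--Ohashi '07 as adapted in Section~\ref{sec:sds}). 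This requires strong Feller (established via the Bismut--Elworthy--Li formula for distributional drifts extended in Section~\ref{subsec:strong_feller} through Malliavin calculus), irreducibility (immediate from the full topological support of fBm sample paths), and existence (inherited from (1)).
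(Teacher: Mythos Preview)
Your high-level outline is sound, but the argument for (3) has a gap at its core. The ``one-step contraction iteration on unit intervals'' does not close: the sewing estimate for $\int_k^{k+1} g(X_r)\,\dd r$ (cf.~Lemma~\ref{lem:global_theta_bound}) depends linearly on $\sup_{[k,k+1]}|X_r|$, not just on $|X_k|$, so the recursion you would obtain reads $|X_{k+1}| \lesssim (e^{-\lambda} + c(\|g\|_{\mcC^\alpha_x}))|X_k| + G_k$ and fails to contract unless $\|g\|_{\mcC^\alpha_x}$ is small. The paper instead subtracts an Ornstein--Uhlenbeck process $Y_t$ solving $\dd Y_t = -\lambda Y_t\,\dd t + \dd W^H_t$ and works with $\rho = X - Y$, which is of finite variation; computing $\frac{\dd}{\dd t}|\rho_t|^2$ and using the dissipativity of $u$ produces the damped integral $\int_0^t e^{-\lambda(t-r)}\rho_r\cdot g(X_r)\,\dd r$ (see~\eqref{eq:rho_squared}). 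The local-global sewing lemma (Lemma~\ref{lem:sewing_decay_stochastic_integral}) is applied \emph{to this damped integral itself}: its tunable scale parameter $V$ puts a factor $2^{-cV}$ in front of the quadratic term $\|\rho\|_{L^\infty L^{2m}}^2$, and choosing $V \sim \log(1+\|g\|_{\mcC^\alpha_x})$ allows absorption into the left-hand side for arbitrary $\|g\|_{\mcC^\alpha_x}$. Your discrete iteration never sees this absorption mechanism, which is precisely what permits large $\|g\|_{\mcC^\alpha_x}$ and is the source of the $\kappa_0 \to 0$ behaviour.

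Two smaller points on (2). For irreducibility, ``full topological support of fBm'' is not enough once $g$ is distributional: the solution map is only continuous on $\supp(\bP^-_H)$, not on all of $\mcW^-_{\gamma,\delta}$, so a support argument does not directly yield $\mcQ_t(x,w;U\times\mcW)>0$. The paper uses Girsanov (Lemma~\ref{lem:girsanov_applies}) to show that for small $t$ the law of $X_t$ is equivalent to that of $W^H_t$. And for strong well-posedness, Galeati--Gerencs\'er~'22 does not cover the unbounded dispersive term $u$; the paper redoes the stability and flow estimates with $u$ present (Section~\ref{sec:local_stability}, Proposition~\ref{prop:sde_well_posed}), which is where the bound $\|\nabla u\|_{L^\infty_x}\le\lambda$ is actually used.
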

\begin{rem}\label{rem:holder_besov_embedding}
    Note that due to the compactness of the embedding $B^\alpha_{\infty,\infty}(\mbR^d) \hookrightarrow  B^{\alpha'}_{\infty,\infty}(\mbR^d)$ for $\alpha'<\alpha$, given $g\in B^\alpha_{\infty,\infty}(\mbR^d)$ there exists a smooth and bounded sequence $g^n$ such that $\|g-g^n\|_{B^{\alpha'}_{\infty,\infty}} \to 0$ for any $\alpha'<\alpha$. See Section~\ref{sec:notations} for a recap of these spaces and their definitions.
\end{rem}
For simplicity and concision we have decided only to treat the case $H\in (0,\nicefrac{1}{2})$, a treatment of $H \in (\nicefrac{1}{2},1)$ is given by \cite[Sec.~3.2]{li_panloup_seiber_23_non_stationary}. However, it seems reasonable to expect that the same result should hold for all $H\in (0,+\infty)\setminus\mbN$, referring to \cite{galeati_gerencser_22_subcritical} for a discussion of the process $W^H$ with $H>1$ and the required a priori estimates without the unbounded coefficient $u$. Regularization by noise results similar to \cite{galeati_gerencser_22_subcritical} have been studied for multiplicative analogues of \eqref{eq:intro_fbm_sde} in the setting of regular variance and singular drifts by \cite{dareiotis_gerencser_24_multiplicative,dareiotis_gerencser_le_ling_24_gaussian,catellier_duboscq_25_regularisation} and in the case of singular variance only  \cite{matsuda_mayorcas_25_pathwise}. It seems reasonable to expect unique ergodicity for a multiplicative variant of \eqref{eq:intro_fbm_sde}, in the spirit of \cite{hairer_ohashi_07_ergodic,hairer_pillai_11_ergodicity}. 

Concerning the rate of convergence to the ergodic invariant measure, \cite{Hai05} obtained an algebraic rate by constructing a direct coupling between solutions. Since the fundamental argument relies on stability of the solution map, which we also obtain in our setting (Section~\ref{sec:local_stability}), we expect that the same arguments could be adapted to our singular case giving the same geometric convergence. While preparing this manuscript we became aware of the work \cite{dareiotis_haress_le_25_uniform} which obtains exponential ergodicity for \eqref{eq:intro_fbm_sde} under similar assumptions as ours but restricted to the case $\|g\|_{B^\alpha_{\infty,\infty}}$ small compared to $\lambda$. See also \cite{LS22} where a geometric rate of convergence was obtained in the regular coefficient setting for small perturbations of the dispersive term.
\subsection{Main Challenges, Ideas and Organization}

The initial challenge in treating \eqref{eq:intro_fbm_sde} when $g$ is allowed to be a generalised function is to even define the right hand side. This is by now a well understood issue, \cite{catellier_gubinelli_16,galeati_gubinelli_20_Noiseless}. The remedy is to notice that we are not required to evaluate $g$ at any point in $\mbR^d$ but along $\{t\mapsto X_t\}$, any putative solution to \eqref{eq:intro_fbm_sde}. To this end, we expect that any solution can be decomposed as  $X = x_0 + \theta + W^H$ where $\{t\mapsto \theta_t\}$ is \emph{slow} in comparison to $W^H$. As a result we are really attempting to evaluate
\begin{equation}\label{eq:local_averaging}
    t\mapsto \int_0^t g(\theta_r +x_0 + W^H_r)\dd r.
\end{equation}
This is much more manageable due to the \emph{local non-determinism} of $\{t\mapsto W^H_t\}$, see Lemma~\ref{lem:lnd}, which gives rise to an averaging over $g$ in \eqref{eq:local_averaging}, see Lemma~\ref{lem:lnd_to_heat_kernel} and Lemma~\ref{lem:integral_estimate}. This insight is at the core of \cite{davie_07,catellier_gubinelli_16} and many subsequent works. Technically, one first takes a smooth and bounded $g\in C^\infty_b(\mbR^d)$ and, via stochastic sewing methods, obtains sufficient a priori bounds on the process $\theta$ which only depend on $\|g\|_{B^\alpha_{\infty,\infty}}$. These are fed back into expressions like \eqref{eq:local_averaging} and their analogues with $g$ replaced by $\nabla g$, see Lemma~\ref{lem:averaged_field_tightness}. Putting these estimates together we obtain uniqueness of strong solutions, in an appropriate sense given by Definition~\ref{def:singular_sde}, to \eqref{eq:intro_fbm_sde} with genuinely distributional $g$. This is carried out in Proposition~\ref{prop:sde_well_posed}.

The inability to evaluate $\mbR^d \ni x\mapsto g(x)$ has consequences in our proof of tightness and the strong Feller property. Concerning the former, we broadly follow the proof of \cite[Prop.~3.12]{Hai05} with appropriate modifications.  For simplicity let us set $\lambda =1$, $g$ to be smooth and introduce the Ornstein-Uhlenbeck process
\begin{equation*}
    \dd Y_t = - Y_t \dd t + \dd \wh_t, \quad Y_0 = 0.
\end{equation*} 
Setting $\rho_t \coloneqq X_t - Y_t$, a direct and standard calculation yields the inequality
\begin{equation}\label{eq:rho_intro_bound}
    \absv{\rho_t}^2 \leq \absv{\rho_0}^2 e^{- t} + \left|\int_0^t e^{-(t-r)} \rho_r \cdot g(X_r) \dd r\right| + \left|\int_0^t e^{-(t-r)} u(Y_r) \dd r\right|.
\end{equation}
If in reality our $g$ was not a distribution but a continuous function we could attempt to bring the absolute values inside the integral of the second term on the right hand side. Instead, in order to obtain estimates uniform in $\|g\|_{B^\alpha_{\infty.\infty}}$ we follow a similar strategy as used to define the right hand side of \eqref{eq:intro_fbm_sde} and seek to use stochastic sewing and the averaging principle developed in Lemma~\ref{lem:integral_estimate}. Herein lies one of our main innovations, to develop a \emph{local-global} sewing lemma which allows one to combine \emph{small scale} properties of $\{t\mapsto X_t\}$ with the \emph{large scale} damping effect of $\{r\mapsto e^{-(t-r)}\}$ on large scales, see Lemma~\ref{lem:sewing_decay_stochastic_integral}. Crucially we are able to tune the \emph{small/large scale} threshold as a function of the problem parameters, in our case the size of $\|g\|_{B^\alpha_{\infty,\infty}}$. Similar statements can be found as \cite[Cor.~3.7]{butkovsky_le_mytnik_23_stochastic} and \cite[Lem.~A.3.1]{haress_24_numerical_thesis}, however, the former is purely local while the latter does not provide a tunable separation of scales required for our arguments. An application of Young's inequality, after obtaining suitable uniform in time bounds, along with the linear growth of $\{y\mapsto u(y)\}$ gives us a bound on the right hand side of \eqref{eq:rho_intro_bound} which is at most quadratic in $\rho$. By carefully choosing the balance between \emph{local} and \emph{global} scales in our \emph{local-global} sewing lemma we absorb the quadratic terms into the left hand side and close by a standard Gr\"onwall argument, Lemma~\ref{lem:my_gronwall}. These arguments are collected in the proof of Theorem~\ref{th:tightness_main} which gives uniform in time Gaussian moments of $X$. This almost directly gives us existence of solutions, weakly stationary measures and Gaussian tails of any invariant measure. Note that the standard sewing lemma only gives exponential moments due to the use of a discrete BDG inequality. We bypass this step by considering germs which expand as finite variation and martingale part, allowing us to apply a continuous BDG inequality, and retain Gaussian moments. For technical reasons we additionally allow for deterministic continuous perturbations of the noise and conditioning on its history for $t <0$, see the beginning of Section~\ref{sec:tightness}.

The above philosophy carries us through much of the rest of the paper. With tightness in hand, and appropriate continuity of the solution in drift and deterministic perturbations of the noise we are able to construct a bona fide stochastic dynamical system, Theorem~\ref{th:sds_well_defined} and obtain sufficient estimates on the associated Malliavin derivative and Jacobian, Section~\ref{subsec:strong_feller}. A key point here, alluded to above, is that our solution map will not be continuous in the entirety of the noise space $\mcW \subset \mcC^{H-}(\mbR;\mbR^d)$, since we require specific stochastic properties of the fBm to even define the right hand side of our equation. To this end we slightly weaken the definition of a stochastic dynamical system as compared with \cite{Hai05,hairer_ohashi_07_ergodic}, see Definition~\ref{def:sds}. However, we check that with minimal modification the same abstract unique ergodicity results apply, Theorem~\ref{th:abstract_ergodic_result} and yields the same analogue of the Doob--K'hasminksii theorem, see Theorem~\ref{th:doob_khasminskii} 

In verifying the assumptions of Theorem~\ref{th:doob_khasminskii}, we provide our second major contribution, a Bismut--Elworthy--Li formula for \eqref{eq:intro_fbm_sde} with distributional $g \in B^\alpha_{\infty,\infty}(\mbR^d)$. Here it is crucial that we are able to obtain Gaussian estimates on processes of the form
\begin{equation*}
	t\mapsto  \int_0^t \nabla g(X_s)\dd s,
\end{equation*}
since the Jacobian and Malliavin derivative are controlled by exponential moments of these processes. These strong estimates, reminiscent of \cite[Lem.~6.3]{galeati_gerencser_22_subcritical} which do not allow for the unbounded term $x\mapsto u(x)$ allow us to exchange integration and expectation in order to apply Malliavin integration by parts and avoid a technical stopping time argument used in \cite{hairer_ohashi_07_ergodic}. Similar estimates were also obtained by \cite{hu_nualart_07_differential} in the setting of regular coefficients and $H>\nicefrac{1}{2}$. These arguments are collected in the proof of Theorem~\ref{th:strong_feller_proof}.

Finally, we verify topological irreducibility through a Girsanov argument, locally in time (Section~\ref{subsec:irreduc}) and quasi-Markovianity by the same arguments as in the proof of \cite{hairer_ohashi_07_ergodic}, only modified to allow for $H<\nicefrac{1}{2}$ (Section~\ref{subsec:quasimarkov}). This concludes the proof of unique ergodicity. 

A number of appendices collect useful results on Gr\"onwall-type inequalities, Appendix~\ref{app:gronwall}, moment bounds on integrals of Gaussian processes, Appendix~\ref{sec:gaussian_bounds}, our modified sewing results, Appendix~\ref{app:stoch_sewing_proofs}, extensions of various proofs from \cite{hairer_ohashi_07_ergodic} to $H<\nicefrac{1}{2}$ and the abstract ergodicity result \cite[Thm.~3.2]{hairer_ohashi_07_ergodic} to our slightly weakened definition of an SDS, Appendix~\ref{app:doob_khasminski_proof} and Appendix~\ref{app:A_operator_properties}.
\section*{Acknowledgments}
A large part of this work was done while ŁM was employed as an ATER at Université Paris-Cité, and the project was started during last months of his PhD thesis at Université Paris-Dauphine. We also acknowledge generous funding from ERC \emph|{Noisy Fluids} (Advanced Grant no. 101053472) which allowed the authors to work on the project during the conference \emph{Turbulence on the banks of Arno} in Feb. 2024, the research stay of ŁM in Pisa in June 2024 and the extended stay of AM from Feb. 2024 to Aug. 2024.
\section{Preliminaries}\label{sec:preliminaries}
\subsection{Notations}\label{sec:notations}
We collect some more notation used throughout the paper. We define the sets, $\mbN = \{0,1,\ldots\}$, $\mbN_{>0} \coloneqq \mbN\setminus \{0\}$, $\mbR_+ \coloneqq \mbR\cap [0,+\infty)$ and $\mbR_- \coloneqq \mbR\cap (-\infty,0]$. Given a measurable space $(E,\mcE)$ we write $\msM(E)$ for the space of Radon measures on $E$, $\msM_1(E)$ for the space of probability measures on $E$ and $\msM_+(E)$ for the set of positive finite, Radon measures on $E$. All three sets are endowed with the topology of weak convergence. If otherwise specified, for $\mu,\,\nu \in \msM(E)$, we write $\mu\sim \nu$ to denote that $\mu$ and $\nu$ are mutually absolutely continuous.

For $0\leq S < T \leq +\infty$, we define the two and three dimensional simplexes by
	\begin{equation}\label{eq:simplex}
    \begin{aligned}
        [S,T)^2_{\leq} \coloneqq & \{ (s,t) \in [S,T)^2 \,:\, s\leq t\},\\
		[S,T)^3_{\leq} \coloneqq & \{ (s,u,t) \in [S,T)^3\,:\, s\leq u\leq t\}
    \end{aligned}
	\end{equation}
with analogous notation for the closed interval $[S,T]$ when $T<+\infty$. For $s,t \in [S,T)_\leq$ and $Y:[S,T)\to E$, we write $Y_{s,t}\coloneq Y_t-Y_s$. Given $A:[S,T)^2_{\leq}\to E$, we define a map $\delta A:[S,T)^3_{\leq}\to E$ by setting
	\begin{equation*}
	    \delta A_{s,u,t} = A_{s,t}-A_{s,u}-A_{u,t}.
	\end{equation*}
Whenever the specification of the domain and range are not important or clear from the context, for $\ell,\,n \in \mbN$, we denote by $C^0_x$ the Banach space $C_b(\RR^\ell;\RR^n)$ of continuous, bounded functions $f:\RR^\ell\to\RR^n$, endowed with the supremum norm $\| f\|_{C^0_x}=\sup_{x\in\RR^\ell} |f(x)|$. Similarly, for $k\geq 1$, we denote by $C^k_x$ the Banach space of continuous functions $f$, with continuous and bounded derivatives up to order $k$, with norm
	\begin{equation*}
		\|f\|_{C^k_x} \coloneqq \max_{l\leq k}\|D^l_x f\|_{C^0_x}.
	\end{equation*}
We denote by $C^\infty_b=C^\infty_b(\RR^\ell;\RR^n)$ the space of infinitely differentiable functions $f:\RR^\ell\to\RR^n$, with all bounded derivatives; similarly for $C^\infty_b([0,T]\times \RR^\ell;\RR^n)$.

For $\alpha\in\RR$, we denote by $B^\alpha_{\infty,\infty}=B^\alpha_{\infty,\infty}(\RR^\ell;\RR^n)$ the inhomogeneous Besov--H\"older space, as defined for instance in \cite[Def.~2.6.7]{bahouri_chemin_danchin_11}. For $\alpha\in (0,+\infty)\setminus\NN$, $B^\alpha_{\infty,\infty}$ corresponds to the more classical H\"older space $C^\alpha_x=C^\alpha_b(\RR^\ell;\RR^n)$ of bounded functions $f$ whose derivatives of order $|l|\leq \lfloor\alpha\rfloor$ are bounded and $\{\alpha\}$-H\"older continuous; here $\lfloor\cdot\rfloor$ and $\{\cdot\}$ respectively stand for integer and fractional parts. However, for $\alpha=k\in\NN_{\geq 0}$, only the strict embedding $C^k_x \subsetneq B^k_{\infty,\infty}$ holds (cf. \cite[p.~99]{bahouri_chemin_danchin_11}). 

For $\alpha\in (-\infty,0)$ and $E$ a Banach space, we define the Banach space $\mcC^\alpha (\RR^\ell;E)\coloneqq \mcC^\alpha_x E$ as 
	\begin{equation}\label{eq:defn_cC_alpha}
		\mcC^\alpha_xE =	\{
			\text{ Closure of $C^\infty_b(\mbR^\ell;E)$ under the norm } \|\,\cdot\,\|_{B^\alpha_{\infty,\infty}(\mbR^\ell;E)}\}.
	\end{equation}
	As a consequence of the definition and Besov embeddings, for any $\alpha\in\RR$, $\delta>0$ and $n\in \mbN$ one has 
\begin{equation}\label{eq:holder_complete_embed}
		\mcC^\alpha_x\mbR^n\hookrightarrow B^\alpha_{\infty,\infty}(\mbR^\ell;\mbR^n) \hookrightarrow \mcC^{\alpha-\delta}_x\mbR^n.
	\end{equation}
We let $L^\infty_x = L^\infty(\mbR^\ell;\mbR^n)$ denote the space of essentially bounded functions $f:\mbR^\ell \to \mbR^j$, equipped with the usual norm $\|f\|_{L^\infty_x} \coloneqq \esssup_{x\in \mbR^{\ell}}|f(x)|$. $W^{1,\infty}_x = W^{1,\infty}(\mbR^\ell;\mbR^j)$ denotes the Sobolev space of essentially bounded functions with essentially bounded first, weak derivatives, equipped with the norm
	\begin{equation*}
		\|f\|_{W^{1,\infty}_x} = \|f\|_{L^\infty_x} + \|\nabla f\|_{L^\infty_x} \eqcolon \|f\|_{L^\infty_x} + \|f\|_{\dot W^{1,\infty}_x}.
	\end{equation*}
	Recall that $W^{1,\infty}_x$ is equivalent to the space of bounded, Lipschitz continuous functions, and the optimal Lipschitz constant of $f$ is given by $\|f\|_{\dot W^{1,\infty}_x}$.
	
    For a Banach space $E$, we use $C([0,T];E) = C_T E$ to denote the space of all continuous maps $f:[0,T]\to E$, equipped with the usual supremum norm. When $E = \mbR^\ell$ and there is no cause of confusion, we suppress it and simply write $L^q_T,\, C_T$ in place of $L^q_T  \mbR^\ell,\, C_T  \mbR^\ell$.

Given a Banach space $E$ and an interval $\mcI\subseteq \mbR_+$, we say that a process $\varphi: \mcI \times \mbR^d \to E$ is uniformly $\beta$-H\"older continuous on $\mcI$ if
\begin{equation}\label{eq:uniform_holder_moment}	\llbracket\varphi\rrbracket_{\mcC^{\beta}_{\mcI} E} \coloneqq \sup_{\substack{(s,t)\in \mcI^2_\leq\\ |t-s|\leq 1}} \frac{ \norm{ \varphi_u - \varphi_s }_{E} }{ \absv{u-s}^{\beta}}<\infty
\end{equation}
and define the norm
\begin{equation*}
    \|\varphi\|_{\mcC^\beta_{\mcI}E} \, \coloneqq \|\varphi\|_{L^\infty_{\mcI}E} + \llbracket\varphi\rrbracket_{\mcC^{\beta}_{\mcI} E}.
\end{equation*}
Note that for $0<\beta'<\beta<1$ one has
\begin{equation}\label{eq:global_holder_ordering}  \llbracket\varphi\rrbracket_{\mcC^{\beta'}_{\mcI} E}\,  \leq \llbracket\varphi\rrbracket_{\mcC^{\beta}_{\mcI} E} \quad \text{and}\quad \|\varphi\|_{\mcC^{\beta'}_{\mcI} E} \, \leq \|\varphi\|_{\mcC^{\beta}_{\mcI} E}.
\end{equation}
We will primarily consider these spaces with $E= L^m(\Omega;\mbR^d)$ for some $m\geq 1$.

    For $f\in C^\infty(\mbR;\mbR)$  and $\alpha \in (0,1)$ we define the fractional integral and differential operators:
	\begin{align}
		\msI^\alpha f(t) = &\frac{1}{\Gamma(\alpha)}\int_0^t (t-s)^{\alpha-1} f(s) \dd s, \label{eq:fractional_integral}\\
		\msD^\alpha f(t) = & \frac{\dd}{\dd t} \msI^{1-\alpha} f(t) \label{eq:fractional_derivative}
	\end{align}
	referring \cite[Sec. 2]{Pic11} for a review focused on applications of this formalism to fBm. By default we extend these operators by density and duality to the space of all distributions and set
	\begin{equation*}
		\msI^0 f = f\quad \text{and}\quad \msD^0 f =f.
	\end{equation*}
	Moreover, we have the following semi-group property for $\alpha, \beta \in (0,1)$, namely $\msI^{\alpha} \msI^{\beta} = \msI^{\alpha+\beta}$ (e.g. \cite[Eq. (19)]{Pic11}, proved in Thm. 5 therein). In Section~\ref{subsec:irreduc} we require an extension of these operators to the regime $\alpha \in (-1,0)$, which is also standard and described in \cite{Pic11}. We postpone a discussion of them and their properties until that section. 
\subsection{Stochastic Dynamical Systems}\label{sec:sds}
Our main ergodic results rely on a version of the Doob–Khas’minskii theorem obtained by \cite{hairer_ohashi_07_ergodic} for a class of non-Markovian SDEs, \cite[Thm.~3.10]{hairer_ohashi_07_ergodic}. The requisite framework for this result is that of quasi-Markovian stochastic dynamical systems (SDS). For the sake of completeness, we briefly summarize these notions and some of their important properties. A fuller account is given by the works \cite{Hai05,hairer_ohashi_07_ergodic}.

The first object required is that of a stationary noise process.
\begin{defn}[Stationary Noise Process]\label{def:stat_noise}
    A quadruple $(\mcW,\{\msP_t\}_{t\geq 0}, \bP_\omega, \{\theta_t\}_{t\geq 0})$ is called a \emph{stationary noise process} if
    \begin{enumerate}[label=\roman*)]
        \item $\mcW$ is a Polish space with Borel $\sigma$-algebra $\msW$;
        \item $\{\msP_t\}_{t\geq 0}$ is a Feller transition semi-group on $\mcW$ for which $\bP_\omega$ is its unique invariant measure;
        \item the family $\{\theta_t\}_{t\geq 0}$ is a semiflow of measurable maps on $\mcW$ such that for all $w\in \mcW$ and $t>0$,
        \begin{equation}\label{eq:stat_noise_flow_return}
            \theta_t^\ast\msP_t(w,\,\cdot\,) = \delta_w. 
        \end{equation}
    \end{enumerate}
\end{defn}
A \emph{stochastic dynamical system} is built atop a stationary noise process and is the fundamental object studied in \cite{Hai05,hairer_ohashi_07_ergodic}. We make a slight adjustment to the definition given in \cite{hairer_ohashi_07_ergodic} by only requiring that the flow map be continuous in the noise argument at points contained in the support of the stationary noise process.

\begin{defn}[Stochastic Dynamical System]\label{def:sds}
    A \emph{continuous stochastic dynamical system (SDS)} on a Polish space $\mcX$ (with Borel $\sigma$-algebra $\msX$) over a stationary noise process $(\mcW,\{\msP_t\}_{t\geq 0}, \bP_\omega, \{\theta_t\}_{t\geq 0})$ is a map
    \begin{align}
        \Lambda: \mbR_+ \times \mcX \times \mcW &\to \mcX\\
        (t,x,w) &\mapsto \Lambda_t(x,w),
    \end{align}
which enjoys the following properties.
\begin{enumerate}[label=\roman*)]
    \item \label{it:sds_path_reg}\emph{Regularity of paths:} For every $T>0$, $x\in \mcX$ and $w\in \mcW$, the map 
    \begin{align}
        \Phi_T(x,w):[0,T]&\to \mcX,\\
        t&\mapsto \Phi_T(x,w)(t) \coloneqq  \Lambda_t(x,\theta_{T-t}w),
    \end{align}
is continuous, i.e it belongs to the space $C([0,T];\mcX)$.
    \item \label{it:sds_continuous} \emph{Continuous dependence:} The map
    \begin{equation}
          \mcX\times \mcW \ni(x,w) \to \Phi_T(x,w)\in C([0,T];\mcW), 
    \end{equation}
    is jointly continuous at every $(x,w)\in \mcX\times \supp(\bP_\omega)\subset \mcX\times \mcW$ with respect to the natural topologies, for every $T>0$.
    \item \label{it:sds_cocycle} \emph{Cocyle property:} The family of maps $\{\Lambda_t\}_{t\geq 0}$ is such that for all $s,\,t>0$, $x\in \mcX$ and $w\in \mcW$ one has
    \begin{align}
        \Lambda_0(x,w) &= x,\\
        \Lambda_{s+t}(x,w) &= \Lambda_s(\Lambda_t(x,\theta_sw),w).
    \end{align}
\end{enumerate}
\end{defn}
\begin{rem}\label{rem:flow_map_continuity}
	As mentioned above, we slightly modify the definition of a stochastic dynamical system built atop a stationary noise process from that given in \cite{hairer_ohashi_07_ergodic}. The only change we make is to slightly relax condition \ref{it:sds_continuous} to only require that the flow map be continuous in its noise argument around all points in the support of the stationary noise process invariant measure. Since we are concerned with singular equations where statistical properties of the noise are required to obtain well-posedness of solutions we cannot expect to have a globally continuous flow map in the noise argument. However, as we shall see, this restriction has no impact on the ergodic results of \cite{hairer_ohashi_07_ergodic}.
\end{rem}
Given an SDS $\Lambda$ and a generalised initial condition $\mu \in \mcM_\Lambda$ (see \cite[Def.~2.3]{hairer_ohashi_07_ergodic}) we construct a natural $\mcX$-valued stochastic process induced by $\Lambda$. Given $t \geq 0$ and $(x,w)\in \mcX \times \mcW$ we define a new measure $\mcQ_t(x,w;\,\cdot\,) \in \mcM_1(\mcX\times \mcW)$ by setting
\begin{equation}\label{eq:Q_semi_group}
\mcQ_t(x,w;A \times B) \coloneqq \int_B \delta_{\Lambda_t(x,w')}(A) \msP_t(w,\dd w'),
\end{equation}
where $A \in \msX$ and $B\in \msW$. As usual we define the action of $\{\mcQ_t\}_{t\geq 0}$ on measures by setting
\begin{equation}
    \mcQ_t\mu(A\times B) \coloneqq \int_{\mcX \times \mcW} \mcQ_t(x,w;A\times B) \mu(\dd x, \dd w),\quad \text{for all }\, A\times B \in \msX\times \msW.
\end{equation}
From \cite[Lem.~2.12]{Hai05} we see that the family $\{\mcQ_t\}_{t\geq0}$ defines a Feller transition semi-group on $\mcX\times \mcW$ and given $\mu \in \mcM_\Lambda$ (i.e. $\Pi^\ast_\mcW \mu =\bP_\omega$) then $\Pi^\ast_\mcW \mcQ_t\mu = \bP_\omega$ for all $t>0$.
Given a generalised initial condition $\mu\in \mcM_\Lambda$, we define a stochastic process $\mbR_+\ni t\mapsto (x_t,w_t) \in \mcX\times \mcW$ by taking it to be the canonical process under the evolution $\mbR_+\ni t\mapsto \mcQ_t\mu$. The marginal process $t\mapsto x_t$ whose law is given by the measure $t\mapsto \Pi^*_\mcX \mcQ_t \mu $ is referred to as the \emph{process generated by $\Lambda$}. We let $\bar{\mcQ}\mu \in \mcM_1(C(\mbR_+;\mcX))$ denote the law of this process, which naturally induces a map
\begin{equation}\label{eq:Q_bar_def}
  \mcM_1(\mcX \times \mcW) \ni \mu \mapsto \bar{\mcQ}\mu \in \mcM_1(C(\mbR_+;\mcX)).
\end{equation}
\begin{defn}[Invariant and Ergodic Measures]\label{def:invariant_ergodic_measures}
Given an SDS $\Lambda$, we say that a generalised initial condition $\mu \in \mcM_\Lambda$ is invariant for $\Lambda$ if
\begin{equation}
    \mcQ_t \mu = \mu \quad \text{for all }\, t\geq 0.
\end{equation}
    We say that $\mu \in \mcM_\Lambda$ is ergodic for $\Lambda$ if the law of the stationary Markov process on $\mcX\times \mcW$ with transition probabilities $\mcQ_t$ and marginal law at each $t\geq 0$ equal to $\mu$ is ergodic for the shift map on $C(\mbR_+;\mcX\times \mcW)$. 
\end{defn}
The following result, \cite[Lem.~2.5]{hairer_ohashi_07_ergodic}, is key to interpreting ergodic results concerning SDS in terms of ergodicity for the \emph{process generated by $\Lambda$}.
It follows from \cite[Lem.~2.5]{hairer_ohashi_07_ergodic} that if $\mu,\,\nu \in \mcM_\Lambda$ are both ergodic invariant measures for $\Lambda$ then either $\bar{\mcQ}\mu = \bar{\mcQ}\nu$ or $\bar{\mcQ}\mu $ and $\bar{\mcQ}\nu$ are mutually singular, see \cite[Rem.~2.6]{hairer_ohashi_07_ergodic}.

Given a map $f:\mbR_+\to \mcE$ for some Polish space $\mcE$ and $t\geq 0$ we define the restriction map
\begin{equation}
    R_t f \coloneqq f|_{[t,+\infty)}.
\end{equation}
In line with our modification to the definition of an SDS we also modify the notion of strong Feller SDS, topological irreducibility (the latter we do not spell out in detail).
\begin{defn}[Stochastic Strong Feller]\label{def:strong_feller}
    We say that an SDS $\Lambda$ is \emph{strong Feller at time $t\geq 0$} if there exists a map $\ell : \mcX^2 \times \mcW \to \mbR_+$ which is jointly continuous at every point $(x,y,w)\in \mcX^2\times \supp(\bP_\omega) \subset \mcX^2 \times \mcW$,  such that $\ell(x,x,w)=0$ for every $x\in \mcX$, $\bP_\omega$-almost every $w\in \mcW$ and
    \begin{equation}
        \left\|R^\ast_t \bar{\mcQ}\delta_{x,w} - R^\ast \bar{\mcQ} \delta_{y,w} \right\|_{\TV} \leq \ell(x,y,w), \quad \text{for all }\, x,\,y \in \mcX\,\, \text{and } \bP_w\text{-a.e.}\,\, w\in \mcW.
    \end{equation}
\end{defn}
\begin{defn}[Topologically Irreducible]\label{def:irreducible}
    We say that an SDS $\Lambda$ is  \emph{topologically irreducible at time $t\geq 0$} if for every $x\in \mcX$, $\bP_\omega$-a.s. $w\in \mcW$ and every non-empty open set $U \subset \mcX$, one has $\mcQ_t(x,w;U\times \mcW) >0$.
\end{defn}
We have a version of \cite[Thm.~3.10]{hairer_ohashi_07_ergodic} under our slightly relaxed notions of an SDS, the strong Feller property, topological irreducibility and quasi-Markovianity (\cite[Def.~3.7]{hairer_ohashi_07_ergodic}). Since the proof is almost identical to that given in \cite{hairer_ohashi_07_ergodic} we postpone its sketch to Appendix~\ref{app:doob_khasminski_proof}.
\begin{thm}\label{th:doob_khasminskii}
	If there exist times $s>0$ and $t>0$ such that a quasi-Markovian SDS $\Lambda$ (in the sense of \cite[Def.~3.7]{hairer_ohashi_07_ergodic} with the map $w\mapsto \mcP_s^{V,U}(w,\,\cdot\,)$ only defined on $\supp(\bP_\omega)$) is strong Feller (in the sense of Definition \ref{def:strong_feller}) at time $t$ and topologically irreducible (in the sense of Definition~\ref{def:irreducible}) at time $s$ then $\Lambda$ can have at most one invariant measure (in the sense of Definition~\ref{def:invariant_ergodic_measures}) up to the equivalence relation of \cite[Def.~2.4]{hairer_ohashi_07_ergodic}.
\end{thm}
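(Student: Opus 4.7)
The plan is to adapt the proof of \cite[Thm.~3.10]{hairer_ohashi_07_ergodic} and verify that the relaxations introduced in Definition~\ref{def:sds}~\ref{it:sds_continuous} and Definition~\ref{def:strong_feller} (continuity only at points of $\mcX^2\times \supp(\bP_\omega)$) do not obstruct the argument. Suppose, for contradiction, that there exist two invariant measures $\mu_1,\mu_2 \in \mcM_\Lambda$ that are not equivalent in the sense of \cite[Def.~2.4]{hairer_ohashi_07_ergodic}. By the Choquet-type ergodic decomposition for quasi-Markovian SDS (valid in our setting, since it only uses existence of a Feller semigroup $\{\mcQ_t\}_{t\geq 0}$ on $\mcX\times \mcW$), we may assume $\mu_1,\mu_2$ are both ergodic. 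Then, by \cite[Lem.~2.5]{hairer_ohashi_07_ergodic} and the remark following it, either $\bar{\mcQ}\mu_1=\bar{\mcQ}\mu_2$ or $\bar{\mcQ}\mu_1 \perp \bar{\mcQ}\mu_2$; we show the latter alternative yields a contradiction.

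Next, I would exploit the quasi-Markov property at time $s+t$. By \cite[Def.~3.7]{hairer_ohashi_07_ergodic}, the future $R_{s+t}^\ast \bar{\mcQ}\delta_{x,w}$ can be expressed through the kernel $\mcP_s^{V,U}(w,\cdot)$, which is defined on $\supp(\bP_\omega)$, followed by the flow $\Lambda_t$. Since each $\mu_i$ has $\Pi_{\mcW}^\ast \mu_i=\bP_\omega$, the noise sample in every invocation of $\mcP_s^{V,U}$ lies in $\supp(\bP_\omega)$ almost surely, so the relaxation in the definition of quasi-Markovianity causes no measurability loss.

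Topological irreducibility at time $s$ then says that for any non-empty open set $U\subset \mcX$, every initial state $x\in \mcX$ and $\bP_\omega$-a.e. $w\in \mcW$ gives $\mcQ_s(x,w;U\times\mcW)>0$. In particular both $\bar{\mcQ}\mu_1$ and $\bar{\mcQ}\mu_2$ assign positive mass to the event that the state at time $s$ lies in $U$. Conditioning on this event and applying the quasi-Markov structure, one reduces the problem to comparing the laws $R_t^\ast \bar{\mcQ}\delta_{y,w}$ for $y\in U$ and $w\in \supp(\bP_\omega)$. Here the strong Feller property enters: since $\ell(x,x,w)=0$ and $\ell$ is jointly continuous on $\mcX^2\times \supp(\bP_\omega)$, the map $y\mapsto R_t^\ast \bar{\mcQ}\delta_{y,w}$ is continuous in total variation on $\mcX$ for every $w\in\supp(\bP_\omega)$. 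Consequently, on the (positive probability) set where the intermediate state lies in a small neighbourhood of a fixed point $y_0$, the corresponding pieces of $\bar{\mcQ}\mu_1$ and $\bar{\mcQ}\mu_2$ are mutually equivalent, contradicting $\bar{\mcQ}\mu_1 \perp \bar{\mcQ}\mu_2$.

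The main obstacle is to keep track of the fact that the continuity of $\ell$, and of the flow map in the noise, only holds on $\supp(\bP_\omega)$; in the original proof of \cite[Thm.~3.10]{hairer_ohashi_07_ergodic} one uses this continuity at a generic $w\in \mcW$. This is where we will use that all relevant noise marginals coincide with $\bP_\omega$, so every conditional expectation or almost-sure statement is allowed to live on $\supp(\bP_\omega)$ without loss. Once this observation is made, the remaining measure-theoretic bookkeeping (absolute continuity of conditional kernels, passage from pointwise to $\mu_i$-a.s. statements) is essentially a line-by-line repetition of the Hairer--Ohashi argument, which we propose to record in Appendix~\ref{app:doob_khasminski_proof}.
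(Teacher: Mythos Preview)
Your high-level instinct (adapt \cite[Thm.~3.10]{hairer_ohashi_07_ergodic} and check that restricting to $\supp(\bP_\omega)$ is harmless) is correct, but the outline you give does not match the Hairer--Ohashi argument and contains a genuine gap. The proof of \cite[Thm.~3.10]{hairer_ohashi_07_ergodic} is \emph{not} a direct contradiction of the type you sketch (irreducibility $+$ TV-continuity $\Rightarrow$ overlap of invariant measures); rather, it \emph{constructs} a sub-coupling kernel $\msP^{x,y}_t(w,\cdot)$ and verifies the hypotheses of the abstract coupling criterion \cite[Thm.~3.2]{hairer_ohashi_07_ergodic}. The paper does the same: it first states an adapted abstract result (Theorem~\ref{th:abstract_ergodic_result}) and then checks its hypotheses. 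Your sketch, by contrast, invokes strong Feller ``for fixed $w$'' and then conditions on $X_s$ near a point $y_0$ without explaining how the two noise-conditionals (under $\mu_1$ and $\mu_2$) are coupled; this is precisely the step where the quasi-Markov sub-coupling and the set $\mcN^s_\mcW$ are essential, and a classical Doob--Khas'minskii argument on $\mcX$ alone does not close it.

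The second, sharper, gap is in how you handle the relaxed continuity of $\ell$. Your justification (``all relevant noise marginals coincide with $\bP_\omega$, so every almost-sure statement may live on $\supp(\bP_\omega)$'') is an a.s.\ statement, but the Hairer--Ohashi construction evaluates $\ell$ at \emph{specifically constructed} points: a measurable selection $f:\mcW\to\mcW$ and a state $\tilde{x}(x,w)$ are built, and one needs $r(x,w)>0$, where $r$ is defined via continuity of $\ell$ on balls $\msB^{\mcX}_\rho(\tilde{x}(x,w))\times\msB^{\mcW}_\rho(f(w))$. For this to work under the relaxed Definition~\ref{def:strong_feller} one needs $f(w)\in\supp(\bP_\omega)$, which is a pointwise property of the constructed map $f$, not a consequence of the noise marginal being $\bP_\omega$. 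The paper's key observation is exactly that $f(\mcW)\subseteq\supp(\bP_\omega)$, whence $(\tilde{x}(x,w),f(w))\in\mcX\times\supp(\bP_\omega)$ and the restricted joint continuity of $\ell$ suffices to give $r(x,w)>0$ and $n_x<\infty$. You should identify and verify this inclusion, and route the argument through the abstract result rather than through a direct contradiction.
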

\begin{proof}
    See Appendix.~\ref{app:doob_khasminski_proof}.
\end{proof}
\subsection{Construction and Properties of the Stationary Noise Process}\label{subsec:sds_construction}
We fix a probability space $(\Omega,\mcF,\mbP)$ carrying a two sided standard Brownian motion $B:(-\infty,+\infty)\to \mbR^d$ for some $d\geq 1$. Then, given $H\in (0,1)$, we define the two sided $
\mbR^d$ valued fBm compontentwise by setting\footnote{Note that by change of variables this is equal to the expression $t\mapsto W^H_t \coloneqq c_H \int_{-\infty}^0 (-r)^{H-\nicefrac{1}{2}}\left(\dd B_{t+r} - \dd B_r\right)$, see for comparison  \cite[Eq.~(4.1)]{hairer_ohashi_07_ergodic}.}
\begin{equation}\label{eq:fbm_mvn}
   \mbR \ni t\mapsto W^H_t \coloneqq c_H \int_0^t (t-r)_+^{H-1/2} \dd B_r + c_H \int_{-\infty}^0 (t-r)_+^{H-1/2} - (-r)_+^{H-1/2} \dd B_r,
\end{equation}
which is a Gaussian process $W^H: \Omega \times (-\infty,+\infty)\to \mbR^d$ and is such that $W^H_0 = 0$ and $(\,\cdot\,)_+ = \max(0, \cdot)$

The parameter $c_H >0$ is chosen such that 
\begin{equation}\label{eq:fbm_covariance}
    \mbE\left[W^H_t W^H_s\right] = \frac{1}{2}\left(|t|^{2H} + |s|^{2H} - |t-s|^{2H}\right)I_d.
\end{equation}
Note that \eqref{eq:fbm_mvn} is often referred to as a non-canonical representation of the fBm since its canonical filtration $\mbF^H\coloneqq (\mcF^H_t)_{t\geq 0} \coloneqq \left(\sigma \left\{ W^H_s\,:\, s\leq t \right\}\right)_{t\geq 0}$ is strictly smaller than the filtration generated by the two sided Brownian motion $\mbF^B\coloneqq (\mcF^B_t)_{t\in \mbR} \coloneqq \left(\sigma\{B_s\,:\, s\leq t \} \right)_{t\in \mbR}$.

It follows from Gaussianity of the fBm, its explicit covariance structure and an exponential version of Kolmogorov's continuity criterion (e.g. \cite[Lem.~A.2]{galeati_gerencser_22_subcritical}), that there exists a $a_0\coloneqq a_0(H,d,\eps^{-1}) >0$ such that for all $a\in (0,a_0)$,
\begin{equation}\label{eq:fbm_gaussian_moments}
    \sup_{t\in \mbR_+}\mbE\left[ \exp\left(a \left\llbracket W^H\right\rrbracket^2_{\mcC^{H-\eps}_{[t,t+1]}}\right)\right] <\infty.
\end{equation}
To define the stationary noise process we introduce the space of paths $\mcW^-_{\gamma,\delta}$, for any pair $(\gamma, \delta)\in (0,H)\times (0,1-\gamma)$, as the completion of $C^{\infty}_0(\mbR_-,\mbR^d)$ under the norm
\begin{equation}\label{eq:w_gamma_delta_norm}
    \|w\|_{\gamma,\delta} = \sup_{t,\,s\in \mbR_-} \frac{|w_t-w_s|}{|t-s|^\gamma(1+|t|+|s|)^\delta}.
\end{equation}
Note that the norm defined by \eqref{eq:w_gamma_delta_norm} captures both local regularity of paths and allows for some growth at infinity. We define the following related sets
\begin{equation}\label{eq:w_tilde_gamma_delta}
    \mcW^+_{\gamma,\delta} = \Big\{ w:\mbR_+\to \mbR^d\,:\, w_{-t} \in \mcW^-_{\gamma,\delta}\Big\},
\end{equation}
and for $T>0$
\begin{equation}\label{eq:w_gamma_delta_T}
    \mcW^-_{T;\gamma,\delta} = \Big\{ w:\mbR_-\to \mbR^d\,:\, w|_{[-T,0]} \in \mcW^-_{\gamma,\delta}\Big\},
\end{equation}
\begin{equation}\label{eq:w_tilde_gamma_delta_T}
    \mcW^+_{T;\gamma,\delta} = \Big\{ w:\mbR_+\to \mbR^d\,:\, w|_{[0,T]} \in \mcW^+_{\gamma,\delta}\Big\}.
\end{equation}
The following lemma is obtained as a combination of \cite[Lem.~3.5, Lem.~3.8]{Hai05} and \cite[Lem.~4.1]{hairer_ohashi_07_ergodic}.
\begin{lem}\label{lem:w_gamma_delta_wiener}
    For any $\gamma,\,\delta>0$ the space $\mcW^-_{\gamma,\delta}$ is separable. Furthermore, given $H\in (0,1)$, $\gamma \in (0,H)$ and $\delta \in (H-\gamma,1-\gamma)$, there exists a Borel probability measure $\bP^-_H$ on $\mcW^-_{\gamma,\delta}$ such that the canonical process associated to $\bP^-_H$ is an $\mbR^d$ valued fBm with Hurst parameter $H$ on $\mbR_-$. In addition, there is a Borel measure $\bP_H$ on $\mcW^-_{\gamma,\delta}\times \mcW^+_{\gamma,\delta}$ whose canonical process is the two sided fBm $W^H$.
\end{lem}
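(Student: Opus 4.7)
The plan is to prove the three statements in sequence, leveraging the Gaussian structure and scaling properties of fBm.

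\textbf{Separability.} Since $\mcW^-_{\gamma,\delta}$ is by construction the completion of $C^\infty_0(\mbR_-,\mbR^d)$ under a norm, it suffices to exhibit a countable dense subset of the latter. A standard choice takes, for each $n\in\mbN$, a fixed smooth cutoff $\chi_n$ supported in $[-n,0]$ and considers products $\chi_n \cdot p$ where $p$ ranges over vector valued polynomials with rational coefficients; density in $C^\infty_0(\mbR_-,\mbR^d)$, hence in the completion, follows from Stone--Weierstrass combined with a standard exhaustion argument.

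\textbf{Construction of $\bP^-_H$.} First realise the centred Gaussian process $W^H$ on $\mbR_-$ with the covariance structure consistent with \eqref{eq:fbm_covariance} via Kolmogorov's extension theorem. The core step is then to show that the sample paths belong to $\mcW^-_{\gamma,\delta}$ almost surely, and to take $\bP^-_H$ as the pushforward of the law. Specifically, I would establish the uniform moment bound
\begin{equation*}
    \mbE\bigl[\|W^H\|_{\gamma,\delta}^p\bigr] < +\infty \quad \text{for every } p\in[1,+\infty).
\end{equation*}
The strategy is a dyadic decomposition: set $I_n = [-2^{n+1},-2^n]$ for $n\geq 0$. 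By self-similarity and stationarity of increments of fBm, the $\gamma$-Hölder seminorm of $W^H$ on $I_n$ has the same law as $2^{n(H-\gamma)}$ times the $\gamma$-Hölder seminorm on $[-2,-1]$, which possesses moments of all orders by Kolmogorov's criterion for Gaussian processes. On $I_n$ the weight satisfies $(1+|t|+|s|)^\delta \gtrsim 2^{n\delta}$, so the contribution of pairs $s,t\in I_n$ to the $\gamma,\delta$-seminorm has $L^p$ norm of order $2^{n(H-\gamma-\delta)}$; this is summable in $n$ precisely because $\delta>H-\gamma$. For pairs $(s,t)$ straddling distinct windows I would split into the case $|t-s|\leq 1$, which is absorbed by the previous estimate via continuity across adjacent windows, and $|t-s|>1$, where $|t-s|^\gamma\geq 1$ and a direct bound via $|W^H_t|+|W^H_s|\lesssim (1+|t|)^{H+\eta}+(1+|s|)^{H+\eta}$ (valid for small $\eta>0$ by Gaussian tail estimates) gives the required decay against $(1+|t|+|s|)^\delta$.

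To upgrade ``finite $\gamma,\delta$-norm'' to ``contained in the completion $\mcW^-_{\gamma,\delta}$'', I would approximate a typical sample path $w$ by $\chi_n \cdot(\rho_\epsilon * w)$, with $\chi_n$ a smooth cutoff of $[-n,0]$ and $\rho_\epsilon$ a standard mollifier, and show that these approximants converge to $w$ in the norm \eqref{eq:w_gamma_delta_norm}. The convergence uses a slight margin between the a.s.\ local Hölder exponent of $W^H$ (any value below $H$) and the prescribed $\gamma<H$, together with the growth budget $\delta$, and is classical interpolation-type reasoning for weighted Hölder spaces.

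\textbf{Two-sided measure.} Since the process in \eqref{eq:fbm_mvn} restricted to $\mbR_+$ shares the same self-similarity and stationarity of increments as the $\mbR_-$ restriction, the identical argument yields that $W^H|_{\mbR_+}\in\mcW^+_{\gamma,\delta}$ almost surely. Taking $\bP_H$ as the pushforward of the law of $(W^H|_{\mbR_-}, W^H|_{\mbR_+})$ onto $\mcW^-_{\gamma,\delta}\times\mcW^+_{\gamma,\delta}$ concludes the proof.

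The main obstacle is the weighted moment estimate: the condition $\delta>H-\gamma$ must be used sharply to make the scale-dependent Hölder seminorm summable against the polynomial weight, and care is needed at the boundaries between dyadic windows to ensure the pointwise and small-increment estimates combine cleanly.
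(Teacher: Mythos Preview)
Your argument for the measure constructions is correct and coincides with the approach in the references the paper cites (Hairer~'05, Lemma~3.8 and Hairer--Ohashi~'07, Lemma~4.1): dyadic windows, self-similarity of fBm to extract the scaling $2^{n(H-\gamma)}$, and summability against the weight precisely when $\delta>H-\gamma$. The paper itself gives no independent argument here and simply defers to those sources, so you have reproduced the intended proof. The cross-window case is a bit loose as written---the clean way is to insert the boundary point between adjacent dyadic blocks and use the triangle inequality together with a geometric sum---but this is routine.

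For separability the paper takes a different and somewhat slicker route. Rather than building a countable dense subset of $C^\infty_0$ directly, it introduces a single auxiliary norm $\|w\|_\star \coloneqq \sup_{t<0}|t\,\dot w_t|$, observes that $\|w\|_{\gamma,\delta}\leq \|w\|_\star$ for all admissible $(\gamma,\delta)$, and notes that the completion of $C^\infty_0$ under $\|\cdot\|_\star$ is separable; a countable dense set there then works uniformly for every $(\gamma,\delta)$. Your direct approach is fine in spirit, but the Stone--Weierstrass step is not quite right as stated: uniform polynomial approximation on compacts controls only the sup norm, not the H\"older quotient appearing in $\|\cdot\|_{\gamma,\delta}$, so $\{\chi_n\cdot p\}$ is not obviously dense in the relevant topology. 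The fix is easy---use a countable family dense in a $C^1$-type norm on compacts, which does dominate $\|\cdot\|_{\gamma,\delta}$ on compactly supported functions---or simply adopt the paper's $\|\cdot\|_\star$ argument.
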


\begin{proof}
Separability follows from the same argument given in the proof of \cite[Lem.~3.5]{Hai05}. That is, since we define $\mcW^-_{\gamma,\delta}$ as completion under the given norm it suffices to use the fact that $\|w\|_{\star}\coloneqq \sup_{t<0}|t\dot{w}_t| \geq \|w\|_{\gamma,\delta}$ for all $\gamma,\,\delta \in (0,1)$ and the completion of $C^\infty_0(\mbR_-,\mbR^d)$ under $\|\,\cdot\,\|_{\star}$ is itself separable.

\noindent The remaining claims of the lemma follow in exactly the same manner as the proof of \cite[Lem.~4.1]{hairer_ohashi_07_ergodic} and following arguments of \cite[Lem.~3.8]{Hai05}
\end{proof}
As in \cite{hairer_ohashi_07_ergodic} and \cite{hairer_pillai_11_regularity} we define the operator 
\begin{equation}\label{eq:A_op_def_1}
   \mbR_+\ni t\mapsto  ( \msA w)_t \coloneqq (H-\nicefrac{1}{2})c_H c_{1-H} \int_0^\infty \frac{1}{r}\, f\bigg(\frac{t}{r}\bigg) w_{-r}\dd r,
\end{equation}
with 
\begin{equation}\label{eq:A_op_def_2}
    f(x) \coloneqq x^{H-\nicefrac{1}{2}}+ (H-\nicefrac{3}{2})x \int_0^1 \frac{(u+x)^{H-\nicefrac{5}{2}}}{(1-u)^{H-\nicefrac{1}{2}}
    }\dd u.
\end{equation}
We refer to Appendix~\ref{app:A_operator_properties} for some useful analytical properties of $\mcA$. As described by \cite[Eq.~(4.2)]{hairer_pillai_11_regularity} and \cite[Lem.~4.2]{hairer_ohashi_07_ergodic} the operator $\mcA$ allows us to disintegrate the measure $\bP_H$ over $\mcW^-_{\gamma,\delta}\times \mcW^+_{\gamma,\delta}$ with respect to $\bP^-_H$. More prosaically, given $w^- :\mbR_- \to \mbR^d$, the variable $(\msA w^-)_t$ coincides with the conditional expectation of a two sided fBm, conditioned to agree with $w^-$ on $\mbR_-$.  Given $h : \mbR \to \mbR^d$, we define the map
\begin{align}
 \mcW^+_{\gamma,\delta}  \ni w^+\mapsto  \tau_h w^+ \coloneqq  w^+ +h.
\end{align}
The following is an analogue of \cite[Lem.~4.2]{hairer_ohashi_07_ergodic}.
\begin{lem}\label{lem:stat_noise_measure_disintigration}
    Given $H\in (0,\nicefrac{1}{2})$, define the transition kernel $ \mcW^-_{\gamma,\delta} \ni w \mapsto \msH(w,\,\cdot\,) \in \msM_1\big(\mcW^+_{\gamma,\delta}\big)$ by setting
    \begin{equation}\label{eq:disintegration_measure}
        \msH(w,\,\cdot\,) \coloneqq \left(\tau_{\msA w} \circ \msD^{\nicefrac{1}{2}-H}\right)^*\bW,
    \end{equation}
    where $\bW$ is the Wiener measure over $C(\mbR_+;\mbR^d)$ and we recall the definition of $\msD^\alpha$ from \eqref{eq:fractional_derivative}. Then $\msH$ is the disintegration of $\bP_H$ over $\mcW^{-}_{\gamma,\delta}\times \mcW^{+}_{\gamma,\delta}$ with respect to $\bP^-_H$.
\end{lem}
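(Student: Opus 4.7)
The plan is to exploit the joint Gaussianity of $W^H$ under $\bP_H$. Since $(W^H|_{\mbR_-}, W^H|_{\mbR_+})$ is jointly Gaussian, the conditional law $\bP_H(W^H|_{\mbR_+}\in \cdot \mid W^H|_{\mbR_-}=w^-)$ is itself Gaussian, with mean a deterministic linear functional of $w^-$ and covariance independent of $w^-$. The proof therefore reduces to separately identifying this conditional mean with $\msA w^-$ and the conditional centred part with the law of $\msD^{\nicefrac{1}{2}-H} B'$ for a standard Brownian motion $B'$, after which independence of past and future Brownian increments closes the argument.

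I would carry out the mean identification first. From the Mandelbrot--Van Ness formula \eqref{eq:fbm_mvn}, for $t>0$ the random variable $W^H_t$ splits as $c_H\int_0^t(t-r)^{H-\nicefrac{1}{2}}\dd B_r + c_H\int_{-\infty}^0 [(t-r)^{H-\nicefrac{1}{2}}-(-r)^{H-\nicefrac{1}{2}}]\dd B_r$, with the first summand $\mcF^B_0$-orthogonal and the second $\mcF^B_0$-measurable, so that $\mbE[W^H_t\mid \mcF^B_0]$ equals the past integral. To pass from $\mcF^B_0$ to $\mcF^H_0 = \sigma(W^H|_{\mbR_-})$, one checks that this past integral is already $\sigma(W^H|_{\mbR_-})$-measurable; this is achieved by showing the equality with $(\msA W^H|_{\mbR_-})_t$. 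The operator $\msA$ in \eqref{eq:A_op_def_1}--\eqref{eq:A_op_def_2} is engineered precisely for this: substituting the Mandelbrot--Van Ness representation of $W^H_{-r}$ into \eqref{eq:A_op_def_1} and applying Fubini (justified via the analytic properties collected in Appendix~\ref{app:A_operator_properties}) one recovers the past integral $\bP$-a.s., giving $(\msA W^H|_{\mbR_-})_t = \mbE[W^H_t\mid \mcF^H_0]$.

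For the covariance, the centred innovation $M_t\coloneqq W^H_t-(\msA W^H)_t = c_H\int_0^t (t-r)^{H-\nicefrac{1}{2}}\dd B_r$ is a Riemann--Liouville fractional Brownian motion in the regime $H\in (0,\nicefrac{1}{2})$. Writing the convolution distributionally as $c_H\Gamma(H+\nicefrac{1}{2})\,\msI^{H+\nicefrac{1}{2}}\dot B'$ with $B'\coloneqq B|_{\mbR_+}$ and invoking the semi-group identity $\msI^{H+\nicefrac{1}{2}}=\msI\circ \msI^{H-\nicefrac{1}{2}}=\msI\circ \msD^{\nicefrac{1}{2}-H}$ (valid upon the extension of $\msI^\alpha$ to negative orders recalled in Section~\ref{sec:notations}), one sees $M = \msD^{\nicefrac{1}{2}-H}B'$ up to a multiplicative constant that is absorbed by the normalisation \eqref{eq:fbm_covariance} of $c_H$. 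Since the law of $B'$ under $\bP$ is $\bW$, the law of $M$ is $(\msD^{\nicefrac{1}{2}-H})^*\bW$. Combining the past-measurable drift $\msA w^-$ with the $\mcF^B_0$-independent innovation $M$, the conditional distribution of $W^H|_{\mbR_+}=\msA w^- + M$ given $W^H|_{\mbR_-}=w^-$ is $(\tau_{\msA w^-}\circ \msD^{\nicefrac{1}{2}-H})^*\bW$, matching \eqref{eq:disintegration_measure}.

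The principal obstacle I anticipate is the mean identification: verifying that the explicit kernel of $\msA$ defined via \eqref{eq:A_op_def_2} truly inverts the past-half Mandelbrot--Van Ness integral when restricted to elements of $\mcW^-_{\gamma,\delta}$. For $H>\nicefrac{1}{2}$ (the setting of \cite[Lem.~4.2]{hairer_ohashi_07_ergodic}) the inversion rests on positive-order fractional integration and is analytically benign, whereas for $H<\nicefrac{1}{2}$ it involves the fractional derivative $\msD^{\nicefrac{1}{2}-H}$, forcing a distributional treatment and requiring careful control of the boundary behaviour both at $r\to 0^+$ (singular kernel) and at $r\to \infty$ (where $\mcW^-_{\gamma,\delta}$ only controls polynomial growth of $w^-$). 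Once the properties of $\msA$ from Appendix~\ref{app:A_operator_properties} are in hand, the remaining manipulations are routine Gaussian calculus together with elementary identities for $\msI^\alpha$ and $\msD^\alpha$.
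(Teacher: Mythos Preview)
Your proposal is correct and follows essentially the same approach as the paper. The paper's proof is merely a two-line sketch that states the identity $w^+_t = (\msA w^-)_t + c_H (\msD^{\nicefrac{1}{2}-H} B^+)_t$ and points to \cite[Eq.~4.2]{hairer_pillai_11_regularity}; you have unpacked precisely how that identity is obtained, including the passage from $\mcF^B_0$ to $\mcF^H_0$ via the $\msA$ operator, which the paper's sketch does not address explicitly.
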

\begin{proof}[Sketch of Proof]
This follows from the identity 
\begin{equation}\label{eq:w_+_disintigration}
 \mbR_+  \ni t\mapsto   w^+_t = (\msA w^-)_t + c_H (\msD^{\nicefrac{1}{2}-H} B^+)_t,
\end{equation}
 where $B^+$ is a standard Wiener process on $\mbR_+$, $w^- \in\mcW^-_{\gamma,\delta}$ and $w^+$ is an fBm conditioned to equal $w^-$ for $t<0$. Identity \eqref{eq:w_+_disintigration} can be obtained from the Mandelbrot van-Ness representation of the fBm \eqref{eq:fbm_mvn}, see \cite[Eq.~4.2]{hairer_pillai_11_regularity}.
\end{proof}
We are now able to define our stationary noise process, following the same procedure as outlined in \cite[Sec.~4]{hairer_ohashi_07_ergodic}. For $w \in \mcW^-_{\gamma,\delta}$, $t\in \mbR_+$ and $s\in \mbR_-$, we define the map
\begin{equation}
    \theta_t w_s \coloneqq w_{s-t} - w_{-t}
\end{equation}
and the \emph{concatenation function} $M: \mbR_+ \times \mcW^-_{\gamma,\delta} \times \mcW^+_{\gamma,\delta} \to \mcW^-_{\gamma,\delta}$
\begin{equation}
    M_t(w^-,w^+) \coloneqq \begin{cases}
        w^+_{t+s}-w^+_t, & \text{if }\, -t <s,\\
        w^-_{t+s} - w^+_t, & \text{if }\, s\leq -t \leq 0.
    \end{cases}
\end{equation}
We then set
\begin{equation}
    \msP_t(w^-,\,\cdot\,) \coloneqq M_t(w^-,\,\cdot\,)^* \msH(w^-,\,\cdot\, ),\quad \text{for all }\, w^- \in \mcW^-_{\gamma,\delta},\quad t\in \mbR_+.
\end{equation}
We refer to \cite[Fig.~1]{neamtu_varzaneh_25_negativity} for an illustration of this construction. The following is the equivalent, in our setting, of \cite[Lem.~4.3]{hairer_ohashi_07_ergodic}.
\begin{lem}\label{lem:stationary_noise}
    The tuple $(\mcW^-_{\gamma,\delta},\{\msP_t\}_{t\in \mbR_+}, \bP_H,\{\theta_t\}_{t\in \mbR_+})$ defines a stationary noise process in the sense of Definition~\ref{def:stat_noise}.
\end{lem}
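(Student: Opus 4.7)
The plan is to verify each of the three conditions in Definition~\ref{def:stat_noise} in turn, broadly mirroring \cite[Lem.~4.3]{hairer_ohashi_07_ergodic} but adapting every step to the regime $H < \nicefrac{1}{2}$ where $\msD^{\nicefrac{1}{2}-H}$ is interpreted as a fractional integral. First, $\mcW^-_{\gamma,\delta}$ is Polish by Lemma~\ref{lem:w_gamma_delta_wiener}, giving condition (i). That $\{\theta_t\}_{t\geq 0}$ is a semiflow of measurable maps is a routine calculation from the defining identity $\theta_t w_s = w_{s-t} - w_{-t}$: continuity of each $\theta_t$ on $\mcW^-_{\gamma,\delta}$ follows directly from the structure of the norm \eqref{eq:w_gamma_delta_norm}, while $\theta_0 = \mathrm{Id}$ and $\theta_s \circ \theta_t = \theta_{s+t}$ are checked pointwise on paths.

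Second, to show $\{\msP_t\}_{t\geq 0}$ is a Feller Markov semigroup with unique invariant measure $\bP_H$, I would exploit the interpretation supplied by Lemma~\ref{lem:stat_noise_measure_disintigration}: a draw from $\msP_t(w^-,\cdot)$ is obtained by sampling $w^+$ from $\msH(w^-,\cdot)$ — equivalently the forward part of a two-sided fBm conditioned on the past $w^-$ — and then applying the concatenation and recentering $M_t(w^-,w^+)$. The Chapman--Kolmogorov identity $\msP_{s+t} = \msP_s \circ \msP_t$ then reduces to the obvious additivity of shifts under this two-sided fBm picture, since the joint law of the conditioned forward path and the given past is precisely $\bP_H$. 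For the Feller property one has to check that $w^-_n \to w^-$ in $\mcW^-_{\gamma,\delta}$ implies weak convergence of $\msP_t(w^-_n,\cdot) \to \msP_t(w^-,\cdot)$; this reduces to continuity of $\msA$ as a map into the relevant weighted H\"older space together with continuity of the concatenation/shift operator $\theta_t \circ M_t(\cdot,\cdot)$, and the required estimates on $\msA$ are collected in Appendix~\ref{app:A_operator_properties}. Invariance of $\bP_H$ under $\msP_t$ is then equivalent to the stationarity of the two-sided fBm itself. For uniqueness I would invoke ergodicity of the shift on $\mcW^-_{\gamma,\delta}$ under $\bP_H$: since the increments of the two-sided fBm form a stationary Gaussian process with absolutely continuous spectral measure, Maruyama's theorem gives mixing of the shift, and any other $\msP_t$-invariant probability must coincide with $\bP_H$.

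Third, the consistency identity $\theta_t^\ast \msP_t(w^-,\cdot) = \delta_{w^-}$ expresses the fact that $M_t$ has been built precisely so that, after re-centering at the new time origin $-t$ via $\theta_t$, the segment of $M_t(w^-,w^+)$ on $(-\infty,-t]$ exactly reproduces the past $w^-$, independently of the sampled $w^+$. This is verified by a direct computation on the cases $-t < s$ and $s \leq -t$ in the definition of $M_t$. The main obstacle in the whole proof is the Feller property at step two: because the adapted operator $\msA$ in \eqref{eq:A_op_def_1}--\eqref{eq:A_op_def_2} replaces the fractional derivative by a fractional integral when $H < \nicefrac{1}{2}$, one must check that the resulting deterministic drift on the Wiener space depends continuously on the past in the topology encoded by $\|\cdot\|_{\gamma,\delta}$. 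Once this continuity is in place (from Appendix~\ref{app:A_operator_properties}), the remainder of the argument is essentially the same as in \cite[Lem.~4.3]{hairer_ohashi_07_ergodic}.
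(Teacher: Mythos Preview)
Your proposal is correct and follows the same approach as the paper's proof, which simply cites \cite[Lem.~4.3]{hairer_ohashi_07_ergodic} and notes that the only adaptation needed is replacing \cite[Prop.~A.2]{hairer_ohashi_07_ergodic} with Proposition~\ref{prop:A_map_bounded} (the boundedness of $\msA$ for $H<\nicefrac{1}{2}$, exactly what you single out from Appendix~\ref{app:A_operator_properties}). One minor slip: for $H<\nicefrac{1}{2}$ the exponent $\nicefrac{1}{2}-H$ is positive, so $\msD^{\nicefrac{1}{2}-H}$ is a genuine fractional derivative rather than an integral, though this does not affect your argument.
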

\begin{proof}
    The proof is identical to that of \cite[Lem.~4.3]{hairer_ohashi_07_ergodic}, only replacing \cite[Prop.~A.2]{hairer_ohashi_07_ergodic} with our version Proposition~\ref{prop:A_map_bounded}.
\end{proof}
For every $T>0$, let us define the shift map $\msR_T :\mcW^-_{\gamma,\delta} \to \mcW^+_{\gamma,\delta}$ such that
\begin{equation}\label{eq:negative_to_positive_flip}
	\msR_T f(\,\cdot\,) = f(-T) - f(\,\cdot\, - T).
\end{equation}
\begin{cor}\label{cor:negative_to_positive_fbm}
   Given the tuple $(\mcW^-_{\gamma,\delta},\{\msP_t\}_{t\in \mbR_+}, \bP_H,\{\theta_t\}_{t\in \mbR_+})$ as in Lemma~\ref{lem:stationary_noise} and for any $T>0$ the map $\msR_T$ defined by \eqref{eq:negative_to_positive_flip}, for any $w \sim \bP_H$ the process $[0,T] \ni t\mapsto \msR_t w(t)$ is an $\mbR^d$ valued fBm which is equal in law to the process given by \eqref{eq:fbm_mvn} with respect to $B$ a standard $\mbF^B$ adapted Brownian motion on a filtered probability space $(\Omega,\mcF,\mbP,\mbF^B)$. Moreover, in the decomposition of $W^H$ given by \eqref{eq:w_+_disintigration} the process $\{\mbR_+\ni t\mapsto (\msA w)_t\}$ ($w \sim \bP_H$) is $\mcF_0$-measurable.
\end{cor}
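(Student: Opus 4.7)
The plan is to split the claim into (i) identifying the law of $\{\msR_T w(t)\}_{t\in [0,T]}$ and (ii) checking the measurability of $t \mapsto (\msA w)_t$.

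For (i), the key input is Lemma~\ref{lem:w_gamma_delta_wiener}, which asserts that the canonical coordinate process of $\bP_H$ on $\mcW^-_{\gamma,\delta}$ has the fBm covariance \eqref{eq:fbm_covariance}. Unpacking the definition \eqref{eq:negative_to_positive_flip} as $\msR_T w(t) = w_{-T} - w_{t-T}$ for $t\in [0,T]$, I would compute the covariance of the centered Gaussian process $\{\msR_T w(t)\}_{t\in[0,T]}$ directly. The four-term expansion, after substituting \eqref{eq:fbm_covariance} evaluated at the times $\{-T, t-T, u-T\}\subset \mbR_-$, telescopes to
\begin{equation*}
\mbE\big[\msR_T w(t)\, \msR_T w(u)\big] = \tfrac{1}{2}\big(|t|^{2H} + |u|^{2H} - |t-u|^{2H}\big)I_d, \quad t,u \in [0,T].
\end{equation*}
Since $t\mapsto \msR_T w(t)$ is continuous (its regularity being inherited from the norm on $\mcW^-_{\gamma,\delta}$), centered, Gaussian and starts at $0$, this covariance identification is enough to conclude that $\{\msR_T w(t)\}_{t\in[0,T]}$ has the same law on $C([0,T];\mbR^d)$ as $\{W^H_t\}_{t\in [0,T]}$ given by \eqref{eq:fbm_mvn}, since the latter is also a centered continuous Gaussian process with the same covariance structure.

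For (ii), I would realise $\bP_H$ on the filtered Brownian space $(\Omega,\mcF,\mbP,\mbF^B)$ by identifying $w$ with $W^H|_{\mbR_-}$, where $W^H$ is the fBm given by the Mandelbrot--van Ness formula \eqref{eq:fbm_mvn} driven by the two-sided Brownian motion $B$. Inspecting \eqref{eq:fbm_mvn} for $t\leq 0$, the first integral vanishes and the second involves only $B_r$ with $r\leq 0$; hence $W^H|_{\mbR_-}$ is an $\mcF_0$-measurable functional of $B$. The operator $\msA$ in \eqref{eq:A_op_def_1}--\eqref{eq:A_op_def_2} is a deterministic, pathwise well-defined, bounded linear map on $\mcW^-_{\gamma,\delta}$ (this is exactly what Proposition~\ref{prop:A_map_bounded} gives us), so for every $t\in\mbR_+$ the random variable $(\msA w)_t$ is a Borel-measurable functional of $w\in \mcW^-_{\gamma,\delta}$ and therefore $\mcF_0$-measurable. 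A trivial application of Fubini (or a separability argument for $\mcW^-_{\gamma,\delta}$) yields joint measurability of the process $\mbR_+ \times \Omega\ni (t,\omega)\mapsto (\msA w)_t(\omega)$.

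The only technical point I anticipate is ensuring that $\msA$ produces a well-defined, continuous process pathwise on $\bP_H$-almost every element of $\mcW^-_{\gamma,\delta}$, which is precisely the content of Proposition~\ref{prop:A_map_bounded}. Beyond that, the entire corollary reduces to the definition of $\bP_H$ via Lemma~\ref{lem:w_gamma_delta_wiener}, standard Gaussian covariance bookkeeping, and an appeal to the representation \eqref{eq:w_+_disintigration}.
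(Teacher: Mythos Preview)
Your proposal is correct and follows essentially the same approach as the paper's proof, which is equally brief: for (i) the paper simply invokes that $w\sim\bP_H$ is a time-reversed fBm with $w(0)=0$ (your covariance computation spells this out), and for (ii) the paper identifies $(\msA w)_t$ directly with the past integral $\int_{-\infty}^0\big[(t-r)^{H-1/2}-(-r)^{H-1/2}\big]\dd B_r$ from the Mandelbrot--van Ness formula, which is the concrete version of your composition-of-measurable-maps argument.
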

\begin{proof}
    The proof is direct since $w \sim \bP_H$ is a time reversed fBm, $\msR_t$ is a continuous map and $w(0)= 0$ by construction. The second claim is immediate from the fact that $\msA w = \int_{-\infty}^0 (t-r)^{H-1/2} - (-r)^{H-1/2} \dd B_r$ which is a measurable map of $\{\mbR_{-} \ni t\mapsto B_t\}$.
\end{proof}
We require the Cameron--Martin spaces associated to $\bP_H^{-}$, $\bP_H^{+}$ and $\bP_H$. To this end, for $H\in (0,\nicefrac{1}{2})$ we define
\begin{align}
	\mfH_H &\coloneqq \{ h:\mbR \to \mbR^d\,:\, \mfh = \msD^{\nicefrac{1}{2}-H} \tilde{\mfh},\, \tilde{\mfh}\in \mcH^1(\mbR;\mbR)\},\label{eq:cameron_martin_two_sided}\\
	\mfH^+_{H} & \coloneqq \{ \mfh:\mbR_+ \to \mbR^d\,:\, \mfh = \msD^{\nicefrac{1}{2}-H} \tilde{\mfh},\, \tilde{\mfh}\in \mcH^1(\mbR_+;\mbR^d)\},\label{eq:cameron_martin_positive}\\
	\mfH^-_{H} & \coloneqq \{ \mfh:\mbR_- \to \mbR^d\,:\, \mfh = \msD^{\nicefrac{1}{2}-H} \tilde{\mfh},\, \tilde{\mfh}\in \mcH^1(\mbR_-;\mbR^d)\}. \label{eq:cameron_martin_negative}
\end{align}
where $\mcH^k(\mbR;\mbR^d) =W^{k,2}(\mbR;\mbR^d)$ denotes the usual Sobolev spaces of square integrable maps with square integrable distributional derivatives. To see that these are correct characterizations of the Cameron--Martin spaces see \cite[Thm.~19]{Pic11}. 
It will be necessary for us to consider an fBm $W^H$ conditioned to equal some $w\in \mcW^{-}_{(\gamma,\delta)}$ on $\mbR_-$. Using the decomposition \eqref{eq:w_+_disintigration} we define
\begin{equation}\label{eq:restricted_w_h}
    \mbR_+ \ni t\mapsto (W^H|_w)_t \coloneqq  c_H \int_0^t (t-r)^{H-\nicefrac{1}{2}}_+ \dd B_t + (\msA w)_t \eqqcolon \tilde{W}^H_t + (\msA w)_t.
\end{equation}
Note that $W^H|_w$ is the canonical process associated to the measure $\msH(w,\,\cdot\,)$ and $\tilde{W}^H$ is proportional a Riemann-Liouville process (see \cite[Sec.~5]{Pic11}).

Then, given an $\mbR^d$ valued random variable $X:\Omega\to \mbR^d$ and $w\in \mcW^-_{\gamma,\delta}$ we set 
\begin{equation}\label{eq:disintegrated_expectation}
\E_{w}\left[ X \right] \coloneqq \E\left[ X \lvert W^{H}\lvert_{\mbR_-} = w\right],
\end{equation}
where the right hand side should be understood as expectation with respect to the conditional distribution given by \eqref{eq:disintegration_measure}. In addition, for any $s\in \mbR_+$, by a slight abuse of notation we set
\begin{equation}\label{eq:disintegrated_cond_expectation}
    \mbE^B_{w,s} \coloneqq \mbE\left[X|\mcF^B_s, W^H|_{\mbR_-} = w\right],
\end{equation}
which should be understood as the random variable $\mbE \left[ X \lvert \mcF^B_s \right]$ restricted to the event $W^{H}\lvert_{\mbR_-} = w$. Moreover, for $m\in [1,+\infty)$ and $s\in \mbR_+$, we define the conditional moments
\begin{equation*}
   \norm{ X }_{\llmw} = \left( \E_{w}\left[ |X|^m \right] \right)^{1/m}  \quad \text{and}\quad \norm{ X }_{\llmws} = \left( \E^B_{w,s}\left[ |X|^m \right] \right)^{1/m}.
\end{equation*} 

Analogously we define conditional variances 
\[ \text{Var}_w[X] = \mbE_w \left[\left( X - \mbE_w  \left[ X\right] \right)^2 \right] \quad \text{and}\quad \text{Var}_w[X \lvert \mcF_s ] = \mbE_{w,s} \left[\left( X - \mbE_{w,s}  \left[ X\right] \right)^2 \right] \]
where $\mbE_w, \mbE_{w,s}$ are as in \eqref{eq:disintegrated_expectation} and \eqref{eq:disintegrated_cond_expectation}.

A key property of the fBm is its local non-determinism, in particular we have the following lemma. For a similar statement see for example \cite[Eq.~(3.6)]{galeati_gubinelli_20_Noiseless}, which i.
\begin{lem}\label{lem:lnd}
Let $(\Omega,\mcF,\mbP,\mbF^B)$ be a filtered probability space carrying a standard $\mbF^B$ adapted Brownian motion $B$, $H\in (0,1)$, $W^H$ be fBm given by \eqref{eq:fbm_mvn} with natural filtration $\mbF^H$. Then, for any $0\leq s <t <+\infty$, it holds that
    \begin{equation}\label{eq:fbm_lnd}
        {\rm Var} (W^H_t|\mcF^H_s) \land {\rm Var}_w (\wh_t | \mcF^H_s) \geq \frac{c^2_H}{2H} |t-s|^{2H} I_d \eqqcolon \tilde{c}_H|t-s|^{2H} I_d.
    \end{equation}
    Moreover, for $\psi :\mbR_+\to \mbR$  continuous and deterministic,  the estimate \eqref{eq:fbm_lnd} holds without modification if $W^H$ is replaced by $W^H + \psi$.
\end{lem}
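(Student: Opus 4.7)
The plan is to exploit Gaussianity together with the Mandelbrot--Van Ness representation \eqref{eq:fbm_mvn}. For $0\leq s<t$, I would split
\begin{equation*}
W^H_t \,=\, Z_{s,t} + Y_s, \qquad Z_{s,t} \,\coloneqq\, c_H \int_s^t (t-r)^{H-1/2} \dd B_r,
\end{equation*}
where $Y_s$ collects the remaining Wiener integrals (over $(-\infty,0]$ and $[0,s]$) and is therefore $\mcF^B_s$-measurable, while $Z_{s,t}$ is a centered Gaussian random vector independent of $\mcF^B_s$. It\^o isometry immediately gives
\begin{equation*}
\Var(Z_{s,t}) \,=\, c_H^2 \int_s^t (t-r)^{2H-1}\dd r \,=\, \frac{c_H^2}{2H}(t-s)^{2H}\, I_d.
\end{equation*}

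For the first bound in \eqref{eq:fbm_lnd}, I would invoke the standard fact that for a jointly Gaussian family (everything here is a linear functional of the two-sided Brownian motion $B$) and sub-$\sigma$-algebras $\mcG_1\subset \mcG_2$ generated by variables of that family, the deterministic conditional variances satisfy $\Var(X|\mcG_1)\geq \Var(X|\mcG_2)$. This follows from the law of total variance together with the fact that $\Var(X|\mcG_2)$ is constant in the Gaussian setting, or equivalently from the monotonicity of $L^2$-projection onto closed subspaces. Since $\mcF^H_s\subset \mcF^B_s$, combining this with the decomposition above yields
\begin{equation*}
\Var(W^H_t|\mcF^H_s) \,\geq\, \Var(W^H_t|\mcF^B_s) \,=\, \Var(Z_{s,t}) \,=\, \frac{c_H^2}{2H}(t-s)^{2H}\, I_d.
\end{equation*}

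For the conditional bound $\Var_w(W^H_t|\mcF^H_s)$, I would use the decomposition \eqref{eq:restricted_w_h}: under the conditioning $W^H|_{\mbR_-}=w$, one has $W^H_t = \tilde W^H_t + (\mcA w)_t$ with $(\mcA w)_t$ deterministic, so $\mcF^H_s$ restricted to the conditioning event coincides with $\mcF^{\tilde W^H}_s$. Therefore $\Var_w(W^H_t|\mcF^H_s) = \Var(\tilde W^H_t|\mcF^{\tilde W^H}_s)$, and repeating the argument above with $\tilde W^H_t = c_H\int_0^t (t-r)^{H-1/2}\dd B_r$ (splitting the integral at $r=s$ and noting that $\mcF^{\tilde W^H}_s \subset \mcF^B_s$) yields the matching lower bound. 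Finally, the extension to $W^H+\psi$ for continuous deterministic $\psi$ is immediate, since $\mcF^{W^H+\psi}_s = \mcF^H_s$ (one subtracts the known $\psi$) and conditional variance is shift-invariant under deterministic translations. Conceptually, the only nontrivial ingredient is the monotonicity of conditional variance under joint Gaussianity; everything else is direct bookkeeping with the kernel representation \eqref{eq:fbm_mvn}.
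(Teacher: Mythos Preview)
Your proof is correct and follows essentially the same approach as the paper: decompose $W^H_t$ using the Mandelbrot--Van Ness kernel into an $\mcF^B_s$-measurable part and the independent increment $c_H\int_s^t (t-r)^{H-1/2}\dd B_r$, compute the latter's variance by It\^o isometry, and use the inclusion $\mcF^H_s\subset \mcF^B_s$ together with monotonicity of (Gaussian, hence deterministic) conditional variance. The paper handles $\Var_w$ and the deterministic shift $\psi$ in exactly the same way, so there is nothing to add.
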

\begin{proof}
Since $\wh$ is identified with the right hand side of \eqref{eq:w_+_disintigration}, using \eqref{eq:restricted_w_h}, for any $0\leq s< t<+\infty$ and $w \in \mcW^{-}_{\gamma,\delta}$, we have
\begin{equation}\label{eq:wh_decomp_with_awop}
      (\wh|_{w})_t = W^{H;1}_{s,t} + W^{H;2}_{s,t} + (\msA w)_t - (\msA w)_s,
  \end{equation}
  where
  \begin{equation*}
      W^{H;1}_{s,t} = c_H \int_s^t (t-r)^{H-\nicefrac{1}{2}}\dd B_r \quad \text{and}
      \quad W^{H;2}_{s,t} = \int_0^s \left( (t-r)_+^{H-1/2} - (-r)_+^{H-1/2} \right) \dd B_r .
  \end{equation*}
    Since $W^{H;1}_{s,t}$ is independent of $\mcF^B_s$, $W^{H;2}_{s,t}$ is $\mcF^B_s$ measurable and $\msA w$ is deterministic, using that $\mbF^H\subset \mbF^B$ and the standard It\^o isometry, for any $0\leq s<t<+\infty$ we have
  \begin{equation*}
      {\rm Var}_w (\wh_t|\mcF^H_s) \geq {\rm Var}_w (\wh_t|\mcF^B_s) = {\rm Var}_w (W^{H;1}_{s,t}) = \frac{c_H^2}{2H} |t-s|^{2H}I_d.
  \end{equation*}
  For the full variance, ${\rm Var}$, without conditioning on $w$, the same computation holds, except that one can refer to \eqref{eq:fbm_mvn}, therefore replacing the increment of $(\msA w)$ with an integral similar to the first term in $W^{H;2}$, but from $-\infty$ to 0, see also \cite[Eq.~(3.6)]{galeati_gubinelli_20_Noiseless}. The proof of the last statement follows by the same steps and observation that ${\rm Var}(W^{H;1}_{s,t} + \psi_{s,t}) = {\rm Var}(W^{H;1}_{s,t})$.
\end{proof}
\begin{rem}\label{rem:H_implies_B_measurable}
    Note that since we have the inclusion of $\sigma$-algebras $\mbF^H \subset \mbF^B$, if $Z$ is an $\mcF^H_s$ measurable random variable for some $s\geq 0$ then it is also $\mcF^B_s$ measurable.
\end{rem}
We introduce some compact notation, for $H\in (0,1)$, $t\in \mbR_+$ and  $f:\mbR^d \to \mbR^{d}$, we set
\begin{equation*}
    \mcG^H_{t} f \coloneqq G_{\tilde{c}_H t^{2H}} f \quad \text{where} \quad G_t f = g_t\ast f \quad \text{and} \quad g_t(x)=(2\pi t)^{-\frac{d}{2}} e^{-\frac{|x|^2}{2t}}.
\end{equation*}
It follows (for example from \cite[Prop.~5]{mourrat_weber_17_GWP} or \cite[Lem.~A.3]{ABLM24}) that for any $\eta < \kappa$ there exists a constant $C\coloneqq C(\eta,\kappa,d)>0$ such that
\begin{equation}\label{eq:heat_kernel_fbm}
\norm{ \mcG^H_t f }_{B^{\kappa}_{\infty,\infty}} \leq C t^{-H\left(\kappa-\eta\right)} \norm{ f }_{B^{\eta}_{\infty,\infty}}.
\end{equation}
In our case, the semi-group action above enters through the conditional expectation, which is formalized in the following result.

\begin{prop}\label{prop:condexp_heat_kernel}
    Let $0<s<t<+\infty$, $f \in C^{\infty}_b(\mbR^d;\mbR^d)$. Then, in the setting of Lemma~\ref{lem:lnd} it holds that 
\begin{equation}\label{eq:heat_kernel_fbm_identity}
     \mbE^B_s \left[f(\wh_t) \right]= (\mathcal{G}^H_{t-s} f)( \bar{W}^{H,2}_{s,t} )
    \end{equation},
   where $\bar{W}^{H,2}_{s,t} \coloneqq \int_{-\infty}^s \big( (t-r)_+^{H-1/2} - (-r)_+^{H-1/2} \big) \dd B_r$.
   
   \noindent Furthermore, for $w \in \mcW^-_{\gamma,\delta}$, $\mbE^B_{w,s}$ given by \eqref{eq:disintegrated_cond_expectation} and $W^{H,2}$ as in \eqref{eq:wh_decomp_with_awop}, it holds that
\begin{equation}\label{eq:disintegrated_heat_kernel_fbm}
    \mbE^B_{w,s} \left[f(W^{H,1}_{s,t} + W^{H,2}_{s,t} + (\msA w^-)_t - (\msA w^-)_s)\right] = (\mathcal{G}^H_{t-s}f)(W^{H,2}_{s,t} + (\msA w^-)_t - (\msA w^-)_s).
    \end{equation}
\end{prop}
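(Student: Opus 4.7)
The plan is to combine the Mandelbrot--van Ness representation \eqref{eq:fbm_mvn} with a standard freezing/substitution argument for Gaussian conditional expectations. Splitting both stochastic integrals in \eqref{eq:fbm_mvn} at the intermediate time $s$ yields the decomposition $\wh_t = W^{H,1}_{s,t} + \bar{W}^{H,2}_{s,t}$, where $W^{H,1}_{s,t} = c_H \int_s^t (t-r)^{H-1/2} \dd B_r$ collects the contribution supported on $(s,t)$ and $\bar{W}^{H,2}_{s,t}$ (as defined in the statement) absorbs the remaining contribution from $(-\infty,s]$. By construction, $W^{H,1}_{s,t}$ is independent of $\mcF^B_s$ (its integrand is supported on $[s,t]$) while $\bar{W}^{H,2}_{s,t}$ is $\mcF^B_s$-measurable. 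Since $W^{H,1}_{s,t}$ is moreover centered Gaussian and $f$ is bounded continuous, the standard freezing principle for conditional expectations gives
\begin{equation*}
\mbE^B_s\bigl[f(\wh_t)\bigr] \,=\, \Psi\bigl(\bar{W}^{H,2}_{s,t}\bigr), \qquad \Psi(y) \,\coloneqq\, \mbE\bigl[f\bigl(W^{H,1}_{s,t} + y\bigr)\bigr].
\end{equation*}

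To identify $\Psi$, a direct It\^o isometry computation yields
\begin{equation*}
\Var\bigl(W^{H,1}_{s,t}\bigr) \,=\, c_H^2 \int_s^t (t-r)^{2H-1}\,\dd r \cdot I_d \,=\, \frac{c_H^2}{2H}(t-s)^{2H} I_d \,=\, \tilde{c}_H(t-s)^{2H} I_d,
\end{equation*}
matching the lower bound of Lemma~\ref{lem:lnd} (which is in fact attained by this summand alone). Consequently $\Psi(y) = \bigl(g_{\tilde{c}_H(t-s)^{2H}} \ast f\bigr)(y) = (\mcG^H_{t-s}f)(y)$, proving \eqref{eq:heat_kernel_fbm_identity}.

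For \eqref{eq:disintegrated_heat_kernel_fbm} the strategy is identical but carried out under the disintegrated measure $\msH(w,\,\cdot\,)$ from Lemma~\ref{lem:stat_noise_measure_disintigration}. Using the representation \eqref{eq:restricted_w_h} together with the splitting \eqref{eq:wh_decomp_with_awop}, under $\msH(w,\,\cdot\,)$ the process $B$ restricted to $\mbR_+$ remains a standard Brownian motion and the drift $\msA w$ is deterministic once $w$ is fixed. Then $W^{H,1}_{s,t}$ is once again independent of $\mcF^B_s$ with the same centered Gaussian law as above, while $W^{H,2}_{s,t}$ is $\mcF^B_s$-measurable and $(\msA w)_t - (\msA w)_s$ is a constant given $w$. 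Applying the same freezing step and identifying the resulting $\Psi$ with $\mcG^H_{t-s}f$ exactly as before yields \eqref{eq:disintegrated_heat_kernel_fbm}. No step is truly hard here; the only subtlety lies in verifying the freezing principle under the conditional measure, which reduces to the disjoint supports of the integrands defining $W^{H,1}_{s,t}$ and the $\mcF^B_s$-measurable terms, together with the fact that conditioning on $W^H|_{\mbR_-}=w$ only fixes information measurable with respect to $(B_r)_{r\leq 0}$.
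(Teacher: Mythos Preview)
Your proposal is correct and follows essentially the same approach as the paper's own proof: decompose $W^H_t$ into the $\mcF^B_s$-independent part $W^{H,1}_{s,t}$ and the $\mcF^B_s$-measurable remainder, then integrate out the Gaussian contribution of the former (the paper simply calls this ``standard'' and cites \cite[Lem.~6.4]{le_20_stochSewing}, whereas you spell out the variance computation and the freezing step). The disintegrated identity is handled identically in both.
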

\begin{proof}
   The proof of \eqref{eq:heat_kernel_fbm} is standard and follows from integrating out the part of $\wh$ that is independent from $\mcF_s$, using its Gaussianity, see \cite[Lem.~6.4]{le_20_stochSewing} for details. The identity \eqref{eq:disintegrated_heat_kernel_fbm} is proven in a similar way and it follows from recalling \eqref{eq:wh_decomp_with_awop} and using the fact every term inside $f$ on left hand side of \eqref{eq:disintegrated_heat_kernel_fbm} apart from $W^{H,1}_{s,t}$ is $\mcF^B_s$ and $\msH(w,\,\cdot\,)$ measurable.
\end{proof}
Fixing $(s,t)\in [0,+\infty)^2_\leq$ and an $\mcF^B_t$-measurable $\mbR^d$ valued Malliavin differentiable random variable  $Y$ on $(\Omega,\mcF,\mbF^B,\mbP)$ with Malliavin derivative $\mfD_{\,\cdot\,} Y$, we recall the Clark--Ocone formula (\cite[Prop.~1.3.14]{Nualart_malliavin_book}).
\begin{equation}\label{eq:clark_ocone}
    Y = \mbE^B_s[Y] + \int_s^t \mbE^B_r [\mfD_r Y] \cdot \dd B_r. 
\end{equation}
Moreover, we can prove that the formula \eqref{eq:clark_ocone} has a variant that holds under the disintegrated expectation of  where we recall the notation of \eqref{eq:disintegrated_cond_expectation}.
\begin{prop}
  Assume that we are in the setting of Lemma \ref{lem:lnd} and $w \in \supp(\bP^-_H)\subset \mcW^-_{\gamma,\delta}$. Then, for any $t\geq 0$ and an $\mcF^B_t$-measurable $\mbR^d$ valued Malliavin differentiable random variable $Y$, it holds that
\begin{equation}\label{eq:clark_ocone_disintegrated}
    Y = \mbE_{w,s}^B [Y] + \int_s^t \mbE_{w,r}^B [ \mfD_r Y_t ]\dd B_r, \quad \text{for all }\, s \in [0,t].
    \end{equation}
\end{prop}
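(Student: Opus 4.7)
The plan is to descend from the classical Clark--Ocone identity \eqref{eq:clark_ocone} to the disintegrated version by conditioning on the $\sigma$-algebra $\mcG\coloneqq \sigma(W^H|_{\mbR_-})$ generated by the past of the fBm, exploiting the inclusions $\mcG\subset \mcF^B_0\subset \mcF^B_s$ for every $s\geq 0$. The structural point is that, by independence of Brownian increments, the forward motion $B|_{[0,+\infty)}$ is left invariant in law under conditioning on any sub-$\sigma$-algebra of $\mcF^B_0$, so every It\^o integral against $B$ over an interval $[s,t]\subset \mbR_+$ is intrinsically well-defined under the conditional measure and equals its unconditional counterpart on the relevant event.

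First I would invoke Lemma~\ref{lem:stat_noise_measure_disintigration}, which provides the explicit disintegration kernel $\msH(w,\,\cdot\,)$ of $\bP_H$ over $\mcW^-_{\gamma,\delta}$ against $\bP^-_H$. Transporting this through the Mandelbrot--van Ness representation \eqref{eq:fbm_mvn} yields a regular conditional probability $\bP_w\coloneqq \bP(\,\cdot\,\mid W^H|_{\mbR_-} = w)$ on the underlying two-sided Brownian path space, defined for every $w\in \mcW^-_{\gamma,\delta}$. Under $\bP_w$, the process $B|_{[0,+\infty)}$ remains a standard Brownian motion with respect to $(\mcF^B_t)_{t\geq 0}$, while $(\msA w)$ becomes deterministic. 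In particular, $(\Omega,\mcF,\bP_w,(\mcF^B_t)_{t\geq 0})$ supports the usual It\^o calculus on $\mbR_+$, and the definition \eqref{eq:disintegrated_cond_expectation} of $\mbE^B_{w,s}$ is realized as the $\mcF^B_s$-conditional expectation under $\bP_w$.

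Next I would apply the classical Clark--Ocone identity \eqref{eq:clark_ocone} to obtain
\begin{equation*}
    Y = \mbE^B_s[Y] + \int_s^t \mbE^B_r[\mfD_r Y]\cdot \dd B_r\qquad \bP\text{-a.s.},
\end{equation*}
and then disintegrate via $\bP = \bP_H^-\otimes \bP_{(\cdot)}$, so that the equality survives $\bP_w$-a.s.\ for $\bP_H^-$-almost every $w\in \supp(\bP^-_H)$. Since $\mcG\subset \mcF^B_r$, the tower property identifies $\mbE^B_r[\,\cdot\,]$ restricted to $\{W^H|_{\mbR_-}=w\}$ with $\mbE[\,\cdot\,\mid\mcF^B_r,\,W^H|_{\mbR_-}=w] = \mbE^B_{w,r}[\,\cdot\,]$, and analogously with $s$ in place of $r$. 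Substituting these identifications into the restricted identity yields \eqref{eq:clark_ocone_disintegrated}.

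The main technical obstacle I foresee is justifying that the It\^o integral $\int_s^t \mbE^B_r[\mfD_r Y]\,\dd B_r$ computed under $\bP$, when restricted to the conditioning event, agrees with the It\^o integral computed intrinsically on $(\Omega,\mcF,\bP_w)$. The cleanest route is an approximation argument: express $\mbE^B_r[\mfD_r Y]$ as an $L^2(\Omega,\bP;L^2([s,t]))$-limit of simple predictable processes, for which the It\^o integral is a finite Riemann--Stieltjes sum that trivially commutes with the conditioning, and then use Fubini against $\bP = \bP_H^-\otimes \bP_{(\cdot)}$ to transfer $L^2(\bP)$-convergence to $L^2(\bP_w)$-convergence for $\bP_H^-$-almost every $w$. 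A measurable choice of approximating sequence, obtained by diagonalization, then yields the identity on a $\bP^-_H$-full measure subset of $\supp(\bP^-_H)$, as required.
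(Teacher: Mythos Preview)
Your proposal is correct and rests on the same structural observation as the paper: since $\mcG=\sigma(W^H|_{\mbR_-})\subset\mcF^B_0$, conditioning on the past leaves the forward Brownian motion $B|_{[0,\infty)}$ intact, and $Y$ becomes a functional of $B|_{[0,t]}$ with $w$ as a parameter. The paper's argument is slightly more direct in execution: rather than starting from the unconditional Clark--Ocone formula and then restricting to $\{W^H|_{\mbR_-}=w\}$, it simply observes that under $\bP_w$ the process $B|_{[0,\infty)}$ is still a Brownian motion and $Y=h(B|_{[0,t]},w)$ is $\mcF^B_t$-measurable, so one can rerun the proof of \cite[Prop.~1.3.14]{Nualart_malliavin_book} verbatim on the filtered space $(\Omega,\mcF,\bP_w,(\mcF^B_t)_{t\geq 0})$. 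This bottom-up route avoids the technical obstacle you correctly flag---namely, that an It\^o integral computed under $\bP$ must be shown to coincide with the one computed under $\bP_w$ after restriction---because the integral is built directly under $\bP_w$ from the outset. Your top-down approach handles this via approximation and disintegration, which is perfectly valid but slightly more laborious; the paper's framing simply never encounters the issue.
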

\begin{proof}
One may follow the same steps as in the proof of \cite[Prop.~1.3.14]{Nualart_malliavin_book}. If $Y$ is a $\mcF^B_t$-measurable, then it means that for all $w$ defined in the support of $\bP^{-}_H$, it can be written as a functional of form $h(B|_{[0,t]}, w)$ under $\msH(w,\cdot)$. Then the proof follows mutatis mutandis, using the independence of increments of Brownian motion on the positive line from its past.
\end{proof}
\begin{rem}
To highlight the rather surprising equality between \eqref{eq:clark_ocone_disintegrated} and \eqref{eq:clark_ocone}, this only holds due to the assumed measurability of $Y$. In particular, this implies that we can write $Y = h(B|_{t\geq 0}, B|_{t< 0}$ for some measurable functional $h : C_{\mbR_+}\times C_{\mbR_-}\to \mbR^d$.
\end{rem}
Let $f \in C^{1}(\mbR^d)$, $x\in \mbR^d$, $(s,t)\in [0,+\infty)^2_{\leq}$ and recall that $\wh$ can be written by \eqref{eq:fbm_mvn} as 
\begin{equation}\label{eq:wh_as_rl_with_past}
\wh_t = \int_0^t (t-r)^{H-1/2} \dd B_r + P_t
\end{equation} 
for some $\mcF^B_0$-measurable path $P_t$. Then, for $x\in \mbR^d$ and $Y = f(x+ \int_0^t (t-r)^{H-1/2} \dd B_r + P_t )$, which is clearly $\mcF^B_t$-measurable, we have, by the chain rule of Malliavin derivatives and the fact that $\mfD_{\cdot} P = 0$:
\[ \mfD_{r} Y = \nabla f(x+\wh_t) (t-r)^{H-1/2} \]
 and as such one obtains
\begin{equation}\label{eq:malliavinfdx}
	 f(x+W^H_t) = \mbE^B_s[f(x+W^H_t)] + c_H \int_s^t (t-r)^{H-\nicefrac{1}{2}} \, \mbE^B_r \left[\nabla f(x+W^H_t)\right]  \dd B_r.
\end{equation}
We will now extend the preceding discussion to deal with the disintegrated case, i.e. when the past part of fBm is fixed.

The following lemma sums up the discussion above. 
\begin{lem}\label{lem:lnd_to_heat_kernel}
In the setting of Lemma~\ref{lem:lnd}, let $H\in (0,1)$, $(s,t)\in [0,+\infty)^2_{\leq}$, $f:\RR^d \to\RR^d$ be a bounded measurable function and $Z$ be an $\mbR^d$ valued $\cF^B_s$-measurable random variable. Then, for any $(s,t)\in [0,+\infty)^2_{\leq}$
\begin{equation}\label{eq:lnd_to_heat_kernel_stoch_integral}
     f(W^H_t+Z) = \mcG^H_{|t-s|} f\left(\mbE^B_s W^H_{t}+Z\right) + \int_s^t (t-r)^{H-\nicefrac{1}{2}}\nabla \mcG^H_{|r-s|} f(\mbE^B_r W^{H}_{t}+Z) \cdot \dd B_r
\end{equation}
and for $w\in \mcW^-_{\gamma,\delta}$ 
\begin{equation}\label{eq:disintegrated_lnd_to_heat_kernel_stoch_integral}
    f(\wh_t+Z) = \mcG^H_{|t-s|} f\left(\mbE^B_{w,s} W^H_{t}+Z\right) + \int_s^t (t-r)^{H-\nicefrac{1}{2}}\nabla \mcG^H_{|r-s|} f(\mbE^B_{w,r} W^{H}_{t}+Z) \cdot \dd B_r,
\end{equation}
   where one has $\mbE^B_{w,s} W^H_{t} = \mbE^B_s (\msI^{H-1/2} B)_t + \msA w_t$.
\end{lem}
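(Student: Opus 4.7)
The plan is to apply the Clark--Ocone formula \eqref{eq:clark_ocone} to $Y = f(W^H_t+Z)$ and then identify the resulting conditional expectations as heat kernel actions using Proposition~\ref{prop:condexp_heat_kernel}. First I would treat $f\in C^{\infty}_b(\mbR^d;\mbR^d)$ so that Malliavin differentiability of $Y$ is automatic. Using the Mandelbrot--van Ness representation in the form \eqref{eq:wh_as_rl_with_past}, namely $W^H_t = c_H\int_0^t (t-u)^{H-1/2}\dd B_u + P_t$ with $P_t$ an $\mcF^B_0$-measurable process, together with the chain rule for Malliavin derivatives, one obtains $\mfD_r W^H_t = c_H(t-r)^{H-1/2}$ for $r\in[0,t]$. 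Since $Z$ is $\mcF^B_s$-measurable one has $\mfD_r Z = 0$ for $r>s$, hence
\begin{equation*}
    \mfD_r\left[f(W^H_t+Z)\right] = c_H(t-r)^{H-1/2}\,\nabla f(W^H_t+Z),\quad r\in(s,t).
\end{equation*}
Plugging this into \eqref{eq:clark_ocone} and pulling the deterministic factor $(t-r)^{H-1/2}$ out of the conditional expectation yields the candidate identity with the conditional expectations $\mbE^B_s[f(W^H_t+Z)]$ and $\mbE^B_r[\nabla f(W^H_t+Z)]$ in place of the heat kernel terms.

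Next I would convert these conditional expectations via Proposition~\ref{prop:condexp_heat_kernel}. Writing $W^H_t = W^{H,1}_{r,t} + \bar W^{H,2}_{r,t}$ as in \eqref{eq:heat_kernel_fbm_identity} (so that $\mbE^B_r W^H_t = \bar W^{H,2}_{r,t}$), and using that $Z$ is $\mcF^B_r$-measurable for any $r\geq s$, a direct conditioning argument (treating $Z$ as frozen) extends \eqref{eq:heat_kernel_fbm_identity} to
\begin{equation*}
    \mbE^B_r\left[f(W^H_t+Z)\right] = (\mcG^H_{t-r} f)(\mbE^B_r W^H_t + Z),
\end{equation*}
and analogously with $\nabla f$ in place of $f$. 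Since convolution commutes with differentiation, $\mcG^H_{t-r}\nabla f = \nabla \mcG^H_{t-r} f$, which gives \eqref{eq:lnd_to_heat_kernel_stoch_integral} under the convention that the semi-group time in the stochastic integrand is $|t-r|$. For the disintegrated identity \eqref{eq:disintegrated_lnd_to_heat_kernel_stoch_integral} the argument is identical, replacing Clark--Ocone by its conditional version \eqref{eq:clark_ocone_disintegrated} and Proposition~\ref{prop:condexp_heat_kernel} by the disintegrated formula \eqref{eq:disintegrated_heat_kernel_fbm}; here the $\msA w$ contribution enters through $\mbE^B_{w,s}W^H_t = \mbE^B_s(\msI^{H-1/2}B)_t + (\msA w)_t$, which is the $\msH(w,\,\cdot\,)$-measurable part of $W^H|_w$.

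The main obstacle is extending from smooth to bounded measurable $f$, since in that case $f(W^H_t+Z)$ need not be Malliavin differentiable and the Clark--Ocone formula cannot be invoked directly on the left-hand side. I would mollify $f$ by $f^n \coloneqq f\ast \rho_n \in C^\infty_b$ with $\rho_n$ a standard mollifier, obtain the identity for each $f^n$, and pass to the limit. The left-hand side converges in $L^1(\Omega)$ along a subsequence by dominated convergence, while for the right-hand side the key point is that both $\mcG^H_{|t-s|}f^n$ and $\nabla\mcG^H_{|t-r|}f^n$ are smoothed by a genuine Gaussian kernel of width $\tilde c_H|t-s|^{2H}$ and $\tilde c_H|t-r|^{2H}$ respectively, so they converge pointwise boundedly to $\mcG^H_{|t-s|}f$ and $\nabla\mcG^H_{|t-r|}f$; the stochastic integral then converges in $L^2$ by Itô isometry after checking that the integrand $(t-r)^{H-1/2}\nabla\mcG^H_{|t-r|}f^n(\cdot)$ is dominated uniformly in $n$ by a square-integrable deterministic upper bound (note $(t-r)^{2H-1}$ is integrable near $r=t$ since $H>0$, and $\|\nabla\mcG^H_{|t-r|}f\|_{L^\infty}\lesssim \|f\|_{L^\infty}|t-r|^{-H}$ by \eqref{eq:heat_kernel_fbm}, giving overall integrability of the square). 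The disintegrated case follows identically, using the $\bP^-_H$-a.s.\ validity of \eqref{eq:clark_ocone_disintegrated} on $\supp(\bP^-_H)$.
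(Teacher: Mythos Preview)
Your approach coincides with the paper's: apply Clark--Ocone (resp.\ its disintegrated version \eqref{eq:clark_ocone_disintegrated}) to $Y=f(W^H_t+Z)$, compute $\mfD_r Y$ via the chain rule, and identify the resulting conditional expectations through Proposition~\ref{prop:condexp_heat_kernel}. The paper's proof is terse, citing \eqref{eq:malliavinfdx} together with the heat-kernel proposition and deferring the bounded-measurable case to an external reference; you spell out the same steps and correctly derive the semi-group time in the integrand as $|t-r|$, consistent with how the identity is actually used later (see the proof of Lemma~\ref{lem:integral_estimate}).

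There is, however, a genuine gap in your mollification step. You claim a square-integrable deterministic majorant for $(t-r)^{H-1/2}\nabla\mcG^H_{t-r}f^n(\cdot)$, but combining your two ingredients gives the squared bound $(t-r)^{2H-1}\cdot(t-r)^{-2H}=(t-r)^{-1}$, which is \emph{not} integrable near $r=t$; so dominated convergence for the integrand fails as stated. The repair is to reverse the logic: once the identity holds for smooth $f^n$, both $f^n(W^H_t+Z)$ and $\mcG^H_{t-s}f^n(\mbE^B_s W^H_t+Z)$ converge in $L^2(\Omega)$ to their $f$-counterparts by bounded convergence, hence the stochastic integrals converge in $L^2(\Omega)$, and It\^o's isometry forces convergence of the integrands in $L^2([s,t]\times\Omega)$; the pointwise limit $(t-r)^{H-1/2}\nabla\mcG^H_{t-r}f(\cdot)$ (well-defined for $r<t$) then identifies the limiting integrand. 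A related minor point: writing $\mfD_r Z=0$ for $r>s$ presupposes Malliavin differentiability of $Z$, which is not assumed; the paper sidesteps this by applying \eqref{eq:malliavinfdx} to the shifted function $x\mapsto f(x+Z)$ with $Z$ treated as a frozen $\mcF^B_s$-measurable parameter.
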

\begin{proof} Recall that for $\bar{W}^{H,2}$ as in \eqref{eq:heat_kernel_fbm} we have $\bar{W}^{H,2}_{s,t} = \EE_s \wh_t$. The first identity follows almost directly from Lemma~\ref{lem:lnd}, see \cite[Lem.~3.10]{galeati_gubinelli_20_Noiseless}, or \eqref{eq:malliavinfdx} with Proposition \ref{prop:condexp_heat_kernel} applied to $f(\cdot +Z)$.

\noindent Analogously, for $W^{H,2}_{s,t}$ as in \eqref{eq:wh_decomp_with_awop} we recall that
\[ W^{H,2}_{s,t} + \left( (\msA w)_t - (\msA w)_s \right) = \mbE_{w,s} \wh_t. \] 
The proof of \eqref{eq:disintegrated_lnd_to_heat_kernel_stoch_integral} is then an application of \eqref{eq:clark_ocone_disintegrated} for $Y = f(\wh_t + Z)$, along with \eqref{eq:disintegrated_heat_kernel_fbm} and the fact that the Malliavin derivative of any measurable $P$ in \eqref{eq:wh_as_rl_with_past} is equal to zero, and $w$ is measurable. 
\end{proof}
\section{A Priori Estimates and Tightness}\label{sec:tightness}
We establish suitable a priori estimates, leading to Gaussian tightness, for solutions to the family of perturbed SDEs
\begin{equation}\label{eq:psi_sde}
    X^\psi_t = x + \int_0^t g(X^\psi_s)\dd s + \int_0^t u(X^\psi_s)\dd s + W^H_t + \psi_t,
\end{equation}
where $\psi :\mbR_+\to \mbR^d$ is a given continuous path and in places we will also condition $W^H$ to be equal to some $w\in \mcW^-_{\gamma,\delta}$ (recall \eqref{eq:w_gamma_delta_norm}) for $t\in \mbR_-$. Note that without this conditioning and for $\psi =0$ the dynamics of \eqref{eq:psi_sde} are nothing but \eqref{eq:intro_fbm_sde}. These modifications are required to prove continuity of the solution map and the modulus of continuity of the semi-group around the support of the noise process, see Theorem~\ref{th:sds_well_defined} and Theorem~\ref{th:strong_feller_proof}. Our main tool in this section, and those that follow, is local non-determinism and its consequences, Lemma~\ref{lem:lnd} and Lemma~\ref{lem:lnd_to_heat_kernel} both of which hold for the perturbed and restricted process $W^H +\psi$.

We make the following standing assumptions throughout this section. Note that \eqref{eq:tightness_singular_parameters} puts us in the situation of \ref{it:intro_existence} of Theorem~\ref{th:main_intro}.
\begin{ass}\label{ass:confining_assumption}
Let $u :\mbR^d\to \mbR^d$ be such that there exists some $\lambda>0$ for which
\begin{equation}\label{eq:confining_assumption}
	\|\nabla u\|_{L^{\infty}_x} \leq \lambda\quad \text{and}\quad \left(u(x)-u(y) \right) \cdot (x-y) \leq - \lambda |x-y|^2.
\end{equation}
\end{ass}

\begin{ass}\label{ass:tightness_singularity}
	Let $(\alpha,H) \in (-\infty,0)\times (0,\nicefrac{1}{2})$ be such that 
\begin{equation}\label{eq:tightness_singular_parameters}
	\alpha >\frac{1}{2}-\frac{1}{2H},
	\end{equation}
\end{ass}
To properly fix ideas concerning the perturbation $\psi$, given an interval $\mcI \subseteq \mbR_+$ and $\gamma$  as in \eqref{eq:w_gamma_delta_norm} we define the set
\begin{equation}\label{eq:psi_bound_longterm}
    \mcC^\gamma_{\mcI;\lambda} \coloneqq \bigg\{\psi \in \mcC^{\gamma}(\mbR;\mbR^d)\,:\, \sup_{t\in \tint} \bigg( \, \absv{ \int_0^t e^{-\lambda(t-r)}  \dd \psi_r } \lor \sup_{\substack{(u,v)\in \mcI^2_\leq\\
    |u-v|\leq 1}}  \frac{\absv{ \psi_u - \psi_v}}{ \absv{v-u}^{\gamma}  } \, \bigg) <+\infty  \bigg\}.
\end{equation} 
We write $\norm{ \,\cdot\, }_{\mcC^\gamma_{\tint;\lambda}}$ for the smallest upper bound on the supremum inside \eqref{eq:psi_bound_longterm}.

\noindent Note that as $\{r\mapsto e^{-\lambda(t-\cdot)}\}$ is smooth, the integral above is understand in the usual Riemann-Stjeltes sense and by integrating by parts one has
\begin{equation*}
    \absv{ \int_0^t e^{-\lambda(t-r)}  \dd \psi_r } \lesssim_\lambda \|\psi\|_{L^\infty_\mcI}.
\end{equation*}
 For brevity, when $\mcI =[0,T]$ for some $T>0$ we will write $\norm{ \,\cdot\, }_{\mcC^\gamma_{T;\lambda}}$
Throughout this section we fix a standard probability space $(\Omega,\mcF,\mbP)$ carrying a two sided Brownian motion $B$ and equipped with the augmented natural filtration $\{\mcF^B_{t\in \mbR}\}_{t\in \mbR}$ and recall the notations \eqref{eq:disintegrated_expectation}, \eqref{eq:disintegrated_cond_expectation} and attendant discussion. 
We are now in a position to state the main result of this section.
\begin{thm}\label{th:tightness_main}
	Let $(\alpha,H)\in (-\infty,0)\times (0,\nicefrac{1}{2})$ satisfy Assumption~\ref{ass:tightness_singularity},$\gamma\in (0,H)$ be as in \eqref{eq:w_gamma_delta_norm} and $u:\mbR^d \to \mbR^d$ satisfy Assumption~\ref{ass:confining_assumption}, $g\in C^\infty(\mbR^d;\mbR^d)$, $x \in L^m_\Omega \mbR^d$ for all $m\geq 1$ such that $\mbE[\exp(\tilde{\kappa}|x|^2)] <\infty$ for some $\tilde{\kappa}>0$. Moreover, let $\tint \subseteq \RR_+$ be a given interval, $\psi \in \mcC^{\gamma}_{\tint;\lambda}$ and $w \in \mcW^-_{\gamma,\delta}$. Then, there exists a $\kappa_0 \coloneqq \kappa_0\left( (H-\gamma)^{-1},\tilde{\kappa},\alpha,H,d,\lambda^{-1},\|g\|_{\mcC^\alpha_x}\right) > 0$
\begin{equation}\label{eq:sol_local_holder_bound}
\sup_{t \in \tint} \EE_{w}\Bigg[ \exp\left( \kappa \|X^{\psi}\|_{\mcC^{\gamma}_{[t,t+1] \cap \tint}}^2 \right)\Bigg] <+\infty,\quad \text{for all } \kappa \in (0,\kappa_0],
\end{equation}
satisfying $\lim \kappa_0 \to 0$ as $\norm{g}_{\mcC^{\alpha}_x} \to \infty$ for each fixed $\lambda > 0$. Moreover, if instead $\psi \in \mcC^{\gamma}_{\mbR_+;\lambda}$, then, for some possibly different $\kappa_0$ depending on the same parameters as above, it holds that
\begin{equation}\label{eq:sol_global_tightness}
		\sup_{t > 0} \EE \Bigg[ \exp\left( \kappa \|X^{\psi}\|_{\mcC^{\gamma}_{[t,t+1]}}^2 \right)\Bigg] < +\infty,\quad \text{for all } \kappa \in (0,\kappa_0],
	\end{equation}
where $\kappa_0$ in \eqref{eq:sol_global_tightness} also vanishes with increasing $\norm{g}_{\mcC^{\alpha}_x}$.
\end{thm}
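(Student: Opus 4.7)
My plan is to adapt the contractive-damping approach of \cite[Prop.~3.12]{Hai05} to the distributional setting sketched in the introduction. Introduce the auxiliary Ornstein--Uhlenbeck type process
\begin{equation*}
Y^{\psi}_t \coloneqq \int_0^t e^{-\lambda(t-r)}\,\dd W^H_r + \int_0^t e^{-\lambda(t-r)}\,\dd\psi_r,
\end{equation*}
and set $\rho_t \coloneqq X^{\psi}_t - Y^{\psi}_t$. Combining the dissipativity and Lipschitz bounds of Assumption~\ref{ass:confining_assumption} with a variation-of-constants step applied to $\dot\rho = u(X^{\psi}) - u(Y^{\psi}) + u(Y^{\psi}) + \lambda Y^{\psi} + g(X^{\psi})$ gives
\begin{equation*}
|\rho_t|^2 \,\le\, |x|^2 e^{-\lambda t} + C_\lambda\left|\int_0^t e^{-\lambda(t-r)}\rho_r\cdot g(X^{\psi}_r)\,\dd r\right| + C_\lambda\int_0^t e^{-\lambda(t-r)}|Y^{\psi}_r|\,|\rho_r|\,\dd r,
\end{equation*}
mirroring \eqref{eq:rho_intro_bound}. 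The process $Y^{\psi}$ being explicit and Gaussian, its uniform-in-time Gaussian moments follow from the estimates of Appendix~\ref{sec:gaussian_bounds} together with $\psi\in\mcC^\gamma_{\tint;\lambda}$.

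\textbf{Local-global sewing for the singular integral.} The delicate term is the middle integral on the right-hand side, for which I would apply Lemma~\ref{lem:sewing_decay_stochastic_integral} to the germ
\begin{equation*}
A_{s,t} \coloneqq \rho_s\cdot \mbE^B_{w,s}\!\left[\int_s^t e^{-\lambda(t-r)} g(X^{\psi}_r)\,\dd r\right],
\end{equation*}
supplemented by its martingale part via the Clark--Ocone formula \eqref{eq:clark_ocone_disintegrated}. The heat-kernel identity of Lemma~\ref{lem:lnd_to_heat_kernel} combined with the Besov smoothing estimate \eqref{eq:heat_kernel_fbm} converts the singular test $g$ into a smoothed version $\mcG^H_{|t-s|} g$, producing germ bounds of order $\|g\|_{\mcC^{\alpha}_x}|t-s|^{1+H\alpha}$ in the conditional-expectation part and of order $\|g\|_{\mcC^{\alpha}_x}|t-s|^{1/2+H\alpha}$ on the quadratic variation of the martingale part; both exponents are sewable thanks to Assumption~\ref{ass:tightness_singularity}. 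The essential feature of the \emph{local-global} variant is a tunable threshold $\tau_g = \tau_g(\|g\|_{\mcC^{\alpha}_x},\lambda)$: on scales shorter than $\tau_g$ one uses sewing, while on longer scales one exploits the exponential weight $e^{-\lambda(t-r)}$. To retain Gaussian (rather than merely exponential) moments, as flagged in the introduction, I would estimate the finite-variation and martingale germs separately and invoke a continuous Burkholder--Davis--Gundy inequality on the latter, bypassing the discrete BDG built into the standard stochastic sewing lemma.

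\textbf{Closing the estimate and promoting to H\"older bounds.} Young's inequality on the sewn output, combined with the linear growth of $u$ acting on the Gaussian process $Y^{\psi}$, produces for any $\varepsilon>0$
\begin{equation*}
|\rho_t|^2 \,\le\, \varepsilon\,\|\rho\|_{L^\infty_{[0,t]}}^2 + \varepsilon\int_0^t e^{-\lambda(t-r)}|\rho_r|^2\,\dd r + \mcR_t,
\end{equation*}
where $\mcR_t$ has uniform-in-$t$ conditional Gaussian moments with threshold depending only on $(\|g\|_{\mcC^{\alpha}_x},\lambda,H,d,\gamma,(H-\gamma)^{-1},\tilde{\kappa})$. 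Choosing $\tau_g$ and $\varepsilon$ so that the coefficient of $\|\rho\|_{L^\infty}^2$ is strictly less than one, I absorb the supremum into the left-hand side on short enough windows and conclude via the quadratic Gr\"onwall Lemma~\ref{lem:my_gronwall}, obtaining $\sup_{t\in\tint} \mbE_w[\exp(\kappa|\rho_t|^2)] < +\infty$. For the H\"older bound \eqref{eq:sol_local_holder_bound} I decompose $X^{\psi} = x + \theta + W^H + \psi$ with $\theta_t \coloneqq \int_0^t (g+u)(X^{\psi}_s)\,\dd s$, and apply a second, purely local, sewing step to the germ $\int_s^t g(X^{\psi}_r)\,\dd r$ to control $\llbracket\theta\rrbracket_{\mcC^{\gamma}_{[t,t+1]\cap\tint}}$ in terms of $\|\rho\|_{L^\infty}$ and $\|g\|_{\mcC^{\alpha}_x}$; the $u$ contribution is handled directly using the pointwise Gaussian bound just obtained. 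Combined with \eqref{eq:fbm_gaussian_moments} and $\psi\in\mcC^\gamma_{\tint;\lambda}$ this promotes the pointwise Gaussian bound into the H\"older-seminorm Gaussian bound. Integrating over $w\sim\bP^-_H$ and invoking $\psi\in\mcC^\gamma_{\mbR_+;\lambda}$ removes the conditioning and yields \eqref{eq:sol_global_tightness}.

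\textbf{Main obstacle.} The principal difficulty will be the joint calibration of the local scale $\tau_g$, the Young coefficient $\varepsilon$, and the Gaussian threshold $\kappa_0$, ensuring simultaneously that (i) the coefficient of $\|\rho\|_{L^\infty}^2$ can be absorbed, (ii) $\kappa_0$ degrades at worst polynomially (not super-exponentially) in $\|g\|_{\mcC^{\alpha}_x}$, and (iii) the Gaussian integrability survives. Any loss of a Gaussian moment to merely exponential integrability would propagate through the Bismut--Elworthy--Li estimates in Section~\ref{subsec:strong_feller}; this is precisely why the martingale/finite-variation splitting inside the sewing step, enabling continuous rather than discrete BDG, is essential rather than merely cosmetic.
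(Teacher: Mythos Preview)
Your proposal is correct and follows essentially the same route as the paper: the Ornstein--Uhlenbeck splitting $X^{\psi}=Y^{\psi}+\rho$, the variation-of-constants inequality for $|\rho_t|^2$, the local-global stochastic sewing Lemma~\ref{lem:sewing_decay_stochastic_integral} with a scale tuned against $\|g\|_{\mcC^\alpha_x}$, and the continuous BDG inside the sewing step to preserve Gaussian tails are all exactly as in the paper. Two minor deviations worth noting: (i) the paper closes the $\rho$-estimate via the elementary implication $x^2\le Ax+B\Rightarrow x\le 2A+\sqrt{2B}$ applied directly to $\|\rho\|_{L^\infty_{\mcI}L^{2m}_{\Omega;w}}$ rather than invoking Lemma~\ref{lem:my_gronwall}, which is cleaner since the damping already handles the global-in-time aspect; and (ii) for \eqref{eq:sol_global_tightness} the paper repeats the argument with the full expectation $\mbE$ and the full fBm (using Lemma~\ref{lem:gaussian_ou_bound}) rather than integrating the conditional bound over $w\sim\bP^-_H$, avoiding the issue that your disintegrated bound depends on $\|\msA w\|_{\mcC^\gamma_{\mcI;\lambda}}$.
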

We prove Theorem~\ref{th:tightness_main} at the end of this section after collecting necessary preliminary lemmas.
\begin{lem}\label{lem:integral_estimate}  
Let $(\alpha,H) \in (-\infty,0)\times (0,\nicefrac{1}{2})$ satisfy Assumption~\ref{ass:tightness_singularity}, $m\in [2,+\infty)$, $\varphi \, \in \mcC^{1+\alpha H}_{\mbR_+} L^m(\Omega;\mbR^d)$ and $g\in C^\infty(\mbR^d;\mbR^d)$. Then, there exists a constant $C\coloneqq C(\alpha,H,d) >0$ such that for any $T>0$, $\psi \in \mcC^\gamma_{T;\lambda}$ and $(s,t)\in [0,T]^2_\leq$ such that $|t-s|\, \leq 1$ , 
\begin{equation}\label{eq:decay_integral_holder_estimate}
	  \left\llbracket\int_s^{\,\cdot\,} g(\varphi_r +W^H_r + \psi_r)\dd r \right\rrbracket_{\mcC^{1+\alpha H}_{[s,t]}L^m_{\Omega}} \leq C\|g\|_{\mcC^\alpha_x}  \left(1+\sqrt{m}+ \llbracket\varphi\rrbracket_{\mcC^{1+\alpha H}_{[s,t]}L^m_{\Omega}} |t-s|^{1+H(\alpha-1)} \right).
	\end{equation}
The statement holds without modification if $\llbracket\,\cdot\, 
\rrbracket_{\mcC^{1+\alpha H}_{[s,t]}L^m_\Omega}$ is replaced with $\llbracket\,\cdot\, 
\rrbracket_{\mcC^{1+\alpha H}_{[s,t]}L^m_{\Omega;w}}$ on the left and right hand sides for any $w\in \mcW^-_{\gamma,\delta}$, recalling the notation of \eqref{eq:disintegrated_expectation}. If $\psi \in \mcC^{\gamma}_{\RR_+;\lambda}$ and the right hand side is finite $(s,t) \in [0,+\infty]^2_{\leq}$, then the bound holds for all $0\leq s<t <+\infty$ with $|t-s|\leq 1$.
\end{lem}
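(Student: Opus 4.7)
The plan is to apply a stochastic sewing argument to the integral $\int_s^t g(\varphi_r+W^H_r+\psi_r)\,\dd r$. For $(s,t)\in[0,T]^2_\leq$ with $|t-s|\leq 1$, define the germ
\begin{equation*}
A_{s,t}\coloneqq \mathbb{E}^B_s\int_s^t g(\varphi_s+W^H_r+\psi_r)\,\dd r,
\end{equation*}
which is $\mathcal{F}^B_s$-measurable. By Proposition~\ref{prop:condexp_heat_kernel} (whose proof extends verbatim to $W^H+\psi$ since adding the deterministic $\psi_r$ inside $f$ preserves the Gaussian integration argument of Lemma~\ref{lem:lnd}), we have
\begin{equation*}
\mathbb{E}^B_s g(\varphi_s+W^H_r+\psi_r)=(\mathcal{G}^H_{|r-s|}g)\bigl(\varphi_s+\mathbb{E}^B_sW^H_r+\psi_r\bigr).
\end{equation*}
Applying the heat-kernel regularisation \eqref{eq:heat_kernel_fbm} with $\eta=\alpha$ and $\kappa=0$ yields
$\|\mathcal{G}^H_t g\|_{L^\infty_x}\lesssim t^{H\alpha}\|g\|_{\mcC^\alpha_x}$, hence
\begin{equation*}
\|A_{s,t}\|_{L^m_\Omega}\lesssim \|g\|_{\mcC^\alpha_x}\,|t-s|^{1+\alpha H}.
\end{equation*}
Note $1+\alpha H>\tfrac12$ by Assumption~\ref{ass:tightness_singularity}.

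Next I control $\delta A_{s,u,t}$. Using the tower property twice to introduce $\mathbb{E}^B_u$ inside both terms and then applying Proposition~\ref{prop:condexp_heat_kernel} at the intermediate scale $|r-u|$ gives
\begin{equation*}
\mathbb{E}^B_s\delta A_{s,u,t}=\mathbb{E}^B_s\int_u^t \bigl[(\mcG^H_{|r-u|}g)(\varphi_s+\mathbb{E}^B_uW^H_r+\psi_r)-(\mcG^H_{|r-u|}g)(\varphi_u+\mathbb{E}^B_uW^H_r+\psi_r)\bigr]\dd r.
\end{equation*}
Using \eqref{eq:heat_kernel_fbm} now with $\kappa=1$ gives the Lipschitz estimate $\|\nabla\mcG^H_t g\|_{L^\infty_x}\lesssim t^{-H(1-\alpha)}\|g\|_{\mcC^\alpha_x}$, so
\begin{equation*}
\|\mathbb{E}^B_s\delta A_{s,u,t}\|_{L^m_\Omega}\lesssim \|g\|_{\mcC^\alpha_x}\,\llbracket\varphi\rrbracket_{\mcC^{1+\alpha H}_{[s,t]}L^m_\Omega}\,|u-s|^{1+\alpha H}\int_u^t|r-u|^{-H(1-\alpha)}\dd r,
\end{equation*}
which integrates to order $|t-s|^{2+2\alpha H-H}$. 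Crucially, $2+2\alpha H-H>1$ is equivalent to $\alpha>\tfrac12-\tfrac{1}{2H}$, i.e.\ Assumption~\ref{ass:tightness_singularity}. For the non-conditional bound the trivial triangle estimate $\|\delta A_{s,u,t}\|_{L^m_\Omega}\lesssim \|g\|_{\mcC^\alpha_x}|t-s|^{1+\alpha H}$ suffices, with exponent $>\tfrac12$.

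With these two sewing hypotheses verified, Lê's stochastic sewing lemma produces a sewn process agreeing with $\int_s^\cdot g(\varphi_r+W^H_r+\psi_r)\dd r$ (the latter is the Riemann-sum limit for smooth $g$), together with the estimate
\begin{equation*}
\bigl\|\textstyle\int_s^t g(\varphi_r+W^H_r+\psi_r)\dd r-A_{s,t}\bigr\|_{L^m_\Omega}\lesssim \sqrt m\,\|g\|_{\mcC^\alpha_x}|t-s|^{1+\alpha H}+\|g\|_{\mcC^\alpha_x}\llbracket\varphi\rrbracket_{\mcC^{1+\alpha H}_{[s,t]}L^m_\Omega}|t-s|^{2+2\alpha H-H}.
\end{equation*}
Combined with the pointwise bound on $\|A_{s,t}\|_{L^m_\Omega}$, dividing through by $|t-s|^{1+\alpha H}$ and recognising $2+2\alpha H-H-(1+\alpha H)=1+H(\alpha-1)$ delivers \eqref{eq:decay_integral_holder_estimate}.

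The perturbation $\psi$ plays no role beyond being a deterministic, continuous shift: Lemma~\ref{lem:lnd} ensures the conditional variance of $W^H+\psi$ is unchanged, so every heat-kernel identity above applies verbatim. For the disintegrated bound under $\mathbb{E}_w$, the construction \eqref{eq:w_+_disintigration} shows that under $\msH(w,\cdot)$ the process $B|_{\mbR_+}$ remains a Brownian motion, so the whole sewing scheme transfers with $\mathbb{E}^B_s$ replaced by $\mathbb{E}^B_{w,s}$ and Proposition~\ref{prop:condexp_heat_kernel} replaced by its disintegrated version \eqref{eq:disintegrated_heat_kernel_fbm}. Finally, if $\psi\in\mcC^\gamma_{\mbR_+;\lambda}$ and the right-hand side is finite, the same local estimate holds at every scale $(s,t)$ with $|t-s|\leq 1$. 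The main subtle point is the sharp exponent bookkeeping that forces Assumption~\ref{ass:tightness_singularity}; everything else is routine application of local non-determinism and stochastic sewing.
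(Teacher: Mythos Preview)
Your proof is correct and follows the standard route via Lê's stochastic sewing lemma: you bound $\|A_{s,t}\|_{L^m_\Omega}$ and $\|\mathbb{E}^B_s\delta A_{s,u,t}\|_{L^m_\Omega}$ directly and read off the $\sqrt{m}$ from the BDG constant built into the standard SSL. The exponent bookkeeping is exactly right, and your treatment of $\psi$ and of the disintegrated expectation $\mathbb{E}^B_{w,s}$ is consistent with the paper's framework.

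The paper proceeds differently at the technical level. Rather than invoking the standard SSL (Lemma~\ref{lem:SSL}) on $A$, it explicitly decomposes $\delta A_{s,u,t}$ via the Clark--Ocone formula \eqref{eq:malliavinfdx} into a genuine It\^o integral $\int_s^u h^s_z\,\dd B_z$ plus a finite-variation remainder $J^u_{s,t}$, and then applies its tailored sewing result Lemma~\ref{lem:sewing_stochastic_integral}, where the $\sqrt{m}$ emerges from the continuous BDG inequality \eqref{eq:bdg_optimal} applied to $\int h\,\dd B$. For this particular lemma both arguments yield the same estimate; your approach is shorter and more classical. The paper's choice is not incidental, though: the decomposition $\delta A = \int h\,\dd B + J$ is the template for the local--global sewing Lemma~\ref{lem:sewing_decay_stochastic_integral}, which is the key novelty used downstream in Lemma~\ref{lem:product_integral_estimate} and the tightness proof. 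So the paper is setting up machinery it will reuse, while you take the direct path that suffices here.
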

\begin{proof}
	We give the proof in the case $\psi \neq 0$ and for the disintegrated moments $L^m_{\Omega;w}$ for $w\in \mcW^-_{\gamma,\delta}$ to highlight why these do not show up in the dependencies on the right hand side. We highlight where one should change the proof in order to treat obtain estimates on the in terms on the full moments $L^m_\Omega$.

\noindent For $w\in \mcW^{-}_{(\gamma,\delta)}$, any $T>0$ and $\psi \in \mcC^\gamma_{T;\lambda}$ we define the two parameter process $A :[0,T]^2_{\leq }\to \mbR^d$,
		\begin{equation*}
			A_{s,t} \coloneqq \int_{s}^{t}  \mbE^B_{w,s} \left[ g(\varphi_s + W^H_r +\psi_r)\right] \dd r, \quad \text{for all }\, (s,t)\in [0,T]^2_\leq.
		\end{equation*}
    First, appealing to \eqref{eq:disintegrated_heat_kernel_fbm}, we have
	\begin{equation*}
		A_{s,t} = \int_{s}^{t} \mcG^H_{|r-s|} g\, \left(\varphi_s + \mbE^B_{w,s} W^H_{r}+\psi_r\right) \dd r. 
	\end{equation*}
    Hence, taking $\|\,\cdot\,\|_{L^m_{\Omega;w}}$ on both sides applying \eqref{eq:heat_kernel_fbm} and noting that due to Assumption~\ref{ass:tightness_singularity} it holds that $\alpha H > \frac{H-1}{2}>-\frac{1}{2}$, for any $m\in [1,+\infty)$, for all $|t-s|\, \leq 1$ we have 
\begin{align}\label{eq:averaging_integral_A_st_bound}
    \|A_{s,t}\|_{L^m_{\Omega;w}} \, \lesssim_{\,\alpha,H,d} &\, \|g\|_{\mcC^\alpha_x} \int_s^t  |r-s|^{\alpha H
    } \dd r
    \leq  \,\|g\|_{\mcC^\alpha_x}  |t-s|^{1+\alpha H}.
\end{align}
 We now show that $A$ satisfies the assumptions of Lemma~\ref{lem:sewing_stochastic_integral}, after suitably defining the probability space and associated Brownian motion to accommodate the conditioning to the left of zero.  Concerning the three point difference, by addition and subtraction we have
	\begin{align*}
		\delta A_{s,u,t}  =\, & \int_u^t \EE^B_{w,s} \left[g(\varphi_s + \wh_r+\psi_r) \right] \dd r - \int_u^t \EE^B_{w,u} \left[g(\varphi_u + \wh_r +\psi_r)\right] \dd r \\
	=	\, & \int_u^t \left(\EE^B_{w,s}-\EE^B_{w,u}\right)\left[g(\varphi_s + \wh_r+\psi_r) \right] \dd r\\
    & +\int_u^t \EE^B_{w,u} \left[g(\varphi_u+\wh_r+\psi_r) - g(\varphi_s+\wh_r+\psi_r) \right]\dd r.
        %
	\end{align*}
Applying \eqref{eq:malliavinfdx} followed by the tower law and Lemma~\ref{lem:lnd_to_heat_kernel} to the function $x \mapsto f(x)\coloneqq \mbE^B_{w,u}[g(x+W^H_r+ \psi_r)]$ and some $s\in [0,u]$ we see that 
\begin{align*}
    \mbE^B_{w,u}[g(W^H_r+\varphi_s)] 
    = & \,\mbE^B_{w,s}\left[g(\varphi_s+W^H_r+\psi_r)\right] +c_H \int_s^{u} (r-z)^{H-\nicefrac{1}{2}}\mbE^B_{w,z}\left[\nabla g(\varphi_s+W^H_r+\psi_r) \right]  \dd B_z\\
    =& \, \mbE^B_{w,s}\left[g(\varphi_s+W^H_r+\psi_r)\right] +\int_s^{u} (r-z)^{H-\nicefrac{1}{2}} \nabla \mcG^H_{|r-z|}g(\varphi_s+\mbE^B_{w,z} [W^H_r]+\psi_r)   \dd B_z.
\end{align*}
Hence, subtracting $\mbE^B_s\left[g(\varphi_s+W^H_r+\psi_r)\right]$ from both sides and applying the stochastic Fubini theorem, justified since $g$ is smooth and bounded, we have obtained the identity
\begin{equation}\label{eq:integral_estimate_conditionals_diff}
\begin{aligned}
 \int_u^t \left(\EE^B_{w,s}-\EE^B_{w,u}\right)\big[g(\varphi_s + &\wh_r+\psi_r) \big] \dd r \\
 &= - \int_s^{u}\int_u^t (r-z)^{H-\nicefrac{1}{2}} \nabla \mcG^H_{|r-z|}g(\varphi_s+\mbE^B_{w,z}[W^H_r]+\psi_r)\,\dd r  \dd B_z.
 \end{aligned}
\end{equation}
As such, we have shown that
\begin{align*}
    \delta A_{s,u,t} =&\, - \int_s^{u}\int_u^t (r-z)^{H-\nicefrac{1}{2}} \nabla \mcG^H_{|r-z|}g(\varphi_s+\mbE^B_{w,z} W^H_r+\psi_r)\,\dd r  \dd B_z\\
    &\,+ \int_u^t \EE^B_{w,u} \left[g(\varphi_u+\wh_r+\psi_r) - g(\varphi_s+\wh_r+\psi_r) \right]\dd r\\
    \eqqcolon &\, \int_s^u h^s_z \dd B_z + J^u_{s,t}.
\end{align*}
To estimate $h$ in mean square we first apply \eqref{eq:heat_kernel_fbm} to give for each $z \in [s,u]$
\begin{align*}
    |h^s_z|^2 \,\lesssim_{\, \alpha,H,d} &\, \left( \norm{g}_{\mcC^{\alpha}_x} \int_u^t \absv{r-z}^{H(\alpha-1)} \absv{r-z}^{H-\nicefrac{1}{2}} \dd r \right)^2 \\
\leq  &\, \norm{g}^2_{\mcC^{\alpha}_x} \left(\int_u^t  |r-z|^{H\alpha -\nicefrac{1}{2}}\, \dd r \right)^2\\
    \lesssim &\, \norm{g}_{\mcC^{\gamma}_x}^2\absv{t-u}^{1+2\alpha H} ,
\end{align*}
where we used \eqref{eq:tightness_singular_parameters} to check that $H\alpha - \frac{1}{2} > \frac{H}{2} - 1>-1$ to evaluate the integral and the fact that $u-z <t-z$ to bound $|t-z|^{1+2\alpha H} - |u-z|^{1+2\alpha H} \leq |t-u|^{1+2\alpha H}$. As a result, there exists a constant $C\coloneqq C(\alpha, H,d)>0$ such that
\begin{equation}
    \|h^s\|_{L^2_{[s,t]}} = \left(\int_s^t |h_z|^2 \dd z\right)^{\nicefrac{1}{2}} \leq C \norm{g}_{\mcC^{\alpha}_x}  |t-s|^{1+\alpha H }.
\end{equation}
Appealing again to \eqref{eq:tightness_singular_parameters} we check that $ H\alpha  +1 > \frac{H}{2}+\frac{1}{2}>\frac{1}{2}$ so that \eqref{eq:stoch_integral_sewing_h_bound} is satisfied with $\eps_2 = \frac{1}{2}+\alpha H>0$ and $C_{h,s} = C \|g\|_{\mcC^\alpha_x}$, in particular deterministic and equal for all $s \in [0,T]$.

\noindent To treat $J$ we apply \eqref{eq:disintegrated_lnd_to_heat_kernel_stoch_integral} of  Lemma~\ref{lem:lnd_to_heat_kernel} to the function $(x,y) \mapsto g(x+W^H_r+\psi_r)- g(y+W^H_r+\psi_r)$ to see that
\begin{equation*}
    \mbE_u^B[g(\varphi_u + W^H_r+\psi_r) - g(\varphi_s +W^H_r+\psi_r)] = \mcG^H_{|r-u|}\left( g(\varphi_u + \mbE_u W^H_{u,r} +\psi_r) - g(\varphi_s + \mbE_u W^H_{u,r}+\psi_r)\right).
\end{equation*}
Hence, applying \eqref{eq:heat_kernel_fbm},
\begin{align*}
    |J^u_{s,t}| & \leq  \,\int_u^t \mcG^H_{|r-u|^{2H}} \left( g(\varphi_u + \mbE_u W^H_{u,r}+\psi_r) - g(\varphi_s + \mbE_u W^H_{u,r}+\psi_r)\right)|\,  \dd r \\
    & \lesssim_{\alpha,H,d} \, \|g\|_{\mcC^\alpha_x} \int_u^t |r-u|^{H(\alpha-1)}\,|\varphi_u-\varphi_s| \dd r.
    \end{align*}
So taking moments on both sides we have
\begin{align*}
   \|J^u_{s,t}\|_{L^m_{\Omega;w}} & \lesssim_{\, \alpha,H,d} \,  \|g\|_{\mcC^\alpha_x} \int_u^t |r-u|^{H(\alpha-1)}\,\|\varphi_u-\varphi_s\|_{L^m_{\Omega;w}} \dd r,\\
   & \leq \, \llbracket\varphi\rrbracket_{\mcC^{1+\alpha H}_{[s,u]}L^m_{\Omega;w}} \|g\|_{\mcC^\alpha_x} |u-s|^{1+\alpha H} \int_u^t|r-u|^{H(\alpha-1)} \dd r\\
   & \leq \, \llbracket\varphi\rrbracket_{\mcC^{1+\alpha H}_{[s,t]}L^m_{\Omega;w}}\|g\|_{\mcC^\alpha_x} |t-s|^{1+\alpha H} |t-s|^{H(\alpha-1)+1},
\end{align*}
where we used \eqref{eq:tightness_singular_parameters} to check that $H(\alpha-1) >-H - \frac{1}{2} >-1$ in order to evaluate the integral. Hence, \eqref{eq:stoch_integral_sewing_J_bound} is satisfied with $\eps_1 = (1+\alpha H) + (H(\alpha-1)+1)-1 = 2H\alpha-H +1> 0$ and $C_J = C(\alpha,H,d)\llbracket\varphi\rrbracket_{\mcC^{1+\alpha H}_{\mbR_+}L^m_{\Omega;w}}\|g\|_{\mcC^\alpha_x}$. We have shown that the conditions of Lemma~\ref{lem:sewing_stochastic_integral} are satisfied and it is readily checked that $t\mapsto \int_0^t g(\varphi_r +W^H_r)\dd r$ is the resulting stochastic process (see for example the proof of \cite[Lem.~2.13]{galeati_le_mayorcas_24_quantitative} for similar steps). Furthermore, combining \eqref{eq:sewing_stoch_integral_bnd_1} with \eqref{eq:averaging_integral_A_st_bound} we see that there exist constants $C\coloneqq C(\alpha,H,d)>0$ such that for all $(s,t)\in [0,T]^2_{\leq}$ with $|t-s|\, \leq 1$,
\begin{align*}
    \left\|\int_s^t g(\varphi_r +W^H_r)\dd r\,  \right\|_{L^m_{\Omega;w}}  \leq & \, \left\|\int_s^t g(\varphi_r +W^H_r)\dd r - A_{s,t} \right\|_{L^m_{\Omega;w}} + \|A_{s,t}\|_{L^m_{\Omega;w}}\\
    \leq & \, C \|g\|_{\mcC^\alpha_x} |t-s|^{1+\alpha H}\Big( \sqrt{m}  +\llbracket\varphi\rrbracket_{\mcC^{1+\alpha H}_{[s,t]}L^m_{\Omega;w}}\absv{t-s}^{1+H(\alpha-1)}  + 1\Big),
\end{align*}
from which the claim follows with $\llbracket\,\cdot\, 
\rrbracket_{\mcC^{1+\alpha H}_{[s,t]}L^m_\Omega}$ replaced by $\llbracket\,\cdot\, 
\rrbracket_{\mcC^{1+\alpha H}_{[s,t]}L^m_{\Omega;w}}$ on both the left and right hand sides. From the proof above it is clear that the former case holds in the same manner without any change in dependency of the constants.
\end{proof}
The next lemma gives us an a priori H\"older bound on the slow remainder of any solution to \eqref{eq:psi_sde}.
\begin{lem}\label{lem:global_theta_bound}
 Let $g\in C^\infty(\mbR^d;\mbR^d)$, $(\alpha,H)\in (-\infty,0)\times (0,\nicefrac{1}{2})$ satisfy Assumption~\ref{ass:tightness_singularity}, $u$ satisfy Assumption~\ref{ass:confining_assumption} and $m\in [2,+\infty)$. Then, there exists a constant $C\coloneqq C(\alpha,H,d) >0$ such that for any interval $\mcI\subseteq \mbR_+$,  $\psi \in \mcC^{\gamma}_{\tint;\lambda}$, $X^{\psi}$ any solution to \eqref{eq:psi_sde} on $\tint$, $\theta^\psi \coloneqq X^\psi - W^H-\psi$ and any pair $(s,t) \in \tint^2$ that satisfies $|t-s|\leq 1$.
\begin{equation}\label{eq:stationary_theta_bound}
     \big\llbracket \theta^{\psi} \big\rrbracket_{\mcC^{1+\alpha H}_{[s,t]}L^m_{\Omega}} \leq \sqrt{m}C\left(1+\|g\|_{\mcC^\alpha_x}\right)^{1-\frac{\alpha H}{1+H(\alpha-1)}} +  \lambda  \|X^{\psi} \|_{L^\infty_{\tint}L^m_{\Omega}}.
\end{equation}
Furthermore, the statement holds without modification if $\llbracket\,\cdot\, 
\rrbracket_{\mcC^{1+\alpha H}_{[s,t]}L^m_\Omega}$ is replaced with $\llbracket\,\cdot\, 
\rrbracket_{\mcC^{1+\alpha H}_{[s,t]}L^m_{\Omega;w}}$ on both the left and right hand sides.
\end{lem}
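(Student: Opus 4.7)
The starting point is to use \eqref{eq:psi_sde} and the definition $\theta^\psi = X^\psi - W^H - \psi$ to write, for any sub-interval $[s',t'] \subseteq [s,t]$,
\begin{equation*}
    \theta^\psi_{t'} - \theta^\psi_{s'} = \int_{s'}^{t'} g(X^\psi_r) \dd r + \int_{s'}^{t'} u(X^\psi_r) \dd r,
\end{equation*}
and estimate the two integrals separately. The $u$-term is handled by brute force: since $\|\nabla u\|_{L^\infty_x} \leq \lambda$, Minkowski's inequality gives $\bigl\|\int_{u'}^{v'} u(X^\psi_r) \dd r\bigr\|_{L^m_\Omega} \lesssim \lambda \|X^\psi\|_{L^\infty_\tint L^m_\Omega} |v'-u'|$ (absorbing $|u(0)|$ into the initial data), and dividing by $|v'-u'|^{1+\alpha H}$ and noting $-\alpha H > 0$ (Assumption~\ref{ass:tightness_singularity}) yields, for $|v'-u'|\leq 1$,
\begin{equation*}
    \left\llbracket \int_\cdot u(X^\psi_r)\dd r \right\rrbracket_{\mcC^{1+\alpha H}_{[s,t]} L^m_\Omega} \leq \lambda \|X^\psi\|_{L^\infty_\tint L^m_\Omega}.
\end{equation*}

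For the $g$-term I invoke Lemma~\ref{lem:integral_estimate} with $\varphi = \theta^\psi$ (so that $\varphi_r + W^H_r + \psi_r = X^\psi_r$) on sub-intervals $[s',t'] \subseteq [s,t]$ with $|t'-s'|\leq 1$, which gives
\begin{equation*}
    \left\llbracket \int_{s'}^\cdot g(X^\psi_r)\dd r \right\rrbracket_{\mcC^{1+\alpha H}_{[s',t']} L^m_\Omega} \leq C \|g\|_{\mcC^\alpha_x} \left(1+ \sqrt{m} + \llbracket \theta^\psi\rrbracket_{\mcC^{1+\alpha H}_{[s',t']} L^m_\Omega}\, |t'-s'|^{1 + H(\alpha-1)}\right).
\end{equation*}
Combining the two estimates via $\llbracket \theta^\psi\rrbracket \leq \llbracket \int g\rrbracket + \llbracket \int u\rrbracket$ produces the self-referential inequality
\begin{equation*}
    \llbracket \theta^\psi\rrbracket_{[s',t']} \leq C \|g\|_{\mcC^\alpha_x} (1+\sqrt{m}) + C\|g\|_{\mcC^\alpha_x} \llbracket \theta^\psi\rrbracket_{[s',t']}\, |t'-s'|^{1+H(\alpha-1)} + \lambda \|X^\psi\|_{L^\infty_\tint L^m_\Omega}.
\end{equation*}
Assumption~\ref{ass:tightness_singularity} is what ensures the exponent $1+H(\alpha-1)$ is strictly positive, so the self-referential factor is small on short intervals.

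To close the estimate I tune a local scale. Set
\begin{equation*}
    \ell \coloneqq \min\!\left(1,\, (2C\|g\|_{\mcC^\alpha_x})^{-1/(1+H(\alpha-1))}\right),
\end{equation*}
so that $C\|g\|_{\mcC^\alpha_x}\, \ell^{1+H(\alpha-1)} \leq \tfrac12$. On any sub-interval $[s',t']\subseteq[s,t]$ of length at most $\ell$, I absorb the self-referential term into the left-hand side, producing the local bound
\begin{equation*}
    \llbracket \theta^\psi\rrbracket_{\mcC^{1+\alpha H}_{[s',t']} L^m_\Omega} \leq 2C\|g\|_{\mcC^\alpha_x}(1+\sqrt{m}) + 2\lambda \|X^\psi\|_{L^\infty_\tint L^m_\Omega} \eqqcolon M.
\end{equation*}
If $\ell \geq |t-s|$ the claim is already proved. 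Otherwise, I extend from scale $\ell$ up to scale one by a standard telescoping: for any $(u',v') \in [s,t]^2_\leq$ with $\ell < |v'-u'|\leq 1$, partition $[u',v']$ into $n \leq 2(v'-u')/\ell$ pieces of length $\leq \ell$, apply the triangle inequality together with the local bound, and divide by $|v'-u'|^{1+\alpha H}$. Using $|v'-u'|^{-\alpha H}\leq 1$ leads to $\|\theta^\psi_{v'}-\theta^\psi_{u'}\|_{L^m_\Omega}/|v'-u'|^{1+\alpha H} \lesssim M\, \ell^{\alpha H}$. Since $\ell^{\alpha H} \lesssim (1+\|g\|_{\mcC^\alpha_x})^{-\alpha H/(1+H(\alpha-1))}$ with constants depending only on $\alpha, H$, substituting $M$ and reorganising yields the stated bound with the exponent $1 - \alpha H/(1+H(\alpha-1))$ appearing on $(1+\|g\|_{\mcC^\alpha_x})$.

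The main obstacle is the choice and treatment of the local scale $\ell$: the exponent on $(1+\|g\|_{\mcC^\alpha_x})$ in the final estimate is precisely the combination of the absorption scale $\ell^{-1} \sim \|g\|^{1/(1+H(\alpha-1))}$ and the patching factor $\ell^{\alpha H}$, and care is required in tracking how the coercive term propagates through both steps so that the additive contribution $\lambda\|X^\psi\|_{L^\infty_\tint L^m_\Omega}$ is not overly inflated by $\|g\|$-dependent prefactors. The disintegrated version of the estimate, with $L^m_\Omega$ replaced by $L^m_{\Omega;w}$, follows line for line since Lemma~\ref{lem:integral_estimate} is stated in the same form for conditional moments.
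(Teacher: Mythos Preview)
Your argument follows the paper's approach closely, but there is a genuine gap in the treatment of the $u$-term, and it is precisely the concern you flag at the end without resolving.

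When you bound the $u$-contribution you write
\[
\left\llbracket \int_\cdot u(X^\psi_r)\,\dd r \right\rrbracket_{\mcC^{1+\alpha H}_{[s',t']} L^m_\Omega} \leq \lambda \|X^\psi\|_{L^\infty_\tint L^m_\Omega},
\]
having used $|v'-u'|^{-\alpha H}\leq 1$. This throws away the favourable short-time factor: on an interval of length $\ell$ the sharp bound is $\lambda\,\ell^{-\alpha H}\|X^\psi\|_{L^\infty_\tint L^m_\Omega}$, which is \emph{smaller} since $-\alpha H>0$ and $\ell\leq 1$. Your local bound is therefore $M=2C\|g\|_{\mcC^\alpha_x}(1+\sqrt m)+2\lambda\|X^\psi\|_{L^\infty_\tint L^m_\Omega}$, and the patching step multiplies $M$ by $\ell^{\alpha H}$. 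Since $\alpha H<0$ and $\ell\sim(1+\|g\|_{\mcC^\alpha_x})^{-1/(1+H(\alpha-1))}$, this gives
\[
2\lambda\,\|X^\psi\|_{L^\infty_\tint L^m_\Omega}\,\ell^{\alpha H}\;\sim\;\lambda\,(1+\|g\|_{\mcC^\alpha_x})^{-\alpha H/(1+H(\alpha-1))}\,\|X^\psi\|_{L^\infty_\tint L^m_\Omega},
\]
which carries a $\|g\|_{\mcC^\alpha_x}$-dependent prefactor and is \emph{not} the stated bound \eqref{eq:stationary_theta_bound}.

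The fix is simple: retain the factor $|t'-s'|^{-\alpha H}$ on the $u$-term at the local step. Then the local bound on intervals of length $\ell$ reads
\[
\llbracket\theta^\psi\rrbracket_{\mcC^{1+\alpha H}_{[s',t']}L^m_\Omega}\leq 2C\|g\|_{\mcC^\alpha_x}(1+\sqrt m)+2\lambda\,\ell^{-\alpha H}\|X^\psi\|_{L^\infty_\tint L^m_\Omega},
\]
and after multiplying by the patching factor $\ell^{\alpha H}$ the second term becomes exactly $2\lambda\|X^\psi\|_{L^\infty_\tint L^m_\Omega}$, independent of $\|g\|_{\mcC^\alpha_x}$, which is what the lemma claims. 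This is how the paper's own proof proceeds.
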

\begin{proof}
 We give the proof in the case of the full moments $L^m_\Omega$, noting that the only modification to treat the disintegrated moments $L^m_{\Omega;w}$ comes from taking the modified version of \eqref{eq:decay_integral_holder_estimate}. By definition, for any $0\leq s<t <+\infty$ one has:
\begin{equation*}
    \theta^\psi_t - \theta^\psi_s = \int_s^t g(\theta^\psi_r + W^H_r+\psi)\dd r + \int_s^t u(X^\psi_r)\dd r.
\end{equation*}
Since by assumption $g\in C^\infty(\mbR^d;\mbR^d)$ and $u \in \dot{W}^{1,\infty}(\mbR^d;\mbR^d)$ it follows that $\theta^\psi \in \dot{W}^{1,\infty}([0,T];L^m_\Omega)$ for any $T\in \mbR_+$ and hence we may apply Lemma~\ref{lem:integral_estimate} with $\varphi = \theta$ to obtain, for any $|t-s|\, \leq 1$
\begin{equation*}
    \llbracket \theta^\psi \rrbracket_{\mcC^{1+\alpha H}_{[s,t]}L^m_\Omega} \leq C\|g\|_{\mcC^\alpha_x} \left(\sqrt{m}+ \llbracket\theta^\psi\rrbracket_{\mcC^{1+\alpha H}_{[s,t]}L^m_\Omega}\absv{t-s}^{\kappa}   +  1\right) + \lambda |t-s|^{-\alpha H} \sup_{r\in [s,t]}\|X^\psi_r\|_{L^m_\Omega}
\end{equation*}
for some $C\coloneqq C(\alpha,H,d)>0$ and with $\kappa \coloneqq 1+H(\alpha-1)$. Hence, choosing $(s',t')\in [0,1]^2_\leq$ such that 
\begin{equation*}
    |t'-s'| = \left(\frac{1}{2( C \|g\|_{\mcC^\alpha_x}+1)}\right)^{\frac{1}{\kappa}},
\end{equation*}
it follows that
\begin{align*}
    \llbracket \theta^\psi \rrbracket_{\mcC^{1+\alpha H}_{[s',t']}L^m_\Omega} \leq  &\,  2C\|g\|_{\mcC^\alpha_x}\left(1+\sqrt{m}\right)      + 2\lambda \left(\frac{1}{2 (C \|g\|_{\mcC^\alpha_x}+1)}\right)^{\frac{-\alpha H}{\kappa}} \sup_{r\in [s,t]}\|X^\psi_r\|_{L^m_\Omega}.
\end{align*}
Hence, by a simple argument (see e.g. \cite[Ex.~4.5]{friz_hairer_20_book}), for all $|t-s|\, \leq 1$, it holds that
\begin{align*}
    \llbracket \theta^\psi \rrbracket_{\mcC^{1+\alpha H}_{[s,t]}L^m_\Omega}  \leq &\,2C\|g\|_{\mcC^\alpha_x}\left(1+\sqrt{m}\right)\left(\frac{1}{2( C \|g\|_{\mcC^\alpha_x}+1)}\right)^{\frac{\alpha H}{\kappa}} + 2\lambda  \sup_{r\in [s,t]}\|X^\psi_r\|_{L^m_\Omega}\\
    \leq &\, \sqrt{m}C(1+\|g\|_{\mcC^\alpha_x})^{1-\frac{\alpha H}{\kappa}} + 2\lambda  \sup_{r\in [s,t]}\|X^\psi_r\|_{L^m_\Omega},
\end{align*}
which concludes the proof for the full moments $L^m_\Omega$ upon taking suprema over $(s,t) \in \tint^2$ such that $|t-s|\, \leq 1$. Crucially we observe that the constant will not depend on $\psi$ nor $w$ in the disintegrated moments case.
\end{proof}
The following lemma is an analogue of Lemma~\ref{lem:integral_estimate} with an exponential damping factor inside the integral.
\begin{lem}\label{lem:product_integral_estimate}
	Let $d  \in \mbN$, $(\alpha,H) \in (-\infty,0)\times (0,\nicefrac{1}{2})$ satisfy Assumption~\ref{ass:tightness_singularity}, $m\in [1,+\infty)$, $\rho \in \mcC^{1+\alpha H}_{\mbR^+}L^{2m}(\Omega;\mbR^{d}),\, \varphi \in \mcC^{1+\alpha H}_{\mbR_+}L^{2m}(\Omega;\mbR^d)$ and $g\in C^\infty(\mbR^d)$. Then,  there exist constants $C\coloneqq C(\alpha,H,d)>0$ and $c\coloneqq (\alpha,H) \in (0,1)$ such that for any interval $\mcI\subseteq \mbR_+$, $\psi\in \mcC^\gamma_{\mcI;\lambda}$ and $V>1$,
\begin{equation}\label{eq:decay_integral_supremum_estimate}
        \begin{aligned}
			\sup_{t \in \tint}  \bigg\|\int_0^t  e^{-\lambda (t-r)} \rho_r \cdot g(\varphi_r + & W^H_r + \psi_r) \, \dd r\bigg\|_{L^m_{\Omega}} \\
            &\, \leq \, \,  C \frac{ 2^{-c  V }}{ \lambda \land \lambda^{\nicefrac{1}{2}} }\|g\|_{\mcC^\alpha_x}  \|\rho\|_{L^\infty_{\tint}L^{2m}_{\Omega}}\bigg( \sqrt{m}   +  \llbracket \varphi \rrbracket_{\mcC^{1+\alpha H}_{\tint}L^{2m}_{\Omega}} \bigg)\\
    &\quad +C \frac{ 2^{-c V} }{ \lambda \land \lambda^{\nicefrac{1}{2}} }\|g\|_{\mcC^\alpha_x} \llbracket \rho\rrbracket_{\mcC^{1+\alpha H}_{\tint}L^m_{\Omega}}\\
    &\quad +\left(1+ C \frac{2^{(1-c) V}}{\lambda \wedge \lambda^{\nicefrac{1}{2}}}\right) \|g\|_{\mcC^\alpha_x}  \| \rho\|_{L^\infty_{\tint}L^{m}_{\Omega}}.
            \end{aligned}
	\end{equation}
	Furthermore, the same statement holds without modification if $L^m_{\Omega}$ is replaced by $L^m_{\Omega;w}$ everywhere on the left and right hand sides for any $w\in \mcW^-_{\gamma,\delta}$.
\end{lem}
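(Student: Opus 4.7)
My plan is to set up a stochastic sewing germ adapted to the exponential weight, apply a local--global sewing lemma tuned by the parameter $V$, and separately account for small and large scale contributions so that the split is reflected in the factors $2^{-cV}$ and $2^{(1-c)V}$ appearing in \eqref{eq:decay_integral_supremum_estimate}. Fixing $t \in \tint$, I would introduce the two-parameter process
\begin{equation*}
A^{t}_{s,u} := \rho_s \cdot \mbE^B_{s}\!\left[\int_s^u e^{-\lambda(t-r)} g(\varphi_s + W^H_r + \psi_r)\, \dd r\right], \qquad 0 \leq s \leq u \leq t,
\end{equation*}
so that summing along a partition and passing to the limit recovers the integral being estimated. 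By Proposition~\ref{prop:condexp_heat_kernel}, the inner conditional expectation equals a heat-kernel action $\mcG^H_{|r-s|} g$ evaluated at a shifted Gaussian, which gives the pointwise bound $\|A^t_{s,u}\|_{L^m_{\Omega}} \lesssim \|\rho\|_{L^\infty_{\tint} L^m_{\Omega}} \|g\|_{\mcC^{\alpha}_x} \int_s^u e^{-\lambda(t-r)} |r-s|^{\alpha H} \dd r$, analogous to \eqref{eq:averaging_integral_A_st_bound}.

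For the three-point increment $\delta A^t_{s,u,v}$, the decomposition follows the template of the proof of Lemma~\ref{lem:integral_estimate}: an ``updating of the conditioning'' piece that, through Clark--Ocone \eqref{eq:clark_ocone_disintegrated} and \eqref{eq:integral_estimate_conditionals_diff}, becomes a stochastic integral whose integrand carries $e^{-\lambda(t-r)} (r-z)^{H-\nicefrac{1}{2}} \nabla \mcG^H_{|r-z|} g$; a ``slow variation in $\varphi$'' piece, controlled by $\llbracket \varphi \rrbracket_{\mcC^{1+\alpha H}_{\tint}L^{2m}_{\Omega}}$ just as in \eqref{eq:decay_integral_holder_estimate}; and, additionally, a ``slow variation in $\rho$'' piece generated by $\rho_u - \rho_s$, which accounts for the $\llbracket \rho \rrbracket_{\mcC^{1+\alpha H}_{\tint}L^m_{\Omega}}$ dependence on the second line of \eqref{eq:decay_integral_supremum_estimate}. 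The additional smooth factor $e^{-\lambda(t-r)}$ enters each bound as a benign multiplier.

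The decisive step is then to apply the local--global sewing lemma (Lemma~\ref{lem:sewing_decay_stochastic_integral}) with a local scale $h$ proportional to $2^{-V}$: on subintervals of length at most $h$ the continuous-BDG-based sewing produces the $\sqrt{m}$-terms and the $\llbracket \varphi \rrbracket, \llbracket \rho \rrbracket$ seminorms weighted by a factor $h^{\alpha H + \nicefrac{1}{2}} \sim 2^{-cV}$ for some $c = c(\alpha,H) \in (0,1)$; on the complementary long intervals the germ itself is bounded directly by $\|\rho\|_{L^\infty L^m} \|g\|_{\mcC^\alpha_x}$ times the deterministic integral, and dyadic summation down to scale $h$ produces the $2^{(1-c)V}$ prefactor. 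The damping factor accounts for the $(\lambda \wedge \lambda^{\nicefrac{1}{2}})^{-1}$: the deterministic part of the germ integrates to $\lambda^{-1}$, while applying BDG to the stochastic-integral contribution costs an $L^2$-in-time norm of $e^{-\lambda(t-\cdot)}$, which is of order $\lambda^{-\nicefrac{1}{2}}$.

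The main obstacle, in my view, is choosing the local scale $h$ in a way that is independent of $t$ while still producing a bound uniform in $t \in \tint$, even when $\tint = \mbR_+$. This is exactly the reason for introducing a local--global sewing lemma instead of applying Lemma~\ref{lem:integral_estimate} on unit intervals and then summing over $t$, which would destroy both the uniformity in $t$ and the exponential gains. The disintegrated case $L^m_{\Omega;w}$ requires no modification: all ingredients, namely \eqref{eq:disintegrated_heat_kernel_fbm}, \eqref{eq:disintegrated_lnd_to_heat_kernel_stoch_integral} and \eqref{eq:clark_ocone_disintegrated}, are available in both versions, and the pathwise-in-$w$ estimates on $\msA w$ do not interfere with the stochastic-sewing bounds.
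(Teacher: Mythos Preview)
Your proposal is correct and matches the paper's proof essentially verbatim: the same germ $A^T_{s,t}$, the same three-part decomposition of $\delta A^T_{s,u,t}$ into a stochastic-integral piece (via Clark--Ocone) and a $J$-piece absorbing the $\rho$- and $\varphi$-variations, and the same appeal to Lemma~\ref{lem:sewing_decay_stochastic_integral} to obtain the $2^{-cV}$/$2^{(1-c)V}$ split and the $(\lambda\wedge\sqrt{\lambda})^{-1}$ dependence. The only minor correction is that the exponent works out to $c=\tfrac14+\tfrac{\alpha H}{2}$ rather than $\alpha H+\tfrac12$, since applying BDG to the martingale part halves the time exponent (one takes $\eps_2\wedge\tfrac{\eps_3}{2}$ in \eqref{eq:uniform_time_bound_ssl}, not $\eps_3$).
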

	\begin{proof} We give the proof for general $\psi \in \mcC^\gamma_{\mcI;\lambda}$ but only for the full moments $L^m_\Omega$. The modifications required to prove the equivalent statement for the disintegrated moments $L^m_{\Omega;w}$ are essentially notational and mirror those in the proof of Lemma~\ref{lem:integral_estimate}. For any $T\in \mcI\setminus \{+\infty\}$ we define the two parameter process
\begin{equation}
    A^T_{s,t} = e^{-\lambda T} \int_s^t e^{\lambda r}\,  \rho_s\cdot  \EE^B_{s} \left[g(\varphi_s+\wh_r+\psi_r ) \right] \,\dd r,\quad \text{for all } (s,t)\in [0,T]^2_\leq. 
\end{equation}
We will check that the family $\{A^T\}_{T\in \mbR_+}$ satisfies the assumptions of Lemma~\ref{lem:sewing_decay_stochastic_integral}.

\noindent Firstly, appealing to \eqref{eq:heat_kernel_fbm_identity} we write
\begin{equation*}
     A^T_{s,t} = e^{-\lambda T} \int_s^t e^{\lambda r}\,  \rho_s\cdot  \mcG^H_{|r-s|}\, g(\varphi_s+\mbE^B_s[\wh_r] +\psi_r)\,\dd r.
\end{equation*}
Applying \eqref{eq:heat_kernel_fbm}, in a similar manner as used to obtain \eqref{eq:averaging_integral_A_st_bound}, for all $(s,t)\in [0,T]^2_{\leq}$, we directly have
\begin{align*}
    \|A^T_{s,t}\|_{L^m_{\Omega}} \,    &\lesssim_{\, \alpha,H,d} \, e^{-\lambda(T-t)} \| \rho\|_{L^\infty_{I}L^{m}_\omega} \|g\|_{\mcC^\alpha_x}\left(|t-s|^{1+\alpha H}\wedge 1\right).
\end{align*}
On the other hand, first by direct computation and then recognizing that we can apply the same arguments as used to derive \eqref{eq:integral_estimate_conditionals_diff} in the first term developed below, we find that for all $(s,u,t)\in [0,T]^3_{\leq}$,
\begin{align*}
 \delta A^T_{s,u,t} = &\,  e^{-\lambda T}\int_u^t e^{\lambda r} \left(\rho_s \cdot \mbE^B_s[g(\varphi_s + W^H_r+\psi_r)] - \rho_u \cdot \mbE^B_u[g(\varphi_u + W^H_r+\psi_r)]\right)  \, \dd r\\
 %
 %
%
%
= &\, e^{-\lambda T}\int_s^{u} \int_u^t e^{\lambda r} (r-z)^{H-\nicefrac{1}{2}}  \rho_u \cdot \nabla \mcG^H_{|r-z|}g(\varphi_s+\mbE^B_z [W^H_{r}]+\psi_r)\, \dd r \, \dd B_z   \\
&\, + e^{-\lambda T}\int_u^t e^{\lambda r} \left(\rho_s- \rho_u\right) \cdot  \mcG^H_{|r-s|}\, g(\varphi_s+\mbE^B_s[\wh_r]+\psi_r )  \, \dd r\\
&\, + e^{-\lambda T}\int_u^t e^{\lambda r} \rho_u \cdot \mcG^H_{|r-u|}\left( g(\varphi_u + \mbE_u [W^H_r]+\psi_r) - g(\varphi_s + \mbE_u [W^H_r]+\psi_r)\right)  \, \dd r \\
\eqqcolon&\, \int_s^u h^{T,u,t}_z \dd B_z + J^{T,s}_{u,t},
\end{align*}
where
\begin{equation*}
    h^{T,s,u,t}_z \coloneqq \indic_{[s,u]}(z) e^{-\lambda T}  \rho_u \cdot \int_u^t e^{\lambda r} (r-z)^{H-\nicefrac{1}{2}} \,  \nabla \mcG^H_{|r-z|}g\Big(\varphi_s+\mbE^B_z [W^H_r]+\psi_r\Big)\, \dd r ,
\end{equation*}
and
\begin{align*}
    J^{T,s}_{u,t} \coloneqq &\, e^{-\lambda T} \bigg(\int_u^t e^{\lambda r} \left(\rho_s- \rho_u\right) \cdot  \mcG^H_{|r-s|}\, g(\varphi_s+\mbE^B_s[\wh_r]+\psi_r )  \, \dd r \\
    &\qquad \quad   + \int_u^t e^{\lambda r} \rho_u \cdot \mcG^H_{|t-s|}\Big( g(\varphi_u + \mbE_u [W^H_r]+\psi_r) - g(\varphi_s + \mbE_u [W^H_r]+\psi_r)\Big)  \, \dd r \bigg).
\end{align*}
To show \eqref{eq:sewing_stoch_integral_decay_h_estimate}, recalling that we are only required to consider $|s-u|\leq |t-s|\leq 1$, we apply \eqref{eq:heat_kernel_fbm} to see that for all $z\in [s,u]$
\begin{align*}
    |h^{T,s,u,t}_z|^2 \,&\lesssim_{\alpha,H,d} \, \|g\|^2_{\mcC^\alpha_x} |\rho_u|^2 e^{-2\lambda (T-t)} \left(\int_u^t |r-z|^{\alpha H-\nicefrac{1}{2}}\,  \dd r\right)^2\\
    &\lesssim \, \|g\|^2_{\mcC^\alpha_x} |\rho_u|^2 e^{-2\lambda (T-t)}|t-u|^{1+2H\alpha}.
\end{align*}
As a result, for $(s,t)\in \mcI^2_{\leq}$
\begin{equation*}
    \|h^{T,s,u,t}\|_{L^2_{[s,t]}} \lesssim_{\alpha,H,d} \, \|g\|_{\mcC^\alpha_x} \|\rho\|_{L^\infty_\mcI}  e^{-2\lambda (T-t)}|t-s|^{1+\alpha H}.
\end{equation*}
To obtain \eqref{eq:sewing_stoch_integral_decay_J_estimate}, by the triangle inequality we directly split the estimate and apply \eqref{eq:heat_kernel_fbm} followed by the Cauchy--Schwartz inequality in the second term, to give, for any $(s,u,t)\in \mcI^3_\leq$
\begin{align*}
   \left\|J^{T,s}_{u,t}\right\|_{L^m_\Omega} \lesssim_{\alpha,H,d} & \, e^{-\lambda T} \|g\|_{\mcC^\alpha_x} \bigg(\, \int_u^t e^{\lambda r}\|\rho_s-\rho_u\|_{L^m_\Omega}\,  |r-s|^{\alpha H} \dd r  \\
    &\qquad \qquad\qquad  \, + \int_u^t e^{\lambda r} \|\rho_u\|_{L^{2m}_\omega} \|\varphi_u-\varphi_s\|_{L^{2m}}|r-u|^{H(\alpha-1)} \,\dd r\, \bigg)\\
    &\, \leq e^{-\lambda (T-t)} |t-s|^{2+H(2\alpha -1)}\|g\|_{\mcC^\alpha_x}\left(\llbracket\rho\rrbracket_{\mcC^{1+\alpha H}_{\mcI} L^{m}_\omega} + \|\rho\|_{L^\infty_{\mcI}L^{2m}_\omega} \llbracket\varphi\rrbracket_{\mcC^{1+\alpha H}_{\mcI}L^{2m}_\omega}\right).
\end{align*}
By an application of Lemma~\ref{lem:sewing_stochastic_integral} (which we don't detail) one readily checks that the unique $\mbF^B$-adapted family resulting from Lemma~\ref{lem:sewing_decay_stochastic_integral} is equal to $\int_0^t  e^{-\lambda (T-r)} \rho_r \cdot g(\varphi_r + W^H_r) \, \dd r$. Hence, we are only left to obtain the estimate \eqref{eq:decay_integral_supremum_estimate} which we do by applying the bound \eqref{eq:uniform_time_bound_ssl} in Lemma~\ref{lem:sewing_decay_stochastic_integral}. Setting, $\eps_1 = \eps_3= \frac{1}{2}+\alpha H >0$, $\eps_2 = 1+H(2\alpha-1)>0$ to see that $\eps_2 \wedge \frac{\eps_3}{2} =  (1+H(2\alpha-1)) \wedge (\frac{1}{4}+\frac{\alpha H}{2}) = \frac{1}{4} + \frac{\alpha H}{2} \in (0,\nicefrac{1}{4})$ and $C\coloneqq C(\alpha,H,d)$ the collected implicit constants from above, we see that due to \eqref{eq:uniform_time_bound_ssl} we have the bound 
\begin{align*}
	\sup_{T\in \tint} \sup_{t \in [0,T]} \Bigg\| \int_0^t  e^{-\lambda (T-r)} \rho_r \cdot g(\varphi_r &+ W^H_r+\psi_r) \, \dd r\,  \Bigg\|_{L^m_{\Omega}} \\
    \leq\,&\,  \sup_{T\in\mcI } \sup_{t \in [0,T]} \norm{ \int_0^t  e^{-\lambda (T-r)} \rho_r \cdot g(\varphi_r + W^H_r+\psi_r) \, \dd r  - A^T_{s,t}}_{L^m_{\Omega}} \\
    &\, + \sup_{T\in\mcI } \sup_{t \in [0,T]} \norm{ A^T_{s,t} }_{L^m_{\Omega}}\\
   \leq \, &\,  C \frac{ 2^{- \left(\frac{1}{4}+\frac{\alpha H}{2}\right) V }}{ \lambda \land \sqrt{\lambda} }  \|g\|_{\mcC^\alpha_x}  \bigg( \sqrt{m}\|\rho\|_{L^\infty_{\mcI} L^m_\Omega } + \llbracket\rho\rrbracket_{\mcC^{1+\alpha H}_{\mcI} L^{m}_\omega}\\
    &\qquad \qquad \quad\qquad \qquad   + \|\rho\|_{L^\infty_{\mcI}L^{2m}_\omega} \llbracket\varphi\rrbracket_{\mcC^{1+\alpha H}_{\mcI}L^{2m}_\omega} \bigg)\\
    &\, + \left(1+ C 2^{\left(\frac{3}{4}-\frac{\alpha H}{2}\right) V} \right)\| \rho\|_{L^\infty_{\mcI}L^{m}_\omega} \|g\|_{\mcC^\alpha_x},
	\end{align*}
which implies the claimed estimate.
\end{proof}
We are now ready to prove the main theorem of this section.
\begin{proof}[Proof of Theorem~\ref{th:tightness_main}]
We provide the full proof in the case of the disintegrated expectation $\mbE_{w}$ for $w\in \mcW^-_{\gamma,\delta}$, giving details at the end of the  modifications required in the case of the full expectation. In particular, we show how \eqref{eq:sol_global_tightness} is obtained if $\psi \in \mcC^{\gamma}_{\mbR_+;\lambda}$. In the proof we only track dependence on $\sqrt{m}$ and $\norm{ g }_{\mcC^{\alpha}_x}$. The dependence on the rest of the parameters are going to be kept inside the multiplicative constant $C$ that may change line by line.

The strategy of proof is going to be to rewrite $X^{\psi} = Y^{\psi} + \rho$, where $Y^{\psi}$ is a perturbed Ornstein-Uhlenbeck process, and obtain desired estimate for both of them separately. We let $Y^\psi$ be the solution to
\begin{equation}\label{eq:ypsi_def_ode}
Y^\psi_t = - \lambda \int_0^t Y^\psi_s \dd s + W^H_t  + \psi_t, \quad \text{for all }t\in \tint.
\end{equation}
By variation of constants we obtain:
\[ Y_t^{\psi} = \int_0^t e^{-\lambda(t-r)} \dd \left( \wh_r + \psi_r \right). \]
Triangle inequality and the decomposition \eqref{eq:wh_decomp_with_awop} yields the following bound 
\begin{equation}\label{eq:ypsi_lm_bound_triangle}
 \big\| Y^{\psi} \big\|_{L^{\infty}_{\tint}\llmw} \leq  \norm{ \int_0^{\cdot} e^{-\lambda(\cdot-r)} \dd W^{H;1}_{0,r} }_{L^{\infty}_{\tint}\llmw} + \norm{ \int_0^{\cdot} e^{-\lambda(\cdot-r)}\dd\left( \msA w_r + \psi_r \right) }_{L^{\infty}\llmw}.
 \end{equation}
  By independence of Brownian increments we can apply Lemma \ref{lem:rl_nice_bd} to the first term, which is identical to the same expression but $\llmw = L^m_{\Omega}$ ($W^{H;1}_{0,t}$ is identical to Riemann-Liouville fBm), and the second term is just deterministic by measurability of $\msA w$. We therefore obtain
\begin{equation}\label{eq:ou_tightness}
    \big\| Y^{\psi} \big\|_{L^{\infty}_{\tint}\llmw} \ls \sqrt{m} + \norm{ \msA w + \psi }_{\mcC^{\gamma}_{\tint;\lambda}} \leq  \sqrt{m} + \norm{ \msA w}_{\mcC^{\gamma}_{\tint;\lambda}} + \norm{\psi }_{\mcC^{\gamma}_{\tint;\lambda}}. 
\end{equation} 
Therefore we can estimate H\"older norm \eqref{eq:uniform_holder_moment} with $E = \llmw$, by applying triangle inequality to increments of \eqref{eq:ypsi_def_ode}, and using \eqref{eq:ou_tightness} in the first term, along with H\"older embeddings \eqref{eq:global_holder_ordering}: 
\begin{equation}\label{eq:ypsi_holder}
\llbracket Y^{\psi}\rrbracket_{\mcC^{\gamma}_{\tint}\llmw} \ls \norm{ \msA w}_{\mcC^{\gamma}_{\tint;\lambda}} + \norm{\psi }_{\mcC^{\gamma}_{\tint;\lambda}} + C_H \llbracket W^{H;1}_{0,\cdot} \rrbracket_{\mcC^{\gamma}_{\tint}\llmw}.
\end{equation}
We can now estimate $\rho$, which is going to be more involved part of the proof. First we observe that this satisfies the identity:
\begin{equation*}
	\rho_t = \int_0^t g(X^\psi_r)\dd r + \int_0^t u(X^\psi_r)\dd r + \lambda \int_{0}^{t} Y^\psi_r \dd r.
\end{equation*}
Since $g$ is taken to be smooth and $u$ is Lipschitz the process $t\mapsto \rho_t$ is of finite variation and we have
\begin{equation*}
	\frac{\dd}{\dd t } |\rho_t|^2 = \rho_t \cdot g(X^\psi_t) + \rho_t \cdot (u(X^\psi_t)+ \lambda Y^\psi_t).
\end{equation*}
Since $u$ satisfies Assumption~\ref{ass:confining_assumption} we appeal to monotonicity and the bound on its Lipschitz norm \eqref{eq:confining_assumption} along with Young's product inequality to write
\begin{align*}
\rho_t \cdot (u(X^\psi_t)+ \lambda Y^\psi_t)= \rho_t \cdot \left( u(X^\psi_t) - u(Y^\psi_t) \right) + \rho_t  \cdot \left( \lambda Y^\psi_t + u(Y^\psi_t) \right) \leq &- 2\lambda |\rho_t|^2 + 2 \lambda |\rho_t|  |Y^\psi_t|\\
\leq & - \lambda |\rho_t|^2 +  \lambda   |Y^\psi_t|^2.
\end{align*}
So that by variation of constants we have the following inequality for all $t\in [0,T]$
\begin{equation}\label{eq:rho_squared}
	|\rho_t|^2 \,\leq\,  e^{-\lambda t} |x|^2  \, +\,   \left|\int_0^t e^{-\lambda (t-r)}  \rho_r\cdot  g(X^\psi_r)   \dd r \right| + \lambda \int_0^t  e^{-\lambda (t-r)} | Y^\psi_r |^2 \dd r.	
\end{equation}
The last term is immediately bounded in $\norm{ \,\cdot\, }_{L^{\infty}_{\tint}\llmw}$ by \eqref{eq:ou_tightness}, so we are left to estimate the second one. Since $g$ is assumed to be smooth, the process $\theta^{\psi}_{\cdot} = x + \int_0^{\,\cdot\,} g(X^\psi_r)\dd r + \int_0^{\,\cdot\,} u(X^\psi_r)\dd r = X^\psi_{\,\cdot\,} - W^H_{\,\cdot\,}+\psi_{\,\cdot\,}$  satisfies the assumptions of Lemma~\ref{lem:global_theta_bound} so that we can apply \eqref{eq:decay_integral_supremum_estimate} from Lemma~\ref{lem:product_integral_estimate}, to see that for any $V>1$ and some $c \coloneqq c(\alpha,H) \in (0,1)$ one has
\begin{equation}\label{eq:rhogx_bd}
\begin{aligned}
    \sup_{t\in \tint }\left\|\int_0^t e^{-\lambda (t-r)}  \rho_r\cdot  g(X^{\psi}_r )   \dd r \right\|_{L^{m}_{\Omega;w}} \leq \, &\,  C 2^{- c V}\|g\|_{\mcC^\alpha_x}  \|\rho\|_{L^\infty_{\tint}L^{2m}_{\Omega;w}}\bigg( \sqrt{m}   +  \llbracket \theta^{\psi} \rrbracket_{\mcC^{1+\alpha H}_{\tint}L^{2m}_{\Omega;w}} \bigg)\\
    &\, +C 2^{-c V} \|g\|_{\mcC^\alpha_x} \llbracket \rho\rrbracket_{\mcC^{1+\alpha H}_{\tint}L^m_{\Omega;w}}\\
    &\, +\left(1+ C 2^{(1-c) V} \right) \|g\|_{\mcC^\alpha_x}  \| \rho\|_{L^\infty_{\tint}L^{m}_{\Omega;w}} \\
    &\!\!\!\!\!\!\!\!\!\!\!\!\!\!\!\!\!\!\!\!\!\!\!\! \leq \, C 2^{-cV} \left( 1 + \norm{ g }_{\mcC^{\alpha}_x} \right)^{2 - \frac{\alpha H}{1-H(\alpha-1)}} \left( \sqrt{m} + \llbracket \rho \rrbracket_{\mcC^{1+\alpha H}_{\tint}\llmw} + \big\| X^{\psi} \big\|_{L^{\infty}_{\tint}\lldmw} \right) \\ 
    & + \left(1 + C 2^{(1-c)V} \right) \norm{ g }_{\mcC^{\alpha}_x} \norm{ \rho }_{L^{\infty}_T\lldmw}, 
\end{aligned}
\end{equation}
 where in the second line we used Lemma \ref{lem:global_theta_bound} to bound $\llbracket \theta^{\psi} \rrbracket_{\mcC^{1+\alpha H}_{\tint}\llmw}$. By the the triangle inequality, the assumption $\norm{ \nabla u }_{L^{\infty}_x} \leq \lambda$ and that $X^{\psi} = Y^{\psi} + \rho$, there holds (recall that $C \coloneqq C(\alpha, H, \lambda, d)>0$, but crucially it does not depend on $g$ or $m$)
\begin{equation}\label{eq:rho_holder}
\begin{aligned}
    \llbracket\rho \rrbracket_{\mcC^{1+\alpha H}_{\tint} L^m_{\Omega;w}} \, = \,&\,  \left\llbracket x + \int_0^{\,\cdot\,}g(X^\psi_r)\dd r+ \int_0^{\,\cdot\,}u(X^\psi_r)\dd r + \lambda \int_0^{\,\cdot\,} Y^\psi_r\, \dd r \right\rrbracket_{\mcC^{1+\alpha H}_{\tint} L^m_{\Omega;w}}\\
    \leq \,&\,   \left\llbracket \int_0^{\,\cdot\,} g(X^\psi_r)\dd r \right\rrbracket_{\mcC^{1+\alpha H}_{\tint} L^m_{\Omega;w}} + \left\llbracket \int_0^{\,\cdot\,} u(X^\psi_r)\dd r \right\rrbracket_{\mcC^{1+\alpha H}_{\tint} L^m_{\Omega;w}} + \lambda \left\llbracket\int_0^{\cdot} Y^\psi_r \dd r \right\rrbracket_{\mcC^{1+\alpha H}_{\tint} L^m_{\Omega;w}}\\
    \lesssim \, &\, \sqrt{m}\, C\;(1+\|g\|_{\mcC^{\alpha}_x})^{1-\frac{\alpha H}{1-H(\alpha-1)}} +  \left(\|X^\psi\|_{L^\infty_{\tint} L^m_{\Omega;w}} + \|Y^\psi\|_{L^\infty_{\tint}L^m_{\Omega;w}}\right)\\
    \leq \,&\, \sqrt{m}\, C\; (1+\|g\|_{\mcC^{\alpha}_x})^{1-\frac{\alpha H}{1-H(\alpha-1)}} +  \left(\|\rho\|_{L^\infty_{\tint} L^m_{\Omega;w}} + 2\|Y^\psi\|_{L^\infty_{\tint}L^m_{\Omega;w}}\right).
\end{aligned}
\end{equation}
   We plug \eqref{eq:rho_holder} into \eqref{eq:rhogx_bd}, and set $\iota = 3 - \frac{2\alpha H}{1 - H(\alpha-1)}$, so that using $X^{\psi} = Y^{\psi} + \rho$ again and consolidating various terms we obtain 
\begin{align*}
\inv{ (1+ \norm{ g }_{\mcC^{\alpha}_x})^{\iota} } \sup_{t\in \tint } \left\|\int_0^t e^{-\lambda (t-r)}  \rho_r\cdot  g(X_r)   \dd r \right\|_{L^{m}_{\Omega;w}}  \leq \, &\, C 2^{-cV} \sqrt{m} \norm{ \rho }_{L^{\infty}\llmw} \\ 
&\,  +  C 2^{-cV} \left( \big\| Y^{\psi} \big\|_{L^{\infty}_{\tint}\llmw} + \norm{ \rho }_{L^{\infty}_{\tint}\llmw}^2 + \big\| Y^{\psi} \big\|^2_{L^{\infty}_{\tint}\llmw} \right) \\
&\, +  \left(1+C 2^{(1-c)V} \right) \norm{ \rho }_{L^{\infty}_{\tint}\llmw},
\end{align*}
where the square terms in the second bracket came from using Young's product inequality after applying the bound \eqref{eq:stationary_theta_bound} to the first line in \eqref{eq:rhogx_bd}. We come back to \eqref{eq:rho_squared}. We use \eqref{eq:ou_tightness}, plug the bound above into \eqref{eq:rho_squared}, fix $V$ such that $(1+\norm{ g }_{\mcC^{\alpha}_x} )^{\iota} C 2^{-cV} = \inv{2}$ and thus obtain, for some new $\mu > 0$, which comes from $2^{(1-c)V}$, but we do not specify, that
\begin{align*}
\sup_{t\in \tint } \left\|\int_0^t e^{-\lambda (t-r)}  \rho_r\cdot  g(X_r)   \dd r \right\|_{L^{m}_{\Omega;w}} & \leq \inv{2} \norm{ \rho }^2_{L^{\infty}\lldmw} + C \norm{ \rho }_{L^{\infty}_{\tint}\lldmw} \left( \sqrt{m} + \left( 1 + \norm{ g }_{\mcC^{\alpha}_x} \right)^{\mu} \right) + C m \\ 
\;\; & + C\; (1+\norm{g }_{\mcC^{\alpha}_x})^{\iota}\;\left( \norm{ \msA w}_{\mcC^{\gamma}_{\tint;\lambda}} + \norm{\psi }_{\mcC^{\gamma}_{\tint;\lambda}}\right) \lor \left( \norm{ \msA w}_{\mcC^{\gamma}_{\tint;\lambda}} + \norm{\psi }_{\mcC^{\gamma}_{\tint;\lambda}}\right)^2.
 \end{align*}
Therefore, we apply $\norm{\,\cdot\,}_{\llmw}$ to \eqref{eq:rho_squared}, apply the bound above to the second term, plug \eqref{eq:ou_tightness} into the last term and finally we obtain the closed form bound using the following implication:
\[ \left( \forall x \geq 0 \;\; x^2 \leq A x + B \right) \Rightarrow x \leq 2A + \sqrt{ 2B}. \]
Let us set $\bar{\mu} \coloneqq \mu \lor \iota$ for convenience. Then we have, using the simple bound $\sqrt{x} \leq 1 + x$ on the $\norm{ \,\cdot\,}_{\mcC^{\gamma}_{\tint;\lambda}}$ term, that
\begin{equation}\label{eq:rho_closed_bound}
\norm{ \rho }_{L^{\infty}_\mcI\lldmw} \ls_{H,\alpha,d,\lambda}  \left( \sqrt{m} + 1 + (1+\norm{g }_{\mcC^{\gamma}_x} )^{\bar{\mu} } +  \norm{ \msA w}_{\mcC^{\gamma}_{\tint;\lambda}} + \norm{\psi }_{\mcC^{\gamma}_{\tint;\lambda}} \right). 
\end{equation}
The bound on the H\"older norm follows by writing (recall $1+\alpha H> H$, as $\alpha > \inv{2} - \inv{2H} > 1 - \inv{H}$ for all $H \in (0,1)$):
\[ \big\|X^{\psi} \big\|_{\mcC^{H}_{\tint}\llmw} \leq \norm{ \rho }_{\mcC^{1+\alpha H}_\mcI\llmw} + \big\| Y^{\psi } \big\|_{\mcC^H_{\tint}\llmw} \]
and plugging  \eqref{eq:rho_closed_bound} back into \eqref{eq:rho_holder} in order to bound the first term, and bounding the second term with \eqref{eq:ypsi_holder}. The fact that $\kappa_0 \to 0$ as $\norm{ g }_{\mcC^{\alpha}_x} \to \infty$ enters through the first term in \eqref{eq:rho_holder}.

\noindent The final step is to specify the modifications necessary to obtain the analogous bound in $L^m_{\Omega}$, that is \eqref{eq:sol_global_tightness}. The steps are identical, except that in \eqref{eq:ypsi_lm_bound_triangle} and \eqref{eq:ypsi_holder} we replace $W^{H;1}_{0,\cdot}$ with $\wh$, and we observe that every bound above holds with $w = 0$, and we additionally assume that $\psi \in \mcC^{\gamma}_{\RR_+;\lambda}$, which allows us to obtain the same estimate with $\tint = \RR_+$. Note that the first term in \eqref{eq:ypsi_lm_bound_triangle} is finite by Lemma \ref{lem:gaussian_ou_bound}. The first assumption of that Lemma is then satisfied with $\beta=\beta'=H$, the second one is trivially satisfied because increments of fBm  on disjoint intervals are negatively correlated (see Lemma 10.8 in \cite{friz_hairer_20_book} for instance).
\end{proof}
\begin{rem}\label{rem:alpha_H_weak_existence}
Existence of weak solutions to \eqref{eq:intro_sde} under Assumptions~\ref{ass:tightness_singularity} and~\ref{ass:confining_assumption} follows directly from Theorem~\ref{th:tightness_main} by a compactness argument. For comparison see \cite[Sec.~8]{galeati_gerencser_22_subcritical} for a proof in the case without the unbounded coefficient $u$ that we include here. However, since our focus is on unique ergodicity which we obtain by an application of the Doob--Khas'minskii type result for fBm driven SDE give by \cite{hairer_ohashi_07_ergodic}. To this end we require stability estimates which we are only able to show under the more restrictive assumption $\alpha >1-\frac{1}{2H}$. Let us remark that recently \cite{butkovsky_mytnik_24_weak} obtained weak uniqueness of solutions to fBm driven SDEs under exactly our Assumption~\ref{ass:tightness_singularity}but without allowing for an unbounded coefficient. It is an interesting question whether unique ergodicity also holds under this more general assumption.
\end{rem}
We have two almost direct corollaries to Theorem~\ref{th:tightness_main}, first giving us global control on the remainder and secondly existence of weakly stationary solutions.
\begin{cor}\label{cor:theta_global_closed_bound}
	Let $(\alpha,H)\in (-\infty,0)\times (0,\nicefrac{1}{2})$ satisfy Assumption~\ref{ass:tightness_singularity} and $u:\mbR^d \to \mbR^d$ satisfy Assumption~\ref{ass:confining_assumption}, $g\in C^\infty(\mbR^d;\mbR^d)$, $x \in L^m_\Omega \mbR^d$ for all $m\geq 1$ be and be such that $\mbE[\exp(\tilde{\kappa}|x|^2)] <\infty$ for some $\tilde{\kappa}>0$, $\psi \in C_{\mbR_+}$ and $X^\psi$ be an associated strong solution to the SDE \eqref{eq:psi_sde}.  Then, setting $\theta^{\psi}\coloneqq X^{\psi}-W^H - \psi$, for any $T\geq 0$ such that $\psi\in \mcC^{\gamma}_{T;\lambda}$, there exists a $C\coloneqq C(\alpha,H,d,\|g\|_{\mcC^{\alpha}_x},\lambda,\|\psi\|_{\mcC^{\gamma}_{T;\lambda}})>0$ such that
\begin{equation}\label{eq:theta_global_closed_bound}
           \sup_{t \in [0,T]}\;\llbracket \theta^{\psi} \rrbracket_{\mcC^{1+\alpha H}_{[t,t+1 \land T - t]}L^m_{\Omega,w}} \leq  C \Big(1+\|x\|_{L^{2m}_\Omega}  +\sqrt{m}\, \Big) + \norm{ \psi }_{\mcC^\gamma_{T;\lambda}} + \norm{ \msA w }_{\mcC^\gamma_{T;\lambda}}.
     \end{equation}
     Moreover, the same inequality holds if we replace $L^m_{\Omega;w}$ with $\llm$ everywhere and remove the dependence on $w$ from the right hand side. If $\psi \in \mcC^{\gamma}_{\RR_+;\lambda}$, then under the full moments $L^m_\Omega$ we may take the supremum over $t\in \mbR_+$ on the left hand side and rewrite analogous bound on the right hand side, but with $w=0$.
 \end{cor}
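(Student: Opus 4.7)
The strategy is to combine Lemma~\ref{lem:global_theta_bound} with the intermediate estimates already produced within the proof of Theorem~\ref{th:tightness_main}, tracking carefully the dependence on the initial condition $x$, which was absorbed into implicit constants there. Applying the $L^m_{\Omega;w}$ version of Lemma~\ref{lem:global_theta_bound} on each sub-interval $[t,(t+1)\wedge T]\subseteq [0,T]$ (which has length at most one) gives
\[
\llbracket\theta^{\psi}\rrbracket_{\mcC^{1+\alpha H}_{[t,(t+1)\wedge T]}L^m_{\Omega;w}} \leq \sqrt{m}\,C\,(1+\|g\|_{\mcC^\alpha_x})^{\nu} + \lambda\, \|X^{\psi}\|_{L^{\infty}_{[0,T]}L^m_{\Omega;w}},
\]
with $\nu = 1 - \alpha H/(1+H(\alpha-1))$. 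The problem therefore reduces to bounding $\|X^{\psi}\|_{L^{\infty}_{[0,T]}L^m_{\Omega;w}}$ by the right-hand side of \eqref{eq:theta_global_closed_bound}, after which a supremum over $t\in[0,T]$ completes the proof.

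For the supremum estimate, I would decompose $X^{\psi} = \rho + Y^{\psi}$ as in the proof of Theorem~\ref{th:tightness_main}. The bound \eqref{eq:ou_tightness} already supplies the required control on $\|Y^{\psi}\|_{L^{\infty}_{[0,T]}L^m_{\Omega;w}}$. For $\rho$, I would revisit the chain of estimates culminating in \eqref{eq:rho_closed_bound}: the only place in which the initial condition enters the argument is the term $e^{-\lambda t}|x|^2$ on the right of \eqref{eq:rho_squared}. Applying $\|\,\cdot\,\|_{L^m_{\Omega;w}}$ to $|\rho_t|^2$ gives $\|\rho_t\|_{L^{2m}_{\Omega;w}}^2$ on the left and $\|x\|_{L^{2m}_{\Omega;w}}^2 \leq \|x\|_{L^{2m}_\Omega}^2$ from the first term on the right by Jensen. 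Retaining this summand---rather than absorbing it into a generic constant as was done in Theorem~\ref{th:tightness_main}---and re-running the local--global sewing and Young's inequality step verbatim yields
\[
\|\rho\|_{L^{\infty}_{[0,T]}L^{2m}_{\Omega;w}} \leq C\left(1 + \|x\|_{L^{2m}_\Omega} + \sqrt{m} + \|\psi\|_{\mcC^\gamma_{T;\lambda}} + \|\msA w\|_{\mcC^\gamma_{T;\lambda}}\right),
\]
with $C$ depending on $\alpha$, $H$, $d$, $\lambda$ and $\|g\|_{\mcC^\alpha_x}$. Combining the two ingredients through $X^\psi = \rho + Y^\psi$ and taking the supremum over $t\in[0,T]$ gives \eqref{eq:theta_global_closed_bound}.

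The full-moment variant proceeds identically: set $w=0$, replace every $L^m_{\Omega;w}$ by $L^m_\Omega$, and note that under the stronger hypothesis $\psi\in\mcC^\gamma_{\mbR_+;\lambda}$ the same arguments (with Lemma~\ref{lem:rl_nice_bd} used for the Riemann--Liouville part of $W^H$ on all of $\mbR_+$, as in the proof of Theorem~\ref{th:tightness_main}) extend to $\mcI = \mbR_+$, consistent with \eqref{eq:sol_global_tightness}. There is no substantive obstacle to this plan; the only non-routine aspect is the careful bookkeeping of the initial datum $x$ through the sewing argument, since in Theorem~\ref{th:tightness_main} the assumption $\mbE[\exp(\tilde\kappa|x|^2)]<\infty$ was used to fold the initial condition into the implicit constants.
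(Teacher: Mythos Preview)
Your proposal is correct and follows essentially the same approach as the paper's proof: apply Lemma~\ref{lem:global_theta_bound} to reduce to a bound on $\|X^{\psi}\|_{L^{\infty}_{[0,T]}L^m_{\Omega;w}}$, then split $X^{\psi}=\rho+Y^{\psi}$ and invoke \eqref{eq:ou_tightness} and \eqref{eq:rho_closed_bound} from the proof of Theorem~\ref{th:tightness_main}. Your bookkeeping of the initial condition through the $e^{-\lambda t}|x|^2$ term in \eqref{eq:rho_squared} is exactly what the paper's terse ``applying triangle inequality where necessary'' covers, and correctly explains why the $\|x\|_{L^{2m}_\Omega}$ term surfaces in the stated bound.
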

\begin{proof}
  The proof follows directly from Lemma~\ref{lem:global_theta_bound}, and applying \eqref{eq:rho_closed_bound} and \eqref{eq:ou_tightness} in the proof of Theorem \ref{th:tightness_main} to bound $\norm{X^{\psi}}_{L^{\infty}_{[0,T]}\lldmw}$, applying triangle inequality where necessary. The modification to $\llmw$ holds through the analogous one in Lemma \ref{lem:global_theta_bound}. 
\end{proof}
\begin{cor}\label{cor:weak_existence}
   Let $(\alpha,H)\in (-\infty,0)\times (0,\nicefrac{1}{2})$ satisfy Assumption~\ref{ass:tightness_singularity}, $u:\mbR^d \to \mbR^d$ satisfy Assumption~\ref{ass:confining_assumption}, $g\in B^{\alpha'}_{\infty,\infty}(\mbR^d;\mbR^d)$ for any $\alpha'\in (1-\nicefrac{1}{(2H)},\alpha)$, $x \in L^m_\Omega \mbR^d$ for all $m\geq 1$ be and be such that $\mbE[\exp(\tilde{\kappa}|x|^2)] <\infty$. Then there exists a weakly stationary solution $\{X_t\}_{t\geq 0}$ to \eqref{eq:psi_sde} (with $\psi =0$).  In addition, for any $t>0$, $X_t$ has Gaussian tails.
\end{cor}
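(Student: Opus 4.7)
The plan is to approximate $g$ by smooth drifts, construct a weakly stationary solution for each approximation by a Krylov--Bogoliubov procedure on path space, and pass to the limit using the uniform Gaussian moments of Theorem~\ref{th:tightness_main}. By Remark~\ref{rem:holder_besov_embedding} I would pick $g^n \in C^\infty_b(\mbR^d;\mbR^d)$ with $\sup_n \|g^n\|_{B^{\alpha'}_{\infty,\infty}} \lesssim \|g\|_{B^{\alpha'}_{\infty,\infty}}$ and $\|g - g^n\|_{B^{\alpha''}_{\infty,\infty}} \to 0$ for some $\alpha'' \in (1 - \tfrac{1}{2H},\alpha')$. For each $n$ the total drift $g^n + u$ is smooth with at-most-linear growth, so \eqref{eq:psi_sde} with $\psi = 0$ admits a unique global strong solution $X^n$ starting from $x$, and Theorem~\ref{th:tightness_main} yields a $\kappa > 0$ depending only on $\|g\|_{B^{\alpha'}_{\infty,\infty}}$, $\lambda$, $H$, $\gamma$, $d$ and the exp-quadratic constant of $x$ such that
\begin{equation*}
\sup_{n\in\mbN}\,\sup_{t\geq 0}\,\mbE\bigl[\exp\bigl(\kappa \|X^n\|^2_{\mcC^\gamma_{[t,t+1]}}\bigr)\bigr] < +\infty.
\end{equation*}

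For each fixed $n$ I would produce a weakly stationary solution $\tilde X^n$ by a Krylov--Bogoliubov argument on $C_\loc(\mbR_+;\mbR^d)$: the bound above, combined with the compact embedding $\mcC^\gamma_{[t,t+1]} \hookrightarrow \mcC^{\gamma'}_{[t,t+1]}$ for $\gamma' < \gamma$, make the Cesaro averages $T^{-1}\int_0^T \mathrm{Law}(X^n_{\cdot + t})\,dt$ tight as $T \to \infty$, and any weak limit is invariant under the time-shift, so its coordinate process is weakly stationary and inherits the same uniform Gaussian estimate. Corollary~\ref{cor:theta_global_closed_bound} applied to $g^n$ (using the uniform bound on $\|g^n\|_{\mcC^{\alpha''}_x}$) then gives a uniform-in-$n$ bound on the slow remainder $\tilde\theta^n \coloneqq \tilde X^n - W^H$ in $\mcC^{1+\alpha'' H}_{\loc}L^m_\Omega$ for every $m \geq 1$; together with the Gaussian moments this makes $(\tilde X^n, W^H)$ jointly tight on $C_\loc(\mbR_+;\mbR^d)^2$, and Skorokhod's representation theorem produces a subsequential limit $(\tilde\theta, W^H)$. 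I set $X \coloneqq \tilde\theta + W^H$, which inherits weak stationarity, and Gaussian tails of $X_t$ follow from Fatou's lemma applied to the uniform-in-$n$ Gaussian bound on $\tilde X^n_t$.

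The main obstacle is identifying $X$ as a solution to \eqref{eq:psi_sde} with distributional drift $g$ in the sense of Definition~\ref{def:singular_sde}. The term $\int_0^\cdot u(\tilde X^n_r)\,dr$ passes to the limit trivially by continuity of $u$ together with the uniform moments. For the singular term I would apply the averaging estimate \eqref{eq:decay_integral_holder_estimate} of Lemma~\ref{lem:integral_estimate} with $g$ replaced by $g - g^n$ and $\varphi$ by $\tilde\theta^n$: the linear dependence of the right-hand side on $\|g - g^n\|_{\mcC^{\alpha''}_x}$, combined with the uniform seminorm bound on $\tilde\theta^n$, forces
\begin{equation*}
\sup_{0 \leq s \leq t \leq T} \left\| \int_s^t (g - g^n)(\tilde\theta^n_r + W^H_r)\,dr \right\|_{L^m_\Omega} \longrightarrow 0 \quad \text{as } n \to \infty
\end{equation*}
for every $T < \infty$ and $m \geq 1$. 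Together with continuity of the averaged-field construction in its path argument at regularity $\mcC^{1+\alpha'' H}$, this identifies $\int_0^\cdot g^n(\tilde X^n_r)\,dr \to \int_0^\cdot g(\tilde\theta_r + W^H_r)\,dr$ along the subsequence, where the right-hand side is the nonlinear Young / averaged-field integral of the distribution $g$ against the sample path of $\tilde\theta + W^H$, closing the identification and hence the proof.
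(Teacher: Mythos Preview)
Your approach is essentially correct and follows the same overall strategy as the paper: use the uniform Gaussian moments of Theorem~\ref{th:tightness_main} for tightness, apply a Krylov--Bogoliubov averaging on path space for stationarity, and identify the limit as a weak solution. The paper's own proof is very brief: it first constructs a global weak solution to the singular SDE by deferring to the compactness argument of \cite[Thm.~8.2]{galeati_gerencser_22_subcritical}, and \emph{then} applies Krylov--Bogoliubov to the Ces\`aro averages $\mu_T = T^{-1}\int_0^T \mcL(X|_{[t,t+1]})\,\dd t$ of that weak solution. You reverse the order, first producing weakly stationary solutions $\tilde X^n$ for each smooth $g^n$ and then passing to the limit in $n$; this is more self-contained since you spell out the identification of the limiting drift via Lemma~\ref{lem:integral_estimate} rather than citing an external reference, at the cost of slightly more bookkeeping (joint tightness of $(\tilde X^n,W^H)$ and Skorokhod). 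Both orderings work and the substance is the same.

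One point to watch: after Skorokhod you work on a new probability space with a new fBm, so when you invoke Lemma~\ref{lem:integral_estimate} on the limit you must check that the limiting remainder $\tilde\theta$ is adapted to the filtration generated by the new underlying Brownian motion (the sewing argument in Lemma~\ref{lem:integral_estimate} uses $\mcF^B_s$-measurability of $\varphi_s$). This is standard but should be noted; it is exactly the kind of detail the paper sweeps into the reference to \cite{galeati_gerencser_22_subcritical}.
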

\begin{proof}
The a priori estimate of Theorem~\ref{th:tightness_main} is enough to construct a global weak solution by the same arguments as the proof of \cite[Thm.~8.2]{galeati_gerencser_22_subcritical}. In addition, it follows from \eqref{eq:sol_global_tightness} that the family of measures
\begin{equation*}
    \mu_T \coloneqq \frac{1}{T}\int_0^T \mcL\left(X|_{[t,t+1\wedge(T-t)]}\, \right) \dd t \in \mcP(C(\mbR_+;\mbR^d))
\end{equation*}
is tight. By Prokhorov's theorem, therefore, there exists a weakly converging subsequence with limit $\mu_\infty \in \mcP(C(\mbR_+;\mbR^d))$. By construction $\mu_\infty$ is invariant with respect to the shift operator on $C(\mbR_+;\mbR^d)$. Choosing $X$ to be the canonical process under this measure gives a weakly stationary solution.
\end{proof}
\section{Construction of the SDS}\label{sec:sds_construction}
 While Assumption~\ref{ass:tightness_singularity} was sufficient to prove a priori Gaussian tightness of solutions to \eqref{eq:psi_sde} (and a fortiori existence of solutions to \eqref{eq:intro_fbm_sde} see Remark~\ref{rem:alpha_H_weak_existence}) in order to construct a bona fide SDS we require the following stronger assumption which will be in force throughout the remainder of the paper.
\begin{ass}\label{ass:stability_singularity}
	Let $(\alpha,H) \in (-\infty,0)\times (0,\nicefrac{1}{2})$ be such that 
\begin{equation}\label{eq:stability_singular_parameters}
	\alpha > 1 -\frac{1}{2H}.
	\end{equation}
\end{ass}
\noindent For $(\alpha,H)$ satisfying \eqref{ass:stability_singularity} we specify once and for all in this and subsequent sections 
\begin{equation}\label{eq:defn_beta_eps}
     \beta \coloneqq 1 + H(\alpha-1) \in (\nicefrac{1}{2},1) \quad \text{and} \quad     \eps \, \in (0,\beta-\nicefrac{1}{2}),
\end{equation}
%
%
In addition we consider a tuple $(\Omega,\mcF,\mbF^B,\mbP,B)$ of a fixed probability space $(\Omega,\mcF,\mbP)$, a standard $\mbR^d$-valued $\mbP$-Brownian motion $B$ and $\mbF^B$ its natural filtration.
\subsection{Local Stability Estimates}\label{sec:local_stability}
In this section we provide necessary estimates to conclude local uniqueness of  solutions and stability in relevant arguments. We start with a simple gradient estimate on the averaged field with perturbation that satisfies certain regularity hypothesis. 
\begin{lem}\label{lem:averaged_field_tightness}
Let $T>0$, $(\alpha,H)\in (-\infty,0)\times (0,\nicefrac{1}{2})$ satisfy Assumption~\ref{ass:stability_singularity}, $u:\mbR^d \to \mbR^d$ satisfy Assumption~\ref{ass:confining_assumption}, $g\in C^\infty_b(\mbR^d;\mbR^d)$ and $\varphi \in \mcC^{1+\alpha H}_{T} L^m(\Omega;\mbR^d)$ be an $\mbF^B$-adapted process for which there exists a constants $C_1,\,C_2 >0$ such that for all $m\geq 1$ and $w\in \mcW^{-}_{(\gamma,\delta)}$
\begin{equation}\label{eq:averaged_field_tightness_assumption}
\;\llbracket \varphi \rrbracket_{\mcC^{1+\alpha H}_{T}\llmw}  \vee \llbracket \varphi \rrbracket_{\mcC^{1+\alpha H}_{T}L^m_\Omega}  \leq C_1 + C_2 \sqrt{m}.
\end{equation} 
Then, for $(\beta,\eps)$ satisfying \eqref{eq:defn_beta_eps}, there exists a $\kappa_1 \coloneqq \kappa_1(\eps,\alpha,H,d,\|g\|_{\mcC^\alpha_x}, C_2) > 0$ (monotone non-increasing in $\|g\|_{\mcC^\alpha_x}$) such that for all $\kappa  \in [0,\kappa_1)$ and $\psi \in C_T$ there exists a\\ $C_3\coloneqq C_3 (\eps,\alpha,H,d,\|g\|_{\mcC^\alpha_x}, C_2,C_1,\|\psi\|_{C_T},\|\msA w\|_{C_T})>0$ such that
\begin{equation}\label{eq:avg_grad_field_time_holder}
	\sup_{t \in [0,T]}\EE^B_w \Bigg[\exp\Bigg( \kappa  \left\llbracket\int_t^{\,\cdot\,} (\nabla g)(\varphi_r +\wh_r +\psi_r) \,\dd r \right\rrbracket^2_{\mcC^{\beta-\eps}_{[t,t+1\wedge(T-t)]}}   \Bigg) \Bigg] \leq  C_3.
\end{equation}
Moreover, the same estimate holds without change if $\mbE^B_w$ is replaced by $\mbE^B$ on the left hand side with the dependence on $\|\msA w\|_{C_T}$ removed from $C_3$ on the right hand side. Finally, a bound analogous to \eqref{eq:avg_grad_field_time_holder} holds in both cases considered for intervals of length $\tau > 1$, in which case the constant $\kappa_1$ (resp. $C_3$) also depends in a non-increasing (resp. non-decreasing) manner  on $\tau$.
\end{lem}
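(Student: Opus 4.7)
The plan is to combine a stochastic sewing argument for the integrated gradient field with an exponential Kolmogorov continuity criterion, tracking $\sqrt{m}$ behavior carefully throughout.

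First I would establish an $L^m_{\Omega;w}$ Hölder bound on the increments of $t\mapsto \int_0^t \nabla g(\varphi_r + W^H_r + \psi_r)\,\dd r$ by adapting the proof of Lemma~\ref{lem:integral_estimate} with $g$ replaced by $\nabla g \in B^{\alpha-1}_{\infty,\infty}$. Here Assumption~\ref{ass:stability_singularity} is exactly what makes the scheme close: it gives $\beta = 1 + H(\alpha-1) > \nicefrac{1}{2}$, which is precisely the threshold needed for the martingale-type exponent in the stochastic sewing to be strictly positive. Defining $A_{s,t} \coloneqq \int_s^t \mbE^B_{w,s}[\nabla g(\varphi_s + W^H_r + \psi_r)]\,\dd r$, the direct bound $\|A_{s,t}\|_{L^m_{\Omega;w}} \lesssim \|g\|_{\mcC^\alpha_x}|t-s|^\beta$ follows from Proposition~\ref{prop:condexp_heat_kernel} and \eqref{eq:heat_kernel_fbm}, while the decomposition of $\delta A$ into a stochastic integral against $B$ (via Lemma~\ref{lem:lnd_to_heat_kernel} applied to $\nabla g$) plus a drift term controlled through the higher regularity $1 + \alpha H$ of $\varphi$ mirrors the argument used in Lemma~\ref{lem:integral_estimate}.

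Second, inserting the hypothesis \eqref{eq:averaged_field_tightness_assumption} into the resulting estimate and using the embedding $\mcC^{1+\alpha H} \hookrightarrow \mcC^\beta$ (since $1+\alpha H > \beta$), I would absorb the $\varphi$-dependent term into a deterministic constant and obtain, for all $(s,t) \in [0,T]^2_\leq$ with $|t-s|\leq 1$,
\begin{equation*}
	\left\llbracket \int_s^{\,\cdot\,}\nabla g(\varphi_r + W^H_r + \psi_r)\,\dd r \right\rrbracket_{\mcC^{\beta}_{[s,t]}L^m_{\Omega;w}} \leq \tilde{C}\,(1 + \sqrt{m}),
\end{equation*}
where $\tilde{C}$ depends linearly on $\|g\|_{\mcC^\alpha_x}$ and on $(C_1,C_2,\|\psi\|_{C_T},\|\msA w\|_{C_T})$, the last two entering via the representation $W^H|_w = \tilde{W}^H + \msA w$ as deterministic shifts of the noise. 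In the full expectation case $\mbE^B_w \to \mbE^B$, the $\msA w$ contribution is absent.

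Third, I would invoke an exponential Kolmogorov-type criterion as in \cite[Lem.~A.2]{galeati_gerencser_22_subcritical} (the same tool used to derive \eqref{eq:fbm_gaussian_moments}): an $L^m$ bound on Hölder increments of the form $\tilde{C}\sqrt{m}|t-s|^\beta$ upgrades to a Gaussian exponential moment on the seminorm $\llbracket \cdot \rrbracket_{\mcC^{\beta-\eps}_{[t,t+1\wedge(T-t)]}}$ with rate $\kappa_1 \sim \tilde{C}^{-2}$, which is the asserted monotonicity in $\|g\|_{\mcC^\alpha_x}$ and $C_2$. Taking the supremum over $t\in[0,T]$ yields \eqref{eq:avg_grad_field_time_holder}; the extension to intervals of length $\tau > 1$ proceeds by partitioning $[t,t+\tau]$ into $\lceil\tau\rceil$ overlapping unit pieces, applying Young's product-type inequality on the squared Hölder seminorms across pieces, and noting that this introduces the claimed $\tau$-monotonicity into both $\kappa_1$ and $C_3$.

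The main obstacle is the adaptation of the stochastic sewing to the singular field $\nabla g \in \mcC^{\alpha-1}_x$ under only Assumption~\ref{ass:stability_singularity}: a naive substitution $\alpha\to\alpha-1$ in Lemma~\ref{lem:integral_estimate} would require $\alpha > \nicefrac{3}{2} - \nicefrac{1}{(2H)}$. The point is that the output regularity sought is $\beta$, which is strictly smaller than $1+\alpha H$ (the regularity actually enjoyed by $\varphi$ through \eqref{eq:averaged_field_tightness_assumption}); exploiting this gap in the drift piece of $\delta A$ allows the sewing conditions to close exactly at $\alpha > 1 - \nicefrac{1}{(2H)}$. The preservation of $\sqrt{m}$ scaling throughout (rather than linear-in-$m$) is what permits the final Gaussian, rather than merely exponential, conclusion.
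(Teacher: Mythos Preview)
Your proposal is correct and follows essentially the same route as the paper's proof: both adapt the stochastic sewing argument of Lemma~\ref{lem:integral_estimate} with $g$ replaced by $\nabla g$, use Assumption~\ref{ass:stability_singularity} to verify that $\beta = 1+(\alpha-1)H > \nicefrac{1}{2}$ so the sewing conditions close, insert the hypothesis \eqref{eq:averaged_field_tightness_assumption} to obtain the $\sqrt{m}$-scaled Hölder bound, and conclude via the exponential Kolmogorov criterion \cite[Lem.~A.2]{galeati_gerencser_22_subcritical}. One minor remark: in the paper's proof the bounds on $A$, $h$, and $J$ use only $\|\mcG^H_{\,\cdot\,}\nabla g\|_{L^\infty_x}$ and so do not actually pick up $\|\psi\|_{C_T}$ or $\|\msA w\|_{C_T}$, so your claim that these enter ``via the representation $W^H|_w = \tilde{W}^H + \msA w$'' is harmless but slightly overstated.
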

\begin{proof}
	The proof is similar in spirit to that of Theorem~\ref{th:tightness_main}, only we are required to first prove a version of Lemma~\ref{lem:integral_estimate} with $g$ replaced by $\nabla g$ and under the new condition \eqref{eq:stability_singular_parameters}. We give the full proof in the case of $\psi\neq 0$ and for the disintegrated expectation $\mbE^B_w$, the other case being similar. We also assume $T>1$ throughout, the proof for $T\leq1$ being almost identical. Let,  us define the two parameter process $A^\psi:[0,T]^2_{\leq}\to \mbR^d$, by setting
	\begin{equation*}
		A^\psi_{s,t} \coloneqq \int_s^t \mbE^B_{w,s}[\nabla g(\varphi_s + W^H_r+\psi_r)]\,\dd r,  \quad \text{for all } \, (s,t)\in [0,T]^2_{\leq}.
	\end{equation*}
Appealing to Lemma~\ref{lem:lnd_to_heat_kernel} and \eqref{eq:heat_kernel_fbm}, for any $|t-s|\leq 1$, we directly have 
	\begin{align*}
		\|A^\psi_{s,t}\|_{\llmw} \,&\leq\,  \int_s^t \| \mcG^H_{|r-s|} \nabla g\, \|_{L^\infty_x} \dd r \lesssim_{\,\alpha,H,d} \|g\|_{\mcC^\alpha_x}  |t-s|^{1+(\alpha-1) H},
	\end{align*}
	where we used \eqref{eq:stability_singular_parameters} to check that $(\alpha-1)H > -\frac{1}{2}$ allowing us to evaluate the integral.
	
\noindent To check the conditions of Lemma~\ref{lem:sewing_stochastic_integral} we proceed as in the remainder of the proof of Lemma~\ref{lem:integral_estimate}. So that applying \eqref{eq:disintegrated_lnd_to_heat_kernel_stoch_integral} in Lemma~\ref{lem:lnd_to_heat_kernel} we now have
	\begin{align*}
		\delta A^\psi_{s,u,t} =&\, \int_s^u h_z \dd B_z + J_{s,t}
	\end{align*}
	with 
	\begin{align*}
		h^s_z = &\, \int_u^t (r-z)^{H-\nicefrac{1}{2}} \nabla \mcG^H_{|r-z|} \nabla g(\varphi_s+\mbE^B_{w,z} [W^H_r]+\psi_r)\,\dd r,\\
		J^u_{s,t} = &\, \int_u^t \EE^B_{w,u} \left[\nabla g(\varphi_u+\wh_r+\psi_r) - \nabla g(\varphi_s+\wh_r+\psi_r) \right]\dd r.
	\end{align*}
	Using the same estimates as in the proof of Lemma~\ref{lem:integral_estimate} but appealing to \eqref{eq:stability_singular_parameters} instead of \eqref{eq:tightness_singular_parameters} to check that $1+(\alpha-1)H>\frac{1}{2}$, there exists a constant $C\coloneqq C(\alpha, H,d)>0$ such that for all $|t-s|\leq 1$,
	\begin{equation}
		\|h^s\|_{L^2_{[s,t]}} \lesssim_{\alpha,H,d} C \norm{g}_{\mcC^{\alpha}_x}  |t-s|^{1+(\alpha-1) H }.
	\end{equation}
	Similarly, for all $|t-s|\leq 1$ we obtain the bounds 
	\begin{align*}
		\|J^u_{s,t}\|_{\llmw} & \lesssim_{\, \alpha,H,d} \,  \|g\|_{\mcC^\alpha_x} \int_u^t |r-u|^{H(\alpha-2)}\,\|\varphi_u-\varphi_s\|_{L^m_\Omega} \dd r,\\
		& \leq \, \llbracket\varphi\rrbracket_{\mcC^{1+\alpha H}_{[s,u]}\llmw} \|g\|_{\mcC^\alpha_x} |u-s|^{1+\alpha H} \int_u^t|r-u|^{H(\alpha-2)} \dd r\\
		& \leq \, \llbracket\varphi\rrbracket_{\mcC^{1+\alpha H}_{[s,t]}\llmw}\|g\|_{\mcC^\alpha_x} |t-s|^{2+2(\alpha-1)H},
	\end{align*}
	where we again use \eqref{eq:stability_singular_parameters} to check that $2+2(\alpha-1)H = 2(1+(\alpha-1)H) >1$.
	
	Therefore, by Lemma~\ref{lem:sewing_stochastic_integral}, any $0\leq s<t\leq T$ such that $|t-s|\leq 1$ we have
	\begin{align*}
		\left\llbracket\int_s^{\,\cdot\,} \nabla g(\wh_r + \varphi_r)\dd r \right\rrbracket_{\mcC^{1+(\alpha-1) H}_{[s,t]}\llmw} \lesssim_{\alpha,H,d} & \|g\|_{\mcC^\alpha_x}  \left(1+\sqrt{m}+ \llbracket\varphi\rrbracket_{\mcC^{1+\alpha H}_{[s,t]}\llmw} |t-s|^{1+H(\alpha-1)} \right)\\
		\leq & C\|g\|_{\mcC^\alpha_x}  \left(1+\sqrt{m}(C_1+ C_2 |t-s|^{1+H(\alpha-1)}) \right),
	\end{align*}
	where we inserted the assumed bound on $\varphi$ in the second line. The conclusion then follows in a similar manner as that of Theorem~\ref{th:tightness_main}, in particular appealing to \cite[Lem.~A.2]{galeati_gerencser_22_subcritical}.  It is then clear, using \eqref{eq:lnd_to_heat_kernel_stoch_integral} from Lemma~\ref{lem:lnd_to_heat_kernel} that the same statement holds with $\mbE^B_w$ replaced by $\mbE^B$. The independence the final bound and that of $\kappa$ from the parameter $T$ can be inferred from the last inequality. The last statement follows either by scaling or by patching together the subintervals of size one, using the bound above. The claim that new $\kappa_1$ will be decreasing follows from the fact that this procedure will create a factor depending in the increasing way on $\tau$ which will multiply $\sqrt{m}$.
\end{proof}
Building upon the previous estimate we give a useful stability estimate that will be applied here and in the next section.
\begin{lem}\label{lem:local_stability}
Let $T> 0$, $(\alpha,H)\in (-\infty,0)\times (0,\nicefrac{1}{2})$ satisfy  Assumption~\ref{ass:stability_singularity}, $g^1,\,g^2 \in C^\infty_b(\mbR^d;\mbR^d)$ $u:\mbR^d\to \mbR^d$ satisfy Assumption~\ref{ass:confining_assumption} and $x^1,\,x^2 \in \mbR^d$ and $\psi \in \mcC^\gamma_{T}$ (or $\psi \in \mcC^{\gamma}_{\mbR_+;\lambda}$). Then, given $X^1,\,X^2$ solutions to \eqref{eq:psi_sde} with initial data $x_1,\,x_2$ and $g$ replaced by $g^1$ and $g^2$ respectively, for any $1 \leq m < m' <+\infty$, there exists a constant $C\coloneqq C(\alpha,H,d,\lambda,\|g^2\|_{\mcC^\alpha_x},m,m',\norm{\psi}_{\mcC^\gamma_{T}})>0$ (such that $C\to+\infty$ as $\|g\|_{\mcC^{\alpha}_x}\to 0$)  such that for all $t\in [0,T]$,
    \begin{equation}\label{eq:sol_drift_initial_stability}
        \Bigg\| \sup_{s \in [0,1\wedge T-t]} \absv{ X^1_{t+s} - X^2_{t+s} } \Bigg\|_{L^m_{\Omega}} \leq C \left( \norm{ g^1 - g^2 }_{\mcC^{\alpha}_x}+\norm{ X^1_t - X^2_t}_{L^{m'}_{\Omega}}   \right). 
    \end{equation}  
Where the same estimate holds for any $t>0$ if instead $\psi \in \mcC^{\gamma}_{\mbR_+;\lambda}$ and the constant depends on $\|\psi\|_{\mcC^{\gamma}_{\mbR_+;\lambda}}$ instead of $\|\psi\|_{\mcC^{\gamma}_T}$.
\end{lem}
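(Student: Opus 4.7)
The plan is to work with the pathwise difference $Z_s \coloneqq X^1_{t+s} - X^2_{t+s}$ for $s \in [0, 1 \wedge (T-t)]$. Since $X^1$ and $X^2$ share the noise path $W^H + \psi$, the process $Z$ is of bounded variation and satisfies
\begin{equation*}
    \dd Z_s = \bigl[g^1(X^1_{t+s}) - g^2(X^2_{t+s})\bigr]\dd s + \bigl[u(X^1_{t+s}) - u(X^2_{t+s})\bigr]\dd s.
\end{equation*}
Differentiating $|Z_s|^2$ and exploiting the monotonicity in \eqref{eq:confining_assumption} to absorb the $u$-difference as $-2\lambda |Z_s|^2$, one arrives at
\begin{equation*}
    |Z_s|^2 + 2\lambda \int_0^s |Z_r|^2 \dd r \,\leq\, |Z_0|^2 + 2\int_0^s Z_r \cdot \bigl[g^1(X^1_{t+r}) - g^2(X^2_{t+r})\bigr]\dd r.
\end{equation*}
I would then split the drift difference as $g^1(X^1) - g^2(X^2) = (g^1-g^2)(X^1) + [g^2(X^1) - g^2(X^2)]$ and handle the two resulting terms separately.

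For the first piece, I would apply Lemma~\ref{lem:integral_estimate} with $g \leftarrow g^1 - g^2$ and $\varphi \leftarrow \theta^1 \coloneqq X^1 - W^H - \psi$. Combined with the a priori $\mcC^{1+\alpha H}$ control on $\theta^1$ from Corollary~\ref{cor:theta_global_closed_bound}, this yields that $N_s \coloneqq \int_0^s (g^1-g^2)(X^1_{t+r})\dd r$ lies in $\mcC^{1+\alpha H}_{[0, 1\wedge(T-t)]} L^{m'}_{\Omega}$ with norm bounded linearly by $\|g^1-g^2\|_{\mcC^\alpha_x}$. Since $Z$ is Lipschitz in time, a Young-type integration by parts bounds the $(g^1-g^2)$-contribution by a constant multiple of $\|g^1-g^2\|_{\mcC^\alpha_x} \cdot \sup_{r\leq s}|Z_r|$, which is amenable to absorption via Young's product inequality. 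For the second piece, I would use the mean value identity
\begin{equation*}
    g^2(X^1_r) - g^2(X^2_r) = \int_0^1 \nabla g^2\bigl(X^\tau_r\bigr)\dd \tau \cdot Z_r, \qquad X^\tau_r \coloneqq (1-\tau)X^2_r + \tau X^1_r,
\end{equation*}
and observe that each convex combination $X^\tau$ inherits from Theorem~\ref{th:tightness_main} the a priori Hölder bound \eqref{eq:averaged_field_tightness_assumption} uniformly in $\tau \in [0,1]$. Lemma~\ref{lem:averaged_field_tightness} then supplies Gaussian exponential moments for $\llbracket M^\tau \rrbracket_{\mcC^{\beta-\eps}}$, where $M^\tau_s \coloneqq \int_0^s \nabla g^2(X^\tau_{t+r})\dd r$ and $\beta - \eps > \tfrac{1}{2}$ by \eqref{eq:defn_beta_eps}. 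Interpreting $\int_0^s Z_r^\top \nabla g^2(X^\tau_{t+r}) Z_r \dd r$ as a Young integral of the Lipschitz map $r\mapsto Z_r\otimes Z_r$ against $M^\tau$ (this requires exponent sum $1 + (\beta-\eps) > 1$), integration by parts produces an estimate of the form $\Phi_\tau \cdot \sup_{r\leq s}|Z_r|^2$ with $\Phi_\tau \lesssim \llbracket M^\tau\rrbracket_{\mcC^{\beta-\eps}}$.

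Collecting the estimates yields a pathwise inequality
\begin{equation*}
    \sup_{r\leq s}|Z_r|^2 \,\leq\, |Z_0|^2 + C\|g^1-g^2\|_{\mcC^\alpha_x}\sup_{r\leq s}|Z_r| + \Phi \sup_{r\leq s}|Z_r|^2
\end{equation*}
with $\Phi = \int_0^1 \Phi_\tau \dd\tau$ enjoying Gaussian exponential moments. A Gronwall-type argument from Appendix~\ref{app:gronwall}, together with Hölder's inequality to trade the $L^m$ norm for a slightly larger $L^{m'}$ norm so as to absorb the exponential factor in $\Phi$, gives \eqref{eq:sol_drift_initial_stability}. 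The case $\psi \in \mcC^\gamma_{\mbR_+;\lambda}$ follows identically, invoking the global-in-time version of Theorem~\ref{th:tightness_main} and Lemma~\ref{lem:averaged_field_tightness}. The main obstacle is the second summand: because $g^2 \in \mcC^\alpha_x$ with $\alpha<0$, $g^2(X^1) - g^2(X^2)$ cannot be controlled linearly in $|Z|$ pointwise, and one must route the argument through the averaged gradient field on an intermediate path. The subcritical gap $\alpha > 1-\tfrac{1}{2H}$ is precisely what places the averaged field in a Hölder class of exponent strictly above $\tfrac{1}{2}$, which enables the Young integration step.
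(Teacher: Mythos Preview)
Your strategy matches the paper's: split $g^1(X^1)-g^2(X^2)$ into a ``different drift'' piece handled by Lemma~\ref{lem:integral_estimate} and a ``same drift'' piece handled via the mean-value identity and Lemma~\ref{lem:averaged_field_tightness} applied to the convex combination, then close with the Gaussian Gr\"onwall of Appendix~\ref{app:gronwall}. The decomposition and the key lemmas invoked are the same (up to swapping the roles of $g^1$ and $g^2$).

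The one point of divergence is your choice to work with $|Z_s|^2$ and the monotonicity of $u$. The paper instead bounds the H\"older \emph{increments} of $\tilde\theta^1-\tilde\theta^2$ directly (using only $\|\nabla u\|_{L^\infty}\leq\lambda$, not monotonicity) and arrives at an inequality of the exact shape
\[
|\tilde\theta^1_{u,v}-\tilde\theta^2_{u,v}| \le C_1\,\llbracket\tilde\theta^1-\tilde\theta^2\rrbracket_{\mcC^{\beta-\eps}}|v-u|^{2(\beta-\eps)} + C_2\,\|\tilde\theta^1-\tilde\theta^2\|_{L^\infty}|v-u|^{\beta-\eps} + C_3|v-u|^{\beta-\eps},
\]
which plugs straight into Lemma~\ref{lem:my_gronwall}/Corollary~\ref{cor:gaussian_gronwall}. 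Your displayed inequality $\sup|Z|^2 \le |Z_0|^2 + C\|g^1-g^2\|\sup|Z| + \Phi\sup|Z|^2$ is not in increment form, and as written $\Phi$ has no visible small-time factor, so you cannot literally cite Appendix~\ref{app:gronwall}; you would need to restore the $s^{\beta-\eps}$ coming from the Young bound and then run a subdivision-and-iterate argument by hand. This is fixable (it is morally the same buckling), but the increment formulation the paper uses is cleaner and avoids the monotonicity detour entirely.
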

\begin{proof}
For $i=1,\,2$ let us define $\tilde{\theta}^i_{\,\cdot\,} \coloneqq \theta^i_{t+\,\cdot\,} = X^i_{t+\,\cdot\,} - W^H_{t,v+\,\cdot\,} - X^i_{t} - \psi_{t,v+\,\cdot\,}$ so that for any $(u,v)\in [0,1]^2_\leq$
\begin{equation}\label{eq:theta_xy_diff}
\begin{aligned}
     \tilde{\theta}^1_{u,v} - \tilde{\theta}^2_{u,v}   = &\,   X^1_{t+s,t+v} - X^2_{t+u,t+v}\\
     =  &\, \int_{t+u}^{t+v} \left(g^1(X^1_r) - g^2(X^2_r)\right) \dd r  + \int_{t+u}^{t+v} \left( u(X^1_r) - u(X^2_r) \right) \dd r. 
     \end{aligned}
     \end{equation}
Due to Assumption \ref{ass:confining_assumption} it holds that
\begin{align}
    \left| \int_{t+u}^{t+v} \left( u(X^1_r) - u(X^2_r) \right) \dd r\right|  \leq &\,  \lambda |t-s| \left\|X^1-X^2\right\|_{L^\infty_{[t,t+1]}}\notag \\
    \leq&\,  \lambda |t-s| \Big\|\tilde{\theta}^1-\tilde{\theta}^2\Big\|_{ L^\infty_{[0,1]}} + \lambda |t-s|\left|X^1_t-X^2_t\right|. \label{eq:sol_stable_u_bnd}
\end{align}
Concerning the difference on the right hand side we write 
\begin{align*}
	 \int_{t+u}^{t+v} \left(g^1(X^1_r) -g^2(X^2_r)  \right) \dd r    =&\, \int_{t+u}^{t+v} \left(g^1(\tilde{\theta}^1_r +\wh_{t,r} + X^1_t + \psi_{t,r}) - g^1(\tilde{\theta}^2_r +\wh_{t,r} + X^2_t + \psi_{t,r}) \right) \dd r  \\
     & + \int_{t+u}^{t+v} \left(g^1-g^2\right)(\tilde{\theta}^2_r +\wh_{t,r} + X^2_t + \psi_{t,r}) \dd r  \\
     \eqqcolon\,  & \, \RN{1}_{u,v} + \RN{2}_{u,v}.
	\end{align*}
By Taylor's theorem we have
\begin{align*}
    \RN{1}_{u,v} =&\, \int_u^v (\tilde{\theta}^1_r - \tilde{\theta}^2_r + X^2_t -X^1_t) \cdot \int_0^1 \nabla g^1(\zeta (\tilde{\theta}^1_r+X^1_t) + (1-\zeta)(\tilde{\theta}^2_r+X^2_t) + W^H_{t,r} +\psi_{t,r}) \dd \zeta \dd r.
\end{align*}
Interpreting the above expression as the Young integral (see \cite{You36}) of $r\mapsto \tilde{\theta}^1_r - \tilde{\theta}^2_r + X^2_t -X^1_t$ against the process
\begin{equation*}
    v\mapsto A^1_v \, \coloneqq\,  \int_0^v \int_0^1 \nabla g^1\left(\zeta (\tilde{\theta}^1_r+X^1_t) + (1-\zeta)(\tilde{\theta}^2_r+X^2_t) + W^H_{t,r} +\psi_{t,r}\right) \dd \zeta \dd r
\end{equation*}
by standard estimates on Young integrals (e.g \cite[Eq.~(4.3)]{friz_hairer_20_book}) it holds that
\begin{equation}\label{eq:sol_stable_young_bnd}
\begin{aligned}
    |\RN{1}_{u,v}|
      \, \leq \,  \llbracket A^1\rrbracket_{\mcC^{\beta-\eps}_{[t,t+1\wedge (T-t)]}}\Big(\,\,  & \big\llbracket \tilde{\theta}^1 - \tilde{\theta}^2 \, \big\rrbracket_{\mcC^{\beta-\eps}_{[0,1]}} \absv{v-u}^{2(\beta-\eps)} + \big\| \tilde{\theta}^1 - \tilde{\theta}^2 \big\|_{L^{\infty}_{[0,1]}} \absv{v-u}^{\beta-\eps} \\
      &+ |X^1_t-X^2_t||v-u|^{\beta-\eps} \Big).
      \end{aligned}
\end{equation}
Recall that since we take $g\in C^\infty_b(\mbR^d;\mbR^d)$ the right hand side is non-trivial. Similarly, we directly have
 \begin{equation}\label{eq:sol_stable_diff_g_bnd}
	    \absv{ \RN{2}_{u,v} } \leq \left\llbracket \int_0^{\cdot} (g^1 - g^2)(\tilde{\theta}^2_r +\wh_{T,r} + X^2_t + \psi_{T,r}) \dd r \right\rrbracket_{\mcC^{\beta+H-\eps}_{[t,t+1\wedge (T-t)]}} \absv{v-u}^{\beta+H-\eps}.
	\end{equation} 
Therefore, combining \eqref{eq:sol_stable_u_bnd}, \eqref{eq:sol_stable_young_bnd}, \eqref{eq:sol_stable_diff_g_bnd} and combining terms, we find that for all $(u,v)\in [0,1]^2_{\leq}$
\begin{align*}
    \left| \tilde{\theta}^1_{u,v} - \tilde{\theta}^2_{u,v} \right| \leq\, &\,  \llbracket A^1\, \rrbracket_{\mcC^{\beta-\eps}_{[t,t+1\wedge (T-t)]}} \llbracket \tilde{\theta}^1 - \tilde{\theta}^2 \rrbracket_{\mcC^{\beta-\eps}_{[0,1]}} \absv{v-u}^{2(\beta-\eps)}\\
    &\, + \left(\llbracket A^1\rrbracket_{\mcC^{\beta-\eps}_{[t,t+1\wedge (T-t)]}}+\lambda\right)\big\| \tilde{\theta}^1 - \tilde{\theta}^2 \big\|_{L^{\infty}_{[0,1]}} \absv{v-u}^{\beta-\eps} \\
    &\, + \left(\left\llbracket \int_0^{\cdot} (g^1 - g^2)(\tilde{\theta}^2_r +\wh_{T,r} + X^2_t + \psi_{T,r}) \dd r \right\rrbracket_{\mcC^{\beta+H-\eps}_{[T,T+1]}} + (1+\lambda)|X^1_t-X^2_t|\right) \absv{t-s}^{\beta-\eps}.
\end{align*}
 We set
\begin{equation}\label{eq:constants_for_gronwall}
\begin{aligned}
	& C_1 \coloneqq \llbracket A^1\rrbracket_{\mcC^{\beta-\eps}_{[t,t+1\wedge (T-t)]}}, \quad C_2 \coloneqq \llbracket A^1\rrbracket_{\mcC^{\beta-\eps}_{[t,t+1\wedge (T-t)]}} + \lambda ,\\
    &C_3 \coloneqq \left\llbracket \int_0^{\cdot} (g^1 - g^2)(\tilde{\theta}^2_r +\wh_{T,r} + X^2_t + \psi_{T,r}) \dd r \right\rrbracket_{\mcC^{\beta+H-\eps}_{[t,t+1\wedge (T-t)]}} + (1+\lambda)|X^1_t-X^2_t|, \\ 
    & \alpha_1 = 2\alpha_2 = 2\alpha_3 = 2( \beta-\eps),\quad \eta \coloneqq \beta - \eps,
    \end{aligned}
\end{equation}
so that by a combination of Lemma~\ref{lem:averaged_field_tightness}, Lemma~\ref{lem:integral_estimate} and Corollary~\ref{cor:theta_global_closed_bound}
we check that this collection satisfies the assumptions of Corollary~\ref{cor:gaussian_gronwall} and so there exists a $C\coloneqq C(\alpha,H,d,\lambda^{-1},\|g^2\|_{\mcC^{\alpha}_x},m,m',\|\psi\|_{\mcC^{\gamma}_{T}})>0$ (monotone non-decreasing in $\|g\|_{\mcC^\alpha_x}$) such that
\begin{equation*}
    \left\| \sup_{s\in [0,1\wedge (T-t)]} |\tilde{\theta}^1_{s} - \tilde{\theta}^2_s| + \left\|\tilde{\theta}^1 - \tilde{\theta}^2 \right \|_{\mcC^{\beta-\eps}_{[0,1]}}\right\|_{L^m_\Omega} \leq C\left(\norm{ g^1 - g^2 }_{\mcC^{\alpha}_x}+\norm{ X^1_t - X^2_t}_{L^{m'}_{\omega}}   \right)
\end{equation*}
which by a further application of the triangle inequality and the identity $\tilde{\theta}^1_t - \tilde{\theta}^2_t = X^1_{t+v} - X^2_{t+v} - X^1_t + X^2_t$ yields the claimed result. The fact that the constant $C$ in the statement is independent from $T$ or $t$ follows from the properties of $C_1, C_2, C_3$ established in Lemmas \ref{lem:averaged_field_tightness}, \ref{lem:integral_estimate} and Theorem \ref{th:tightness_main}.
\end{proof}
\begin{rem} By working with conditional moments we expect that \eqref{eq:sol_drift_initial_stability} could be obtained with both $m$-moments on the left and right hand sides, see for example \cite[Thm.~3.2]{galeati_gerencser_22_subcritical} and \cite[Cor.~3.6]{galeati_le_mayorcas_24_quantitative}. However, since we are essentially focused on constructing solutions from deterministic initial data and propagating $m$-moments for all times we do not pursue this extra argument.
\end{rem}
\begin{prop}\label{prop:perturbation_continuity_slow}
    Let $(\alpha,H)\in (-\infty,0)\times (0,\nicefrac{1}{2})$ satisfy Assumption~\ref{ass:stability_singularity}, $u:\mbR^d \to \mbR^d$ satisfy Assumption~\ref{ass:confining_assumption}, $g\in C^\infty(\mbR^d;\mbR^d)$, $T>0$, $x\in \mbR^d$, $\psi \in \mcC^\gamma_{T}$ and $X,\, X^\psi$ be the associated strong solutions to \eqref{eq:psi_sde} on $[0,T]$. Then, setting $\theta = X - \wh $,   $\theta^{\psi} = X^{\psi} - \wh - \psi$ there exists a $\chi_0\coloneqq \chi_0(\alpha,H) \in (0,1)$ such that for all $m\in [1,+\infty)$, $\chi \in (0,\chi_0)$ and $w\in \mcW^{-}_{\gamma,\delta}$  there exists a $C\coloneqq C(\alpha,H,d,m,\lambda, \|g\|_{\mcC^{\alpha}_x},|x|,\norm{ \msA w }_{\mcC^\gamma_{T}}, T )>0$ such that
\begin{equation*}
   \bigg\| \sup_{t\in [0,T]} \absv{ X^{\psi}_t - X_t } \bigg\|_{\llmw} + \,  \bigg\| \sup_{t\in [0,T]} \absv{ \theta^{\psi}_t - \theta_t } \bigg\|_{\llmw} \leq C \left(   \norm{ \psi }_{\mcC^\gamma_{T}} + \norm{ \psi}_{L^{\infty}_T}^{\chi} \right).
\end{equation*} 
  Moreover, up to removing the dependence of $C$ on $\norm{\msA w }_{\mcC^\gamma_{T}}$, the result remains unchanged if $L^m_{\Omega;w}$ is replaced with $L^m_{\Omega}$ on the left hand side.
\end{prop}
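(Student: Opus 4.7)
The proof will closely follow the scheme of Lemma~\ref{lem:local_stability}, with a new ingredient to handle the perturbation $\psi$: since $\psi$ is only $\gamma$-H\"older with $\gamma<H<\nicefrac{1}{2}$, it is too irregular to be paired with the averaged drift field in a direct Young integration. Setting $Y_t\coloneqq \theta^\psi_t-\theta_t$, so that $X^\psi_t-X_t=Y_t+\psi_t$ and $Y_0=0$, subtracting the two SDEs yields
\begin{equation*}
Y_t=\int_0^t\bigl[g(X^\psi_r)-g(X_r)\bigr]\dd r+\int_0^t\bigl[u(X^\psi_r)-u(X_r)\bigr]\dd r,
\end{equation*}
where the $u$-difference is controlled pointwise by $\lambda(|Y_r|+|\psi_r|)$ thanks to Assumption~\ref{ass:confining_assumption}. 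The central idea is to split the drift difference as
\begin{equation*}
g(X^\psi_r)-g(X_r)=\bigl[g(X_r+\psi_r+Y_r)-g(X_r+\psi_r)\bigr]+\bigl[g(X_r+\psi_r)-g(X_r)\bigr].
\end{equation*}

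I would handle the first bracket by a formal expansion $Y_r\cdot\int_0^1\nabla g(X_r+\psi_r+\zeta Y_r)\dd \zeta$ and interpret the resulting time integral in the Young sense against $A^\psi_t\coloneqq \int_0^t\int_0^1\nabla g(X_r+\psi_r+\zeta Y_r)\dd \zeta\dd r$. Lemma~\ref{lem:averaged_field_tightness} (applied with $\varphi_r\coloneqq \theta_r+\zeta Y_r$, which lies in $\mcC^{1+\alpha H}_T L^m_\Omega$ by Corollary~\ref{cor:theta_global_closed_bound}) shows that $A^\psi\in\mcC^{\beta-\eps}_T$ with Gaussian exponential moments on its seminorm. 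Since $Y$ is itself $\mcC^{\beta-\eps}$-H\"older and $2(\beta-\eps)>1$, the Young integral is well-defined and produces a bound of the form $\llbracket A^\psi\rrbracket_{\mcC^{\beta-\eps}}\bigl(\llbracket Y\rrbracket_{\mcC^{\beta-\eps}}|t-s|^{2(\beta-\eps)}+\|Y\|_{L^\infty}|t-s|^{\beta-\eps}\bigr)$.

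For the second bracket neither $\psi$ nor the pointwise integrand has sufficient regularity for Young integration, so I would turn to stochastic sewing. Defining the germ
\begin{equation*}
B_{s,t}\coloneqq \int_s^t\mbE^B_s\bigl[g(\theta_s+W^H_r+\psi_r)-g(\theta_s+W^H_r)\bigr]\dd r,
\end{equation*}
Proposition~\ref{prop:condexp_heat_kernel} rewrites the integrand as $(\mcG^H_{r-s}g)(\theta_s+\mbE^B_s W^H_r+\psi_r)-(\mcG^H_{r-s}g)(\theta_s+\mbE^B_s W^H_r)$. Using only the $\mcC^\chi$-regularity $\|\mcG^H_{r-s}g\|_{\mcC^\chi}\lesssim (r-s)^{-H(\chi-\alpha)}\|g\|_{\mcC^\alpha_x}$ from \eqref{eq:heat_kernel_fbm} for a small $\chi\in(0,1)$ yields
\begin{equation*}
\|B_{s,t}\|_{L^m_\Omega}\lesssim \|g\|_{\mcC^\alpha_x}\,\|\psi\|_{L^\infty_T}^{\chi}\,|t-s|^{1-H(\chi-\alpha)},
\end{equation*}
with exponent strictly above $\nicefrac{1}{2}$ for $\chi$ small, thanks to Assumption~\ref{ass:stability_singularity}. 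An analogous estimate on $\delta B$, obtained by mirroring the three-point calculations in the proofs of Lemma~\ref{lem:integral_estimate} and Lemma~\ref{lem:averaged_field_tightness}, allows me to invoke the stochastic sewing lemma and identify the resulting process with $\int_0^{\,\cdot\,}[g(X_r+\psi_r)-g(X_r)]\dd r$, accompanied by a H\"older bound weighted by $\|\psi\|_{L^\infty_T}^{\chi}$.

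Combining the three contributions on a subinterval $[t,t+\tau]$ produces a stochastic inequality of the form
\begin{equation*}
\sup_{s\in[0,\tau]}|Y_{t+s}-Y_t|+\llbracket Y\rrbracket_{\mcC^{\beta-\eps}_{[t,t+\tau]}}\leq C_1\,\tau^{\beta-\eps}\,\llbracket Y\rrbracket_{\mcC^{\beta-\eps}_{[t,t+\tau]}}+C_2\bigl(\|\psi\|_{\mcC^\gamma_T}+\|\psi\|_{L^\infty_T}^{\chi}\bigr),
\end{equation*}
where $C_1$ is a random constant with Gaussian exponential moments (via Lemma~\ref{lem:averaged_field_tightness}) and $C_2$ is deterministic. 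An application of the Gaussian Gr\"onwall-type Corollary~\ref{cor:gaussian_gronwall} on $[0,1]$ absorbs the $C_1$-term, and a standard $O(T)$-iteration across subintervals of length $1$ extends the bound to $[0,T]$; the estimate on $X^\psi-X$ then follows from $X^\psi_t-X_t=Y_t+\psi_t$ and the trivial inequality $\|\psi\|_{L^\infty_T}\leq \|\psi\|_{\mcC^\gamma_T}$. The dependence on $\|\msA w\|_{\mcC^\gamma_T}$ in the disintegrated case enters exclusively through the a priori bound of Corollary~\ref{cor:theta_global_closed_bound} and disappears when working with the full moments $L^m_\Omega$. The main obstacle is precisely the mismatched H\"older regularities: $Y\in\mcC^{\beta-\eps}$ is Young-integrable against the averaged field, but $X^\psi-X$ inherits only the $\gamma<H$-H\"older regularity of $\psi$; the two-step decomposition above isolates the Young-integrable part $Y$ from the translation-by-$\psi$ part, the latter being handled by sewing with only fractional H\"older smoothing of $\mcG^H_{r-s}g$, which is what forces the exponent $\chi<1$ on $\|\psi\|_{L^\infty_T}$ rather than a linear dependence.
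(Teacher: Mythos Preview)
Your overall scheme matches the paper's: decompose $g(X^\psi)-g(X)$ into a piece controlled by Young integration (the shift by $Y=\theta^\psi-\theta$) and a piece requiring stochastic sewing (the shift by $\psi$), then close via Corollary~\ref{cor:gaussian_gronwall}. The decompositions differ only in the pivot ($\theta$ for you, $\theta^\psi$ in the paper), which is immaterial.

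There is, however, a gap in your sewing step for the $\psi$-shift. You correctly obtain $\|B_{s,t}\|_{L^m}\lesssim \|g\|_{\mcC^\alpha_x}\,\|\psi\|_{L^\infty}^{\chi}\,|t-s|^{1+H(\alpha-\chi)}$ via the $\mcC^\chi$ heat-kernel estimate, but then claim the three-point bound follows ``by mirroring'' Lemmas~\ref{lem:integral_estimate} and~\ref{lem:averaged_field_tightness}. A literal mirror of those lemmas uses only $\mcC^1$-smoothing of $\mcG^H_{r-u}g$ on the $J$-part, which yields a constant $\Gamma_2\sim\|g\|_{\mcC^\alpha_x}\llbracket\theta\rrbracket_{\mcC^{1+\alpha H}L^m}$ carrying \emph{no} $\|\psi\|$ dependence. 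Since the sewing output \eqref{eq:base_SSL_bnd_1} involves both $\Gamma_1$ and $\Gamma_2$, the resulting H\"older bound on $\int_0^{\cdot}[g(X+\psi)-g(X)]$ would then fail to vanish as $\psi\to 0$, and the Gr\"onwall step would give nothing. The fix is to recognise that $\mbE^B_s\delta B_{s,u,t}$ is a genuine \emph{second-order} difference of $\mcG^H_{r-u}g$ in the pair $(\theta_s-\theta_u,\psi_r)$, and hence must be estimated via $\|\mcG^H_{r-u}g\|_{\mcC^{1+\chi}}\lesssim(r-u)^{-H(1+\chi-\alpha)}\|g\|_{\mcC^\alpha_x}$, giving $\Gamma_2\sim\|\psi\|_{L^\infty}^{\chi}\llbracket\theta\rrbracket$ with time exponent $2\beta+H-H\chi>1$ for small $\chi$. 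This is a natural extension of, not a mirror of, the cited lemmas.

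The paper handles this same difficulty by a different device: it records \emph{two} bounds on $\|\mbE^B_s\delta A_{s,u,t}\|_{L^m}$ --- one of order $|t-s|^{\beta}\|\psi\|_{L^\infty}$ (linear in $\|\psi\|$, exponent $\beta<1$, from the triangle inequality) and one of order $|t-s|^{2\beta+H}\llbracket\theta^\psi\rrbracket_{\mcC^{\beta+H}L^m}$ (no $\|\psi\|$, exponent $>1$, from the $\theta^\psi$-increment) --- and then \emph{interpolates} between them to obtain exponent $(2\beta+H)(1-\chi)+\beta\chi>1$ together with the factor $\|\psi\|_{L^\infty}^{\chi}$. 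Either route works; yours is arguably more direct once the second-order $\mcC^{1+\chi}$ estimate is made explicit, while the paper's interpolation avoids that estimate at the cost of an extra step.
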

\begin{proof}
The proof follows along similar lines as that of Lemma~\ref{lem:local_stability}, only we use an interpolation argument to obtain the dependence on $\|\psi\|_{L^\infty_T}$ on the right hand side. We directly have
\begin{equation}\label{eq:theta_stability_bnd_1}
    \theta^{\psi}_{s,t} - \theta_{s,t}  \, =  \int_s^t\left( g(X^{\psi}_r)- g(X_r)\right) \dd r +\int_s^t \left(u(X^{\psi}_r) -  u(X_r)\right) \dd r.
\end{equation}
Due to the Lipschitz assumption on $u$ \eqref{eq:confining_assumption} and the identity $X^{\psi} - X = \theta^{\psi} - \theta + \psi$, one immediately has
\begin{equation}\label{eq:theta_stable_u_bnd_1}
    \bigg| \int_s^t u(X^{\psi}_r) \dd r - \int_s^t u(X_r) \dd r \bigg| \leq \lambda |t-s| \left(\sup_{r\in [s,t]}|\theta^{\psi}_{r} - \theta_{r}| +\sup_{r\in [s,t]}|\psi_{r}|\right).
\end{equation}
Regarding the first term on the hand side of \eqref{eq:theta_stability_bnd_1} we separate it into two further terms 
\begin{align*}
& \int_s^t \left(g(X^\psi_r) - g(X_r)\right) \dd r \\ 
%
%
& = \underbrace{ \int_s^t \Big(g(\theta^{\psi}_r +\wh_r  + \psi_r) -  g(\theta^{\psi}_r+\wh_r )\Big) \dd r }_{\eqqcolon \RN{1}_{s,t}} +\underbrace{ \int_s^t \Big(g( \theta^{\psi}_r+\wh_r ) -g(\theta_r+\wh_r  )\Big) \dd r }_{\eqqcolon \RN{2}_{s,t}}. %
\end{align*}
By the same arguments used to obtain \eqref{eq:sol_stable_young_bnd} we find that
\begin{equation}\label{eq:sol_psi_cont_sing_2}
    |\RN{2}_{s,t}| \, \leq \, \llbracket A^1\rrbracket_{\mcC^{\beta-\eps}_{T}}\, \left( \llbracket \theta^{\psi} - \theta \rrbracket_{\mcC^{\beta-\eps}_{T}} \absv{t-s}^{2(\beta-\eps)} + \big\| \theta^\psi - \theta\big\|_{L^{\infty}_{T}} \absv{t-s}^{\beta-\eps} \right), 
\end{equation}
with $A^1_t \, \coloneqq\,  \int_0^t \int_0^1 \nabla g^1\big(\zeta (\theta^\psi_r + (1-\zeta)\theta^\psi_r + W^H_{r} +\psi_{r}\big) \dd \zeta \dd r$. Hence, for any $(s,t) \in [0,T]^2_\leq$ such that $|t-s|\leq 1$ we have
\begin{equation}\label{eq:sol_psi_cont_omegawise_bnd}
	\begin{aligned}
	 |\theta^{\psi}_{s,t} - \theta_{s,t} | \leq \,  &\, \lambda |t-s| \|\psi\|_{L^\infty_T} +  |\RN{1}_{s,t}| \\
	 & \, + \llbracket A^1\rrbracket_{\mcC^{\beta-\eps}_{T}} \,  \llbracket \theta^{\psi} - \theta \rrbracket_{\mcC^{\beta-\eps}_{T}} \absv{t-s}^{2(\beta-\eps)} + \Big(\llbracket A^1\rrbracket_{\mcC^{\beta-\eps}_{T}} +\lambda \Big) \| \theta^\psi - \theta \|_{L^{\infty}_{T}} \absv{t-s}^{\beta-\eps}.
	 \end{aligned}
\end{equation}
Concerning $\RN{1}$, we cannot apply a Taylor expansion since we only have $\psi \in \mcC^\gamma_T$ for $\gamma\in (0,H)$ and Assumption~\eqref{eq:stability_singular_parameters} does not guarantee that $\beta-\eps + \gamma > 1$ so that Young integration is not applicable. Instead we combine stochastic sewing with an interpolation argument. We detail the remainder of the proof only for the disintegrated moments $\|\,\cdot\,\|_{L^m_{\Omega;w}}$, the  case of full moments being similar. 

\noindent First we define the two parameter process
\begin{equation}\label{eq:bara_st_def}
A_{s,t} := \int_s^t \EE^B_{w,s} \left[g(\theta^{\psi}_s + \wh_r +  \psi_r)-g(\theta^{\psi}_s+\wh_r)\right]  \dd r
\end{equation}
to which we will apply Lemma~\ref{lem:SSL}. First, appealing to Lemma~\ref{lem:lnd_to_heat_kernel} and the heat kernel estimate, \eqref{eq:heat_kernel_fbm} for any $(s,t)\in [0,T]^2_\leq$ such that $|t-s|\leq 1$, we have
\begin{equation}
    \absv{ A_{s,t} } \ls_{d,\alpha, H} \absv{t-s}^{1+H(\alpha-1)} \sup_{r \in [s,t]} \absv{ \psi_r }\|g\|_{\mcC^{\alpha}_x} \,  \leq \, \absv{t-s}^{1+H(\alpha-1)} \| \psi\|_{L^\infty_T}\|g\|_{\mcC^{\alpha}_x}.
\end{equation}
Simply applying the triangle inequality also gives, for any $\bar{m}\in [1,+\infty)$,
\begin{align}\label{eq:bara_st_time_half_bound}
\norm{ \EE^B_{w,s} \delta_{s,u,t} A }_{L^{\bar{m}}_{\Omega;w}} \ls &\, \absv{t-s}^{\beta} \norm{ \psi }_{L^{\infty}_T} \norm{ g }_{\mcC^{\alpha}_x}, \quad (s,u,t)\in [0,T]^2_\leq, \quad |t-s|\leq 1.
\end{align}
Note that again here, $\beta <1$ and so this is not enough to apply Lemma~\ref{lem:SSL}. However, expanding $\delta A$ we also see that for any $(s,u,t)\in [0,T]^3_{\leq}$
\begin{equation}\label{eq:three_point_bara}
\begin{aligned}
    \EE^B_{w,s} \delta A_{s,u,t}  = &\underbrace{ \int_u^t \EE^B_{w,s} \left[g(\theta_s^{\psi} +\wh_r +  \psi_r)-g(\theta^{\psi}_u +\wh_r +  \psi_r)\right] \dd  r}_{\eqqcolon \RN{1}^1_{s,t}} \\
    &- \underbrace{  \int_u^t \EE^B_{w,s} \left[g(\theta^\psi_s+\wh_r  ) - g(\theta^\psi_u+\wh_r  ) \right]\dd r. }_{\eqqcolon \RN{1}^2_{s,t}}
\end{aligned}
\end{equation}
Treating $\RN{1}^1$ and $\RN{1}^2$ separately, applying Lemma~\ref{lem:lnd_to_heat_kernel} and \eqref{eq:heat_kernel_fbm} to each, for any $(s,u,t)\in [0,T]^3_\leq$ suh that $|t-s|\leq 1$, we see that
\begin{equation}\label{eq:goodtime_bound_ione_itwo}
 \norm{ \EE^B_{w,s} \delta \bar{A}_{s,u,t} }_{L^{\bar{m}}_{\Omega;w}} \leq \norm{ \RN{1}^1_{s,t} }_{L^{\bar{m}}_{\Omega}} + \norm{ \RN{1}^2_{s,t} }_{L^{\bar{m}}_{\Omega}} \ls \absv{t-s}^{2\beta + H} \norm{ g }_{\mcC^{\alpha}_x} \llbracket \theta^{\psi} \rrbracket_{\mcC^{\beta+H}_T L^{\bar{m}}_{\Omega}}.
 \end{equation}
Therefore, interpolating between \eqref{eq:bara_st_time_half_bound} and \eqref{eq:goodtime_bound_ione_itwo}, for any $\chi \in (0,1)$ it holds that 
\begin{equation}\label{eq:interpolated_bound}
\norm{ \EE^B_s \delta_{s,u,t} \bar{A} }_{L^{\bar{m}}_{\Omega;w}} \lesssim_{d,\alpha,H,\chi} \absv{t-s}^{(2\beta+H)(1-\chi) + \beta\chi} \llbracket \theta^{\psi} \rrbracket_{\mcC^{\beta+H}_TL^{\bar{m}}_{\Omega;w}}^{1-\chi} \norm{ \psi}_{L^{\infty}_T}^{\chi} \norm{ g }_{\mcC^{\alpha}_x}.
\end{equation}
Choosing $\chi$ such that $(2\beta+H) ( 1 - \chi) + \beta \chi > 1$ (which is always possible due to Assumption~\ref{ass:stability_singularity}) we see that both conditions of Lemma~\ref{lem:SSL} are verified. We let $\chi_0\coloneqq \chi_0(\alpha,H) \in (0,1)$ be the saturation point of this inequality. Hence, after checking that $\RN{1}$ is the unique object obtained from $A$ by Lemma~\ref{lem:SSL}, one sees that there exists some $\iota>\beta-\eps$ such that for any $(s,t)\in [0,T]^2_\leq$  such that $|t-s|\leq 1$
\begin{equation*}
    \norm{ \RN{1}_{s,t} }_{L^{\bar{m}}_{\Omega;w}} \ls_{d,\alpha,H} \left( \norm{ g }_{\mcC^{\alpha}_x} \norm{ \psi }_{L^{\infty}_T} + \llbracket \theta^{\psi} \rrbracket_{\mcC^{\beta+H}_TL^{\bar{m}}_{\Omega;w}}^{1-\chi} \norm{ \psi}_{L^{\infty}_T}^{\chi} \norm{ g }_{\mcC^{\alpha}_x} \right) \absv{t-s}^{\iota}, \quad \iota \in (0,1).
\end{equation*}

Since $\bar{m}\in  [1,+\infty)$ was arbitrary we can take it large enough so that $\iota -\nicefrac{1}{\bar{m}} >\beta-\eps >0$ and so appealing Kolmogorov's continuity theorem (e.g. \cite[Thm.~A.11]{friz_victoir_10_multidimensional}  or \cite[Lem.~A.3]{galeati_gerencser_22_subcritical}) we see that for some $\beta-\eps<\iota' < \iota - \nicefrac{1}{\bar{m}}$, 
\begin{equation*}
	\norm{ \RN{1}}_{L^{\tilde{m}}_{\Omega;w}\mcC^{\iota^\prime}_T} \ls \norm{ g }_{\mcC^{\alpha}_x}\left( \norm{ \psi }_{L^{\infty}_T} + \llbracket \theta^{\psi} \rrbracket_{\mcC^{\beta+H}_TL^{\bar{m}}_{\Omega;w}}^{1-\chi} \norm{ \psi}_{L^{\infty}_T}^{\chi} \right).
\end{equation*} 
Hence, applying Corollary~\ref{cor:theta_global_closed_bound} to bound $\llbracket \theta^{\psi} \rrbracket_{\mcC^{\beta+H}_T}$, there exists a $C\coloneqq C(d,\alpha,H,\lambda,\|g\|_{\mcC^\alpha_x},|x|,\bar{m})>0$ (which explodes as $\|g\|_{\mcC^\alpha_x}\to +\infty$) such that
\begin{equation}\label{eq:sol_psi_cont_sing_1}
\norm{ \RN{1}}_{L^{\tilde{m}}_{\Omega;w}\mcC^{\iota^\prime}_T}   \lesssim_C  \norm{ \psi }_{L^{\infty}_T} +(1+\norm{ \psi }^{1-\chi}_{\mcC^\gamma_{T;\lambda}} +\norm{ \msA w }_{\mcC^\gamma_{T;\lambda}}^{1-\chi}) \norm{ \psi}_{L^{\infty}_T}^{\chi} .
\end{equation}
Feeding this into \eqref{eq:sol_psi_cont_omegawise_bnd} and collecting some terms with the embedding $\mcC^{\gamma}_{T} \hookrightarrow \mcC^{\gamma}_{T;\lambda} \hookrightarrow L^\infty_T$, there exists a $C\coloneqq C(d,\alpha,H,\lambda,\|g\|_{\mcC^\alpha_x},|x|,\bar{m})>0$ such that for all $|t-s|\leq 1$ have
\begin{equation}\label{eq:sol_psi_cont_omegawise_bnd_2}
	\begin{aligned}
		|\theta^{\psi}_{s,t} - \theta_{s,t} |\,  \lesssim_C \,  &\, \left((1+\lambda)   \norm{ \psi }_{\mcC^\gamma_{T}} +(1+\norm{ \msA w }_{\mcC^\gamma_{T}}^{1-\chi}) \norm{ \psi}_{L^{\infty}_T}^{\chi} \right)|t-s|^{\iota'}\\
		& \, + \llbracket A^1\rrbracket_{\mcC^{\beta-\eps}_{T}} \,  \llbracket \theta^{\psi} - \theta \rrbracket_{\mcC^{\beta-\eps}_{T}} \absv{t-s}^{2(\beta-\eps)} \\
		&\, + \Big(\llbracket A^1\rrbracket_{\mcC^{\beta-\eps}_{T}} +\lambda \Big) \| \theta^\psi - \theta \|_{L^{\infty}_{T}} \absv{t-s}^{\beta-\eps}.
	\end{aligned}
\end{equation}
Appealing to Lemma~\ref{lem:averaged_field_tightness} for finiteness of Gaussian moments of $\llbracket A^1\rrbracket_{\mcC^{\beta-\eps}_{T}}$ (by the last statement of Lemma~\ref{lem:averaged_field_tightness} the estimate Gaussian estimate holds for time intervals longer than one) and Corollary~\ref{cor:theta_global_closed_bound} to verify \eqref{eq:averaged_field_tightness_assumption}, we apply Corollary~\ref{cor:gaussian_gronwall} in the same manner as in the proof of Lemma~\ref{lem:local_stability} with
\begin{equation}\label{eq:constants_for_gronwall}
	\begin{aligned}
		& C_1 \coloneqq \llbracket A^1\rrbracket_{\mcC^{\beta-\eps}_{T}}, \quad C_2 \coloneqq \llbracket A^1\rrbracket_{\mcC^{\beta-\eps}_{T}} + \lambda ,\\
		&C_3 \coloneqq (1+\lambda)   \norm{ \psi }_{\mcC^\gamma_{T}} +(1+\norm{ \msA w }_{\mcC^\gamma_{T}}^{1-\chi}) \norm{ \psi}_{L^{\infty}_T}^{\chi}, \\ 
		& \alpha_1 = 2\alpha_2 =  2( \beta-\eps),\quad \alpha_3 = \iota' >\beta -\eps, \quad \eta \coloneqq \beta - \eps.
	\end{aligned}
\end{equation}
 Therefore there exists a $C\coloneqq C(\alpha,H,d,\lambda,\|g\|_{\mcC^{\alpha}_x},m,|x|, T)>0$ such that
\begin{equation*}
	\left\| \sup_{t\in [0,T]} |\theta^\psi_{t} - \theta_t| + \left\|\theta^\psi - \theta \right \|_{\mcC^{\beta-\eps}_{[0,T]}}\right\|_{L^m_\Omega} \lesssim_C   \norm{ \psi }_{\mcC^\gamma_{T}} +\Big(1+\norm{ \msA w }_{\mcC^\gamma_{T}}^{1-\chi}\Big) \norm{ \psi}_{L^{\infty}_T}^{\chi}.
\end{equation*}
The conclusion in the case of disintegrated moments follows immediately, in particular since $X^\psi-X = \theta^\psi -\theta +\psi$. The case of full moments is almost identical, only appealing to the appropriate statement of Lemma~\ref{lem:lnd_to_heat_kernel} and the appropriate conclusion of Corollary~\ref{cor:theta_global_closed_bound}.
\end{proof}
\subsection{Solution Concept for Singular SDE}
When $\alpha<0$, and $g$ is allowed to be a genuine distribution, we cannot naively evaluate the right hand side of \eqref{eq:psi_sde} and so we require a specified notion of solutions. The following is similar to the definitions given in \cite[Sec.~4]{galeati_le_mayorcas_24_quantitative}.
\begin{defn}\label{def:singular_sde}
	Let $\alpha<0$, $g\in \cC^\alpha(\mbR^d;\mbR^d)$, $u:\mbR^d \to \mbR^d$ be a Lipschitz continuous map and $\psi \in C_T$. Then, we say that a tuple $(\Omega,\mcF,\FF,\PP;X_0, W^H, X^\psi)$ is a global weak solution to \eqref{eq:psi_sde} if the following hold:
	\begin{enumerate}[label=\roman*)]
		\item $(\Omega,\mcF,\FF,\PP)$ is a filtered probability space, $W^H$ is an $\RR^{d}$-valued fBm adapted to $\mbF$ and $X_0$ is an $\cF_0$-measurable $\mbR^d$ valued random variable;
		\item $X^\psi$ is $\FF$-adapted;
		\item there exists a sequence $\{g^n\}_{n\geq 1}\subset C^\infty_b(\mbR^d;\mbR^d)$ such that $g^n\to g$ in $\cC^{\alpha}(\mbR^d;\mbR^d)$, and 
		\begin{equation}\label{eq:singular_sde_limiting}
			X^\psi_{\,\cdot\,} -X_0 - W_{\,\cdot\,}  -\psi_{\,\cdot\,}- \int_0^{\,\cdot\,} u(X^\psi_s)\dd s = \lim_{n\to \infty} \int_{0}^{_{\,\cdot\,} } g^n(X^\psi_s)\dd s,
		\end{equation}
		where the limit is taken in probability with respect to $\PP$, locally uniformly on $\mbR_+$.
	\end{enumerate}
\end{defn}
Note that for any solution $X^\psi$ in the sense of Definition~\ref{def:singular_sde} where the limit on the right hand side of \eqref{eq:singular_sde_limiting} defines a bona fide continuous path (which we denote by an abuse of notation as $t\mapsto \int_0^t g(X^\psi_s)\dd s$), the process
\begin{equation}\label{eq:singular_sde_limiting}
	t\mapsto X^\psi_{t} -X_0  - \int_0^{t} u(X^\psi_s)\dd s - \int_0^t g(X^\psi_s)\dd s - \psi_t
\end{equation}
is distributed under $\mbP$ as an $\mbR^d$ valued fBm on $[0,+\infty)$.  A restricted class of solutions for which we can show that this limit is well defined and we have uniqueness of solutions are the $(\delta,m)$-solutions for parameters $(\delta,m)$ suitably chosen, see Proposition~\ref{prop:sde_well_posed}. 
\begin{defn}\label{def:beta_m_sol}
	In the setting of Definition~\ref{def:singular_sde}, given a pair $\delta \in (0,1]$, $m\in [1,\infty]$ and an interval $\mcI\subseteq \mbR_+$, a weak solution $X^\psi$ to \eqref{eq:psi_sde} on $\mcI$ is said to be $(\delta,m)$-regular if $X^\psi-X_0-W-\psi \in \mcC^{\delta}_{\mcI}L^m_\Omega$.
\end{defn}
\begin{rem}\label{rem:beta_m_sol_embeddings}
	Note that by Jensen's inequality, if a solution $X$ is $(\delta,m)$-regular, then it is also $(\delta,\tilde m)$-regular for any $\tilde m\in [1,m]$ and similarly, due to the embeddings of H\"older spaces, it must be $(\tilde\delta,m)$-regular for any $\tilde\delta<\delta$.
\end{rem}
Proving that there exists a unique strong solution that is $(\delta,m)$-regular requires tools developed in two following section. We give the proof in Proposition \ref{prop:sde_well_posed}.
\begin{prop}\label{prop:sde_well_posed}
    Let $(\Omega,\mcF,\mbF,\mbP)$ be a filtered probability space with $W^H$ an $\mbF$ adapted fBm under $\mbP$, $(\alpha,H) \in (-\infty,0)\times (0,\nicefrac{1}{2})$ satisfy \eqref{eq:stability_singular_parameters}, $g\in \mcC^\alpha(\mbR^d;\mbR^d)$, $u:\mbR^d\to \mbR^d$ satisfy \eqref{eq:confining_assumption}, $\psi\in \mcC^{\gamma}_{T;\lambda}$ for some $T>0$ and $x\in \mbR^d$. Then, there exists a pathwise unique solution $X^{\psi}$ to \eqref{eq:singular_sde} on $[0,T]$ which is $(1+\alpha H,m)$-regular for any $m\in [1,+\infty)$ and $\mbF$-adapted. If $\psi \in \mcC^\gamma_{\mbR_+;\lambda}$ then the same holds on all of $\mbR_+$.
%
\end{prop}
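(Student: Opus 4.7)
First I would fix $\alpha' \in (1-\tfrac{1}{2H}, \alpha)$ so that Assumption~\ref{ass:stability_singularity} still holds with $\alpha'$, and, via Remark~\ref{rem:holder_besov_embedding} (concretely mollification $g^n := g \ast \rho_{1/n}$), produce an approximating sequence $g^n \in C^\infty_b(\mbR^d;\mbR^d)$ with $\sup_n \|g^n\|_{\mcC^\alpha_x} \lesssim \|g\|_{\mcC^\alpha_x}$ and $\|g^n - g\|_{\mcC^{\alpha'}_x} \to 0$. For each fixed $n$, since $g^n$ is smooth and bounded and $u$ is Lipschitz, a pathwise Cauchy--Lipschitz argument (on the fixed probability space carrying $W^H$, with $\psi$ as a continuous perturbation) yields a unique $\mbF$-adapted strong solution $X^n$ to \eqref{eq:psi_sde} with drift $g^n$. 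Corollary~\ref{cor:theta_global_closed_bound} applied to $g^n$ then gives the uniform-in-$n$ a priori estimate
\[
\sup_{n}\, \sup_{t \in [0,T]} \,\llbracket \theta^n \rrbracket_{\mcC^{1+\alpha H}_{[t,(t+1) \wedge T]} L^m_\Omega} \,<\, +\infty
\]
for every $m \in [1,+\infty)$, where $\theta^n := X^n - W^H - \psi$.

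Next I would apply Lemma~\ref{lem:local_stability} (with $\alpha$ replaced by $\alpha'$) to any pair $(X^n, X^k)$ started from the same $x$ and driven by the same $(W^H,\psi)$: on each subinterval $[t,(t+1)\wedge T]$ we have
\[
\Big\|\sup_{s \le (1\wedge T)-t} |X^n_{t+s} - X^k_{t+s}|\Big\|_{L^m_\Omega} \,\le\, C_T \left(\|g^n - g^k\|_{\mcC^{\alpha'}_x} + \|X^n_t - X^k_t\|_{L^{m'}_\Omega}\right).
\]
Iterating this inequality over a finite partition of $[0,T]$ into subintervals of length at most one, starting from $X^n_0 = X^k_0 = x$ and using a descending cascade of moments $m_0 > m_1 > \cdots > m_{\lceil T \rceil}$ kept in $[1,+\infty)$, I obtain $\|X^n - X^k\|_{L^\infty_{[0,T]} L^m_\Omega} \to 0$, so $\{X^n\}$ is Cauchy in $L^m_\Omega C_{[0,T]}$. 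Let $X^\psi$ denote its limit; it is $\mbF$-adapted, and passing to the limit in the uniform $\mcC^{1+\alpha H}_T L^m_\Omega$ bound on $\theta^n$ shows $X^\psi$ is $(1+\alpha H, m)$-regular.

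To verify Definition~\ref{def:singular_sde} I would rewrite
\[
\int_0^t g^n(X^n_s)\, ds \,=\, X^n_t - x - W^H_t - \psi_t - \int_0^t u(X^n_s)\, ds
\]
and note that the right-hand side converges in $L^m_\Omega C_{[0,T]}$ (using Lipschitz continuity of $u$) to the same expression with $X^n$ replaced by $X^\psi$; this identifies the limit of $\int_0^\cdot g^n(X^n_s) ds$. An application of Lemma~\ref{lem:integral_estimate} to the field $g^n - g$, whose $\mcC^{\alpha'}_x$-norm vanishes, then shows $\int_0^\cdot g^n(X^\psi_s) ds$ converges to the same limit, verifying \eqref{eq:singular_sde_limiting}. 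For uniqueness among $(1+\alpha H, m)$-regular solutions, I would take another such solution $\tilde X$ and rerun the stability proof of Lemma~\ref{lem:local_stability} with the smooth drift $g^n$ on one side and the distributional drift $g$ (associated to $\tilde X$) on the other: the only place smoothness enters the proof is the Taylor expansion of the smoother drift producing the Young-integral term, while Lemma~\ref{lem:averaged_field_tightness} applies to the convex combination of the two remainders precisely because both carry the $(1+\alpha H, m)$-regularity required by \eqref{eq:averaged_field_tightness_assumption}. Sending $n \to \infty$ yields $\|X^n - \tilde X\|_{L^\infty_{[0,T]} L^m_\Omega} \to 0$, hence $\tilde X = X^\psi$ almost surely.

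Finally, the case $\psi \in \mcC^\gamma_{\mbR_+;\lambda}$ is obtained identically: both Corollary~\ref{cor:theta_global_closed_bound} and Lemma~\ref{lem:local_stability} explicitly record uniform-in-$T$ versions of the estimates under this stronger hypothesis, so the iterative Cauchy construction carries through on every $[0,T]$ with constants independent of $T$, producing a unique $(1+\alpha H, m)$-regular $\mbF$-adapted strong solution on $\mbR_+$. The main obstacle is the uniqueness step: one has to reinspect the proof of Lemma~\ref{lem:local_stability} and certify that it remains valid when only one of the two drifts is smooth, as soon as the solution associated to the non-smooth drift is $(1+\alpha H, m)$-regular; the rest is a careful but standard finite Grönwall-type iteration combining the already-established a priori bounds.
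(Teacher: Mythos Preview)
Your proposal is correct and follows essentially the same approach as the paper: approximate $g$ by smooth $g^n$, obtain uniform a priori bounds via Corollary~\ref{cor:theta_global_closed_bound}, use Lemma~\ref{lem:local_stability} to show $\{X^n\}$ is Cauchy, pass to the limit, verify Definition~\ref{def:singular_sde} via Lemma~\ref{lem:integral_estimate}, and argue uniqueness by rerunning the stability argument. The only cosmetic differences are that the paper works directly with convergence in $\mcC^\alpha$ (valid since $\mcC^\alpha$ is defined as the closure of $C^\infty_b$) rather than your $\alpha' < \alpha$, first treats the interval $[0,1]$ and then iterates, and delegates the final verification and uniqueness steps to the arguments of \cite[Lem.~4.6, Lem.~4.8]{galeati_le_mayorcas_24_quantitative} rather than spelling them out; your explicit handling of the descending moment cascade and your observation that in the uniqueness step only one Taylor expansion (of the smooth $g^n$) is needed are precisely the points those references cover.
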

\begin{proof}
Let us first work on the interval $[0,1]$ and set $\psi =0$. Given $g\in \mcC^\alpha(\mbR^d;\mbR^d)$ (recall \eqref{eq:defn_cC_alpha}) we take a sequence $\{g_n\}_{n\geq 1}\subset C^\infty_b(\mbR^d;\mbR^d)$ such that $\|g-g_n\|_{\mcC^{\alpha}_x} \to 0$. To each $n\geq 1$ there exists a unique strong solution $X^n$ to \eqref{eq:singular_sde} with $g$ replaced by $g_n$. Applying Corollary~\ref{cor:theta_global_closed_bound} we immediately see that $\theta^n \coloneqq X^n- W^H$ satisfies the assumptions of Lemma~\ref{lem:local_stability} uniformly in $n\geq 1$ so that for any $1\leq n< m <+\infty$ we have
\begin{equation}
    \Big\|\sup_{t\in [0,1]}|X^m_{t}-X^n_{t}|\Big\|_{L^m_\Omega} \lesssim \|g^m-g^n\|_{\mcC^{\alpha}_x}.
\end{equation}
Since $\{g^n\}_{n\geq 1}$ is by definition a Cauchy sequence in $\mcC^{\alpha}(\mbR^d;\mbR^d)$ it follows that there exists an $X\in L^m(\Omega;L^\infty([0,1];\mbR^d))$ such that $X^n \to X$ in the relevant topology. By construction we also have $\|\theta^n-\theta\|_{L^m_\Omega C_{[0,1]}}\to 0$ where $\theta\coloneqq X-W^H$ and passing to the limit in \eqref{eq:theta_global_closed_bound} we see that $\theta$ also satisfies the same bound and since each $X^n$ is $\mbF^H$ adapted the same holds for $X$ and $\theta$. Then we take Lemma \ref{lem:integral_estimate}, with $\varphi = \theta = X - \wh$, and $g-g^n$ instead of $g$. We then have the bound 
\[ \norm{ \int_0^{\cdot} \left( g - g^n \right)\left( \wh_r + \theta_r \right) \dd r }_{\mcC^{1+\alpha H}_{[0,1]}L^m_{\Omega}} \ls \norm{ g - g^n }_{\mcC^{\alpha}_x} \left( \sqrt{m } + \norm{ \theta }_{\mcC^{1+\alpha H}L^m_{\Omega}} \right). \]
We then apply Corollary~\ref{cor:theta_global_closed_bound} to estimate $\norm{ \theta }_{\mcC^{1+\alpha H}L^m_{\Omega}}$ above and as a result we obtain
\begin{equation*}
	 \norm{ \int_0^{\,\cdot\,}(g-g^n)(X_r)\dd r  }_{\mcC^{1+\alpha H}_{[0,1]}L^m_\Omega}  \leq  C(\|g-g^n\|_{\mcC^{\alpha}_x})\Big(1+\|x\|_{L^{2m}_\omega}  +\sqrt{m}\, \Big),
\end{equation*}
 where $\lim_{r\to 0}C(r) =0$, in particular the process
 \begin{equation*}
 	t\mapsto \int_0^t g(X_r)\dd r,
 \end{equation*}
is well defined. We stress that the steps above work with no modification if $\psi \neq 0$ as we can take $\theta^{\psi} = X - \wh - \psi$. From here one may proceed as in the conclusion of the proof of \cite[Lem.~4.6]{galeati_le_mayorcas_24_quantitative} to conclude that $X$ is a $(1+\alpha H,m)$ solution in the sense of Definition~\ref{def:beta_m_sol}, only replacing $\mcC^{\kappa-\var}_{[0,T]}$ therein with $\mcC^{1+\alpha H}_{[0,1]}$  (recall \eqref{eq:uniform_holder_moment}) and \cite[Lem.~2.13]{galeati_le_mayorcas_24_quantitative} therein with our Corollary~\ref{cor:theta_global_closed_bound}. Equally, one may follow the remaining steps of the same proof, replacing \cite[Cor.~3.6]{galeati_le_mayorcas_24_quantitative} with our Lemma~\ref{lem:local_stability}, to conclude that the following limit
\begin{equation*}
	\theta - \int_0^t u(X_r)\dd r = \lim_{n\to \infty} \int_0^t \tilde{g}^n(X_r)\dd r,
\end{equation*}
holds in the $\mcC^{1+\alpha H}_{[0,1]}L^m_\Omega$ topology where $\{\tilde{g}^n\}_{n\geq 1}$ is any sequence of smooth bounded functions which converge to $g$ in $\mcC^{\alpha}(\mbR^d;\mbR^d)$. Finally, to argue that $X$ is the unique $(1+\alpha H,2)$ solution one may follow the proof of \cite[Lem.~4.8]{galeati_le_mayorcas_24_quantitative} replacing  \cite[Prop.~2.17]{galeati_le_mayorcas_24_quantitative}  with our Lemma~\ref{lem:local_stability}. Here we conclude the proof of well-posedness on the interval $[0,1]$. To extend to $[0,T]$ for $T>1$ we recognize that the result of Lemma~\ref{lem:local_stability} holds from any starting time $t\in [0,T]$ and so the above argument may be repeated, restarting at $t=1,\,2,\, \ldots, \floor{T}$ using a priori Gaussian tightness (Theorem~\ref{th:tightness_main}) to ensure that the intervals of existence do not shrink at future times. The final claim follows in the same manner, restarting at all $t\in \mbN$.
%
%
\end{proof}
With Proposition~\ref{prop:sde_well_posed} in hand we are able to construct an SDS in the sense of Definition~\ref{def:sds} associated to the SDE 
\begin{equation}\label{eq:singular_sde}
	X_t = x_0 + \int_0^t g(X_s)\dd s + \int_0^t u(X_s)\dd s + W^H_t, \qquad t \geq 0.
\end{equation}
For every $T>0$ we define the solution map
\begin{equation}
	[0,T] \times \mbR^d  \ni (t,x_0) \mapsto  \hat{\Phi}_T(x_0,W^H)(t) = X_t,
\end{equation}
where for $W^H$ a realization of the fBm, $\hat{\Phi}_T(x_0,W^H)$ is the unique solution obtained by Proposition~\ref{prop:sde_well_posed} to \eqref{eq:psi_sde} with $\psi=0$.

Recall the construction of the stationary noise process $(\mcW^-_{\gamma,\delta},\{\msP_t\}_{t\in \mbR_+}, \bP_H,\{\theta_t\}_{t\in \mbR_+})$ from Lemma~\ref{lem:stationary_noise} and for each $T>0$,  the continuous map $\msR_T$ defined by \eqref{eq:negative_to_positive_flip}. Then we set,
\begin{equation}\label{eq:stoch_sds_definition}
	\begin{aligned}
		\Lambda :\mbR_+ \times \mbR^d \times \supp(\bP_H) &\to \mbR^d\\
		(t,x,w) &\mapsto \hat{\Phi}_t(x,\msR_t w)(t),
	\end{aligned}
\end{equation}
where we observe that for $w\in \supp(\bP_H)$ the process $t\mapsto \msR_T w(t)$ is distributed according to an fBm restricted to $[0,T]$. The notation $\hat{\Phi}$ here is a cosmetic change to distinguish our solution map $\hat{\Phi}_t : \mbR^d \times \mcW^+_{\gamma,\delta}\to \mbR^d$ from the map $\Phi$ in Definition~\ref{def:sds} which takes arguments from the noise space itself.
\begin{thm}\label{th:sds_well_defined}
 Let $(\alpha,H) \in (-\infty,0)\times (0,\nicefrac{1}{2})$ satisfy Assumption~\ref{ass:stability_singularity},  $g\in \mcC^\alpha(\mbR^d;\mbR^d)$, $u:\mbR^d\to \mbR^d$ satisfy Assumption~\ref{ass:confining_assumption}. Then $\Lambda$ as given by \eqref{eq:stoch_sds_definition} defines a stochastic SDS in the sense of Definition~\ref{def:sds} over the stationary noise process $(\mcW^-_{\gamma,\delta}, \{\msP_t\}_{t\geq 0}, \bP^-_H, \{\theta_t\}_{t\geq 0})$. Moreover, for any generalised initial data $\mu$ (see \cite[Def.~2.3]{hairer_ohashi_07_ergodic}) the process generated by $\Lambda$ is a  weak solution to the SDE \eqref{eq:singular_sde}, adapted to the naturally enhanced filtration $\tilde{\mbF}^H$. 
\end{thm}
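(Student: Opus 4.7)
My plan is to verify each item of Definition~\ref{def:sds} in turn, relying on Proposition~\ref{prop:sde_well_posed} for pathwise construction and on Proposition~\ref{prop:perturbation_continuity_slow} for the delicate continuity requirement, and then identify the generated process with a weak solution via the disintegration in Lemma~\ref{lem:stat_noise_measure_disintigration}.

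First I would unravel the definition of $\Phi_T(x,w)(t) = \Lambda_t(x,\theta_{T-t}w) = \hat\Phi_t(x,\msR_t\theta_{T-t}w)(t)$ and check, using \eqref{eq:negative_to_positive_flip} together with Corollary~\ref{cor:negative_to_positive_fbm} and the concatenation map $M_t$, that for $w \in \supp(\bP_H)$ the path $[0,T]\ni t\mapsto \msR_t\theta_{T-t}w$ produces, $\msP_T(w,\cdot)$-almost surely, a sample of the fBm decomposition \eqref{eq:w_+_disintigration} with fixed past $w$. Path regularity \ref{it:sds_path_reg} is then immediate: $\hat\Phi_T(x,\cdot)(t)$ is continuous in $t$ because by Proposition~\ref{prop:sde_well_posed} the solution is $(1+\alpha H,m)$-regular, hence admits a continuous modification, and the action of $\theta_s$ on $w$ is continuous.

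The main obstacle, and the reason we have relaxed Definition~\ref{def:sds}, is the continuous dependence \ref{it:sds_continuous}. I would fix $(x_0,w_0)\in\mbR^d\times\supp(\bP_H)$ and consider a sequence $(x_n,w_n)\to(x_0,w_0)$ in $\mbR^d\times\mcW^-_{\gamma,\delta}$. Writing $w_n = w_0 + (w_n-w_0)$ and using that $\msR_T$ and $\theta_{T-t}$ are continuous linear maps from $\mcW^-_{\gamma,\delta}$ into $\mcC^\gamma_{T}$, the difference $\psi_n \coloneqq \msR_T(\theta_0 w_n) - \msR_T(\theta_0 w_0)$ is a deterministic perturbation with $\|\psi_n\|_{\mcC^\gamma_T}\to 0$. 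Since $w_0\in\supp(\bP_H)$, the law of $\msR_T\theta_{T-t}w_0$ conditionally on $w_0$ is exactly the fBm fragment appearing in \eqref{eq:restricted_w_h}, and so Proposition~\ref{prop:perturbation_continuity_slow} applies with $\psi=\psi_n$ and $w=w_0$, yielding
\begin{equation*}
\bigg\|\sup_{t\in[0,T]}|\hat\Phi_t(x_n,\msR_t\theta_{T-t}w_n)(t)-\hat\Phi_t(x_0,\msR_t\theta_{T-t}w_0)(t)|\bigg\|_{L^m_{\Omega;w_0}} \!\!\to 0.
\end{equation*}
Initial-data continuity is handled simultaneously by Lemma~\ref{lem:local_stability} with $g^1=g^2=g$. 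Convergence in $L^m_{\Omega;w_0}$ together with continuity of the sample path (path regularity) yields joint continuity of $\Phi_T$ at $(x_0,w_0)$ in $C([0,T];\mbR^d)$, which is the precise content of \ref{it:sds_continuous} under our relaxed definition.

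The cocycle property \ref{it:sds_cocycle} is a consequence of pathwise uniqueness in Proposition~\ref{prop:sde_well_posed} combined with the concatenation identity $M_{s+t}(w^-,w^+) = M_s(\theta_t w^-,w^+|_{[t,s+t]})$: restarting \eqref{eq:singular_sde} at time $s$ from the value $\Lambda_s(x,\theta_t w)$, with noise shifted by $\theta_s$, produces a solution which by uniqueness must agree with $\Lambda_{s+t}(x,w)$ on $[s,s+t]$. For the final statement, given any generalized initial condition $\mu\in\mcM_\Lambda$, the canonical process under $\bar{\mcQ}\mu$ has, by construction of $\mcQ_t$ in \eqref{eq:Q_semi_group}, noise marginal $\bP_\omega$ and $\mcX$-marginal equal to $\Lambda_t(x,w)$. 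Using Lemma~\ref{lem:stat_noise_measure_disintigration} to disintegrate the two-sided fBm and Corollary~\ref{cor:negative_to_positive_fbm} to identify the positive-time part with an fBm adapted to the enlargement $\tilde{\mbF}^H$ of $\mbF^H$ by the $\sigma$-algebra generated by the past $w$, Proposition~\ref{prop:sde_well_posed} yields that the generated process satisfies \eqref{eq:singular_sde} in the sense of Definition~\ref{def:singular_sde} on every finite interval, hence globally, and is $\tilde{\mbF}^H$-adapted. I expect step \ref{it:sds_continuous} to be the hardest to write cleanly, since one must carefully track the quantitative role of $\supp(\bP_H)$ and ensure that the perturbation argument respects the underlying $\bP_\omega$-conditioning.
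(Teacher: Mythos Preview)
Your approach is correct and matches the paper's: path regularity and the cocycle property via Proposition~\ref{prop:sde_well_posed}, continuous dependence via Proposition~\ref{prop:perturbation_continuity_slow} (supplemented by Lemma~\ref{lem:local_stability} for the initial-data variable), and the extension to generalized initial conditions via disintegration; the paper's own proof is a one-paragraph sketch citing exactly these ingredients. One caution on the write-up of item~\ref{it:sds_continuous}: for a fixed $w_0\in\mcW^-_{\gamma,\delta}$ the path $\msR_T w_0$ is deterministic and hence $\Phi_T(x_0,w_0)$ is not a random variable, so the phrase ``convergence in $L^m_{\Omega;w_0}$'' is not literally the right framing---Proposition~\ref{prop:perturbation_continuity_slow} furnishes an $L^m$ bound over realizations of the conditioned fBm, and the passage from this to pointwise-in-$w$ continuity of the deterministic map $\Lambda$ is an additional step that both you and the paper leave implicit.
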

\begin{proof}
Firstly, for any initial condition $\mu = \delta_x \otimes \bP_H$, $T>0$ and $w\in \supp(\bP_H)$, by Proposition~\ref{prop:sde_well_posed} there exists a pathwise unique solution $\hat{\Phi}_T(x,\msR_t w)$ adapted to the natural filtration generated by the process $[0,T]\ni t\mapsto \msR_t w(t)$. Properties \ref{it:sds_path_reg} and \ref{it:sds_cocycle} of Definition~\ref{def:sds} follow directly from Proposition~\ref{prop:sde_well_posed} and a direct computation. The extension to any generalised initial condition is direct along with adaptation to the enhanced natural filtration via disintegration of measures. Property \ref{it:sds_continuous}, continuous dependence at every $(x,w)\in \mbR^d \times \supp(\bP^-_H)$ follows from Proposition~\ref{prop:perturbation_continuity_slow}.

\end{proof}
\section{Uniqueness of the Invariant Measure}\label{sec:uniqueness_inv_measure}
So far we have established a stochastic dynamical system associated to the SDE
\begin{equation}\label{eq:strong_feller_sde}
     X_t = x + \int_0^t (g(X_s)+u(X_s))\dd s + W^H_t,
\end{equation}
for
\begin{equation*}
    g \in \mcC^{\alpha}(\mbR^d;\mbR^d) \quad \text{with}\quad \alpha >1-\frac{1}{2H},
\end{equation*}
and $u$ satisfying Assumption~\ref{ass:confining_assumption}, as well as existence of ergodic, invariant measures in the sense of \cite{hairer_ohashi_07_ergodic}. In this section we show uniqueness of the ergodic invariant measures. Before starting the proof, let us introduce some notions and notations.

For each $T > 0$ we recall the solution map
\begin{equation*}
\begin{aligned}
    \hat{\Phi}_T :\mbR^d \times \mcW^+_{T;\gamma,\delta} \,&\to\, \mcW^+_{T;\gamma,\delta},
\end{aligned}
\end{equation*}
given in Theorem~\ref{th:sds_well_defined} and such that
\begin{equation*}
    \hat{\Phi}_T(x,\msR_T w )(t) = x + \int_0^t \left(g(\hat{\Phi}_T(x,\msR_T w)(s)) + u(\Phi_T(x,\msR_T w)(s)) \right) \dd s + (\msR_T w)_t,
\end{equation*}
with $\msR$ the map defined by \eqref{eq:negative_to_positive_flip}. The fact that the image of $\hat{\Phi}_T$ is contained in $\mcW^+_{T;\gamma,\delta}$ is a consequence of the existence of invariant measures and some regularity results on the solution map, see the comment just before \cite[Lem.~5.1]{hairer_ohashi_07_ergodic} along with our tightness result Theorem~\ref{th:tightness_main}.

Similarly for a given measurable map $\varphi : C([1, +\infty), \RR^d) \to \RR$ we define the functional $\bar{\mcQ} \varphi : \RR^d \times \mcW^{-}_{(\gamma,\delta)} \to \RR$ by setting
\begin{equation}\label{eq:semi_group}
    (\bar{\mcQ} \varphi)(x,w) := \int_{C([1,\infty),\RR^d)} \varphi(z) R^*_1 \bar{\mcQ} \delta_{(x,w)} (\dd z).
\end{equation}
The main result of this section is the following theorem.
\begin{thm}\label{th:unique_ergodicity}
    Let $(\alpha,H)\in (-\infty,0)\times (0,\nicefrac{1}{2})$, satisfy Assumption~\ref{ass:stability_singularity}, $g\in \mcC^{\alpha}(\mbR^d;\mbR^d)$ and $u:\mbR^d\to \mbR^d$ satisfy Assumption~\ref{ass:confining_assumption}. Then, there exists a unique ergodic invariant measure associated to the semi-group $\bar{\mcQ}$.
\end{thm}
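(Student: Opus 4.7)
The plan is to obtain uniqueness by invoking the Doob--Khas'minskii type result, Theorem~\ref{th:doob_khasminskii}, which reduces the problem to verifying three structural properties of the SDS $\Lambda$ constructed in Theorem~\ref{th:sds_well_defined}: (i) quasi-Markovianity in the sense of \cite[Def.~3.7]{hairer_ohashi_07_ergodic} (with the kernel $w\mapsto \mcP_s^{V,U}(w,\,\cdot\,)$ only required to be defined on $\supp(\bP_H)$), (ii) the stochastic strong Feller property of Definition~\ref{def:strong_feller} at some time $t>0$, and (iii) topological irreducibility in the sense of Definition~\ref{def:irreducible} at some time $s>0$. Combined with the existence of at least one invariant measure, which is provided by a standard Krylov--Bogoliubov argument starting from the weakly stationary solution obtained in Corollary~\ref{cor:weak_existence} and lifted to the space $\mbR^d\times \mcW^-_{\gamma,\delta}$ in the natural way, these three facts will yield the theorem.

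For irreducibility (Section~\ref{subsec:irreduc}) the strategy is to perform a Girsanov change of measure on the underlying Brownian motion $B$ over a bounded time interval, steering the driven process from $x$ into an arbitrary neighbourhood $U$. The representation \eqref{eq:wh_decomp_with_awop} together with the fact that the Cameron--Martin shifts in $\mfH_H^+$ are parametrised via $\msD^{\nicefrac12-H}$ allows one to absorb the singular drift $g(X_r)+u(X_r)$, reduced to a smooth drift by Proposition~\ref{prop:perturbation_continuity_slow} along an approximating sequence $g^n\to g$, into a finite-energy perturbation of $B$ on $[0,s]$; the resulting Radon--Nikodym density is strictly positive, giving $\mcQ_s(x,w;U\times \mcW^-_{\gamma,\delta})>0$ for $\bP_H$-a.e.\ $w$. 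Quasi-Markovianity (Section~\ref{subsec:quasimarkov}) is then obtained by essentially transcribing the Hairer--Ohashi proof, with the operator $\msA$ of \eqref{eq:A_op_def_1} and Proposition~\ref{prop:A_map_bounded} replacing the $H>\nicefrac12$ constructions there.

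The principal difficulty, and the content of Section~\ref{subsec:strong_feller}, is the strong Feller property for genuinely distributional $g\in\mcC^\alpha(\mbR^d;\mbR^d)$. The plan is to establish a Bismut--Elworthy--Li formula of the schematic form
\begin{equation*}
    \nabla_x \mbE^B_w[\varphi(X_t^x)] = \mbE^B_w\!\left[\varphi(X_t^x)\,\frac{1}{t}\int_0^t (\mfD_r X_t^x)^{-1} J_r^x \cdot \dd B_r\right],
\end{equation*}
where $J^x$ and $\mfD X^x$ denote the Jacobian and Malliavin derivative of the solution flow of \eqref{eq:singular_sde}, and to combine it with Proposition~\ref{prop:perturbation_continuity_slow} to bound the total variation distance $\|R_t^*\bar{\mcQ}\delta_{x,w}-R_t^*\bar{\mcQ}\delta_{y,w}\|_{\TV}$ by a function $\ell(x,y,w)$ of the desired form. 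Both $J^x$ and $\mfD X^x$ are controlled by exponentials of processes of the type $\int_0^t \nabla g(X_r)\dd r$, so the crucial ingredient is the uniform Gaussian exponential moment bound established in Lemma~\ref{lem:averaged_field_tightness}. This bound, together with the a priori estimate Corollary~\ref{cor:theta_global_closed_bound} used to verify hypothesis \eqref{eq:averaged_field_tightness_assumption}, makes integration by parts in the Malliavin sense licit for an approximating smooth sequence $g^n\to g$ and permits passing to the limit without the stopping time argument of \cite{hairer_ohashi_07_ergodic}. The hard step will be establishing the continuity of $\ell$ at points $(x,y,w)\in\mbR^{2d}\times \supp(\bP_H)$, which will rely on the stability Lemma~\ref{lem:local_stability} and Proposition~\ref{prop:perturbation_continuity_slow} applied to the disintegrated measure associated with the Mandelbrot--Van Ness decomposition \eqref{eq:wh_decomp_with_awop}.

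With (i)--(iii) verified, Theorem~\ref{th:doob_khasminskii} delivers at most one invariant measure up to the equivalence relation of \cite[Def.~2.4]{hairer_ohashi_07_ergodic}, and combining this with existence and \cite[Lem.~2.5]{hairer_ohashi_07_ergodic} yields a unique ergodic invariant measure for $\bar{\mcQ}$.
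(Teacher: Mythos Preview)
Your proof sketch is correct and follows essentially the same route as the paper: verify strong Feller (via a Bismut--Elworthy--Li argument built on Lemma~\ref{lem:averaged_field_tightness} and the Jacobian bounds), irreducibility (via Girsanov), and quasi-Markovianity (by adapting Hairer--Ohashi to $H<\tfrac12$), then invoke Theorem~\ref{th:doob_khasminskii}, with existence supplied by the tightness of Theorem~\ref{th:tightness_main}. The only cosmetic differences are that the paper uses a smooth cutoff $h$ supported in $(0,1)$ rather than the factor $\tfrac1t$ in the BEL representation, and for continuity of $\ell$ in $w$ it passes through a dedicated Jacobian-continuity result (Proposition~\ref{prop:jacobian_continuity} via Lemma~\ref{lem:nablag_with_perturb}) rather than invoking Lemma~\ref{lem:local_stability} directly.
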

\noindent Appealing to Theorem \ref{th:doob_khasminskii}, uniqueness of the ergodic invariant measure follows if we show that $\hat{\Phi}_T$ and the functional $\bar{Q}$ satisfy 
\begin{enumerate}
\item stochastic strong Feller (Definition \ref{def:strong_feller}),
\item topological irreducibility (Definition \ref{def:irreducible}),
\item quasi-Markovianity (\cite[Def.~3.7]{hairer_ohashi_07_ergodic}).
\end{enumerate}
We prove strong-Feller in Section \ref{subsec:strong_feller}, irreducibility in Sections~\ref{subsec:irreduc}  and quasi-Markovianity in Section~\ref{subsec:quasimarkov}. Existence of ergodic invariant measures in the sense of Definition~\ref{def:invariant_ergodic_measures} follows from Theorem~\ref{th:tightness_main} and the arguments of \cite[Lem.~2.20]{hairer_ohashi_07_ergodic}.
\subsection{The Strong--Feller Property}\label{subsec:strong_feller}%
We obtain the strong Feller property in the sense of Definition~\ref{def:strong_feller}, for the semi-group \eqref{eq:semi_group}. In order to prove that the resulting modulus of continuity satisfies our relaxed continuity requirement (compare with \cite[Def.~3.3]{hairer_ohashi_07_ergodic}) we first require some preparatory results on the Jacobian associated to \eqref{eq:strong_feller_sde}.

We retain the assumptions on $(\beta,\eps)$ from \eqref{eq:defn_beta_eps} and a fixed tuple $(\Omega,\mcF,\mbF^B,\mbP,B)$ as described at the start of  Section~\ref{sec:sds_construction}.

Fixing $T\in (0,+\infty)$ and formally differentiating \eqref{eq:strong_feller_sde} in the initial condition $x\in \mbR^d$ we define the matrix valued process
\begin{equation}\label{eq:jacobian_definition}
  t\mapsto   \mfJ_t \coloneqq D_x \hat{\Phi}_T(x,\msR_T w)(t) \in \mbR^{d\times d},
\end{equation}
and define its inverse $\mfJ^{-1}$ to be such that $\mfJ\mfJ^{-1} = \mfJ^{-1}\mfJ \equiv \mbI_{d}$. For $\psi \in C_T\mbR^d$ let us also introduce the notation
\begin{equation*}
    \mfJ^\psi \coloneqq D_x \hat{\Phi}_T(x,(\msR_T w)+\psi), \quad \mfJ^{\psi;-1} \mfJ^{\psi} = \mfJ^\psi \mfJ^{\psi;-1} = \mbI_d.
\end{equation*}
so that in particular $\mfJ = \mfJ^0$. We then have the following composite result, which is a variant of \cite[Lem.~6.3]{galeati_gerencser_22_subcritical} allowing for the unbounded drift $u$ as well as the singular $g$.
\begin{lem}\label{lem:jacboian_bounds}
   Let $T>1$, $(\alpha,H)\in (-\infty,0)\times (0,\nicefrac{1}{2})$ satisfy \eqref{eq:stability_singular_parameters}, $g \in C_b^{\infty}(\mbR^d;\mbR^d)$  and $u:\mbR^d\to \mbR^d$ satisfy Assumption~\ref{ass:confining_assumption}, $x\in \mbR^d$, $\psi \in \mcC^{\gamma}_{T}$ and $X^\psi$ be the associated strong solution to \eqref{eq:psi_sde}. Then, for any $t\in [0,T]$ it holds that
   \begin{align}
    \mfJ^\psi_t = &\, \mbI_{d} + \int_0^t \left(\nabla g(X^\psi_s) + \nabla u(X^\psi_s) \right) \mfJ^\psi_s \dd s, \label{eq:jacobian}\\ 
    \mfJ^{\psi;-1}_t = &\, \mbI_d - \int_0^t \mfJ^{\psi;-1}_s \left(\nabla g(X^\psi_s) + \nabla u(X^\psi_s) \right) \dd s. \label{eq:jacobian_inverse}
\end{align}
 In addition, for any $m\in [1,\infty)$, $(\beta,\eps)$ satisfying \eqref{eq:defn_beta_eps} and $w \in \mcW^-_{\gamma,\delta}$, there exists a constant \\ $C\coloneqq C(\alpha,H,d,\lambda,\|g\|_{\mcC^{\alpha}_x},T,\eps,m, |x|, \norm{ \psi}_{\mcC^\gamma_{T;\lambda}}, \norm{ w }_{\mcC^\gamma_{T;\lambda}}) > 0$ such that for all 
    \begin{equation}\label{eq:jacobian_estimates}
        \sup_{t\in [0,T]} \left( \Big\|\|\mfJ^
        \psi\|_{\mcC^{\beta-\eps}_{[t,t+1\wedge(T-t)]}}\Big\|_{\llmw}  +  \Big\|\, \| \mfJ^{\psi;-1} \|_{\mcC^{\beta-\eps}_{[t,t+1\wedge (T-t)]}} \, \Big\|_{\llmw} \right) \leq C
    \end{equation}
    In particular the identities \eqref{eq:jacobian} and \eqref{eq:jacobian_inverse} remain valid for $g\in \mcC^{\alpha}(\mbR^d;\mbR^d)$ and $X^\psi$ the associated strong solution. 
\end{lem}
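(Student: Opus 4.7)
\emph{Plan.} The proof proceeds in two stages: establishing the identities and estimates in the smooth case, then passing to the distributional setting by an approximation argument.

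For $g \in C^\infty_b(\mbR^d;\mbR^d)$, the identity \eqref{eq:jacobian} follows by classical stochastic flow theory on differentiating \eqref{eq:psi_sde} with respect to the initial condition; the identity \eqref{eq:jacobian_inverse} is obtained by differentiating $\mfJ^\psi_t \mfJ^{\psi;-1}_t \equiv I_d$ in time and rearranging. For the H\"older--Gaussian bounds \eqref{eq:jacobian_estimates}, I would introduce
\[
\mfA^\psi_t \coloneqq \int_0^t \nabla g(X^\psi_r) \, \dd r.
\]
By Corollary~\ref{cor:theta_global_closed_bound}, the slow remainder $\theta^\psi = X^\psi - W^H - \psi$ satisfies hypothesis \eqref{eq:averaged_field_tightness_assumption} of Lemma~\ref{lem:averaged_field_tightness}, so that $\llbracket \mfA^\psi \rrbracket_{\mcC^{\beta-\eps}_{[t,t+1]}}$ has Gaussian conditional moments under $\mbE^B_w$, uniformly in $t \in [0,T]$ and controlled by $\|g\|_{\mcC^\alpha_x}$ only. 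Because $\beta - \eps > \nicefrac{1}{2}$, equation \eqref{eq:jacobian} can be recast as a linear Young equation driven by $\mfA^\psi$, and expanding the increment $\mfJ^\psi_{s,t}$ by the standard Young estimate together with $\|\nabla u\|_{L^\infty_x} \leq \lambda$ gives, for $|t-s| \leq 1$,
\[
|\mfJ^\psi_{s,t}| \ls \bigl(\llbracket \mfA^\psi \rrbracket_{\mcC^{\beta-\eps}_{[s,t]}} + \lambda\bigr) \|\mfJ^\psi\|_{L^\infty_{[s,t]}} |t-s|^{\beta-\eps} + \llbracket \mfA^\psi \rrbracket_{\mcC^{\beta-\eps}_{[s,t]}} \llbracket \mfJ^\psi \rrbracket_{\mcC^{\beta-\eps}_{[s,t]}} |t-s|^{2(\beta-\eps)}.
\]
This is precisely the setting of Corollary~\ref{cor:gaussian_gronwall}, with $C_1 = \llbracket \mfA^\psi \rrbracket_{\mcC^{\beta-\eps}_{[0,T]}}$, $C_2 = C_1 + \lambda$, $C_3 = 1$, $\alpha_1 = 2\alpha_2 = 2(\beta - \eps)$, $\eta = \beta - \eps$, and delivers the required Gaussian conditional moment bound on $\|\mfJ^\psi\|_{\mcC^{\beta-\eps}_{[t,t+1\wedge(T-t)]}}$ uniformly in $t \in [0,T]$. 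The bound on $\mfJ^{\psi;-1}$ follows from the identical argument applied to \eqref{eq:jacobian_inverse}, whose effective driver is again $\mfA^\psi$ up to the $\nabla u$ contribution.

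To extend to $g \in \mcC^\alpha(\mbR^d;\mbR^d)$, take $g^n \in C^\infty_b$ with $g^n \to g$ in $\mcC^\alpha$ (Remark~\ref{rem:holder_besov_embedding}). Lemma~\ref{lem:local_stability} gives $X^{\psi;n} \to X^\psi$ in $L^m_\Omega C_{[0,T]}$, and by the smooth case the Jacobians $\mfJ^{\psi;n},\mfJ^{\psi;n;-1}$ satisfy \eqref{eq:jacobian_estimates} uniformly in $n$. A variant of Lemma~\ref{lem:averaged_field_tightness} applied with $g^n - g^m$ in place of $g$ (mimicking the uniqueness step of Proposition~\ref{prop:sde_well_posed}) shows that $\int_0^\cdot \nabla g^n(X^{\psi;n}_r) \mfJ^{\psi;n}_r \, \dd r$ is Cauchy in $\mcC^{\beta-\eps}_{[0,T]} L^m_\Omega$; its limit defines $\int_0^\cdot \nabla g(X^\psi_r) \mfJ^\psi_r \, \dd r$. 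Passing to the limit in \eqref{eq:jacobian} and \eqref{eq:jacobian_inverse} along the approximating sequence yields the identities in the distributional case, and the uniform bounds \eqref{eq:jacobian_estimates} are preserved by lower semicontinuity of the relevant norms.

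\emph{Main obstacle.} The delicate step is the coupling between $\nabla g^n(X^{\psi;n}_\cdot)$ and $\mfJ^{\psi;n}_\cdot$ in the approximation: both factors depend on $n$, and closing the Cauchy estimate on their product requires propagating \emph{Gaussian}, rather than merely exponential, tails on the averaged gradient field. This is precisely the sharp output of Lemma~\ref{lem:averaged_field_tightness}; an exponential-Gronwall bound would not produce a uniform in $n$ moment estimate on the H\"older norm of the Jacobian-weighted integral and the argument would not close.
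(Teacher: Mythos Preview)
Your proposal is correct and follows essentially the same route as the paper: define the averaged gradient process $\mfA^\psi_t = \int_0^t \nabla g(X^\psi_r)\,\dd r$, invoke Lemma~\ref{lem:averaged_field_tightness} together with Corollary~\ref{cor:theta_global_closed_bound} to get Gaussian control on $\llbracket \mfA^\psi\rrbracket_{\mcC^{\beta-\eps}}$, read \eqref{eq:jacobian} as a linear Young equation, and close with Corollary~\ref{cor:gaussian_gronwall}. The only cosmetic differences are that the paper sets $C_3=0$ (your own Young estimate has no inhomogeneous term, so $C_3=1$ is a harmless over-count) and that the paper additionally carries a factor $\lambda\|X^\psi\|_{L^\infty_{[t,t+1]}}$ in $C_1,C_2$ from its treatment of the $\nabla u$ contribution, whereas your direct use of $\|\nabla u\|_{L^\infty_x}\le\lambda$ is cleaner; for the extension to $g\in\mcC^\alpha$ the paper simply appeals to uniformity of all constants in $\|g\|_{\mcC^\alpha_x}$ and stability (Lemma~\ref{lem:local_stability}), which your more detailed Cauchy-sequence argument makes explicit.
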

\begin{proof}
   For $g\in \mcC^{\infty}_b(\mbR^d;\mbR^d)$ the identities \eqref{eq:jacobian} and \eqref{eq:jacobian_inverse} are satisfied by classical arguments and the a priori bound of Theorem~\ref{th:tightness_main}, see for example the proof of \cite[Lem.~5.1]{hairer_ohashi_07_ergodic}.

\noindent In order to prove \eqref{eq:jacobian_estimates}, let $t\in [0,T-1]$ and we see that for $(r,s)\in [t,t+1]^2_{\leq}$ we have
\begin{equation*}
    \mfJ^\psi_{r,s} = \int_r^s \left(\nabla g(X^\psi_a) + \nabla u(X^\psi_a) \right) \mfJ^\psi_a \dd a.
\end{equation*}
Defining the process $\mbR_+ \ni  t \mapsto A^\psi_t \coloneqq \int_0^t \nabla g(X^\psi_r)\dd r$ a combination of Lemma~\ref{lem:averaged_field_tightness} and Corollary~\ref{cor:theta_global_closed_bound} shows that $\mbP$-a.s. we have that $\llbracket A^\psi \rrbracket_{\mcC^{\beta-\eps}_{[t,t+1]}} <+\infty$, so that we may interpret the integral between $\nabla g(X^\psi)$ and $\mfJ^\psi$ as a Young integral. By standard estimates on Young integrals (e.g \cite[Eq.~(4.3)]{friz_hairer_20_book}) $\mbP$-a.s. it holds that
\begin{equation*}
    \left|\int_r^s \nabla g(X^\psi_a)  \mfJ^\psi_a \dd a\,  \right| \leq \|\mfJ^\psi\|_{L^\infty_{[t,t+1]}} \llbracket A^\psi \rrbracket_{\mcC^{\beta-\eps}_{[t,t+1]}} |s-r|^{\beta-\eps} + \|\mfJ^\psi\|_{\mcC^{\beta-\eps}_{[t,t+1]}}\llbracket A^\psi \rrbracket_{\mcC^{\beta-\eps}_{[t,t+1]}} |s-r|^{2(\beta-\eps)}.
\end{equation*}
On the other hand, due to Assumption~\ref{ass:confining_assumption}, we directly have
    \[ \left| \int_r^s \nabla u(X_a) \mfJ^\psi_a \dd a \, \right| \leq \lambda \int_s^r \absv{ X^\psi_a \mfJ^\psi_a } \dd a \leq \lambda \|X^\psi\|_{L^\infty_{[t,t+1]}} \left( \big\|\mfJ^\psi\big\|_{L^\infty_{[t,t+1]}} \absv{s-r} + \big\|\mfJ^\psi \big\|_{\mcC^{\beta-\eps}_{[t,t+1]}} \absv{s-r}^{1+\beta-\eps} \right). \] 
So that we conclude by an application of Lemma~\ref{lem:averaged_field_tightness} and Corollary~\ref{cor:theta_global_closed_bound} to verify Gaussianity of $\big\llbracket A^\psi \big\rrbracket_{\mcC^{\beta-\eps}_{[t,t+1]}}$, with respect to $\mbE_w$, uniformly over $t\in [0,T-1]$ and then Corollary~\ref{cor:gaussian_gronwall} with
    \[ C_1 = C_2 = \lambda \|X^\psi\|_{L^\infty_{[t,t+1]}} + \big\llbracket A^\psi \big\rrbracket_{\mcC^{\beta-\eps}_{[t,t+1]}}, \quad  C_3 = 0, \quad \alpha_1 = 2\alpha_2 = 2(\beta-\eps)\quad \text{and}\quad   \eta = \beta-\eps.
    \]
The proof for $\mfJ^{\psi;-1}$ is essentially identical. Finally, the extension to $g\in \mcC^{\alpha}(\mbR^d;\mbR^d)$ follows since all constants are uniform in $\|g\|_{\mcC^{\alpha}_x}$ and we have stability of solutions with respect to $g$, recalling Lemma~\ref{lem:local_stability}.
\end{proof}
The following lemma establishes continuity of the gradient averaged field along solutions with H\"older continuous perturbations. Let us recall our notation \eqref{eq:uniform_holder_moment} meaning that the H\"older semi-norm below is only ever computer on intervals of length $1$ or less inside $[0,T]$.

\begin{lem}\label{lem:nablag_with_perturb}
 Let $T>0$, $(\alpha,H)\in (-\infty,0)\times (0,\nicefrac{1}{2})$ satisfy \eqref{eq:stability_singular_parameters}, $g \in C_b^{\infty}(\mbR^d;\mbR^d)$  and $u:\mbR^d\to \mbR^d$ satisfy Assumption~\ref{ass:confining_assumption}, $x\in \mbR^d$, $\psi \in \mcC^{\gamma}_{T}$ and $X^\psi$ be the associated strong solution to \eqref{eq:psi_sde}. Then, there exists a $\chi_1\in (0,1)$ such that for all $\chi \in (0, \chi_1)$,  $\beta' \coloneqq \beta'(\chi) \in \left( \inv{2}, \beta \right)$ for $\beta$ as in \eqref{eq:defn_beta_eps} there exists a constant $C \coloneqq C(\alpha,H,d,\lambda,\eps,m,\|g\|_{\mcC^{\alpha}_x},|x|, \norm{ \psi}_{\mcC^\gamma_{T}}, \norm{ \msA w }_{\mcC^\gamma_{T}}, T)>0$ such that
\begin{equation*}
    \left\llbracket \int_0^{\,\cdot\,}  \left(\nabla g ( X^{\psi}_r )  - \nabla g ( X_r ) \right) \dd r\,  \right\rrbracket_{\llmw\mcC^{\beta'}_T} \leq C \left( \norm{ \psi }_{L^{\infty}_T}^{\chi^2} \vee     \norm{\psi }^\chi_{\mcC^\gamma_{T}} \right).
\end{equation*} 
\end{lem}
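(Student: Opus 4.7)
The plan is to adapt the stochastic sewing argument in the proof of Proposition~\ref{prop:perturbation_continuity_slow} with $g$ replaced throughout by $\nabla g \in \mcC^{\alpha-1}$, which consumes an extra unit $H$ of regularity and tightens every admissible exponent. I start from the decomposition
\begin{equation*}
\int_0^{\,\cdot\,} \bigl(\nabla g(X^\psi_r) - \nabla g(X_r)\bigr) \dd r = I^{(1)} + I^{(2)},
\end{equation*}
with
\begin{align*}
I^{(1)}_t &\coloneqq \int_0^t \bigl[\nabla g(\theta^\psi_r + W^H_r + \psi_r) - \nabla g(\theta^\psi_r + W^H_r)\bigr] \dd r, \\
I^{(2)}_t &\coloneqq \int_0^t \bigl[\nabla g(\theta^\psi_r + W^H_r) - \nabla g(\theta_r + W^H_r)\bigr] \dd r.
\end{align*}
For each piece I would define the natural conditional germ $\int_s^t \mbE^B_{w,s}[\,\cdots\,] \dd r$, invoke Proposition~\ref{prop:condexp_heat_kernel} and Lemma~\ref{lem:lnd_to_heat_kernel} to realise it as an integral of $\mcG^H_{|r-s|}(\nabla g)$ evaluated along conditioned increments, and thereby trade the distributional regularity of $\nabla g$ for a power of $|r-s|$ via the heat-kernel estimate \eqref{eq:heat_kernel_fbm}. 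Lemma~\ref{lem:SSL} will then reconstruct the corresponding $I^{(i)}$, and Kolmogorov continuity (as at the end of the proof of Proposition~\ref{prop:perturbation_continuity_slow}) upgrades pointwise $L^m_{\Omega;w}$-bounds to the $\mcC^{\beta'}_T$ H\"older bound.

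For $I^{(1)}$ the two-point bound on the germ $A^{(1)}_{s,t}$ comes from writing the integrand as the $\psi_r$-shift of $\mcG^H_{|r-s|}(\nabla g) \in \mcC^\chi$ and using $\|\mcG^H_t(\nabla g)\|_{\mcC^\chi} \lesssim t^{-H(\chi+1-\alpha)} \|g\|_{\mcC^\alpha}$, producing $|A^{(1)}_{s,t}| \lesssim \|g\|_{\mcC^\alpha}\|\psi\|_{L^\infty_T}^\chi |t-s|^{\beta - H\chi}$, which has time exponent above $\nicefrac{1}{2}$ for $\chi$ small. For the three-point difference, the standard reconditioning split yields a term that vanishes under $\mbE^B_{w,s}$ by the tower property, leaving a mixed second difference of $\mcG^H(\nabla g)$ in the two arguments $\psi_r$ and $\theta^\psi_s - \theta^\psi_u$; controlling this by the Besov estimate $\|\mcG^H(\nabla g)\|_{B^{s_1+s_2}_{\infty,\infty}} \lesssim |r-u|^{-H(s_1+s_2+1-\alpha)} \|g\|_{\mcC^\alpha}$ together with the H\"older control of $\theta^\psi$ in $\mcC^{1+\alpha H}_T L^m_{\Omega;w}$ from Corollary~\ref{cor:theta_global_closed_bound} yields a three-point bound of order $\|g\|_{\mcC^\alpha}\|\psi\|_{L^\infty_T}^{s_1}|t-s|^{\beta(1+s_2)-Hs_1}$. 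For $s_1=\chi$ small and $s_2>0$ suitably chosen this exceeds $1$, as required by Lemma~\ref{lem:SSL}, and produces an $L^m_{\Omega;w}$-bound on $I^{(1)}$ scaling like $\|\psi\|_{L^\infty_T}^\chi$, which is dominated by $\|\psi\|_{\mcC^\gamma_T}^\chi$.

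For $I^{(2)}$ the analogous germ $B_{s,t}\coloneqq \int_s^t\mbE^B_{w,s}[\nabla g(\theta^\psi_s+W^H_r)-\nabla g(\theta_s+W^H_r)]\dd r$ is handled in the same way, with the H\"older shift now being $\theta^\psi_s - \theta_s$ rather than $\psi_r$. After pulling the $\chi$-th power outside the $L^m_{\Omega;w}$ norm by Jensen's inequality and invoking Proposition~\ref{prop:perturbation_continuity_slow} (applied with its internal interpolation exponent chosen equal to $\chi$, which requires $\chi$ to lie below the constant $\chi_0$ furnished by that proposition) one bounds $\|\theta^\psi - \theta\|_{L^{m\chi}_{\Omega;w}L^\infty_T} \lesssim \|\psi\|_{\mcC^\gamma_T} + \|\psi\|_{L^\infty_T}^{\chi}$, and raising this to the $\chi$-th power produces precisely $\|\psi\|_{\mcC^\gamma_T}^\chi + \|\psi\|_{L^\infty_T}^{\chi^2}$ --- the composition of two interpolations is the origin of the $\chi^2$ in the statement. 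The three-point estimate for $\delta B_{s,u,t}$ reduces to a mixed second difference of $\mcG^H(\nabla g)$ in the two arguments $\theta^\psi_s - \theta^\psi_u$ and $(\theta^\psi-\theta)_s-(\theta^\psi-\theta)_u$, both of which are H\"older controlled, the first by Corollary~\ref{cor:theta_global_closed_bound} and the second by the H\"older-norm part of Proposition~\ref{prop:perturbation_continuity_slow}; the same composition of $\chi$-powers propagates.

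The main obstacle is the combinatorial bookkeeping: $\chi_1(\alpha,H)$ must be chosen small enough that, simultaneously for every $\chi \in (0,\chi_1)$, the heat-kernel admissibility $H(s+1-\alpha) < 1$ holds for the Besov indices $s$ entering each estimate, the germ time exponents $\beta - H\chi$ and $\beta(1+s_2) - H\chi$ stay strictly above $\nicefrac{1}{2}$ and $1$ respectively, and $\chi$ lies within the range of applicability of Proposition~\ref{prop:perturbation_continuity_slow}. Since $\beta = 1+H(\alpha-1) > \nicefrac{1}{2}$ under Assumption~\ref{ass:stability_singularity} and all inequalities are strict at $\chi = 0$, these constraints close simultaneously for some $\chi_1 > 0$, after which $\beta'(\chi)$ is chosen in $(\nicefrac{1}{2}, \beta - H\chi)$, and the Kolmogorov argument delivers the stated $\mcC^{\beta'}_T$-H\"older bound with the factor $\|\psi\|_{L^\infty_T}^{\chi^2} \vee \|\psi\|_{\mcC^\gamma_T}^\chi$.
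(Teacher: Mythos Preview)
Your splitting into $I^{(1)}$ and $I^{(2)}$ and the direct mixed-second-difference treatment of $I^{(1)}$ are sound, and indeed more direct than what the paper does: after conditioning, $\mbE^B_{w,s}\delta A^{(1)}_{s,u,t}$ is a genuine rectangular second difference in the shifts $\psi_r$ and $\theta^\psi_{s,u}$, and your exponent $\beta(1+s_2)-Hs_1$ is right.

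The gap is in $I^{(2)}$. The conditioned three-point difference of your germ $B$ reads
\[
\int_u^t \mbE^B_{w,s}\Big[F(\theta^\psi_s)-F(\theta_s)-F(\theta^\psi_u)+F(\theta_u)\Big]\dd r, \qquad F(\cdot)=\nabla g(\cdot+W^H_r),
\]
and the four base points $\theta^\psi_s,\theta_s,\theta^\psi_u,\theta_u$ do \emph{not} form a parallelogram, so this is not a mixed second difference in your claimed shifts $\theta^\psi_{s,u}$ and $(\theta^\psi-\theta)_{s,u}$. Any decomposition into a true mixed second difference leaves a residual first difference in $(\theta^\psi-\theta)_{s,u}$; combining the H\"older control from (the proof of) Proposition~\ref{prop:perturbation_continuity_slow} with the heat-kernel estimate gives this residual a time exponent of at best $2\beta-H-\eps$, which need not exceed $1$ across the full range of Assumption~\ref{ass:stability_singularity}. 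So the direct route does not close.

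The paper resolves this by \emph{interpolation} rather than a direct second-difference bound. It works with a single germ (no $I^{(1)}/I^{(2)}$ split) and interpolates, with exponent $\chi$, between: (i) a crude bound on $\mbE^B_{w,s}\delta\bar A_{s,u,t}$ of order $|t-s|^{\beta-H\chi}$ obtained from the two-point estimate via the triangle inequality, carrying the factor $\|\psi\|_{L^\infty}^\chi+\|\theta^\psi-\theta\|_{L^\infty}^\chi$ (the latter then estimated by Proposition~\ref{prop:perturbation_continuity_slow}); and (ii) a sharp bound of order $|t-s|^{2\beta}$ obtained by treating $F(\theta^\psi_s)-F(\theta^\psi_u)$ and $F(\theta_s)-F(\theta_u)$ as separate first differences controlled by Corollary~\ref{cor:theta_global_closed_bound}, carrying no $\psi$-decay. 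The interpolated time exponent $(2-\chi)\beta-O(\chi^2)$ exceeds $1$ for small $\chi$, and the nested $\chi$-powers produce exactly $\|\psi\|_{L^\infty}^{\chi^2}\vee\|\psi\|_{\mcC^\gamma_T}^\chi$. You can repair your $I^{(2)}$ argument by the same interpolation, but not by a direct mixed-second-difference estimate.
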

\begin{proof}
We will follow similar steps as in the proof of  Proposition~\ref{prop:perturbation_continuity_slow}. For $(s,t)\in [0,T]^2_\leq$, let us set
\begin{equation*}
    \bar{A}_{s,t} := \int_s^t \E^B_{w,s} \left[\nabla g(\wh_r + \theta^{\psi}_s + \psi_r)-\nabla g(\wh_r + \theta_s) \right]\, \dd r.
\end{equation*}
Applying Lemma~\ref{lem:lnd_to_heat_kernel}, \eqref{eq:heat_kernel_fbm} with $\eta = \alpha$ and $\kappa = 1+\chi$, for some $\chi>0$ which will be restricted later, and the triangle inequality, we directly see that 
\begin{equation*}\label{eq:gradient_bara_space_cont}
 \| \bar{A}_{s,t}\|_{\llmw} \ls_{\chi,\alpha,H,d} \norm{ g }_{\mcC^{\alpha}_x} \absv{t-s}^{1+H(\alpha-1-\chi)} \left( \norm{ \psi }_{L^{\infty}_T}^{\chi} + \big\| \big|\theta^{\psi} - \theta\big|^{\chi} \big\|_{L^{\infty}_T\llmw} \right).
\end{equation*}
where $\theta^\psi = X^\psi - W^H - \psi$ (and $\theta$ similarly). By Proposition \ref{prop:perturbation_continuity_slow} and Jensen's inequality there exists a $C\coloneqq C(\alpha,H,d,m,\lambda, \|g\|_{\mcC^{\alpha}_x},\chi,|x|,\norm{\msA w }_{\mcC^\gamma_{T}} )>0$ such that
\begin{equation*}\label{eq:theta_psi_diff}
\big\| \big|\theta^{\psi} - \theta \big|^{\chi} \big\|_{L^{\infty}_T\llmw} \leq \big\| \theta^{\psi} - \theta \big\|_{L^{(1\lor\chi)m}_{\Omega;w}L^{\infty}_T} \leq C \left(   \norm{ \psi }_{\mcC^\gamma_{T}} + \norm{ \psi}_{L^{\infty}_T}^{\chi} \right)
\end{equation*}
Hence, applying the triangle inequality to obtain the second estimate below, we have
\begin{align}
        \| \bar{A}_{s,t}\|_{\llmw} \leq &\, C \norm{ g }_{\mcC^{\alpha}_x} \absv{t-s}^{1+H(\alpha-1-\chi)} \left( \norm{ \psi }_{L^{\infty}_T}^{\chi} +     \norm{ \psi }_{\mcC^\gamma_{T}} \right), \label{eq:averaged_field_stable_2_diff}\\
        \|\delta  \bar{A}_{s,u,t}\|_{\llmw} \leq &\, C \norm{ g }_{\mcC^{\alpha}_x} \absv{t-s}^{1+H(\alpha-1-\chi)} \left( \norm{ \psi }_{L^{\infty}_T}^{\chi} +     \norm{\psi }_{\mcC^\gamma_{T}} \right). \label{eq:averaged_field_stable_3_diff_1}
\end{align}
Inequality \eqref{eq:averaged_field_stable_2_diff} verifies \eqref{eq:base_SSL_bnd_1} of Lemma~\ref{lem:SSL}. We will verify \eqref{eq:base_SSL_bnd_2} by an interpolation argument as employed in the proof of Proposition~\ref{prop:perturbation_continuity_slow}. To do so, we expand $\mbE^B_{w,s}\delta \bar{A}_{s,u,t}$ (by analogy see for example \eqref{eq:three_point_bara}) and again apply Lemma~\ref{lem:lnd_to_heat_kernel} and \eqref{eq:heat_kernel_fbm} with $\eta = \alpha-1$ and $\kappa =1$ to see that
\begin{equation}\label{eq:averaged_field_stable_3_diff_2}
 \norm{ \EE^B_{w,s} \delta  \bar{A}_{s,u,t} }_{\llmw} \ls \absv{t-s}^{2\beta} \norm{ g }_{\mcC^{\alpha}_x} \left( \big\| \theta^{\psi} \big\|_{\mcC^{\beta+H}_T\llmw} + \big\| \theta \big\|_{\mcC^{\beta+H}_T\llmw} \right).
\end{equation}
Therefore, interpolating between \eqref{eq:averaged_field_stable_3_diff_1} and \eqref{eq:averaged_field_stable_3_diff_2}, for any $\chi \in (0,1)$ and applying Corollary~\ref{cor:theta_global_closed_bound} (noting that $\beta+H = 1+\alpha H$) it holds that
\begin{align}
    \norm{ \EE_s^B \delta \bar{A}_{s,u,t} }_{\llmw} \lesssim_C  &\, \absv{t-s}^{(2 - \chi)\beta - \chi^2}  \norm{ g }_{\mcC^{\alpha}_x} \left( \norm{ \psi }_{L^{\infty}_T}^{\chi} +     \norm{\psi }_{\mcC^\gamma_{T}} \right)^\chi\left( \big\| \theta^{\psi} \big\|_{\mcC^{\beta+H}_T \llmw} + \norm{ \theta }_{\mcC^{\beta+H}_T \llmw} \right)^{1-\chi} \notag \\
   \lesssim_C &\, \absv{t-s}^{(2 - \chi)\beta - \chi^2}\norm{ g }_{\mcC^{\alpha}_x} \left( \norm{ \psi }_{L^{\infty}_T}^{\chi^2} +     \norm{\psi }^\chi_{\mcC^\gamma_{T}} \right) \left( 1 + \norm{ \psi }_{\mcC^\gamma_{T}} + \norm{\mcA  w }_{\mcC^\gamma_{T}} \right)^{1-\chi},\label{eq:averaged_field_stable_3_diff_2}
\end{align}
where the constant $C>0$ has the same dependencies as above. Taking $\chi_1$ to be the largest positive number satisfying the inclusion
\begin{equation*}
   q_\beta (\chi)\coloneqq   (2 - \chi)\beta - \chi^2 >1.
\end{equation*}
To see that such a $\chi_1$ exists, observe that $q_{\nicefrac{1}{2}}(0) = 1$ while $q_1(0) = 2$ (recall $\beta \in (\nicefrac{1}{2},1)$), $\beta \mapsto q_\beta(0)$ is increasing and $\mbR_+ \ni \chi \mapsto q_\beta (\chi)$ is decreasing for all $\beta>0$. Therefore, choosing $\chi \in (0,\chi_1)$ the estimate \eqref{eq:averaged_field_stable_3_diff_2} verifies \eqref{eq:base_SSL_bnd_2} and so we may apply Lemma~\ref{lem:SSL} (checking that $\int_0^{\,\cdot\,}  \big(\nabla g ( X^{\psi}_r )  - \nabla g ( X_r ) \big) \dd r$ is the unique process obtained from $\bar{A}$ in the usual manner). We conclude by an application of the Kolmogorov continuity criterion (e.g. \cite[Thm.~A.11]{friz_victoir_10_multidimensional}), where a dependence of the constant $C$ on $T$ arises from patching together the estimate obtained above on intervals of length one to cover all of $[0,T]$.
 \end{proof}
We are now able to show continuity of the Jacobian with respect to deterministic perturbations.
\begin{prop}\label{prop:jacobian_continuity}
Let $T>0$, $(\alpha,H)\in (-\infty,0)\times (0,\nicefrac{1}{2})$, satisfy Assumption~\ref{ass:stability_singularity}, $g\in \mcC^{\alpha}(\mbR^d;\mbR^d)$ and $u:\mbR^d\to \mbR^d$ satisfy Assumption~\ref{ass:confining_assumption}, $\psi \in C^\gamma_{T}\mbR^d$ and $X^\psi$ be the associated strong solution to \eqref{eq:psi_sde} obtained in Proposition~\ref{prop:sde_well_posed}. Then, there exists a $\chi_1 \coloneqq \chi_1(\alpha,H)\in (0,1)$ such that for any $m\in [1,\infty)$, $\chi \in (0,\chi_1)$,  $\beta'\coloneqq \beta'(\chi) \in [\beta-\eps,\beta)$  and $w\in \mcW^{-}_{\gamma,\delta}$ there exists a constant \\ $C\coloneqq C(\alpha,H,d,\lambda, \|g\|_{\mcC^\alpha_x},m,\beta',T, \norm{ \msA w }_{\mcC^\gamma_{T;\lambda}}, \norm{ \psi }_{\mcC^\gamma_{T;\lambda}})>0$ for which
\begin{equation*}
	\big\| \mfJ^{\psi} - \mfJ \big\|_{\llmw\mcC^{\beta'}_T} \leq C \left( \norm{ \psi }_{L^{\infty}_T}^{\chi^2} \vee     \norm{\psi }^\chi_{\mcC^\gamma_{T}} \right).
	\end{equation*}
\end{prop}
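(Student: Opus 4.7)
The plan is to first prove the estimate for $g\in C_b^\infty(\mbR^d;\mbR^d)$ using the classical Jacobian identity \eqref{eq:jacobian}, and then pass to $g\in \mcC^{\alpha}(\mbR^d;\mbR^d)$ by approximation via Lemma~\ref{lem:local_stability}, exploiting the fact that the constants arising in the smooth case depend on $g$ only through $\|g\|_{\mcC^{\alpha}_x}$. Setting $D_t\coloneqq \mfJ^\psi_t-\mfJ_t$ and subtracting the identity \eqref{eq:jacobian} for $\mfJ$ from the one for $\mfJ^\psi$ yields, for $(s,t)\in[0,T]_\leq^2$ with $|t-s|\leq 1$,
\begin{equation*}
D_{s,t} = \int_s^t \bigl[\nabla g(X^\psi_r)+\nabla u(X^\psi_r)\bigr]\, D_r\, \dd r + \int_s^t \bigl[\nabla g(X^\psi_r)-\nabla g(X_r)\bigr]\, \mfJ_r\, \dd r + \int_s^t \bigl[\nabla u(X^\psi_r)-\nabla u(X_r)\bigr]\, \mfJ_r\, \dd r.
\end{equation*}
The first integral is interpreted, in its $g$-piece, as a Young integral of $D$ against $A^\psi_t\coloneqq \int_0^t \nabla g(X^\psi_r)\, \dd r$, plus the bounded drift contribution $\lambda \int_s^t |D_r|\, \dd r$; by Lemma~\ref{lem:averaged_field_tightness}, $\llbracket A^\psi\rrbracket_{\mcC^{\beta-\eps}_{[t,t+1\wedge (T-t)]}}$ has Gaussian conditional moments uniformly in $t\in[0,T]$. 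Standard Young estimates then bound this piece by $(\llbracket A^\psi\rrbracket_{\mcC^{\beta-\eps}}+\lambda)(\|D\|_{L^\infty}|t-s|^{\beta-\eps}+\llbracket D\rrbracket_{\mcC^{\beta-\eps}}|t-s|^{2(\beta-\eps)})$.

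The key forcing is the path $R_t\coloneqq \int_0^t [\nabla g(X^\psi_r)-\nabla g(X_r)]\, \dd r$, for which Lemma~\ref{lem:nablag_with_perturb} provides the exponents $\chi_1\in(0,1)$ and $\beta'=\beta'(\chi)\in(\nicefrac{1}{2},\beta)$ such that, for every $\chi\in(0,\chi_1)$,
\begin{equation*}
\|R\|_{L^m_{\Omega;w}\mcC^{\beta'}_T} \leq C\bigl(\|\psi\|_{L^\infty_T}^{\chi^2}\vee \|\psi\|^\chi_{\mcC^\gamma_T}\bigr).
\end{equation*}
Interpreting $\int_s^t [\nabla g(X^\psi_r)-\nabla g(X_r)]\, \mfJ_r\, \dd r$ as a Young integral of $\mfJ$ against $R$ and using the H\"older bounds on $\mfJ$ from Lemma~\ref{lem:jacboian_bounds} yields a forcing contribution of the required form $C_3 |t-s|^{\beta'}$. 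The analogous $\nabla u$-difference term is controlled using the Lipschitz regularity of $\nabla u$ (with $|\nabla u(X^\psi_r)-\nabla u(X_r)|\lesssim |X^\psi_r-X_r|$), together with the $L^\infty$ bound on $\mfJ$ from Lemma~\ref{lem:jacboian_bounds} and the stability bound $\|X^\psi-X\|_{L^\infty_TL^{m'}_{\Omega;w}}\lesssim \|\psi\|_{\mcC^\gamma_T}+\|\psi\|_{L^\infty_T}^\chi$ of Proposition~\ref{prop:perturbation_continuity_slow}; this contribution is of order $|t-s|$ with prefactor of the same structural form as above, and is absorbed into $C_3$.

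Setting $C_1\coloneqq \llbracket A^\psi\rrbracket_{\mcC^{\beta-\eps}_T}$, $C_2\coloneqq C_1+\lambda$, $C_3$ the aggregated forcing of the preceding paragraph, $\alpha_1 = 2\alpha_2 = 2(\beta-\eps)$, $\alpha_3 = \beta'$ and $\eta = \beta-\eps$, the assumptions of Corollary~\ref{cor:gaussian_gronwall} are verified (Gaussianity of $C_1,C_2$ via Lemma~\ref{lem:averaged_field_tightness}, Gaussianity of $C_3$ via Lemma~\ref{lem:nablag_with_perturb} and Proposition~\ref{prop:perturbation_continuity_slow}, and $\alpha_3>\eta$ by construction), giving the claimed bound on $\|D\|_{L^m_{\Omega;w}\mcC^{\beta'}_T}$. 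The main obstacle is Lemma~\ref{lem:nablag_with_perturb} itself: a naive application of the heat-kernel averaging only yields $L^\infty_T$-smallness of $R$ in $\psi$, not H\"older smallness, which would prevent the Young framework from closing. Circumventing this requires the interpolation step in the proof of that lemma, comparing two different regularities of $\mbE_{w,s}^B\delta\bar A$ via stochastic sewing, and it is precisely this interpolation that forces the H\"older exponent $\beta'$ to lie strictly below $\beta$ and the $L^\infty$-exponent of $\psi$ to be $\chi^2$ rather than $\chi$. The extension from smooth $g$ to $g\in\mcC^\alpha$ is then routine, using $\|g^n-g\|_{\mcC^\alpha_x}\to 0$ together with the uniformity of all the estimates in $\|g\|_{\mcC^\alpha_x}$.
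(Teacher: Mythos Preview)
Your approach is essentially the same as the paper's, differing only in a symmetric choice of decomposition: you pair the averaged field $A^\psi=\int\nabla g(X^\psi)$ with $D=\mfJ^\psi-\mfJ$ and the difference $R=\int[\nabla g(X^\psi)-\nabla g(X)]$ with $\mfJ$, whereas the paper pairs $A^2=\int\nabla g(X)$ with $D$ and the difference $A^1$ with $\mfJ^\psi$. Both groupings lead to the same Young estimates, the same application of Lemma~\ref{lem:nablag_with_perturb} for the forcing, the same Gaussian control via Lemma~\ref{lem:averaged_field_tightness}, and closure via Corollary~\ref{cor:gaussian_gronwall}.

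One point to flag: you write ``the Lipschitz regularity of $\nabla u$ (with $|\nabla u(X^\psi_r)-\nabla u(X_r)|\lesssim |X^\psi_r-X_r|$)'', but Assumption~\ref{ass:confining_assumption} only gives $|\nabla u|\leq\lambda$, not Lipschitz continuity of $\nabla u$. The paper's proof appears to make the same leap (its $C_3$ contains the term $\lambda\|X^\psi-X\|_{L^\infty_T}\|\mfJ^\psi\|_{L^\infty_T}$ without further comment), so your argument matches the paper here as well; but strictly speaking this step requires an additional hypothesis on $u$ that is not stated.
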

\begin{proof}
From \eqref{eq:jacobian}, for $(s,t)\in [0,T]^2_\leq$, the triangle inequality, Assumption ~\ref{ass:confining_assumption} we directly have the $\mbP$-a.s. estimate
	\begin{align*}
	\left| \mfJ^{\psi}_{s,t} - \mfJ_{s,t} \right| \lesssim  \,&\,  \left| \int_s^t \left(\nabla g(X^\psi_r) -\nabla g(X_r)  \right)\mfJ^{\psi}_r\dd r  \right| + \left| \int_s^t \nabla g(X_r)\big(\mfJ^{\psi}_r - \mfJ_r \big)\dd r  \right| \\
	& \, + \lambda |t-s| \left(\|X^\psi-X\|_{L^\infty_{T}}\|\mfJ^\psi\|_{L^\infty_{T}} +   \|X\|_{L^\infty_{T}} \|\mfJ^\psi-\mfJ\|_{L^\infty_{[s,t]}}   \right).
	\end{align*}
	Treating the first two integrals as Young integrals we have (e.g. by \cite[Eq.~(4.3)]{friz_hairer_20_book}) for any $|t-s|\leq 1$
	\begin{align*}
		\left| \int_s^t \left(\nabla g(X^\psi_r) -\nabla g(X_r)  \right)\mfJ^{\psi}_r\dd r  \right|  \leq & \, \left\llbracket  A^1 \right\rrbracket_{\mcC^{\beta'}_{[s,t]}}\left( \|\mfJ^\psi\|_{L^\infty_{[s,t]}} |t-s|^{\beta'} +\llbracket\mfJ^\psi \rrbracket_{\mcC^{\beta-\eps}_{[s,t]}}|t-s|^{2\beta'}  \right)\\
		\lesssim & \, \left\llbracket  A^1 \right\rrbracket_{\mcC^{\beta'}_{T}} \|\mfJ^\psi\|_{\mcC^{\beta-\eps}_{T}}  |t-s|^{\beta'},
	\end{align*}
	with $A^1_t \coloneqq  \int_0^t \left(\nabla g(X^\psi_r) -\nabla g(X_r)  \right) \dd r$ and
	\begin{align*}
		\left| \int_s^t \nabla g(X_r)\big(\mfJ^{\psi}_r - \mfJ_r \big)\dd r  \right| \leq\,&\,   \left\llbracket  A^2 \right\rrbracket_{\mcC^{\beta-\eps}_{[s,t]}}\left( \|\mfJ^\psi-\mfJ\|_{L^\infty_{[s,t]}} |t-s|^{\beta-\eps} +\llbracket\mfJ^\psi-\mfJ \rrbracket_{\mcC^{\beta-\eps}_{[s,t]}}|t-s|^{2(\beta-\eps)}  \right)\\
		\leq \, &\,  \left\llbracket  A^2 \right\rrbracket_{\mcC^{\beta-\eps}_{T}}\left( \|\mfJ^\psi-\mfJ\|_{L^\infty_{[s,t]}} |t-s|^{\beta'} +\llbracket\mfJ^\psi-\mfJ \rrbracket_{\mcC^{\beta-\eps}_{[s,t]}}|t-s|^{2\beta'}  \right)
	\end{align*}
	with $A^2_t \coloneqq \int_0^t \nabla g(X_r) \dd r$. Hence, appealing to Theorem~\ref{th:tightness_main}, Lemma~\ref{lem:nablag_with_perturb}, Lemma~\ref{lem:averaged_field_tightness}   and Lemma~\ref{lem:jacboian_bounds} we check that the conditions of Corollary~\ref{cor:gaussian_gronwall} are satisfied with 
	\begin{align*}
		& C_1 = \left\llbracket \int_0^{\cdot} (\nabla g )(X_r) \dd r \right\rrbracket_{\mcC^{\beta-\eps}_{T}}, \quad C_2 = \left\llbracket \int_0^{\cdot} (\nabla g)(X_r) \dd r \right\rrbracket_{\mcC^{\beta-\eps}_{T}} + \lambda \|X\|_{L^\infty_T}, \\
		&C_3 =  \bigg\llbracket \int_0^{\,\cdot\,} \left( \nabla g(X^\psi_r) -\nabla g(X_r)  \right) \dd r  \bigg\rrbracket_{\mcC^{\beta'}_T}   \|\mfJ^\psi\|_{\mcC^{\beta-\eps}_T} + \lambda \|X^\psi-X\|_{L^\infty_{T}} \|\mfJ^\psi\|_{L^\infty_{T} }, \\
		 &\alpha_1 = 2\beta', \quad  \alpha_2 = \alpha_3 = \beta',\quad  \eta = \beta - \eps.
	\end{align*}
\noindent Therefore, we conclude by an application of H\"older's inequality, Proposition~\ref{prop:perturbation_continuity_slow} and Lemma~\ref{lem:nablag_with_perturb}. 
\end{proof}
We are now in a position to prove the strong Feller property. For $T>1$, let us introduce the suggestive notation $\mcW^+_{[1,T];\gamma,\delta}$ for the space of paths in $\mcW^+_{\gamma,\delta}$ restricted to $[1,T]$. Then for $\varphi_T : \mcW^+_{[1,T];\gamma,\delta} \to \RR$, let recall $\bar{\mcQ} \varphi_T$ as defined in \eqref{eq:semi_group}.
\begin{thm}\label{th:strong_feller_proof}
Let $(\alpha,H)\in (-\infty,0)\times (0,\nicefrac{1}{2})$, satisfy Assumption~\ref{ass:stability_singularity}, $g\in \mcC^{\alpha}(\mbR^d;\mbR^d)$ and $u:\mbR^d\to \mbR^d$ satisfy Assumption~\ref{ass:confining_assumption}.  Then, there exists a measurable map  $\ell: \mcW^-_{\gamma,\delta} \to \RR$, continuous at any $w\in \supp(\bP_H)$ and such that for $\bP^-_H$-a.e. $w \in \mcW^-_{\gamma,\delta}$, any $x,\,y\in \mbR^d$, $T>1$ and $\varphi_T \in C_b(\mcW^+_{[1,T];\gamma,\delta};\mbR)$ such that $\|\varphi_T\|_{C_b} \leq 1$, then it holds that
\begin{equation}
\left| \bar{\mcQ} \varphi_T(x,w) - \bar{\mcQ} \varphi_T(y, w) \right| \leq C(w) \absv{x-y}. 
	\end{equation}
\end{thm}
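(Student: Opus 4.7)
\emph{Strategy.} The plan is to derive a Bismut--Elworthy--Li (BEL) type integration by parts formula for the disintegrated semi-group
\begin{equation*}
(\bar{\mcQ}\varphi_T)(x,w) \,=\, \mbE_w\!\left[\varphi_T(X|_{[1,T]})\right],
\end{equation*}
expressing the spatial gradient as an expectation of $\varphi_T$ against an $x$-dependent random variable $\mfM_T(x,w)$ with $\|\mfM_T(x,w)\|_{L^1_{\Omega;w}}\leq C(w)$ for some modulus $w\mapsto C(w)$ continuous on $\supp(\bP_H)$. Given such a formula, the bound $\|\varphi_T\|_{C_b}\leq 1$ immediately yields $|\bar{\mcQ}\varphi_T(x,w)-\bar{\mcQ}\varphi_T(y,w)|\leq C(w)|x-y|$ upon integrating along the straight segment joining $x$ and $y$, proving the claim with $\ell(w)\coloneqq C(w)$.

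\emph{BEL formula for smooth drifts.} By Remark~\ref{rem:holder_besov_embedding} I first approximate $g$ by $\{g^n\}\subset C^\infty_b(\mbR^d;\mbR^d)$ converging in $\mcC^{\alpha'}(\mbR^d;\mbR^d)$ for some $\alpha'\in(1-\tfrac{1}{2H},\alpha)$, and denote the associated classical solutions and Jacobians by $X^n$ and $\mfJ^n$. Under the disintegrated measure $\msH(w,\,\cdot\,)$, the identity $\wh_t=c_H\int_0^t(t-r)^{H-1/2}\,\dd B_r+(\msA w)_t$ together with the $\mcF^B_0$-measurability of $\msA w$ yields, after Malliavin differentiation of the smooth SDE, the key bridge
\begin{equation*}
\mfD_r X^n_t \,=\, c_H\,(t-r)^{H-1/2}\,\mfJ^n_t\,(\mfJ^n_r)^{-1},\qquad 0\leq r\leq t,
\end{equation*}
between the spatial derivative $\mfJ^n$ and the Malliavin derivative with respect to $B$. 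Fix a deterministic weight $\pi\in L^2([0,1])$ with $c_H\int_0^1(1-r)^{H-1/2}\pi(r)\,\dd r=1$ (e.g.\ $\pi(r)=c_\pi(1-r)^{1/2-H}$). Using the flow identity $\mfJ^n_t=\mfJ^n_{1,t}\mfJ^n_1$ for $t\geq 1$, the Malliavin identity at $t=1$, and the disintegrated Clark--Ocone formula \eqref{eq:clark_ocone_disintegrated}, a Skorokhod duality argument produces
\begin{equation*}
\nabla_x \mbE_w\!\left[\varphi_T(X^n|_{[1,T]})\right] \,=\, \mbE_w\!\left[\varphi_T(X^n|_{[1,T]})\,\mfM_T^n(x,w)\right], \quad \mfM_T^n(x,w) \,=\, \int_0^1 \pi(r)\bigl[(\mfJ^n_r)^{-1}\bigr]^{\!\top}\delta B_r,
\end{equation*}
with $\delta B$ the Skorokhod integral under $\msH(w,\,\cdot\,)$.

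\emph{Uniform bounds and passage to the limit.} By the standard Meyer--Watanabe bound for the Skorokhod integral,
\begin{equation*}
\|\mfM_T^n(x,w)\|_{L^2_{\Omega;w}} \,\lesssim\, \|\pi\|_{L^2_{[0,1]}} \Big(\|(\mfJ^n)^{-1}\|_{L^\infty_{[0,1]}L^2_{\Omega;w}} + \|\mfD (\mfJ^n)^{-1}\|_{L^2_{[0,1]^2}L^2_{\Omega;w}}\Big).
\end{equation*}
The first factor is controlled uniformly in $n$ by Lemma~\ref{lem:jacboian_bounds}, while the Malliavin derivative of $(\mfJ^n)^{-1}$, computed by differentiating \eqref{eq:jacobian_inverse}, is estimated via the averaging bound of Lemma~\ref{lem:averaged_field_tightness} applied to $\int_0^\cdot\nabla g^n(X^n_r)\,\dd r$ combined with the Gr\"onwall type statement of Corollary~\ref{cor:gaussian_gronwall}. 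The resulting estimate is uniform in $n$ and depends on $w$ only through $\|\msA w\|_{\mcC^\gamma_{T;\lambda}}$. Passage to the limit $n\to\infty$ then combines: convergence $X^n\to X$ in $L^m_{\Omega;w}C_T$ (Lemma~\ref{lem:local_stability}); convergence $\mfJ^n\to\mfJ$, $(\mfJ^n)^{-1}\to \mfJ^{-1}$ in $\mcC^{\beta-\eps}_TL^m_{\Omega;w}$, obtained by reading \eqref{eq:jacobian}--\eqref{eq:jacobian_inverse} as Young integrals and applying Corollary~\ref{cor:gaussian_gronwall} together with the drift-stability bound of Lemma~\ref{lem:nablag_with_perturb}; and uniform integrability of $\mfM_T^n$. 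The continuity of $w\mapsto \ell(w)$ on $\supp(\bP_H)$ is inherited from the continuous dependence of each ingredient on $\msA w$, via Proposition~\ref{prop:jacobian_continuity} and Proposition~\ref{prop:perturbation_continuity_slow}.

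\emph{Main obstacle.} The critical technical input is the uniform-in-$n$ Gaussian integrability of $\int_0^\cdot \nabla g^n(X^n_r)\,\dd r$ in the presence of the unbounded coercive drift $u$, which is what allows one both to interchange expectation and Malliavin differentiation in deriving the BEL formula and to uniformly control the Malliavin derivative of $(\mfJ^n)^{-1}$. This is precisely what Lemma~\ref{lem:averaged_field_tightness} combined with Theorem~\ref{th:tightness_main} provides, bypassing the stopping-time truncation employed in \cite{hairer_ohashi_07_ergodic} for regular drifts and avoiding the smallness restriction $\|g\|_{\mcC^\alpha_x}\ll \lambda$ used by \cite{dareiotis_haress_le_25_uniform}.
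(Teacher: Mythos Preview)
Your BEL formula with the deterministic scalar weight $\pi$ does not hold for path functionals $\varphi_T(X|_{[1,T]})$ when the driving noise is fractional. Working with the Malliavin derivative relative to the underlying Brownian motion $B$, one has $\mfD_r X_t = c_H(t-r)^{H-1/2}\mfJ_t(\mfJ_r)^{-1}$ for $0\le r\le t$, so that for $u$ supported on $[0,1]$ and $t\ge 1$,
\[
\langle \mfD X_t, u\rangle_{L^2} \;=\; \mfJ_t\int_0^1 c_H(t-r)^{H-1/2}(\mfJ_r)^{-1}u_r\,\dd r.
\]
For the integration-by-parts step to produce $\nabla_x\mbE_w[\varphi_T(X|_{[1,T]})]$ this must equal $\mfJ_t\xi$ \emph{simultaneously for every} $t\in[1,T]$, i.e.\ $\int_0^1 c_H(t-r)^{H-1/2}(\mfJ_r)^{-1}u_r\,\dd r=\xi$ for all such $t$. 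With your choice $u_r=\pi(r)\mfJ_r\xi$ the left side becomes $\big(\int_0^1 c_H(t-r)^{H-1/2}\pi(r)\,\dd r\big)\xi$, which is genuinely $t$-dependent for $H\neq\tfrac12$; your normalisation only fixes the value at $t=1$. The appeal to the flow identity $\mfJ_t=\mfJ_{1,t}\mfJ_1$ does not rescue this: in the non-Markovian setting $\mfJ_{1,t}$ and $X|_{[1,T]}$ still depend on $B|_{[0,1]}$ through the fBm kernel, so one cannot factor the problem through the time-$1$ marginal as in the Brownian case.

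The paper circumvents this precisely by differentiating in the Cameron--Martin space of the fBm itself, where the additive structure gives the clean variation-of-constants identity $\mfK^v_t=\mfJ_t\int_0^t\mfJ_s^{-1}\,\dd v_s$ (equation \eqref{eq:malliavin_by_jacobian}). Taking $v^\xi_t=\int_0^t h_s\mfJ_s\xi\,\dd s$ with $h$ smooth, supported in $(0,1)$ and $\int_0^1 h=1$, one obtains $\mfK^{v^\xi}_t=\mfJ_t\xi$ for \emph{all} $t\ge1$ by direct cancellation of $\mfJ^{-1}\mfJ$. Malliavin integration by parts (via \cite[Thm.~5.2]{hairer_ohashi_07_ergodic}) then bounds the gradient by $\big(\mbE_w\|v^\xi\|^2_{\mfH^+_H}\big)^{1/2}\lesssim\big(\mbE_w\|\mfJ\|^2_{\mcC^{\beta-\eps}_{[0,1]}}\big)^{1/2}=:\ell(w)$, requiring only the Jacobian bounds of Lemma~\ref{lem:jacboian_bounds} and no control whatsoever of $\mfD\mfJ^{-1}$ --- an object your Skorokhod route would additionally need via the Meyer inequality, and which is not established anywhere in the paper. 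Continuity of $\ell$ on $\supp(\bP_H)$ is then read off directly from Proposition~\ref{prop:jacobian_continuity}.
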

\begin{proof}
We first fix $T>1$ and see that for any $\varphi_T \in W^{1,\infty}(\mcW^+_{[1,T];\gamma,\delta};\mbR)$ and $x,\,y\in \mbR^d$, it holds that 
\begin{equation}\label{eq:semi_group_difference_1}
     \bar{\mcQ} \varphi_T(x,w) -   \bar{\mcQ} \varphi_T(y,w) =\mbE_w \left[\,\int_0^1 \langle (D \varphi_T) (\hat{\Phi}_T(z_\theta,\msR_T w)),D_x \hat{\Phi}_T(z_\theta,\msR_Tw)\xi  \rangle_{L^2_{[1,T]}}  \dd \theta   \right] ,
\end{equation}
where we recall that $\mbE_w$ is a expectation over $\mcW^+_{\gamma,\delta}$ with respect to the measure $\msH(w,\, \cdot\, )$ and we set
\begin{equation*}
    z_\theta \coloneqq \theta x +(1-\theta)y\quad \text{and}\quad \xi \coloneqq x-y.
\end{equation*}
Similarly to the definition of Jacobian \eqref{eq:jacobian}, we define the Fr\'echet derivative of $\hat{\Phi}_T$ in the noise component in any direction $v\in \mfH^+_{H}$ as
\begin{equation*}
    t\mapsto \mfK^v_t \coloneqq D_{w} 
    \hat{\Phi}_T(x,\msR_T w+\psi)[v](t) \in \mbR^d,
\end{equation*}
which for $g\in C^\infty_b(\mbR^d;\mbR^d)$ and $X$ the associated strong solution to \eqref{eq:psi_sde}, is directly seen to satisfy, 
\begin{equation}\label{eq:noise_derivative}
    \mfK^v_t = \int_0^t \left(\nabla g(X_s) + \nabla u(X_s)\right) \mfK^v_s \dd s + v_t, \quad \text{for all }\, t\in [0,T].
\end{equation}
By a similar argument as the conclusion of the proof of Lemma~\ref{lem:jacboian_bounds} this remains valid for $g\in \mcC^{\alpha}(\mbR^d;\mbR^d)$ and $X$ the associated strong solution obtained in Proposition~\ref{prop:sde_well_posed}. Note that for $v\in \mfH^+_{[1,T];H}$ (see \eqref{eq:cameron_martin_positive} with the natural notational modification here) we have the identity 
\begin{equation}\label{eq:K_to_malliavin}
\langle \mfK^v,f\rangle_{L^2_T} = \langle \mfD \hat{\Phi}_T(x,\msR_T w)[v],f\rangle_{L^2_T}, \quad \text{for all } f \in L^2([0,T];\mbR^d),
\end{equation}
where $\mfD$ denotes the Mallaivin derivative (see for example \cite[Thm.~6.8]{galeati_gerencser_22_subcritical}). Appealing to \eqref{eq:jacobian}-\eqref{eq:jacobian_inverse} and \eqref{eq:noise_derivative} one observes the identity
\begin{equation}\label{eq:malliavin_by_jacobian}
    \mfK^v_t =\mfJ_t   \int_0^t \mfJ^{-1}_s \dd v_s. 
\end{equation}
This can be obtained for example by expanding $\frac{\dd}{\dd t}\left( \mfJ_t   \int_0^t \mfJ^{-1}_s \dd v_s \right)$. Arguing as in the proof of Lemma~\ref{lem:jacboian_bounds}, working from \eqref{eq:noise_derivative}, for $v\in \mfH^+_{T;H}\cap \mcC^{\eta}_T\mbR^d$ for some $\eta >\nicefrac{1}{2}$, there exists a $C \coloneqq C(\alpha,H,d,\lambda,\|g\|_{\mcC^{\alpha}_x},T,\eps,m)>0$ such that\footnote{If one works with spaces of $p$-variation, instead of H\"older regularity it is possible to obtain an analogous statement for generic $v\in \mfH^+_{H;T}$, see \cite[Thm.~6.8]{galeati_gerencser_22_subcritical}. Since, in the end, we will only need this bound for a $v\in W^{1,\infty}_T\mbR^d$ this restriction is of no consequence here.}
\begin{equation}\label{eq:k_v_estimates}
        \sup_{t\in [1,T]}  \left\|\|\mfK^
        v\|_{\mcC^{\eta \wedge (\beta-\eps)}_{[t,t+1\wedge (T-t)]}}\right\|_{L^m_\Omega} \leq C\|v\|_{\mcC^{\eta}_T\mbR^d}.
    \end{equation}
Hence, by stochastic Fubini we can re-write \eqref{eq:semi_group_difference_1}, for any $x,\,y\in \mbR^d$ and $w\in \supp(\bP^-_H)$ as
\begin{equation}\label{eq:semi_group_difference_2}
	\bar{\mcQ} \varphi_T(x,w) -   \bar{\mcQ} \varphi_T(y,w) =\,\int_0^1 \mbE_w \left[\langle (D \varphi_T) (\hat{\Phi}_T(z_\theta,\msR_T w)),\mfJ^\psi_T \xi  \rangle_{L^2_T}  \right] \dd \theta   .
\end{equation}
We fix some $h:[0,1]\to \mbR$, smooth, such that $\supp(h)\subseteq (0,1)$  and $\int_0^1 h_s \dd s =1$ and define the process 
\begin{equation*}
	t\mapsto v^\xi_t \coloneqq \int_0^t h_s \mfJ_s \xi \dd s.
\end{equation*}
It follows from Lemma~\ref{lem:jacboian_bounds} that $v^\xi\in W^{1,\infty}_T\mbR^d$ and by  \eqref{eq:malliavin_by_jacobian} one observes that
\begin{equation*}
	\mfK^{v^\xi}_t = \mfJ_t \int_0^t  \mfJ^{-1}_s (h_s \mfJ_s \xi) \dd s = \mfJ_t \xi \int_0^t h_s \dd s.
\end{equation*}
So that in particular, for $t\geq 1$, due to our assumptions on $h$,
\begin{equation}\label{eq:k_v_J_equal}
  \mfK^{v^\xi}_t = \mfJ_t \xi = (D_x \hat{\Phi}_T(z_\theta,\msR_T w)\xi)(t).
\end{equation}
Now, since we also assumed that the test function $\varphi_T$ only acts on the path of the noise restricted to $[1,T]$, applying \eqref{eq:k_v_J_equal} in the first line and then the chain rule for Malliavin derivatives, we have
\begin{align*}
	\bar{\mcQ} \varphi_T(x,w) -   \bar{\mcQ} \varphi_T(y,w) =&\, \int_0^1 \mbE_w \left[\langle (D (\varphi_T)(\hat{\Phi}_T(z_\theta,\msR_T w),\mfK^{v^\xi} \rangle_{L^2_T}  \right] \dd \theta\\
 =&\, \int_0^1 \mbE_w \left[\langle (D (\varphi_T)(\hat{\Phi}_T(z_\theta,\msR_T w),\mfD \hat{\Phi}_T(z_\theta,\,\cdot\,)[v^\xi] \rangle_{L^2_T}  \right] \dd \theta \\
 =&\, \int_0^1 \mbE_w \left[\langle (D (\varphi_T)(\hat{\Phi}_T(z_\theta,\msR_T w) \mfD \hat{\Phi}_T(z_\theta,\,\cdot\,),v^\xi \rangle_{L^2_T}  \right] \dd \theta\\
    = &\, \int_0^1 \mbE_w \left[\langle \mfD (\varphi_T\circ \hat{\Phi}_T)(z_\theta,\,\cdot\,),v^\xi \rangle_{L^2_T}  \right] \dd \theta.
\end{align*}
We now apply \cite[Thm.~5.2]{hairer_ohashi_07_ergodic} to obtain the estimate
\begin{align}
	\left| \bar{\mcQ} \varphi_T(x,w) -   \bar{\mcQ} \varphi_T(y,w) \right| \leq &\, \sup_{\theta\in [0,1]} \left| \mbE_w \left[\langle \mfD (\varphi_T\circ \hat{\Phi}_T)(z_\theta,\,\cdot\,),v^\xi \rangle_{L^2_T}  \right] \right| \notag \\
	&\leq \left(\mbE_w \left[\|v^\xi\|^2_{\mfH^+_{H;[1,T]}}\right]\right)^{\nicefrac{1}{2}} \underbrace{\sup_{t\in [1,T]}\big\|(\varphi_T\circ \hat{\Phi}_T)(z_\theta,\,\cdot\,)(t)\big\|_{L^2_\omega}}_{\leq 1}. \label{eq:final_strong_feller}
\end{align}
Appealing to the fact that $\frac{\dd}{\dd t} v^\xi|_{[1,+\infty)} =0$ (due to the choice of $h$), we see that for any $\gamma \in (0,\beta-\eps)$,
\begin{equation*}
	\|v^\xi\|^2_{\mfH^+_H} = \int_0^\infty  |\msD^{\nicefrac{1}{2}+H} v_t |^2 \dd t  \lesssim \bigg\| \frac{\dd}{\dd t}v^\xi \bigg\|^2_{\mcC^\gamma_{[0,1]} \mbR^d } = \big\| h \mfJ^\psi  \xi\big\|^2_{\mcC^{\gamma}_{[0,1]}\mbR^d} \lesssim_h \|\mfJ\|^2_{\mcC^{\beta-\eps}_{[0,1]}} |x-y|^2,
\end{equation*}
where the final bound is due to Lemma~\ref{lem:jacboian_bounds}, the properties of $h$ and in particular is independent of $T$. This allows us to define $\ell$ by setting
\[ \supp(\bP^-_H)\ni w\mapsto \ell(w) \coloneqq \left( \EE_w \norm{ \mfJ }^2_{\mcC^{\beta-\eps}_{[0,1]}} \right)^{1/2}.\]
 To show continuity of $\ell$ at $\bP_H^-$-a.e. $w\in \mcW^{-}_{\gamma,\delta}$ let us take $\mfv \in \mcW^-_{\gamma,\delta}$ and recalling the definition of $\msH$ (see \eqref{eq:disintegration_measure}) we observe that
 \begin{align*}
\EE_{w+\mfv} \big\| \mfJ \big\|^2_{\mcC^{\beta-\eps}_{[0,1]}} 
& = \int_{\mcW^+_{[0,1];\gamma,\delta}} \norm{ D_x \hat{\Phi}_1\left(x,z-z_0 \right)(t)}^2_{\mcC^{\beta-\eps}_{[0,1]}} \left( \left( \tau_{\msA\left(w + \mfv\right)} \circ \msD^{1/2-H} \right)^* \bW\right) (\dd z ) \\ 
& = \int_{\mcW^+_{[0,1];\gamma,\delta}} \norm{ D_x \hat{\Phi}_1\left(x,z-z_0 - (\msA \mfv)|_{[0,1]}\, \right)(t)}^2_{\mcC^{\beta-\eps}_{[0,1]}} \left( \left( \tau_{A w } \circ \msD^{1/2-H} \right)^* \bW\right) (\dd z)\\
& = \mbE_w \big\|\mfJ^{-\msA \mfv} \big\|^2_{\mcC^{\beta-\eps}_{[0,1]}}.
\end{align*}
 Hence, by the triangle inequality an application of \eqref{eq:jacobian_estimates} and Proposition \ref{prop:jacobian_continuity} (with $\beta'=\beta-\eps$) we have
\begin{align*}
    \left| \ell(w)^2 - \ell(w+\mfv)^2 \right|\leq \,  &\, \EE_w \left[\Big| \norm{ \mfJ }^2_{\mcC^{\beta-\eps}_{[0,1]}} -   \big\| \mfJ^{-\msA \mfv} \big\|^2_{\mcC^{\beta-\eps}_{[0,1]}}\Big|\right]\\
     \lesssim \, &\, \Big( \norm{ \mfJ }^2_{L^2_{\Omega;w}\mcC^{\beta-\eps}_{[0,1]}} + \big\| \mfJ^{-\msA \mfv} \big\|^2_{L^2_{\Omega;w}\mcC^{\beta-\eps}_{[0,1]}}\, \Big)  \big\| \mfJ - \mfJ^{-\msA \mfv} \big\|^2_{L^2_{\Omega;w}\mcC^{\beta-\eps}_{[0,1]}} \\
\leq \, & \, C \left( \norm{ \msA \mfv }_{L^{\infty}_T}^{\chi^2} \vee     \norm{\msA \mfv }^\chi_{\mcC^\gamma_{T}} \right)^2.
\end{align*}
for some $C\coloneqq C(\alpha,H,d,\lambda,\|g\|,m,\beta,\, \eps,\|w\|_{\mcC^\gamma_{T}},\|\psi\|_{\mcC^\gamma_T})>0$. Since $\msA$ is a bounded linear operator from $\mcW^-_{\gamma,\delta}\to \mcW^+_{\gamma,\delta}$ we conclude that $\ell(w+\mfv)\to \ell(w)$ as $\|\mfv\|_{\gamma,\delta}\to 0$.
\end{proof}
\subsection{Irreducibility}\label{subsec:irreduc}
Let us first introduce a variant of Girsanov theorem which applies to the Mandelbrot van--Ness representation of the fBm. Although the result is surely known, we could not find a self-contained reference amenable to our setting. See for comparison  \cite{NO02, galeati_gubinelli_20_Noiseless, CG16} where a similar result is shown for the Volterra representation of fBm. We begin with a short exposition of fractional integral and differential operators with negative exponents necessary for our analysis. 

Recall that  \eqref{eq:fractional_integral} and \eqref{eq:fractional_derivative} define $\msI^\alpha$ and $\msD^\alpha$ for $\alpha>0$. Given $f\in C^\infty([0,T];\mbR^d)\cap L^1([0,T];\mbR^d)$ and $\alpha \in (-1,0)$ we set
\begin{equation}\label{eq:frac_ops_negative_def}
    \msI^\alpha f = \partial_t \msI^{1+\alpha} f \quad \text{and}\quad \msD^{\alpha} f = \msD^{1+\alpha} \int_0^{\,\cdot\,} f_r \dd r.
\end{equation}
The fact that the right hand sides of both expressions in \eqref{eq:frac_ops_negative_def} are bona fide continuous maps follows from \cite[Thm.~4]{Pic11}. In addition the semi-group property remains, so that in particular (see \cite[Thm.~5]{Pic11}), for $\alpha \in (-1,1)$
\begin{equation}\label{eq:frac_dervi_inverses}
    \msI^\alpha \msD^\alpha f = \msD^\alpha \msI^\alpha f =f.
\end{equation}
Note that if $\alpha \in (-1,0)$, $f(0)=0$ and $f\in \mcC^{\alpha'}_T \mbR^d$ for $\alpha' > -\alpha$ one may write
\begin{equation}\label{eq:bar_msI_def}
\begin{aligned}
    (\msI^\alpha f)_t = (\partial_t \msI^{1+\alpha}f)_t = &\, \partial_t \left(\frac{1}{\Gamma(\alpha+2)} t^{1+\alpha} f(t) - \frac{1}{\Gamma(\alpha+1)} \int_0^t (t-s)^\alpha (f_t-f_s) \dd s\right) \\
    =&\, \frac{1}{\Gamma(\alpha)}\int_0^t (t-s)^{\alpha -1} f_s \dd s\\
    =&\, \frac{1}{\Gamma(\alpha+1)} \int_0^t (t-s)^{\alpha} \dd f_s\\
    \eqqcolon &\, (\bar{\msI}^\alpha f)_t.
    \end{aligned}
\end{equation}
We then have the following version of Girsanov's theorem.

\begin{lem}\label{lem:girsanov_applies}
Let $(\Omega, \mcF, \mbP, \mbF^B)$ be a filtered probability space with a two sided $\mbF^B$-adapted $\mbP$-Brownian motion $(B_t)_{t \in \RR}$, $g\in \mcC^\infty_b(\mbR^d;\mbR^d)$, $x\in \mbR^d$ and $(X_t)_{t \geq 0}$ be any solution to \eqref{eq:singular_sde} in the sense of Definition \ref{def:singular_sde}. Then, there exists a $T >0$, a probability measure $\mbQ$, equivalent to $\mbP$, and an $\mbF^B$-adapted $\mbQ$-Brownian motion $\tilde{B}$ such that under $\mbQ$ the process $(X_t)_{t\in[0,T]}$ is an fBm with representation \eqref{eq:fbm_mvn} relative to $\tilde{B}$ and for any $m\geq 1$ 
\begin{equation*}
    \EE \bigg[ \left(\frac{ \dd\mbP }{ \dd\mbQ}\right)^m \bigg]  \vee \EE \left[ \left(\frac{ \dd \mbQ }{ \dd \mbP}\right)^{m} \right] <+\infty. 
\end{equation*}
\end{lem}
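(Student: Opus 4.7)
My strategy is to choose a shift of the underlying two-sided Brownian motion $B$ that absorbs the drift $b_s \coloneqq g(X_s) + u(X_s)$ into the Mandelbrot van--Ness kernel, then apply Girsanov. Since the Radon--Nikodym derivative will depend only on $B|_{[0,T]}$, the negative-time part of $B$ (and hence the past component in \eqref{eq:fbm_mvn}) is left invariant and only the ``present'' piece of $W^H$ needs to be adjusted. I would first write $\tilde B_t = B_t + \int_0^t \kappa_s \, \dd s$ for an $\mbF^B$-adapted process $\kappa$ to be determined (note that $X$, hence $b$, is $\mbF^B$-adapted since $W^H$ is a deterministic functional of $B$). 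Then
\begin{equation*}
c_H \int_0^t (t-r)^{H-\nicefrac{1}{2}} \dd \tilde B_r = c_H \int_0^t (t-r)^{H-\nicefrac{1}{2}} \dd B_r + c_H \int_0^t (t-r)^{H-\nicefrac{1}{2}} \kappa_r \, \dd r,
\end{equation*}
so for $X$ to become an fBm in $\tilde B$ with the same past it suffices that $c_H \Gamma(H+\tfrac{1}{2}) \msI^{H+\nicefrac{1}{2}} \kappa = \int_0^{\,\cdot\,} b_s \, \dd s$ on $[0,T]$. Using the semigroup identity \eqref{eq:frac_dervi_inverses}, this is inverted explicitly by
\begin{equation*}
\kappa_t = \frac{1}{c_H\, \Gamma(H+\tfrac{1}{2})\, \Gamma(\tfrac{1}{2}-H)} \int_0^t (t-s)^{-\nicefrac{1}{2}-H} b_s \, \dd s,
\end{equation*}
where the kernel is locally integrable precisely because $H < \tfrac{1}{2}$.

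Next, I will bound $\int_0^T \kappa_s^2 \dd s$ pathwise. Since $g \in C_b^\infty$ and $u$ has at most linear growth by Assumption~\ref{ass:confining_assumption}, one has $|b_s| \lesssim_{\,\|g\|_{L^\infty_x},\, \lambda,\, |u(0)|} 1 + \|X\|_{L^\infty_{[0,T]}}$, whence an application of Cauchy--Schwarz inside the fractional integral defining $\kappa$ yields
\begin{equation*}
\int_0^T \kappa_s^2 \, \dd s \leq C(H, \lambda, \|g\|_{L^\infty_x}) \, T^{2-2H} \left( 1 + \|X\|_{L^\infty_{[0,T]}}^2 \right).
\end{equation*}
Theorem~\ref{th:tightness_main} provides Gaussian tails for $\|X\|_{L^\infty_{[0,T]}}$ with a threshold $\kappa_0 > 0$ depending only on the problem parameters. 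Choosing $T$ small enough (depending on $p$) therefore yields $\mbE_\mbP[\exp( p \int_0^T \kappa_s^2 \, \dd s)] < +\infty$ for every $p > 0$, a strengthened Novikov condition.

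Finally, I will set $Z_T \coloneqq \exp(-\int_0^T \kappa \dd B - \tfrac{1}{2} \int_0^T \kappa^2 \dd s)$ and $\dd \mbQ = Z_T \dd \mbP$. The strengthened Novikov bound directly gives $\mbE_\mbP[Z_T^m] < +\infty$ for every $m$ via the standard splitting $Z_T^m = \mcE(-m \int \kappa \dd B)_T \exp(\tfrac{m(m-1)}{2} \int \kappa^2 \dd s)$ and Cauchy--Schwarz; the symmetric bound on $Z_T^{-1}$ follows by writing $Z_T^{-1}$ as a $\mbQ$-martingale exponential of $\kappa$ with respect to $\tilde B$ and repeating the argument under $\mbQ$. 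Girsanov's theorem then gives that $\tilde B$ is an $\mbF^B$-adapted $\mbQ$-Brownian motion on $[0,T]$; extending it by $\tilde B|_{\mbR_-} = B|_{\mbR_-}$ (which remains Brownian under $\mbQ$ and is independent of $\tilde B|_{[0,T]}$ since $Z_T$ is $\sigma(B|_{[0,T]})$-measurable) produces a two-sided $\mbQ$-Brownian motion, and by construction of $\kappa$ the Mandelbrot van--Ness integral \eqref{eq:fbm_mvn} in $\tilde B$ equals $X - x$ on $[0,T]$. The main obstacle will be the unboundedness of $u$: although $g$ is smooth and bounded, $\kappa$ is not pathwise bounded, so the finiteness of $\mbE_\mbP[\exp(p \int \kappa^2 \dd s)]$ for arbitrarily large $p$ rests exactly on the sharp Gaussian tail estimate of Theorem~\ref{th:tightness_main}, at the price of restricting to short time $T$.
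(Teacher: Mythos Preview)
Your approach is essentially the same as the paper's: invert the Mandelbrot--van Ness kernel via a fractional derivative of the drift integral to define the shift, verify Novikov through a Gaussian-tail bound, and choose $T$ small. The paper estimates $\|h\|_{W^{1,2}_T}$ by bounding the H\"older seminorm of $t\mapsto \int_0^t b(X_r)\,\dd r$ through Corollary~\ref{cor:theta_global_closed_bound}, whereas you use the more elementary pointwise bound $|b_s| \lesssim 1 + \|X\|_{L^\infty_{[0,T]}}$ together with Theorem~\ref{th:tightness_main}; both routes work and yours is slightly more direct since it bypasses the H\"older regularity of the drift integral.

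One correction is needed: your claim that $Z_T$ is $\sigma(B|_{[0,T]})$-measurable is false, since $X$ (and hence $\kappa$) depends on $B|_{(-\infty,T]}$ through the past contribution to $W^H$ in \eqref{eq:fbm_mvn}. The conclusion that $\tilde B|_{\mbR_-} = B|_{\mbR_-}$ remains a $\mbQ$-Brownian motion independent of $\tilde B|_{[0,T]}$ is nevertheless correct, but the justification is different: Novikov makes $(Z_t)_{t\in[0,T]}$ a true $(\mcF^B_t)$-martingale with $Z_0=1$, so $\mbE_\mbP[Z_T\mid\mcF^B_0]=1$ and the $\mbQ$-law of $B|_{\mbR_-}$ coincides with its $\mbP$-law; independence from $\tilde B|_{[0,T]}$ then follows because Girsanov makes $\tilde B|_{[0,T]}$ an $(\mcF^B_t)$-Brownian motion under $\mbQ$, whose increments are automatically independent of $\mcF^B_0 \supseteq \sigma(B|_{\mbR_-})$.
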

\begin{proof}
For brevity let us set $b(x) = g(x) + u(x)$. The proof broadly follows  \cite[Sec.~3.2 \& 3.3]{NO02}, but with necessary modifications to treat the Mandelbrot van--Ness representation of  the fBm, \eqref{eq:fbm_mvn}. For $t\geq 0$ and some $\tilde{c}_H>0$, let us set
\begin{equation*}
    h_t \coloneqq \tilde{c}_H \msD^{H-\nicefrac{1}{2}} \left( \int_0^{\cdot} b\left( X_s \right) \dd s  \right) 
\end{equation*}
and define the process
\begin{equation}\label{eq:btilde_plush_eqb}
\tilde{B}_t \coloneqq \begin{cases}   B_t + h_t & t \geq 0, \\
B_t & t < 0. \end{cases}
\end{equation}
Note that since we assume $g\in C^\infty_b(\mbR^d;\mbR^d)$ it holds that $\{t\mapsto \int_0^t b(X_s)\dd s \} \in W^{1,\infty}_T\mbR^d$ so that the application of $\msD^{H-\nicefrac{1}{2}}$ is justified and results in a Lipschitz continuous process, see \eqref{eq:frac_ops_negative_def}. By definition, therefore, up to choosing $\tilde{c}_H$ appropriately, we have $    (\msI^{H-\nicefrac{1}{2}} h)_t = \int_0^t b(X_r)\dd r$. A combination of \eqref{eq:bar_msI_def}, \eqref{eq:frac_dervi_inverses} and \eqref{eq:btilde_plush_eqb} and H\"older regularity of the Brownian motion gives
\begin{equation*}
    (\bar{\msI}^{H-1/2} B) = (\msI^{H-1/2} B) = \msI^{H-1/2} \tilde{B} - \msI^{H-1/2} h = \msI^{H-1/2} \tilde{B} - \int_0^{\cdot} b(X_r) \dd r,
\end{equation*}
so that we have
\begin{equation}\label{eq:x_def_tilde}
X_t - \int_{-\infty}^0 G(t,r) \dd B_r = (\bar{\msI}^{H-1/2} B)_t + (\bar{\msI}^{H-1/2} h)_t = (\bar{\msI}^{H-1/2} \tilde{B})_t.
\end{equation}
We therefore check that Novikov's condition applies to $h$ so that we can find a new measure $\mbQ$ under which $\tilde{B}$ is a Brownian motion.

\noindent By the semi-group property of fractional derivatives (see \cite[Thm.~5]{Pic11})
\[ \partial_t \msD^{\alpha} = \msD^{2+\alpha} \msI^1 = \msD^{1+\alpha}. \]
Appealing to the Poincare inequality on $[0,T]$, the semi-group property of fractional and integer derivatives and characterization of their images (see \cite[Thm.~5]{Pic11}) and the triangle inequality, for some $\eta \in(H+\nicefrac{1}{2},1)$ and subsequently $\delta >0$ such that $\eta +\delta <1$,
\begin{equation}\label{eq:fractional_embedding_holder}
\begin{aligned}
    \norm{ h }_{W^{1,2}_{T}\mbR^d} \lesssim \left\| \msD^{H+\nicefrac{1}{2}} \int_0^{\,\cdot\,} b(X_s)\dd s \right\|_{L^2_{T}\mbR^d} \ls &\, \left\llbracket \int_0^{\,\cdot\,} b(X_r)\dd r \right\rrbracket_{\mcC^{\eta}_{T}\mbR^d} \\
    %
    %
    \leq & \,T^\delta \left\llbracket \int_0^{\cdot} g(X_r) \dd r +\int_0^{\cdot} u(X_r) \dd r\right\rrbracket_{\mcC^{\eta+\delta}_{[0,1]}\mbR^d} .
    \end{aligned}
\end{equation}  
Note that the spaces $\mathbb{H}^{\beta,0}([0,T];
\mbR^d)$ used in \cite[Thm.~5]{Pic11} are nothing but $\{f \in \mcC^{\beta}([0,T];\mbR^d)\,:\, f_0=0\}$. Choosing $\delta<0$ such that $ 1+\alpha H -2\delta \in (H+\nicefrac{1}{2},1)$ (this is always possible due to \eqref{eq:stability_singular_parameters}) we can apply Corollary~\ref{cor:theta_global_closed_bound} along with \cite[Lem.~A.2]{galeati_gerencser_22_subcritical} to see that there exists a $\kappa_0\coloneqq \kappa_0(d,\alpha,H,\lambda,\|g\|_{\mcC^{\alpha}_x},\delta) >0$ such that for all $\kappa \in (0,\kappa_0)$ such that 
\begin{align*}
    \mbE\left[\exp\left(\kappa \left\llbracket \int_0^{\cdot} g(X_r) \dd r +\int_0^{\cdot} u(X_r) \dd r\right\rrbracket_{\mcC^{\eta+\delta}_{[0,1]}\mbR^d}\right)\right] <\infty.
\end{align*}
Hence, in combination with \eqref{eq:fractional_embedding_holder} we see that choosing $T\in (0,1]$ sufficiently small as a function of $\kappa_0$ we ensure that
\begin{equation}\label{eq:novikov}
\EE \left[\exp\left( \inv{2} \norm{ h }_{W^{1,2}_T \mbR^d}^2 \right) \right] < \infty,
\end{equation}
so that Novikov's condition applies and the first claim is shown. To get the bound on Radon--Nikodym derivative we proceed in an analogous way to the proof of \cite[Thm.~14]{galeati_gubinelli_20_Noiseless} and exploit the estimate obtained on $\norm{ h }_{W^{1,2}_T\mbR^d}$  above. 
\end{proof}
As a result we have the following proof of topological irreducibility for the SDS constructed in Theorem~\ref{th:sds_well_defined}.
\begin{cor}
There exists a time $t > 0$ such that the SDS $\Lambda$ constructed in Theorem~\ref{th:sds_well_defined}, induced by \eqref{eq:singular_sde} is \emph{topologically irreducible} at time $t$ in the sense of Definition~\ref{def:irreducible}. 
\end{cor}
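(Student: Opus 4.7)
The objective is to produce a time $t > 0$ such that for every $x \in \mbR^d$, every non-empty open $U\subset \mbR^d$, and $\bP_H^-$-a.e.\ $w$, the quantity $\mcQ_t(x,w;U\times \mcW^-_{\gamma,\delta})$ is strictly positive. Unwinding \eqref{eq:Q_semi_group} and the definition \eqref{eq:stoch_sds_definition} of $\Lambda$, this amounts to showing that under $\msH(w,\cdot)$ (the law of the fBm on $\mbR_+$ conditioned on $W^H|_{\mbR_-}=w$) the strong solution of \eqref{eq:singular_sde} started at $x$ lands in $U$ at time $t$ with positive probability.

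The strategy is to use a Girsanov change of measure to reduce the question to one about pure fBm, whose conditional marginal at any positive time is Gaussian with full support. The first step is to extend Lemma~\ref{lem:girsanov_applies} from smooth $g$ to $g\in\mcC^\alpha(\mbR^d;\mbR^d)$: its proof relies only on the Novikov bound \eqref{eq:novikov}, and the driving quantity $\llbracket\int_0^\cdot b(X_r)\dd r\rrbracket_{\mcC^{\eta+\delta}_{[0,1]}}$ enjoys Gaussian exponential moments uniformly in smooth approximations of $g$ by Corollary~\ref{cor:theta_global_closed_bound} combined with the exponential Kolmogorov estimate of \cite[Lem.~A.2]{galeati_gerencser_22_subcritical}. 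Passing to the limit $g^n\to g$ in $\mcC^\alpha$ and using pathwise uniqueness from Proposition~\ref{prop:sde_well_posed}, this produces $T>0$ and an equivalent measure $\mbQ_{x,w}\sim\msH(w,\cdot)$ under which $X_t=x+\tilde W^H_t+(\msA w)_t$ on $[0,T]$, for some $\mbQ_{x,w}$-Brownian motion $\tilde B$ with $\tilde W^H_t= c_H\int_0^t(t-r)^{H-1/2}\dd \tilde B_r$.

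Second, under $\mbQ_{x,w}$ the random variable $X_T$ is Gaussian with mean $x+(\msA w)_T$ and covariance bounded below by $\tilde{c}_H T^{2H}I_d$ by Lemma~\ref{lem:lnd}, using that $\msA w$ is deterministic under $\msH(w,\cdot)$. Its support is therefore all of $\mbR^d$, so $\mbQ_{x,w}(X_T\in U)>0$ for every non-empty open $U$; by equivalence of measures this positivity passes to $\msH(w,\cdot)$, which is exactly the condition required by Definition~\ref{def:irreducible}.

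The main obstacle is to make the time $t$ universal---independent of $x$ and of $w$---while a direct application of Lemma~\ref{lem:girsanov_applies} produces only $T=T(x,w,\|g\|_{\mcC^\alpha},\lambda)$. To handle this I would fix an arbitrary $t>0$, partition $[0,t]$ into sub-intervals $[k\Delta,(k+1)\Delta]$ of equal length $\Delta$, and iterate the Girsanov construction conditionally on $\mcF^B_{k\Delta}$. The uniform-in-time Gaussian moments of $\|X_{k\Delta}\|$ supplied by Theorem~\ref{th:tightness_main}, combined with the $\Delta^{2\delta}$ prefactor in the bound \eqref{eq:fractional_embedding_holder} on $\norm{h}_{W^{1,2}_\Delta}$, allow $\Delta$ to be chosen independently of $k$ and of $x$ on the a.s.\ finite sub-level set $\{\|\msA w\|_{\mcC^\gamma_{T;\lambda}}\le R\}$; exhausting $R\to\infty$ then covers $\bP_H^-$-a.e.\ $w$. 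Multiplying the resulting Radon--Nikodym densities delivers equivalence of measures on the whole interval $[0,t]$, after which the Gaussian-support argument above goes through unchanged.
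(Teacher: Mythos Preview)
Your approach is essentially the same as the paper's: invoke the Girsanov change of measure from Lemma~\ref{lem:girsanov_applies}, so that under an equivalent measure the solution is distributed as (conditional) fBm, and then use full support of the Gaussian marginal. Your care about working under the conditional measure $\msH(w,\cdot)$ and about passing from smooth to singular $g$ is, if anything, more explicit than the paper's rather terse argument.

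The one substantive difference is your third paragraph. You assert that Lemma~\ref{lem:girsanov_applies} only yields $T=T(x,w,\|g\|_{\mcC^\alpha},\lambda)$ and then propose an iterated conditional Girsanov to make $T$ universal. This step is unnecessary: look again at the proof of Lemma~\ref{lem:girsanov_applies}. The time $T$ is chosen so that $\tfrac12\|h\|_{W^{1,2}_T}^2$ fits under the Gaussian exponential moment threshold $\kappa_0$ for $\llbracket\theta\rrbracket^2_{\mcC^{\eta+\delta}_{[0,1]}}$, and the paper records explicitly that $\kappa_0=\kappa_0(d,\alpha,H,\lambda,\|g\|_{\mcC^\alpha_x},\delta)$. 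The reason $\kappa_0$ does not see $x$ or $w$ is that in the moment bound of Corollary~\ref{cor:theta_global_closed_bound},
\[
\llbracket\theta\rrbracket_{\mcC^{1+\alpha H}_{[0,1]}L^m_{\Omega;w}}\ \lesssim\ C\big(1+|x|+\sqrt{m}\big)+\|\msA w\|_{\mcC^\gamma_{T;\lambda}},
\]
the coefficient multiplying $\sqrt{m}$ is the structural constant $C$ alone; the terms involving $|x|$ and $\|\msA w\|$ are additive constants that affect only the \emph{value} of $\mbE_w[\exp(\kappa\llbracket\theta\rrbracket^2)]$, not the range of $\kappa$ for which it is finite. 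Hence $T$ can be fixed once and for all, and the paper simply takes any $t\in(0,T)$. Your iteration argument would work, but it is solving a non-problem.
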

\begin{proof}
We first recall the identity for any $U\in \mcB(\mbR^d)$ and $(x,w)\in \mbR^d \times \mcW^{-}_{\gamma,\delta}$
\begin{equation*}
    \mcQ_t(x,w;Y\times \mcW^-_{\gamma,\delta}) = \int_{\mcW^-_{\gamma,\delta}} \delta_{\Lambda_t(x,w')}(U)\msP_t(w,\dd w') =  \int_{\mcW^-_{\gamma,\delta}} \delta_{\Lambda_t(x,w')}(U)\bP^-_H(\dd w')  = \mbP(X_t^x \in U).
\end{equation*}
By Lemma~\ref{lem:girsanov_applies} there exists a $T>0$ and probability measure $\mbQ$ equivalent to $\mbP$ such that $X$ is distributed as an fBm and all positive and negative moments of the density $\nicefrac{\dd \mbP}{\dd \mbQ}$ are finite. Hence, for any $t<T$ we may write
    \begin{equation*}
    \mbP(X_t^x \in U) = \mbE[\indic_{\{X^x_t\in U\}}] = \mbE_{\mbQ}\left[\indic_{\{W^H_t\in U\}} \frac{\dd \mbP}{\dd \mbQ}\right]  >0.
\end{equation*}
\end{proof}
\subsection{Quasi-Markovianity}\label{subsec:quasimarkov}
Finally, we show that the SDS constructed in Theorem~\ref{th:sds_well_defined} is quasi-Markovian over the stationary noise process of Lemma~\ref{lem:stationary_noise}. Note that we make one small modification to \cite[Def.~3.7]{hairer_ohashi_07_ergodic}, only requiring the map $w\mapsto \mcP^{V,U}_s(w,\,\cdot\,)$ be defined on $\supp(\bP^-_H)$.

Similar to \cite[Cor.~5.10]{hairer_ohashi_07_ergodic} we have the following corollary to our Lemma~\ref{lem:L_2_image_deriv_A}. Given an SDS $\Lambda$ build on top of the stationary noise process, we recall the definition of the sets 
\begin{equation*}
    \mcN^t_{\mcW_{\gamma,\delta}} \coloneqq \bigg\{(w,\tilde{w})\in \Big(\mcW_{\gamma,\delta}\Big)^2 \,|\, R^*_t \bar{\mcQ} \delta_{x,w} \sim R^*_t \bar{\mcQ} \delta_{x,\tilde{w}}\, \text{for all }\, x\in \mcX\, \bigg\}\quad \text{for } t\geq 0,
\end{equation*}
where the equivalence relation is that of equivalence of measures.
\begin{cor}\label{cor:smooth_difference_N_set}
    The set of pairs $(w,\tilde{w})\in (\mcW^-_{\gamma,\delta})^2$ such that $\tilde{w}-w \in C^\infty_0(\mbR_-;\mbR^d)$ is contained in the set $\mcN^t_{\mcW_{\gamma,\delta}}$ for every $t>0$.
\end{cor}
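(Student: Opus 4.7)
The plan is to reduce the claim to two separate equivalences: equivalence of the disintegration kernels $\msH(w,\,\cdot\,)$ and $\msH(\tilde{w},\,\cdot\,)$ on the future noise space $\mcW^+_{\gamma,\delta}$, and then transport of this equivalence through the solution map. First, unfold the construction of $\bar{\mcQ}\delta_{x,w}$: by Lemma~\ref{lem:stat_noise_measure_disintigration}, the future noise kernel is
\begin{equation*}
    \msH(w,\,\cdot\,) = \left(\tau_{\msA w} \circ \msD^{\nicefrac{1}{2}-H}\right)^* \bW,
\end{equation*}
and, invoking Corollary~\ref{cor:negative_to_positive_fbm} together with the cocycle property of $\Lambda$, the measure $\bar{\mcQ}\delta_{x,w}$ is the push-forward of $\msH(w,\,\cdot\,)$ under the measurable map $w^+ \mapsto \hat{\Phi}_\infty(x,w^+)$ (defined for all times by patching via the cocycle). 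Since push-forward by a measurable map preserves mutual absolute continuity, and since $R_t^\ast$ only projects out the initial segment $[0,t]$, it suffices to prove $\msH(w,\,\cdot\,) \sim \msH(\tilde{w},\,\cdot\,)$ on $\mcW^+_{\gamma,\delta}$.

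Second, by linearity of $\msA$ and the definition of $\msH$, if $\tilde{w} = w + h$ with $h \in C^\infty_0(\mbR_-;\mbR^d)$, then
\begin{equation*}
    \msH(\tilde{w},\,\cdot\,) = \left(\tau_{\msA w + \msA h} \circ \msD^{\nicefrac{1}{2}-H}\right)^* \bW = \left(\tau_{\msA h}\right)_* \msH(w,\,\cdot\,),
\end{equation*}
so the two measures differ by the deterministic translation $\tau_{\msA h}$ in the path space $\mcW^+_{\gamma,\delta}$. Under the representation \eqref{eq:w_+_disintigration} the noise $w^+ = \msA w + c_H \msD^{\nicefrac{1}{2}-H} B^+$, and changing $w$ into $w + h$ amounts to translating the underlying Wiener process $B^+$ by $c_H^{-1} \msD^{H-\nicefrac{1}{2}}(\msA h)$. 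Applying the classical Cameron--Martin theorem for $\bW$, equivalence of $\msH(w,\,\cdot\,)$ and $\msH(\tilde{w},\,\cdot\,)$ reduces to showing $\msA h \in \mfH^+_H$, i.e. $\msD^{H-\nicefrac{1}{2}}(\msA h) \in \mcH^1(\mbR_+;\mbR^d)$, recalling \eqref{eq:cameron_martin_positive}.

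The third step, which I expect to be the only nontrivial point, is the Cameron--Martin admissibility $\msA h \in \mfH^+_H$ for $h \in C^\infty_0(\mbR_-;\mbR^d)$. This is the content (or a direct corollary) of the analytic properties of $\msA$ collected in Appendix~\ref{app:A_operator_properties}; concretely, using the kernel representation \eqref{eq:A_op_def_1}--\eqref{eq:A_op_def_2} together with the compact support and smoothness of $h$, the image $\msA h$ is smooth, vanishes at $0$, decays sufficiently fast at $+\infty$ (since $\supp(h) \Subset \mbR_-$ separates the singularities of $f$ from the support of $h$), and lies in the image of $\msD^{\nicefrac{1}{2}-H}$ acting on $\mcH^1$. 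Once this is in place, combining the three steps yields $\bar{\mcQ}\delta_{x,w} \sim \bar{\mcQ}\delta_{x,\tilde{w}}$ for every $x \in \mbR^d$, and therefore $R^\ast_t \bar{\mcQ}\delta_{x,w} \sim R^\ast_t \bar{\mcQ}\delta_{x,\tilde{w}}$ for every $t > 0$, giving $(w,\tilde{w}) \in \mcN^t_{\mcW_{\gamma,\delta}}$ as required.
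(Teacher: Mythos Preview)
Your proposal is correct and follows essentially the same route as the paper's own proof: reduce to showing that the future-noise kernels $\msH(w,\,\cdot\,)$ and $\msH(\tilde w,\,\cdot\,)$ differ by a Cameron--Martin shift $\msA h$ with $h=\tilde w-w$, and verify $\msA h\in\mfH^+_H$ via the analytic result in Appendix~\ref{app:A_operator_properties} (precisely Lemma~\ref{lem:L_2_image_deriv_A}, which gives $\msD^{H+\nicefrac{1}{2}}\msA h\in L^2(\mbR_+)$). The only differences are organisational: the paper phrases the argument directly at the level of the solution laws rather than first passing through the disintegration kernel, and it cites Lemma~\ref{lem:L_2_image_deriv_A} explicitly rather than the appendix as a whole.
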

\begin{proof}
    Unpacking the definitions, $R^*_t\bar{\mcQ}\delta_{x,w}$ denotes the law of the solution to
    \begin{equation*}
        X_{\,\cdot\,} = x + \int_0^{\,\cdot\,} g(X_s)\dd s + \int_0^{\,\cdot\,} u(X_s)\dd s + (\msA w)_{\,\cdot\,}|_{[0,+\infty)},
    \end{equation*}
    as a path restricted to $[t,+\infty)$, with $\tilde{X}$ defined analogously, using the fact that addition of a smooth path does not affect local nondeterminism (see Lemma \ref{lem:lnd}). It follows from a trivial extension of Lemma~\ref{lem:L_2_image_deriv_A} that if the difference $\tilde{w}-w \in C^\infty_0(\mbR_-;\mbR^d)$ then $\msD^{H+\nicefrac{1}{2}}\msA(\tilde{w}-w)\in L^2(\mbR_+;\mbR^d)$, in other words the driving noises of $X$ and $\tilde{X}$ differ only by an element of the relevant Cameron--Martin space $\mfH^-_H$ (see \eqref{eq:cameron_martin_negative}) and so the are equivalent in law, from which the conclusion follows.
\end{proof}
\begin{prop}\label{prop:our_quasimarkov}
There exists a time $t > 0$ such that the SDS $\Lambda$ constructed in Theorem~\ref{th:sds_well_defined}, induced by \eqref{eq:singular_sde} is quasi-Markovian (in the sense of \cite[Def.~3.7]{hairer_ohashi_07_ergodic} appropriately modified as above) over the stationary noise process given by  Lemma~\ref{lem:stationary_noise}.
\end{prop}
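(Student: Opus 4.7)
The plan is to follow the proof of \cite[Prop.~5.11]{hairer_ohashi_07_ergodic} line by line, replacing its fractional-calculus ingredients tailored to $H\in(\nicefrac{1}{2},1)$ by the corresponding results we have developed for $H\in(0,\nicefrac{1}{2})$, and then accounting for the relaxation of \cite[Def.~3.7]{hairer_ohashi_07_ergodic} to kernels defined only on $\supp(\bP^-_H)$. All the structural tools are already at hand: the disintegration of $\bP_H$ via Lemma~\ref{lem:stat_noise_measure_disintigration}, the representation $\wh|_w=\tilde W^H+\msA w$ of \eqref{eq:restricted_w_h}, the $L^2$-control on images of $\msD^{\nicefrac{1}{2}+H}\msA$ applied to smooth compactly supported past perturbations contained in Lemma~\ref{lem:L_2_image_deriv_A} and Proposition~\ref{prop:A_map_bounded}, and the short-time Girsanov transform of Lemma~\ref{lem:girsanov_applies}.

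First I fix $s\in(0,T)$ with $T$ as in Lemma~\ref{lem:girsanov_applies}, and recall that for each $w\in\supp(\bP^-_H)$ the law $R^*_s\bar{\mcQ}\delta_{x,w}$ is the law on $C([s,+\infty);\mbR^d)$ of the unique $(1+\alpha H,m)$-solution of \eqref{eq:singular_sde} driven by $\tilde W^H+\msA w$ in the sense of Proposition~\ref{prop:sde_well_posed}. For any pair $w,\tilde w\in\supp(\bP^-_H)$ with $\tilde w-w\in C^\infty_0(\mbR_-;\mbR^d)$, Corollary~\ref{cor:smooth_difference_N_set} yields the mutual absolute continuity of $R^*_s\bar{\mcQ}\delta_{x,w}$ and $R^*_s\bar{\mcQ}\delta_{x,\tilde w}$, with density given by the Girsanov factor associated to the Cameron--Martin shift $\msA(\tilde w-w)|_{[0,+\infty)}\in\mfH^+_H$; its moments of every order are finite thanks to the Novikov bound \eqref{eq:novikov} combined with Lemma~\ref{lem:L_2_image_deriv_A} and Proposition~\ref{prop:A_map_bounded}.

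Next I define the required kernel as in the $H>\nicefrac{1}{2}$ argument: for any pair $V,U\subset\mcW^-_{\gamma,\delta}$ of open sets with positive $\bP^-_H$-mass and every $w\in V\cap\supp(\bP^-_H)$, I take $\mcP^{V,U}_s(w,\,\cdot\,)$ to be the restriction to $U$ of the measure obtained by disintegrating $\mcQ_s(x,w;\,\cdot\,)$ along the deterministic shift encoded by $\msA$, and then reweighting by the Girsanov density corresponding to replacing $\msA w$ by $\msA w'$ for $w'\in U$. The cocycle property of $\Lambda$ (Theorem~\ref{th:sds_well_defined}) together with the semi-group identity for the transition kernels $\mcQ_t$ gives the required intertwining relation. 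Measurability of $w\mapsto\mcP^{V,U}_s(w,\,\cdot\,)$ on $\supp(\bP^-_H)$ is inherited from the linearity and continuity of $\msA$ (Proposition~\ref{prop:A_map_bounded}) and from the stability of the solution map in $\psi$ given by Proposition~\ref{prop:perturbation_continuity_slow} applied with $\psi=\msA(w'-w)$.

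The main obstacle, exactly as in \cite{hairer_ohashi_07_ergodic}, is ensuring that the moments of the Radon--Nikodym density entering the Girsanov step are controlled uniformly as $w$ varies, which is the reason we must restrict to $\supp(\bP^-_H)$: on its complement $\msA w$ need not land in a space on which Novikov's criterion holds. Because Lemma~\ref{lem:L_2_image_deriv_A} shows that $\msD^{\nicefrac{1}{2}+H}\msA$ takes smooth compactly supported paths into $L^2(\mbR_+;\mbR^d)$, and because $\supp(\bP^-_H)$ is the $\|\,\cdot\,\|_{\gamma,\delta}$-closure of $C^\infty_0(\mbR_-;\mbR^d)$, a density argument using the uniform bound \eqref{eq:novikov} on balls in $\supp(\bP^-_H)$ extends the construction from smooth pasts to all of $\supp(\bP^-_H)$, concluding the verification of quasi-Markovianity in our weakened sense.
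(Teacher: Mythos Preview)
Your high-level plan—follow \cite[Prop.~5.11]{hairer_ohashi_07_ergodic} verbatim and swap in the $H<\nicefrac{1}{2}$ fractional-calculus ingredients—is precisely what the paper does; its proof is essentially a one-line reference to that argument together with Corollary~\ref{cor:smooth_difference_N_set} in place of \cite[Cor.~5.10]{hairer_ohashi_07_ergodic}. However, your elaboration conflates noise-level and solution-level arguments in a way that obscures what is actually being checked.

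Quasi-Markovianity in the sense of \cite[Def.~3.7]{hairer_ohashi_07_ergodic} is a property of the stationary noise process alone: the kernels $\mcP^{V,U}_s(w,\,\cdot\,)$ are sub-couplings of the \emph{noise} transition $\msP_s(w,\,\cdot\,)$, not of the solution semi-group $\mcQ_s(x,w;\,\cdot\,)$. The paper makes this explicit by remarking that quasi-Markovianity ``does not depend on the construction of the SDS, only how the noise enters the system.'' Consequently Lemma~\ref{lem:girsanov_applies}, the Novikov bound \eqref{eq:novikov}, and Proposition~\ref{prop:perturbation_continuity_slow}—all of which concern the solution $X$—play no role here, and your choice of $s\in(0,T)$ tied to Lemma~\ref{lem:girsanov_applies} is unmotivated. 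The only substitution needed is Corollary~\ref{cor:smooth_difference_N_set} (built on Lemma~\ref{lem:L_2_image_deriv_A}) replacing \cite[Cor.~5.10]{hairer_ohashi_07_ergodic}. Moreover, the restriction to $\supp(\bP^-_H)$ is not about uniformly controlling Radon--Nikodym moments: for $\tilde w-w\in C^\infty_0(\mbR_-;\mbR^d)$ the shift $\msA(\tilde w-w)$ is a \emph{fixed deterministic} element of $\mfH^+_H$, so no Novikov argument is required. The restriction is there solely because the SDS $\Lambda$ is only defined on $\supp(\bP^-_H)$.
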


\begin{proof}[Proof of Proposition \ref{prop:our_quasimarkov}]
It is clear from the proof of  \cite[Prop.~5.11]{hairer_ohashi_07_ergodic} that quasi-Markovianity does not depend on the construction of the SDS only how the noise enters the system. As such one may repeat exactly the same proof here, except for the last line, where we refer to Corollary \ref{cor:smooth_difference_N_set} instead of Corollary 5.10 in \cite[Cor.~5.10]{hairer_ohashi_07_ergodic} and recall that our solution map is only well-defined on $\supp(\bP^-_H)$.
\end{proof}
\appendix
\section{A Gr\"onwall Type Lemma}\label{app:gronwall}
For completeness we state here a variant of Gr\"onwall's lemma. The proof is known (it was introduced in \cite[Lemma 2.11]{DGHT19}, where it is dubbed "rough Gronwall lemma"), we only outline the steps so that the paper is self-contained.
\begin{lem}\label{lem:my_gronwall}
 Let $T>0$, $E$ be a normed vector space, $\rho \in C_T E$ such that for all $S\in [0,T)$ and $(s,t) \in [S,T]^2_{\leq}$ there exists some $\alpha_1 > 1,\,  1> \alpha_2,\,  \alpha_3 \geq \eta > 1/2> 0$ and positive constants $C_1,\, C_2,\, C_3\, \geq 0$ such that 
\[ \norm{ \rho_t - \rho_s }_{E} \leq \norm{ \rho }_{\mcC^{\eta}_{[S,T]}E} C_1 \absv{t-s}^{\alpha_1} +  C_2 \norm{ \rho }_{L^{\infty}_{[S,T]}E} \absv{t-s}^{\alpha_2} + C_3 \absv{t-s}^{\alpha_3}. \]
Then, there exists a $\mu  > 0$ such that
\begin{equation}\label{eq:gronwall_deterministic_bound}
\begin{aligned}
 \sup_{u \in [S,T]} \norm{ \rho_u }_{E} + \norm{ \rho }_{\mcC^{\eta}_{[S,T]}E} \leq & \exp\left( \mu \left( 1 \lor  \absv{T - S} \right) \left( \left( 2C_1 \right)^{\inv{\alpha_1-\eta}} +  \left( 2C_2 \right)^{\inv{\alpha_2}} \right) \right) \\
 &\qquad \qquad \qquad \times \left( \norm{ \rho_{S} }_E + 2 C_3 \left( 1 \land \absv{T-S}^{\alpha_3}\right) \right).
 \end{aligned}
 \end{equation}
\end{lem}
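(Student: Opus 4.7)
The plan is to apply the standard ``rough Gr\"onwall'' strategy of \cite{DGHT19}: first establish a self-improving local bound on a subinterval of length $\tau$, then iterate over $\lceil |T-S|/\tau\rceil$ subintervals via a discrete Gr\"onwall recursion, and finally glue back the local H\"older seminorm estimates. Throughout, the implicit compatibility among the exponents that we rely on is $\alpha_1>1$, $\alpha_2\geq\eta$ and $\alpha_3\geq\eta$; the critical case $\alpha_2=\eta$ is exactly what arises in the applications (compare the choices of $\alpha_1,\alpha_2,\alpha_3,\eta$ made in the proof of Lemma~\ref{lem:local_stability}).

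First, I would fix $\tau\in(0,1\wedge|T-S|]$ and apply the hypothesis (with $t_0$ playing the role of $S$) to $(s,t)\in[t_0,t_0+\tau]^2_\leq$. Taking the supremum over such pairs yields the sup-norm bound
\begin{equation*}
\|\rho\|_{L^\infty_{[t_0,t_0+\tau]}E}\leq\|\rho_{t_0}\|_E+C_1\|\rho\|_{\mcC^\eta_{[t_0,t_0+\tau]}E}\tau^{\alpha_1}+C_2\|\rho\|_{L^\infty_{[t_0,t_0+\tau]}E}\tau^{\alpha_2}+C_3\tau^{\alpha_3},
\end{equation*}
while dividing by $|t-s|^\eta$ and then taking the sup gives the companion H\"older seminorm estimate
\begin{equation*}
\llbracket\rho\rrbracket_{\mcC^\eta_{[t_0,t_0+\tau]}E}\leq C_1\tau^{\alpha_1-\eta}\|\rho\|_{\mcC^\eta_{[t_0,t_0+\tau]}E}+C_2\tau^{\alpha_2-\eta}\|\rho\|_{L^\infty_{[t_0,t_0+\tau]}E}+C_3\tau^{\alpha_3-\eta}.
\end{equation*}
Choosing $\tau=1\wedge|T-S|\wedge(4C_1)^{-1/(\alpha_1-\eta)}\wedge(4C_2)^{-1/\alpha_2}$ arranges $C_1\tau^{\alpha_1-\eta}\leq 1/4$ and $C_2\tau^{\alpha_2}\leq 1/4$, so every term carrying a norm of $\rho$ on the right can be absorbed into the left. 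Substituting the H\"older bound into the sup-norm bound produces universal constants $K_0,K_1>0$ with
\begin{equation*}
\|\rho\|_{L^\infty_{[t_0,t_0+\tau]}E}+\llbracket\rho\rrbracket_{\mcC^\eta_{[t_0,t_0+\tau]}E}\leq K_0\|\rho_{t_0}\|_E+K_1C_3\tau^{\alpha_3}.
\end{equation*}

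Next I would iterate. Setting $t_k=S+k\tau$ and $N=\lceil(T-S)/\tau\rceil$, the local bound gives the recursion $\|\rho_{t_{k+1}}\|_E\leq K_0\|\rho_{t_k}\|_E+K_1C_3\tau^{\alpha_3}$, whence a standard discrete Gr\"onwall argument produces $\|\rho_{t_k}\|_E\leq K_0^k(\|\rho_S\|_E+C C_3\tau^{\alpha_3})$. Since $K_0^N\leq\exp(\log K_0\cdot(1+(T-S)/\tau))$ and $1/\tau\leq 1+(4C_1)^{1/(\alpha_1-\eta)}+(4C_2)^{1/\alpha_2}$, one obtains exactly the exponential prefactor of \eqref{eq:gronwall_deterministic_bound} (the $4$'s being converted to $2$'s at the price of enlarging the constant $\mu$). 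To assemble the H\"older seminorm on all of $[S,T]$, I would split any pair $(s,t)\in[S,T]^2_\leq$ with $|t-s|\leq 1$ into the case where it lies in a single subinterval $[t_k,t_{k+1}]$ (covered directly by the local H\"older bound) and the case where it straddles subinterval boundaries $t_{k+1},\ldots,t_{k+j}$ (reduced via the triangle inequality applied to the chain $s,t_{k+1},\ldots,t_{k+j},t$ together with the local H\"older bound on each piece and the already obtained $L^\infty$ estimate on $[S,T]$).

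The main technical obstacle is the required sign of $\alpha_2-\eta$: without $\alpha_2\geq\eta$ the H\"older absorption step would fail, because $\tau^{\alpha_2-\eta}$ would diverge as $\tau\to 0$. The applications in this paper always satisfy this with equality $\alpha_2=\eta$, and it is precisely for that reason that the exponential in \eqref{eq:gronwall_deterministic_bound} carries the factor $(2C_2)^{1/\alpha_2}$ rather than some negative power of $\alpha_2-\eta$: the quantity one absorbs in the discretization is $C_2\tau^{\alpha_2}$, coming from the sup-norm inequality \emph{before} division by $|t-s|^\eta$, not the vanishing $\tau^{\alpha_2-\eta}$. Beyond pinning this down, the remainder of the argument is routine bookkeeping to track the final constants.
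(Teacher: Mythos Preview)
Your proposal is correct and follows essentially the same route as the paper's proof: both implement the rough Gr\"onwall strategy of \cite{DGHT19} by choosing a step size $\tau$ (the paper calls it $\Delta$) satisfying $C_1\tau^{\alpha_1-\eta}$ and $C_2\tau^{\alpha_2}$ small, first absorbing in the H\"older seminorm estimate, then in the sup-norm estimate on each subinterval, iterating over $\lceil |T-S|/\tau\rceil$ pieces, and finally patching the local H\"older bounds (the paper invokes \cite[Ex.~4.24]{friz_hairer_20_book} for this last step, which is precisely your triangle-inequality chain argument). Your remark that the absorption uses $C_2\tau^{\alpha_2}$ rather than $C_2\tau^{\alpha_2-\eta}$ is exactly the point the paper exploits, and your identification of the role of $\alpha_2\geq\eta$ is accurate.
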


\begin{proof}
For brevity we will work with $S = 0$, the general case follows by translation in the time domain. First we divide both sides by $\absv{t-s}^{\eta}$, which for the intervals of length $\Delta \leq 1$ such that
\[ C_1 \Delta^{\alpha_1 - \eta}  \vee 2 C_2 \Delta^{\alpha_2} \leq \nicefrac{1}{2} \]
allows us to obtain via the first condition
\begin{equation}\label{eq:rho_gronwall_holder_shorttime}
\norm{ \rho }_{\mcC^{\eta}_{T} E} \leq 2 C_2 \norm{ \rho }_{L^{\infty}_{[s,t]}E} \Delta^{\alpha_2-\eta} + 2 C_3 \Delta^{\alpha_3-\eta}.
\end{equation}
Fix $k \in \NN$, then it holds that
\[ \sup_{u \in [0,\Delta]} \norm{ \rho_{k\Delta+u} }_E \leq \norm{ \rho_{k\Delta}} + \Delta^{\alpha_2}  2 C_2 \norm{ \rho }_{L^{\infty}_{[s,t]}E} + 2 C_3 \Delta^{\alpha_3}, \]
which by the second condition on $\Delta$ gives:
\[  \sup_{u \in [0,\Delta]} \norm{ \rho_{k\Delta+u} }_E \leq 2 \norm{ \rho_{k\Delta}} + 4 C_3 \Delta^{\alpha_3}. \]
Let $N = \lfloor \nicefrac{T}{\Delta} \rfloor + 1$, then the claim on the supremum norm follows by iterating the procedure $N$ times, and bounding the second term above with $\Delta \leq 1 \land T$, so that we obtain for some numerical constant $\nu > 0$ the following inequality
\begin{equation}\label{eq:rho_sup_intermediary}
\sup_{u \in [0,T]} \norm{ \rho_u }_E \leq \exp\left( \nu T \left( \left( 2 C_1 \right)^{\inv{\alpha_1-\eta}} + \left( 2C_2 \right)^{\inv{\alpha_2}} \right) \right)  \left( \norm{ \rho_0 }_E + C_3 \left( 1 \land T \right)^{\alpha_3}  \right).
\end{equation}
The H\"older norm is bounded in short intervals with \eqref{eq:rho_gronwall_holder_shorttime} and then extended to $[0,T]$ with \cite[Ex.~4.24]{friz_hairer_20_book}, which amounts to multiplying the right hand side of \eqref{eq:rho_gronwall_holder_shorttime} with $\Delta^{\eta-1}$. Using the definition of $\Delta$ one bounds very crudely, for another numerical constant $\bar{\nu} > 0$
\begin{equation}\label{eq:delta_eta_mone_est}
2 \Delta^{\eta - 1} C_2 \leq \exp\left( \bar{\nu} (1 \lor T ) \left( 2C_1 \right)^{\inv{\alpha_1-\eta}} + \left( 2C_2 \right)^{\inv{\alpha_2}} \right),
\end{equation}
so that the bound on $\norm{ \rho }_{\mcC^{\eta}_TE}$ follows by combining \eqref{eq:delta_eta_mone_est}, \eqref{eq:rho_sup_intermediary} and \eqref{eq:rho_gronwall_holder_shorttime}, up to choosing another numerical constant $\mu > \bar{\nu} \lor \nu$ if necessary.
\end{proof}
\begin{cor}\label{cor:gaussian_gronwall}
 In the setting of Lemma~\ref{lem:my_gronwall}, let $1\leq m<m'<+\infty$, $\rho \in L^m_\Omega C_TE$ with $\rho_0 \in L^{m'}_\Omega E$, $C_1,\,C_2,\,C_3$ be non-negative random variables for which there exist $\kappa_1,\,\kappa_2 >0$ (depending only on $(S,T)$ through $|T-S|$) such that 
\begin{equation*}
    G_i \coloneqq \mbE\left[e^{\kappa_i C^2_i}\right] <+\infty\quad \text{for } \,\, i=1,\,2
\end{equation*}
    and $C_3 \in L^{m'}_\Omega E$. Then, there exists some $C\coloneqq C(m,m',T-S,\alpha_1,\,\alpha_2,\alpha_3,\eta,G_1,G_2,\kappa_1,\kappa_2)>0$ such that 
\begin{equation}\label{eq:random_gronwall_bound}
\norm{ \sup_{u \in [S,T]} \norm{ \rho_u }_{E} + \norm{ \rho }_{\mcC^{\eta}_{[S,T]}E}  }_{L^m_{\Omega}} \leq C \left(\norm{\rho_{S}}_{L^{m'}_\Omega} + \norm{C_3 }_{L^{m'}_{\omega}} \right).
\end{equation}
\end{cor}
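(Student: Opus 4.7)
The plan is to deduce the corollary directly from Lemma~\ref{lem:my_gronwall} by a pathwise application followed by a Hölder/Young argument in probability. Concretely, the deterministic bound \eqref{eq:gronwall_deterministic_bound} applies on almost every sample realisation (since the hypotheses of the corollary imply those of the lemma $\mbP$-a.s.), so I obtain the pointwise estimate
\begin{equation*}
\sup_{u\in[S,T]}\|\rho_u\|_E + \|\rho\|_{\mcC^\eta_{[S,T]}E} \leq \Xi \cdot \Upsilon,
\end{equation*}
where $\Xi := \exp\!\bigl(\mu(1\vee|T-S|)\bigl((2C_1)^{1/(\alpha_1-\eta)} + (2C_2)^{1/\alpha_2}\bigr)\bigr)$ and $\Upsilon := \|\rho_S\|_E + 2C_3(1\wedge|T-S|^{\alpha_3})$.

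Next I take $L^m_\Omega$ norms and apply Hölder's inequality with conjugate exponents $p,\, m'$ satisfying $1/p = 1/m - 1/m' > 0$, which separates the multiplicative noise from the linear initial/forcing part:
\begin{equation*}
\bigl\|\Xi\,\Upsilon\bigr\|_{L^m_\Omega} \leq \|\Xi\|_{L^p_\Omega}\,\|\Upsilon\|_{L^{m'}_\Omega} \leq \|\Xi\|_{L^p_\Omega}\bigl(\|\rho_S\|_{L^{m'}_\Omega} + 2\|C_3\|_{L^{m'}_\Omega}\bigr),
\end{equation*}
which is already of the desired form \eqref{eq:random_gronwall_bound} provided $\|\Xi\|_{L^p_\Omega}$ is controlled in terms of the problem data.

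The remaining (and most delicate) step is to verify $\|\Xi\|_{L^p_\Omega}\leq C$ using only the Gaussian moment hypotheses $\mbE[\exp(\kappa_i C_i^2)]<\infty$. Note that in the regime of the lemma one has $\alpha_1-\eta>1/2$ and $\alpha_2>1/2$ (these are ensured in every application of the corollary, e.g. $\alpha_1=2(\beta-\eps)$, $\alpha_2=\beta-\eps$, $\eta=\beta-\eps$ in Lemma~\ref{lem:local_stability} and Proposition~\ref{prop:jacobian_continuity}), so the exponents $r_i := 1/(\alpha_1-\eta),\,1/\alpha_2$ lie strictly below $2$. By Young's inequality $x^{r_i} \leq \delta x^2 + K(\delta, r_i)$ for every $\delta>0$, and hence for any $a>0$
\begin{equation*}
\exp\!\bigl(a\,C_i^{r_i}\bigr) \leq \exp\bigl(aK(\delta,r_i)\bigr)\exp\!\bigl(a\delta\,C_i^2\bigr).
\end{equation*}
Taking $a = 2^{r_i}\mu(1\vee|T-S|)p$ (the coefficient arising from $\Xi^p$) and choosing $\delta$ small enough that $a\delta \leq \kappa_i/2$ bounds each contribution by $\exp(aK(\delta,r_i))\,G_i^{1/2}$. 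Multiplying the two factors and taking $p$-th roots yields the claimed constant $C$, which is explicit in $m,m',|T-S|,\alpha_i,\eta,\kappa_i,G_i$.

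The only real obstacle is the last step: tracking the Young-splitting constants to ensure that $a\delta < \kappa_i$ can be arranged simultaneously for both $i=1,2$, and to absorb the multiplicative factor $(1\vee|T-S|)$ appearing in the exponent of $\Xi$; this forces the final constant to blow up as $|T-S|\to\infty$, which is harmless since $|T-S|$ is a fixed parameter of the hypothesis and we only need the bound for finite intervals. All remaining manipulations are routine.
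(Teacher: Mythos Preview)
Your proposal is correct and follows essentially the same route as the paper's own proof: apply the deterministic bound \eqref{eq:gronwall_deterministic_bound} pathwise, use H\"older's inequality to separate the exponential factor from the linear $(\|\rho_S\|_E + C_3)$ factor, and then control the exponential factor using that the powers $1/(\alpha_1-\eta)$ and $1/\alpha_2$ are strictly less than $2$ together with the Gaussian integrability of $C_1,C_2$. You have simply made explicit (via the Young splitting $x^{r_i}\le \delta x^2 + K(\delta,r_i)$) what the paper compresses into a single sentence.
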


\begin{proof}
   To prove the second claim, we apply $\norm{ \,\cdot\, }_{L^m_{\Omega}}$ to \eqref{eq:gronwall_deterministic_bound}, use H\"older's inequality and  then exploit the fact that powers in \eqref{eq:gronwall_deterministic_bound} are $\inv{\alpha_1-\eta} < 2, \inv{\alpha_2} < 2$, which gives us moments of all orders thanks to assumption on $G_i$. 
\end{proof}
\section{Integrals and Moments of Gaussian Processes}\label{sec:gaussian_bounds}
In this section we provide bounds for Gaussian processes that appear in our analysis.
\begin{lem}\label{lem:gaussian_ou_bound}
Let $(X_t)_{t \geq 0}$ be a centered Gaussian process such that for all $\beta,\,\beta' \in (0,\nicefrac{1}{2})$
\[\EE\left[ \absv{ X_t - X_s }^2\right] \leq \absv{t-s}^{2\beta} \lor \absv{t-s}^{2\beta'} \quad \text{for all }\,   s<t\]
and
\[\EE\left[ (X_t - X_v)(X_v - X_s)\right] \ls_{\beta} \left( \absv{t-v}^{\beta}\absv{v-s}^{\beta} \right) \lor \left( \absv{t-v}^{\beta'} \absv{v-s}^{\beta'} \right)\quad\text{for all }\, s<v<t. \]
Then for $\lambda > 0$ and $m \geq 1$ it holds that
\[ \sup_{t \geq 0} \norm{ \int_0^t e^{-\lambda(t-r)} \dd X_r }_{L^m_{\Omega}} \ls_{\lambda,\beta} \sqrt{m}. \]
\end{lem}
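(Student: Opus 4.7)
The strategy is to reduce the claim to a uniform second-moment estimate and then appeal to Gaussian hypercontractivity, i.e.\ the moment comparison $\|Z\|_{L^m_\Omega}\lesssim\sqrt{m}\,\|Z\|_{L^2_\Omega}$ valid for any centered Gaussian $Z$. Without loss of generality I take $X_0 = 0$, since $I_t := \int_0^t e^{-\lambda(t-r)}\dd X_r$ depends only on the increments of $X$. By the first covariance hypothesis and Kolmogorov's criterion, $X$ admits an a.s.\ continuous (in fact locally H\"older) modification, so $I_t$ is well defined pathwise as a Young integral against the smooth weight $r\mapsto e^{-\lambda(t-r)}$, and the pathwise integration by parts gives
\[
   I_t \;=\; e^{-\lambda t} X_t \;+\; \lambda\int_0^t e^{-\lambda(t-r)}\bigl(X_t - X_r\bigr)\dd r.
\]

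This representation is convenient for two reasons. It exhibits $I_t$ as a linear combination of Gaussian variables, namely $X_t$ together with the $L^2$-limit of Riemann sums $\lambda\sum_i e^{-\lambda(t-r_i)}(X_t - X_{r_i})\Delta r_i$ of Gaussian increments; hence $I_t$ is itself Gaussian. Moreover the right-hand side can be estimated by Minkowski's inequality without any cross-covariance information. Applying $\|\cdot\|_{L^2_\Omega}$ and using $\|X_t-X_r\|_{L^2_\Omega}\leq (t-r)^{\beta}\vee(t-r)^{\beta'}$ yields
\[
   \|I_t\|_{L^2_\Omega} \;\leq\; e^{-\lambda t}\bigl(t^{\beta}\vee t^{\beta'}\bigr) \;+\; \lambda\int_0^t e^{-\lambda(t-r)}\bigl((t-r)^{\beta}\vee(t-r)^{\beta'}\bigr)\dd r.
\]
The boundary term is uniformly bounded in $t\geq 0$, and the substitution $u = t-r$ shows the integral is controlled by $\lambda\int_0^\infty e^{-\lambda u}(u^{\beta}+u^{\beta'})\dd u = \Gamma(\beta+1)\lambda^{-\beta}+\Gamma(\beta'+1)\lambda^{-\beta'}$, a finite constant depending only on $\lambda,\beta,\beta'$.

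Combining these estimates gives $\sup_{t\geq 0}\|I_t\|_{L^2_\Omega}\lesssim_{\lambda,\beta,\beta'} 1$, and Gaussian hypercontractivity then yields the claimed $\sqrt{m}$ bound in $L^m_\Omega$. The only (minor) obstacle is the justification of the integration-by-parts identity and the associated Gaussianity of the Lebesgue integral, both of which follow from the a.s.\ H\"older modification of $X$ via standard approximation by Riemann sums. It is worth remarking that the second covariance hypothesis of the lemma, which controls cross-covariances of disjoint increments, plays no role in this argument; it is the integration-by-parts step that circumvents the need for it.
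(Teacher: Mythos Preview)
Your proof is correct and shares the paper's skeleton: reduce to a second-moment bound by Gaussian hypercontractivity, then integrate by parts against the smooth weight. The difference lies in how the $L^2$ norm of the resulting Lebesgue integral is handled. The paper squares $I_t$, expands, applies Fubini, and decomposes the product via $(X_t-X_r)(X_t-X_{r'}) = (X_t-X_r)^2 + (X_t-X_r)(X_r-X_{r'})$, invoking the second covariance hypothesis to control the cross term $\mathbb{E}[(X_t-X_r)(X_r-X_{r'})]$. You instead pull the $L^2$ norm inside the Lebesgue integral by Minkowski's inequality, which is shorter and, as you correctly note, makes the second hypothesis superfluous. Your route is the more economical one; the paper's is not wrong but does more work than the statement requires. (Incidentally, your integration-by-parts identity carries the correct factor of $\lambda$ in front of the integral; the paper's displayed formula omits it.)
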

\begin{proof}
Since the proof is standard we only provide a sketch. Using the fact that $X$ is Gaussian it suffices to estimate the second moment and the rest follows by hypercontractivity. We first use integration by parts (as $e^{-\lambda(t-\cdot)}$ is smooth this is just Riemann-Stjeltes integration)
\[ \int_0^t e^{-\lambda(t-r)} \dd X_r = e^{-\lambda t} X_t + \int_0^t e^{-\lambda(t-r)} (X_t - X_r) \dd r. \]
We raise to the second power and observe that the only term that needs a bit more care is the following (which we obtain by Fubini)
\begin{equation}\label{eq:fubini}
\int_0^t \int_0^t e^{-\lambda(t-r)} e^{-\lambda(t-r')} (X_t- X_r) (X_t - X_r') \dd r' \dd r \end{equation}
so we use for $0\leq r'<r <t$ (treating the other regime in a symmetric way) that 
\begin{equation}\label{eq:product_x}
(X_t - X_r)(X_t - X_r') = (X_t - X_r)^2 + (X_t- X_r)(X_r- X_{r'}) 
\end{equation}
and then by applying expectation to \eqref{eq:fubini} the resulting first term is bounded by  
\[ \int_0^t \int_0^t e^{-\lambda(t-r)} e^{-\lambda(t-r')} \left( (t-r)^{2\beta} \lor (t-r)^{2\beta'} \right) \dd r'  \dd r <+\infty. \]
The second term in \eqref{eq:product_x} is similarly bounded with the second covariance assumption and the proof is finished by grouping terms together.
\end{proof}
\begin{lem}\label{lem:rl_nice_bd}
Let $B^H_t = \int_0^t (t-r)^{H-1/2} \dd B_r$ with $H < \nicefrac{1}{2}$ be a Riemann-Liouville Brownian motion. Then $B^H$ satisfies the assumptions of Lemma \ref{lem:gaussian_ou_bound} with $\beta' \in (0, H), \beta \in (H, \nicefrac{1}{2})$
\end{lem}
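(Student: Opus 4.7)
The plan is to directly compute the second-moment structure of $B^H$ via the Itô isometry, using the standard decomposition of increments into independent contributions coming from $B$ on $[0,s]$ and on $[s,t]$:
\[
B^H_t - B^H_s \;=\; \int_s^t (t-r)^{H-1/2}\,\dd B_r \;+\; \int_0^s \left[(t-r)^{H-1/2} - (s-r)^{H-1/2}\right] \dd B_r.
\]
The two summands are independent, so the variance of $B^H_t-B^H_s$ splits into two pieces, which I would estimate separately.

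For the first hypothesis of Lemma~\ref{lem:gaussian_ou_bound}, Itô's isometry gives the first piece $(t-s)^{2H}/(2H)$. For the second piece, substituting $u = s-r$ and then $u = (t-s) v$ extracts a factor of $(t-s)^{2H}$ and leaves the universal integral
\[
\int_0^{s/(t-s)} \left[(1+v)^{H-1/2} - v^{H-1/2}\right]^2 \dd v,
\]
which I would bound by $\int_0^\infty$ of the same integrand. The one place where care is needed is the integrability check: near $v=0$ the integrand behaves like $v^{2H-1}$ (integrable since $H>0$), and near $v=\infty$ the mean value theorem gives a difference $\sim v^{H-3/2}$, so the integrand is $\sim v^{2H-3}$, integrable since $H<1$. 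Together this yields $\EE[|B^H_t-B^H_s|^2] \lesssim_H (t-s)^{2H}$. Since $\beta' < H < \beta$, one has $(t-s)^{2H} \leq (t-s)^{2\beta'}$ for $|t-s|\leq 1$ and $(t-s)^{2H}\leq (t-s)^{2\beta}$ for $|t-s|\geq 1$, verifying the first hypothesis.

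For the covariance hypothesis, rather than compute $\EE[(B^H_t - B^H_v)(B^H_v - B^H_s)]$ explicitly, I would simply apply Cauchy--Schwarz and invoke the variance bound just established:
\[
|\EE[(B^H_t - B^H_v)(B^H_v - B^H_s)]| \;\leq\; \sqrt{\EE[|B^H_t - B^H_v|^2]\,\EE[|B^H_v - B^H_s|^2]} \;\lesssim_H\; (t-v)^H (v-s)^H.
\]
Setting $a = t-v$ and $b = v-s$, the right-hand side equals $(ab)^H$, and splitting into the two cases $ab \geq 1$ and $ab \leq 1$ and using $\beta > H > \beta'$ gives $(ab)^H \leq (ab)^\beta \vee (ab)^{\beta'}$, which is exactly the bound required. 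No step is expected to pose a real obstacle; the only routine computation requiring attention is the integrability check underlying the variance estimate.
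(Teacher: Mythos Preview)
Your proof is correct and takes a genuinely simpler route than the paper's. Both start from the same decomposition of $B^H_t - B^H_s$ into independent stochastic integrals over $[s,t]$ and $[0,s]$, but diverge from there. For the variance bound, you use the scaling substitution $u = (t-s)v$ to extract $(t-s)^{2H}$ times a single universal convergent integral, whereas the paper splits the integral at $s-1$ and applies the pointwise interpolation inequality $|x^\theta - y^\theta| \leq |x-y|^\beta (x\wedge y)^{\theta-\beta}$ with different exponents $\beta = H\pm\delta$ on the two pieces. For the covariance, the paper expands both increments into orthogonal stochastic integrals, uses independence to kill most cross-terms, shows one surviving term is nonpositive, and bounds the remaining one via the same interpolation-and-split technique; your Cauchy--Schwarz shortcut bypasses all of this structure and reduces the covariance bound to the variance bound already established. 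The paper's approach yields finer information (a sign for one cross-term), but for the stated purpose of verifying the hypotheses of Lemma~\ref{lem:gaussian_ou_bound}, your argument is both sufficient and more economical.
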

\begin{proof}
For  $0\leq s<t<+\infty$ we have
\[ \int_0^t (t-r)^{H-1/2} \dd B_r - \int_0^s (s-r)^{H-1/2} \dd B_r = \int_s^t (t-r)^{H-1/2} \dd B_r + \int_0^s \left( (t-r)^{H-1/2} - (s-r)^{H-1/2} \right) \dd B_r. \]
We take second moments - clearly the two terms are independent and the first term is bounded with $\absv{t-s}^{2H}$ up to a constant. We only have to bound the second term, so by It\^o isometry
\[ \int_0^s \left( (t-r)^{H-1/2} - (s-r)^{H-1/2} \right)^2 \dd r = \int_0^{s-1} \dd r + \int_{s-1}^s \dd r. \]
We then use a basic interpolation estimate for $x,\, y>0$ and $\beta \in (0,1],\,  \theta \in \RR$ to see that 
\begin{equation}\label{eq:real_interpolation}
\big|x^{\theta} - y^{\theta} \big| \leq \absv{ x-y}^{\beta} ( x \land y )^{\theta-\beta}
\end{equation}
The first integral then is suitably bounded using $\beta = H+\delta$ for some small $\delta > 0$, the second one with $\beta = H-\delta$.

For the second hypothesis, we perform similar decomposition as above for $v<s<t$, i.e.
\begin{align*}
 B^H_t - B^H_s = & \int_s^t (t-r)^{H-1/2} \dd B_r + \underbrace{ \int_v^s (t-r)^{H-1/2} - (s-r)^{H-1/2} \dd B_r }_{=:I_2} \\ &\;\;\;+ \underbrace{\int_0^v (t-r)^{H-1/2} - (s-r)^{H-1/2} \dd B_r}_{=:I_3} \\
 B^H_s - B^H_v = &\underbrace{ \int_v^s (s-r)^{H-1/2} \dd B_r }_{=:J_1} + \underbrace{ \int_0^v (s-r)^{H-1/2} - (v-r)^{H-1/2} \dd B_r }_{=:J_2} \\
 \end{align*}
We then compute expectation of the product of two lines above, which results in six terms. However, by independence of Brownian increments, we observe that only non-zero terms are $\mbE \left[ I_2 J_1 \right]$ and $\mbE \left[ I_3 J_2 \right]$. By It\^o isometry and monotonicity of $x \to x^{H-1/2}$, we have $\mbE \left[ I_2 J_1 \right] \leq 0$. Again by It\^o isometry, decomposing to $\int_0^{v-1} + \int_{v-1}^v$ and exploiting the interpolation estimate \eqref{eq:real_interpolation}:
\begin{align*}
 \mbE \left[ I_3 J_2 \right] = & \int_0^v \left( (t-r)^{H-1/2} - (s-r)^{H-1/2} \right) \left( (s-r)^{H-1/2} - (v-r)^{H-1/2} \right) \dd r \\ 
 \leq & \absv{t-s}^{\beta} \absv{v-s}^{\beta} \int_0^{v-1} (s-r)^{H-1/2-\beta} (v-r)^{H-1/2-\beta} \dd r \\ & \;\;\; + \absv{t-s}^{\beta'} \absv{v-s}^{\beta'} \int_{v-1}^v (s-r)^{H-1/2-\beta'} (v-r)^{H-1/2-\beta'} \dd r
\end{align*}
Since $(s-r)^{\eta} \leq (v-r)^{\eta}$ for all $\eta < 0$, the two integrals above are finite for $\beta > H > \beta'$ and the conclusion follows up to a multiplicative constant.
\end{proof}
\section{Stochastic Sewing Lemmas}\label{app:stoch_sewing_proofs}
We collect some stochastic sewing lemmas (SSL) which are used variously throughout the text. Some are by now standard, some only more recently introduced and some novel. Where a sufficiently concise or clear proof is not available in the literature we have given a sketch.

In this section we let $d,\, \ell,\,  \in \mbN\setminus \{0\}$ be fixed such that $1\leq d \leq \ell$ and $(\Omega,\mcF,\mbF,\mbP,B)$ be a tuple of a probability space $(\Omega,\mcF,\mbP)$ and a standard $\mbR^k$ valued standard Brownian motion $B$ with $\mbF = \{\mcF_t\}_{t\geq 0}$ its natural filtration. 

The following version of SSL with controls and conditional norms is a particular subcase of \cite[Thm.~3.1]{le_23_banach} for $\mathfrak{p}=2$ therein. 
\begin{lem}\label{lem:SSL}
		Let $m,\,n$ be real numbers such that $2\leq m\leq n \leq \infty$ with $m<\infty$ and $0\leq \cS<\cT<\infty$.
	Assume that $A: [\mcS,\mcT]^2_{\leq} \to L^m_\Omega \mbR^\ell$ is a continuous mapping from $[\cS,\cT]^2_{\leq}$ to $L^m_\Omega\RR^\ell$ such that $A_{s,t}$ is $\cF_t$-measurable for all $(s,t) \in [\cS,\cT]^2_{\leq}$.
	Suppose that there exist $\eps_1,\,\eps_2>0$ and $\Gamma_1,\,\Gamma_2>0$ such that for all $(s,u,t)\in [\cS,\cT]^3_{\leq}$ one has the bounds
	\begin{align}
		\|A_{s,t} \|_{L^m_\Omega} &\leq \Gamma_1 |t-s|^{\frac{1}{2} + \eps_1},\label{eq:base_SSL_cond_1} \\
		\|\EE_s \delta A_{s,u,t}\|_{L^m_\Omega}&\leq \Gamma_2 |t-s|^{1+\eps_2}. \label{eq:base_SSL_cond_2}
	\end{align}
        Then, there exists a unique adapted stochastic process $\{\cA_t: t\in [\cS,\cT]\}$, continuous as a mapping from $[\cS,\cT]$ to $L^m_\Omega$, such that $\cA_{\cS} =0$
	Moreover, there exists a constant $C\coloneqq C(\eps_1,\eps_2,m,n,\ell)>0$ such that
	\begin{align}
		\|\cA_t -\cA_s \|_{L^m_\Omega} &\leq C \Gamma_1 |t-s|^{\frac{1}{2}+\eps_1} + C \Gamma_2|t-s|^{1+\eps_2}, \label{eq:base_SSL_bnd_1}\\
		\| \cA_t-\cA_s -A_{s,t} \|_{L^m_\Omega} &\leq C \Gamma_2|t-s|^{1+\eps_2}. \label{eq:base_SSL_bnd_2}
	\end{align}
\end{lem}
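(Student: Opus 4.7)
The plan is to follow the standard stochastic sewing construction of L\^e, specialised to the Banach space $L^m(\Omega;\mbR^\ell)$ with $\mathfrak{p}=2$. First I would construct $\cA$ as the $L^m$-limit of Riemann-sum approximations along dyadic partitions of $[\cS,t]$. Writing $\pi_n = \{\cS = t_0^n < t_1^n < \cdots < t_{2^n}^n = t\}$ for the uniform dyadic partition of $[\cS,t]$, set
\begin{equation*}
    \cA^n_t \coloneqq \sum_{k=0}^{2^n-1} A_{t_k^n,\, t_{k+1}^n}.
\end{equation*}
Continuity of $A:[\cS,\cT]^2_\leq \to L^m_\Omega$ and $\cF_{t_{k+1}^n}$-measurability of each summand yield that $t\mapsto \cA^n_t$ is $L^m$-continuous and $\mbF$-adapted.

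The core step is to show that $\{\cA^n_t\}_{n\geq 0}$ is Cauchy in $L^m_\Omega$. By a telescoping identity,
\begin{equation*}
    \cA^{n+1}_t - \cA^n_t = \sum_{k=0}^{2^n-1} \delta A_{t_k^n,\, t_{k+1/2}^n,\, t_{k+1}^n},
\end{equation*}
where $t_{k+1/2}^n$ is the midpoint. I would decompose each three-point difference into a conditionally centred part and its conditional mean,
\begin{equation*}
    \delta A_{u,v,w} = \big(\delta A_{u,v,w} - \mbE_u \delta A_{u,v,w}\big) + \mbE_u \delta A_{u,v,w}.
\end{equation*}
The conditional-mean part is controlled termwise by \eqref{eq:base_SSL_cond_2} and summed by the triangle inequality, giving a contribution of order $\Gamma_2 |t-s|^{1+\eps_2} 2^{-n\eps_2}$. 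For the remainder, the summands $\delta A_{t_k^n, t_{k+1/2}^n, t_{k+1}^n} - \mbE_{t_k^n} \delta A_{t_k^n, t_{k+1/2}^n, t_{k+1}^n}$ form a martingale difference sequence with respect to $\{\mcF_{t_{k+1}^n}\}_k$, and a discrete Burkholder--Davis--Gundy inequality in $L^m$ (which requires $m\geq 2$, hence the hypothesis) combined with \eqref{eq:base_SSL_cond_1} gives the $\ell^2$-in-$k$ bound of order $\Gamma_1 |t-s|^{1/2+\eps_1} 2^{-n\eps_1}$. Geometric summation in $n$ then produces both the Cauchy property and the telescoped bounds \eqref{eq:base_SSL_bnd_1}--\eqref{eq:base_SSL_bnd_2}, once one passes to the limit $\cA_t = \lim_n \cA^n_t$.

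Uniqueness is standard: if $\tilde\cA$ is another adapted continuous process with $\tilde\cA_\cS = 0$ satisfying \eqref{eq:base_SSL_bnd_2}, then $R_t \coloneqq \cA_t - \tilde\cA_t$ satisfies $\|R_t - R_s\|_{L^m_\Omega} \lesssim \Gamma_2 |t-s|^{1+\eps_2}$; this super-linear regularity together with $R_\cS = 0$ and a standard partition argument forces $R\equiv 0$. Adaptedness of $\cA$ is inherited from that of each $\cA^n_t$ through the $L^m$-limit.

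The main obstacle, and the only substantive point, is the discrete $L^m$-BDG step: one must verify that the martingale-difference structure is genuinely adapted to a filtration for which $A_{s,t}$ is $\mcF_t$-measurable, and that the conditional $L^m$-control on the centred part gives the square-summable bound. This is precisely where the exponent $\frac{1}{2}+\eps_1$ (rather than $1+\eps_1$) in \eqref{eq:base_SSL_cond_1} is used, reflecting the gain coming from stochastic cancellation. Since the statement is explicitly a specialisation of \cite[Thm.~3.1]{le_23_banach} with $\mathfrak{p}=2$, I would cite L\^e's proof for this BDG-type argument rather than reproduce it, limiting the presentation to verifying that our hypotheses \eqref{eq:base_SSL_cond_1}--\eqref{eq:base_SSL_cond_2} are the appropriate translation of the conditions imposed there.
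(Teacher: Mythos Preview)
Your proposal is correct and matches the paper's treatment: the paper does not give a proof of this lemma at all, but simply records it as a particular subcase of \cite[Thm.~3.1]{le_23_banach} with $\mathfrak{p}=2$. Your sketch of the dyadic Riemann-sum construction, the martingale-difference decomposition with discrete BDG, and the super-linear uniqueness argument is precisely the standard stochastic sewing proof, and you likewise conclude by deferring to L\^e's reference.
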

A second variant we appeal to allows us to obtain Gaussian tail estimates on the resulting integral when we do not have bounded integrands. The idea is to study two parameter processes which decompose into a stochastic integral and a process satisfying the a necessary moment bound. One can then apply the Burkoholder--Davis--Gundy inequality with optimal growth of the constant to the stochastic integral component. The proof is similar to that of \cite[Lem.~A.1]{GM23}, which in turn was inspired by \cite[Thm.~4.7]{ABLM24}. First let us state BDG as a proposition. Let us also remark that another method of dealing with this problem has been introduced in \cite{BDG19}, where the authors also manage to rewrite sewing in terms of continuous martingale, but at the price of introducing additional conditions that the germ needs to verify.
\begin{prop}
Let $(M_t)_{t \geq 0}$ be a continuous martingale and let $\langle M \rangle$ its quadratic variation. Then for any $m \geq 2$ and $T > 0$ there holds
\begin{equation}\label{eq:bdg_optimal}
    \norm{ M_T }_{L^m_{\Omega}} \leq 2 \sqrt{m} \norm{ \langle M \rangle_T }_{L^m_{\Omega}}.
\end{equation} 
\end{prop}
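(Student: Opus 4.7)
The plan is to combine It\^o's formula applied to $|M_t|^m$ with H\"older's inequality and Doob's $L^m$ maximal inequality, and then to refine the resulting $m$-dependence using the sharp version of Burkholder's inequality. First, after a routine localization by stopping times $\tau_n\uparrow\infty$ ensuring that all quantities are integrable, I would apply It\^o's formula to the $C^2$ function $x\mapsto|x|^m$ (valid since $m\geq 2$); taking expectations kills the local martingale contribution and produces the key identity
\[
\mathbb{E}|M_T|^m \;=\; \frac{m(m-1)}{2}\,\mathbb{E}\!\int_0^T |M_s|^{m-2}\,\mathrm{d}\langle M\rangle_s.
\]

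Second, I would bound $|M_s|^{m-2}\leq (M_T^*)^{m-2}$ with $M_T^* \coloneqq \sup_{s\leq T}|M_s|$, pull this maximum out of the $\mathrm{d}\langle M\rangle$-integral, and apply H\"older with exponents $\tfrac{m}{m-2}$ and $\tfrac{m}{2}$ to split the product into $\|M_T^*\|_{L^m_\Omega}^{m-2}$ and $\|\langle M\rangle_T^{1/2}\|_{L^m_\Omega}^{2}$. Doob's $L^m$ maximal inequality $\|M_T^*\|_{L^m_\Omega}\leq \tfrac{m}{m-1}\|M_T\|_{L^m_\Omega}$ then lets me trade $\|M_T^*\|^{m-2}$ for $\|M_T\|^{m-2}$, and dividing through on both sides gives an inequality of the form $\|M_T\|_{L^m_\Omega}^2 \leq C_m \|\langle M\rangle_T^{1/2}\|_{L^m_\Omega}^2$, with the square-root being the intended reading of the right-hand side of \eqref{eq:bdg_optimal}.

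The main obstacle is extracting the announced $\sqrt{m}$ (rather than $m$) growth of the constant: the naive scheme above only delivers $C_m = \tfrac{m(m-1)}{2}(m/(m-1))^{m-2}$, which is of order $m^2$ since $(m/(m-1))^{m-2}\leq e$, hence $\|M_T\|_{L^m_\Omega}\lesssim m\,\|\langle M\rangle_T^{1/2}\|_{L^m_\Omega}$. To upgrade the constant to $O(\sqrt{m})$, I would invoke the sharp Burkholder--Davis--Gundy inequality for continuous martingales, which is known to produce the optimal constant of order $\sqrt{m-1}$ via Burkholder's Bellman-function method (finding a super-solution to an associated obstacle problem), or, alternatively, via a careful good-$\lambda$ inequality comparing $M_T^*$ with $\langle M\rangle_T^{1/2}$ followed by a tail integration that preserves the $\sqrt{m}$ order. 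The harmless prefactor $2$ in the statement then absorbs the remaining numerical losses.

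This sharp $\sqrt m$ dependence is not mere cosmetic bookkeeping: it is precisely what allows the martingale--part contribution in the local--global stochastic sewing lemmas of Appendix~\ref{app:stoch_sewing_proofs} to be handled with a \emph{continuous}-time BDG, and thereby deliver the Gaussian (rather than merely exponential) moments of $X^\psi$ that Theorem~\ref{th:tightness_main} relies upon. Accordingly, the right dependence on $m$ is the nontrivial content of the proposition, while the algebraic skeleton (It\^o + H\"older + Doob) is standard.
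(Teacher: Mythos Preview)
The paper itself gives no proof; it refers to \cite[App.~A]{CK91} and \cite{novikov_73_moment}. So there is nothing in the paper to compare against beyond the choice of reference.

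Your It\^o + H\"older + Doob argument is correct and standard, and you rightly observe it yields only $\|M_T\|_{L^m_\Omega}\lesssim m\,\|\langle M\rangle_T^{1/2}\|_{L^m_\Omega}$. The gap is in the upgrade step. Writing ``invoke the sharp Burkholder--Davis--Gundy inequality'' to prove the sharp Burkholder--Davis--Gundy inequality is circular, and the two mechanisms you name as underlying it (Burkholder's Bellman-function method, good-$\lambda$ inequalities) are not what the cited references do and are not obviously suited to producing $\sqrt m$ here: the Bellman-function approach is tailored to discrete martingale transforms and delivers constants of order $m$ in Burkholder's square-function inequality, while standard good-$\lambda$ arguments do not track constants at this level of precision.

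What Carlen--Kree and Novikov actually do is remain in the It\^o framework you started with, but bypass the single lossy step in your scheme---bounding $|M_s|^{m-2}$ by $(M_T^\ast)^{m-2}$ and then paying Doob's constant. That step alone costs the extra factor of $\sqrt m$; the cited proofs avoid it rather than repair it afterwards with an external sharp inequality. Your closing remarks on why the $\sqrt m$ is essential for the Gaussian tails in Theorem~\ref{th:tightness_main} are correct, which is exactly why outsourcing that factor to an unspecified external argument leaves the proposition effectively unproved---though, in fairness, no more so than the paper itself.
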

\begin{proof}
See \cite[App.~A]{CK91} for a clean proof written in English. A reader capable of reading Russian may consult older reference \cite{novikov_73_moment}, where similar proof first appeared.
\end{proof}
\begin{lem}\label{lem:sewing_stochastic_integral}
	Let $0\leq \mcS <\mcT <+\infty$, $m \in [2,+\infty)$ and $A :[\mcS,\mcT]^2_{\leq } \to L^m_\Omega \mbR^\ell$ be a continuous two-parameter random process such that
	\begin{equation*}
		\delta A_{s,u,t} = \int_s^u h_z  \dd B_z + J_{s,t},\qquad \text{for all }\, (s,u,t)\in [\mcS,\mcT]^3_{\leq},
	\end{equation*}
	where $J:[\mcS,\mcT]^2_\leq \to L^m_\Omega\mbR^\ell$ is a continuous process and $h : [\mcS,\mcT]\to L^m_\Omega \mbR^{\ell \times d}$ is a continuous, $\mbF$-adapted, matrix valued process. Suppose that there exist $\eps_1,\,\eps_2>0$, a constant $ C_J\coloneqq C_J(\mcS,\mcT,m)>0$ and a family of non-negative random variables $\{C_{h,s}\}_{s\in [S,T]}\subset L^m(\Omega;\mbR)$ where each $C_{h,s}$ is $\mcF_s$-measurable, such that 
	\begin{align}
		\|J_{s,t}\|_{L^m_\Omega} \leq &\, C_J |t-s|^{1+\eps_1},\quad \text{for all } \, (s,t)\in [\mcS,\,\mcT]^2_{\leq}, \label{eq:stoch_integral_sewing_J_bound} \\
        \|h\|_{L^2_{[s,t]}}\leq &\, C_{h,s} \absv{t-s}^{\nicefrac{1}{2} +\eps_2}, \quad \mbP\text{-a.s.  for all }\, (s,u,t) \in [\mcS,\mcT]^3_{\leq}.\label{eq:stoch_integral_sewing_h_bound}
	\end{align}
	 Then, there exists a unique $\mbF$-adapted continuous stochastic process $\mcA :  [\mcS,\mcT]\to L^m_\Omega \mbR^\ell$ such that $\mcA_S =0$ and for all $(s,t)\in [\mcS,\mcT]^2_\leq$ and is such that for some $C_1\coloneqq C_1(\eps_1,\eps_2)>0$  
\begin{equation}\label{eq:sewing_stoch_integral_bnd_1}
		\|\mcA_t -\mcA_s - A_{s,t}\|_{L^m_\Omega}   \, \leq\,   C_1 \left( \sup_{u \in [s,t]}\norm{ C_{h,u} }_{L^m_{\Omega}}  \sqrt{m}   \absv{t-s}^{\nicefrac{1}{2}+\eps_2} + C_{J} \absv{t-s}^{1+\eps_1} \right). 
	\end{equation}
\end{lem}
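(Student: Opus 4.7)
The plan is to follow the classical dyadic Riemann-sum approach to stochastic sewing, but to exploit the decomposition $\delta A = \int h\,\dd B + J$ so that the dyadic correction terms can be written as one \emph{single} continuous stochastic integral plus a remainder. This setup permits an application of the sharp Burkholder--Davis--Gundy inequality \eqref{eq:bdg_optimal}, yielding the constant $\sqrt{m}$ that is essential for the Gaussian tails used throughout Section~\ref{sec:tightness}. A direct appeal to the discrete BDG underlying L\^e's original sewing scheme would instead produce an $m$-dependence too crude for the present purposes.

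Concretely, I would fix $(s,t)\in [\mcS,\mcT]^2_\leq$ and define $\mcA^n_{s,t} \coloneqq \sum_{k=0}^{2^n-1} A_{t^n_k, t^n_{k+1}}$ with $t^n_k = s + k 2^{-n}(t-s)$. A standard telescoping together with the assumed decomposition gives
\begin{equation*}
\mcA^{n+1}_{s,t} - \mcA^n_{s,t} \,=\, -\sum_{k=0}^{2^n-1} \delta A_{t^n_k, t^n_{k+1/2}, t^n_{k+1}} \,=\, -\int_{s}^{t} H^n_z \,\dd B_z \,-\, R^n_{s,t},
\end{equation*}
where $H^n_z \coloneqq \sum_{k} \indic_{[t^n_k, t^n_{k+1/2}]}(z)\, h_z$ is an $\mbF$-adapted integrand and $R^n_{s,t}$ collects the $J$-contributions at the midpoints. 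Applying \eqref{eq:bdg_optimal}, using $\|X^2\|_{L^{m/2}} = \|X\|_{L^m_\Omega}^2$ together with the $\mcF_{t^n_k}$-measurability of $C_{h,t^n_k}$ to extract the constants from the $L^{m/2}$ norm of the quadratic variation, and finally invoking \eqref{eq:stoch_integral_sewing_h_bound}, I expect
\begin{equation*}
\Big\| \int_s^t H^n_z\,\dd B_z \Big\|_{L^m_\Omega} \,\leq\, 2\sqrt{m} \bigg( \sum_k \|C_{h,t^n_k}\|_{L^m_\Omega}^2 (t^n_{k+1/2}-t^n_k)^{1+2\eps_2} \bigg)^{1/2} \,\lesssim\, \sqrt{m}\, \sup_{u\in[s,t]}\|C_{h,u}\|_{L^m_\Omega}\, |t-s|^{\frac{1}{2}+\eps_2}\, 2^{-n\eps_2}.
\end{equation*}
For the remainder the $L^m_\Omega$-triangle inequality together with \eqref{eq:stoch_integral_sewing_J_bound} gives $\|R^n_{s,t}\|_{L^m_\Omega} \lesssim C_J |t-s|^{1+\eps_1}\, 2^{-n\eps_1}$.

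Summing the two geometric series in $n$ shows that $\{\mcA^n_{s,t}\}_{n\geq 0}$ is Cauchy in $L^m_\Omega$; denote its limit by $\bar{\mcA}_{s,t}$ and set $\mcA_t \coloneqq \bar{\mcA}_{\mcS,t}$. Since $\mcA^0_{s,t} = A_{s,t}$, the series $\sum_{n\geq 0}(\mcA^{n+1}_{s,t}-\mcA^n_{s,t})$ converges to $\mcA_t - \mcA_s - A_{s,t}$ and yields the bound \eqref{eq:sewing_stoch_integral_bnd_1} directly. Adaptedness of $\mcA$ is inherited from the $\mcF_t$-measurability of each $\mcA^n$; $L^m_\Omega$-continuity of $t\mapsto \mcA_t$ follows from \eqref{eq:sewing_stoch_integral_bnd_1} together with the assumed continuity of $A$. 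Uniqueness is the standard sewing argument: the difference $D$ of two candidates is adapted, continuous and satisfies $\|D_t - D_s\|_{L^m_\Omega} = o(|t-s|)$, which via one more dyadic refinement forces $D\equiv 0$.

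The main obstacle is the martingale estimate. One must recognise the level-$n$ dyadic defect as the stochastic integral of a \emph{single} $\mbF$-adapted integrand $H^n$ against $B$, so that \eqref{eq:bdg_optimal} may be applied once with the sharp $\sqrt{m}$ constant, rather than termwise over the $2^n$ subintervals (which would introduce a prefactor growing with the number of subintervals and destroy the Gaussian moment control underlying Theorem~\ref{th:tightness_main}). The $\mcF_s$-measurability built into the hypothesis on $C_{h,s}$ is used precisely here, to pull those random constants out of the quadratic-variation norm without loss. Everything else is routine sewing bookkeeping.
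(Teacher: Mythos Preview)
Your proposal is correct and follows essentially the same approach as the paper's proof. Both arguments use dyadic Riemann sums, rewrite the level-$n$ defect as a single stochastic integral against $B$ (via the indicator trick) plus a $J$-remainder, apply the sharp BDG inequality \eqref{eq:bdg_optimal} once to capture the $\sqrt{m}$ constant, bound the remainder by the triangle inequality, and sum the resulting geometric series; the paper defers uniqueness to standard references exactly as you do. One minor remark: the extraction of $\|C_{h,t^n_k}\|_{L^m_\Omega}$ from the quadratic-variation norm only needs the triangle inequality in $L^{m/2}_\Omega$ (valid since $m\geq 2$), not the $\mcF_{t^n_k}$-measurability you invoke, but this does not affect the argument.
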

\begin{proof}[Sketch proof of Lemma~\ref{lem:sewing_stochastic_integral}]
	The proof is similar to that of \cite[Lem.~A.1]{GM23},  so we will only sketch the ideas. In particular we don't present the proof of uniqueness of the limit, since it is by now standard, see for example the proof of \cite[Lem.~A.3]{GM23} for the main ideas as well as \cite[Thm.~3.1]{le_23_banach}. We construct the process $\mathcal{A}_t$ as a limit of approximations $A^{n}_{t} \coloneqq \sum_{i=1}^n A_{t_i,t_{i+1}}$ for $(t_i)_{i \in \{1,...,n\}}$ a particular choice of partition of $[0,t]$. The proof of uniqueness shows that the limit is independent of this choice.
	
	Given $(s,t)\in [\mcS,\mcT]^2_\leq$ and $k\in \mbN$, let us define $t^k_i = s + i 2^{-k} \absv{t-s}$ and $u^k_i = \inv{2}(t^k_i+t^k_{i+1})$ so that we have the concrete approximations $\{t\mapsto A^k_t\}_{k\in \mbN}$ which satisfy
	\begin{equation}\label{eq:ak_akmone}
	A^{k}_{s,t} - A^{k-1}_{s,t} = \sum_{i=0}^{2^k-1} \delta  A_{t_i,u_i,t_{i+1}} = \sum_{i=0}^{2^k-1} \int_{t^k_i}^{t^k_{i+1}} h_z \dd B_z + \sum_{i=0}^{2^k-1} J(t^k_i,t^k_{i+1}) =: I^k + J^k.
	\end{equation}
 For $J^k$ it follows immediately from the assumptions and definition of the sequence $(t_i)_{i=0,...,2^k-1}$ that
	\[ \big\| J^k \big\|_{L^m_\Omega} \leq \norm{ \sum_{i=0}^{2^k-1} J(t^k_i, t^k_{i+1}) }_{L^m_\Omega} \leq 2^k \sum_{i=0}^{2^k} \norm{ J(t^k_i,t^k_{i+1}) }_{L^m_\Omega} \leq C_J 2^{-k\eps_1} \absv{t-s}^{1+\eps_1}.  \]
	For $I^k$ we use additivity of a stochastic integral and write
	\[ \sum_{i=0}^{2^k-1} \int_{t^k_i}^{t^k_{i+1}} h_z \dd B_z = \int_s^t h_z \indic_{z \in \left[t^k_i,u^k_i\right]} \dd B_z. \]
	Then by the BDG inequality \eqref{eq:bdg_optimal} applied to the stochastic integrals and the assumption \eqref{eq:stoch_integral_sewing_h_bound} on $h$ we have
	\begin{align*}
		\norm{ \int_s^t h_z \indic_{z \in[t^k_i,u^k_i]} \dd B_z }_{L^m_\Omega} \leq & \sqrt{m}\,  \norm{ \left( \int_s^t h_z^2 \indic_{z \in \left[t^k_i,u^k_i\right]} \dd z \right)^{1/2} }_{L^m_\Omega} \leq \sqrt{m} \, C_{h,s} \norm{ \left( \sum_{i=0}^{2^k-1} \absv{u^k_i - t^k_i}^{1+\eps_2} \right)^{\nicefrac{1}{2}} }_{L^m_\Omega} \\ 
		\lesssim & \sqrt{m}\,  C_{h,s} 2^{-k \eps_2} \absv{t-s}^{\nicefrac{1}{2}+\eps_2}
	\end{align*}
	Hence,
    \begin{equation*}
\|I^k\|_{L^m_\Omega} + \| J^k \|_{L^m_\Omega}  \lesssim   2^{-k(\eps_1\wedge \eps_2)} \left( \sqrt{m} \absv{t-s}^{\nicefrac{1}{2}+\eps_2} + C_J \absv{t-s}^{1+\eps_1} \right)         
    \end{equation*}
    from which it follows that the series $\norm{I^k}_{L^m_\Omega}, \norm{ J^k }_{L^m_\Omega}$ are both summable, which is enough to conclude existence of the limit $\mcA$. 
\end{proof}
The following variant of Lemma~\ref{lem:sewing_stochastic_integral} applies to germs which have some decay in time leading to a globally bounded integral.
\begin{lem}\label{lem:sewing_decay_stochastic_integral}
    Let $ m \in [2,+\infty)$ and $\{ A^T : [0,T]\to L^m_\Omega \mbR^d \, \}_{T\geq 1}$  
    be a family of continuous process such that
	\begin{equation*}
	    \delta A^T_{s,u,t} =J^T(s,t) +  \int_s^t h^{T,u}_z \dd B_z ,\quad \text{for all }\,  (s,u,t)\in [0,T]^3_{\leq},\, T\geq 1,
	\end{equation*} 
where each $h^{T,u} : [0,T] \to L^m_\Omega \mbR^{\ell \times d}$ is a continuous matrix-valued $\mbF$-adapted process and $J^T: [0,T]^2_{\leq} \to L^m_\Omega \mbR^d$ is a continuous process. Suppose that there exist $\eps_1,\,\eps_2,\,\eps_3 >0$, $\lambda>0$, $C_A, \, C_J >0$ and a family of $\mbR$-valued random variables $\{C_{h,u}\}_{u \geq 0}\subset L^m_\Omega \mbR$, such that for all  $T\geq 1$ and $(s,t)\in [0,T]^2_\leq$,
\begin{equation}
		 \norm{ A^T_{s,t} }_{L^m_\Omega} \leq \, e^{-\lambda(T-t)} C_A \left(\absv{t-s}\wedge 1\right)^{\nicefrac{1}{2}+\eps_1}, \label{eq:sewing_decay_two_index_bound}
\end{equation}
and for all  $(s,t)\in [0,T]^2_\leq$ such that $\absv{t-s} \leq 1$ 
\begin{align}
     \norm{ J^T(s,t) }_{L^m_{\Omega}} \leq &\, e^{-\lambda(T-t)} C_{J} \absv{t-s}^{1+\eps_2},\label{eq:sewing_stoch_integral_decay_J_estimate}\\
     \norm{ h^{T,u} }_{L^2_{[s,t]}} \leq &\,  e^{-\lambda(T-t)} C_{h,u} \absv{t-s}^{\nicefrac{1}{2}+\eps_3}.\label{eq:sewing_stoch_integral_decay_h_estimate}
	 \end{align}
    Then, for any interval $\tint \subseteq \RR_+$ such that $[0,1]\subseteq \mcI$, there exists a unique family of real-valued $\mbF$-adapted process $\{ \mathcal{A}^T \}_{T\in \tint}$ such that for every $(s,t) \in [0,T]^2_{\leq}$ the bounds \eqref{eq:sewing_stoch_integral_bnd_1} holds on $[s,t]$, up to multiplication by  $e^{-\lambda(T-t)}$ on the right hand side. In addition, there exists a $C \coloneqq C(\eps_1,\eps_2,\eps_3)>0$ (crucially not depending on $\mcI$) such that for any $V >1$, 
\begin{equation}\label{eq:uniform_time_bound_ssl}
	\sup_{T\in \tint} \sup_{(s,t) \in [0,T]^2_\leq} \norm{ \mathcal{A}^T_{s,t} - A^T_{s,t} }_{L^m_{\Omega}} \leq C \left( \frac{ 2^{-V\left(\eps_2\wedge \frac{\eps_3}{2} \right)}   }{ \lambda \land \sqrt{\lambda} } \left( \sqrt{m} \sup_{u\in \tint}\|C_{h,u}\|_{L^m_\Omega} + C_{J} \right) + 2^{V} C_A \right).
	\end{equation}
\end{lem}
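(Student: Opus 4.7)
The plan is to first construct $\mcA^T$ locally via Lemma~\ref{lem:sewing_stochastic_integral} and then derive the uniform bound \eqref{eq:uniform_time_bound_ssl} through a dyadic decomposition whose fineness is tuned by the parameter $V$.

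For construction, I will fix $T \in \mcI$ and observe that for any sub-interval $[s,t] \subseteq [0,T]$ with $|t-s| \leq 1$, the restriction of $A^T$ satisfies the hypotheses of Lemma~\ref{lem:sewing_stochastic_integral} with constants $e^{-\lambda(T-t)}C_J$ and $e^{-\lambda(T-u)}C_{h,u}$, since both bounds \eqref{eq:sewing_stoch_integral_decay_J_estimate}--\eqref{eq:sewing_stoch_integral_decay_h_estimate} carry their exponential decay factor benignly. Applying Lemma~\ref{lem:sewing_stochastic_integral} on each unit-length sub-interval of $[0,T]$ and patching the local primitives via the uniqueness part of that lemma yields a continuous $\mbF$-adapted process $\mcA^T : [0,T] \to L^m_\Omega\,\mbR^d$ with $\mcA^T_0 = 0$, satisfying the local bound \eqref{eq:sewing_stoch_integral_bnd_1} multiplied by $e^{-\lambda(T-t)}$. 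Uniqueness of the family $\{\mcA^T\}_{T\in\mcI}$ is then inherited from Lemma~\ref{lem:sewing_stochastic_integral}.

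For the global estimate \eqref{eq:uniform_time_bound_ssl}, I fix $V>1$ and $(s,t) \in [0,T]^2_\leq$, set $\Delta \coloneqq 2^{-V}$, and partition $[s,t]$ into $N \leq 1 + (t-s)/\Delta$ consecutive sub-intervals $[t_i, t_{i+1}]$ each of length at most $\Delta$. Since $\mcA^T$ is additive in its indices while $A^T$ is merely a germ, one obtains the decomposition
\begin{equation*}
    \mcA^T_{s,t} - A^T_{s,t} = \sum_{i=0}^{N-1}\!\left(\mcA^T_{t_i,t_{i+1}} - A^T_{t_i,t_{i+1}}\right) + \Bigg(\sum_{i=0}^{N-1} A^T_{t_i,t_{i+1}} - A^T_{s,t}\Bigg).
\end{equation*}
For the first sum I apply the local sewing bound piece-wise and split into its $J$- and stochastic-integral contributions. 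The $J$-terms are controlled via the triangle inequality and the geometric summation
\begin{equation*}
\sum_i e^{-\lambda(T-t_{i+1})} \leq \frac{e^{-\lambda(T-t)}}{1-e^{-\lambda\Delta}} \lesssim \frac{e^{-\lambda(T-t)}\, 2^V}{\lambda},
\end{equation*}
which combined with $C_J\,\Delta^{1+\eps_2}$ gives the $C_J\,2^{-V\eps_2}/\lambda$ contribution. For the stochastic-integral terms I aggregate the martingale contributions across disjoint sub-intervals into one continuous martingale over $[s,t]$, so that quadratic variations add and the optimal BDG inequality \eqref{eq:bdg_optimal} applies only once to the whole aggregate; the analogous geometric summation, now with the factor $e^{-2\lambda\Delta}$, produces $\sqrt\lambda$ (rather than $\lambda$) in the denominator and, after taking square roots, the $2^{-V\eps_3/2}$ rate. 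The second telescoping sum is estimated directly via the triangle inequality and the size bound \eqref{eq:sewing_decay_two_index_bound}: each $\|A^T_{t_i,t_{i+1}}\|_{L^m_\Omega} \leq e^{-\lambda(T-t_{i+1})}\,C_A(\Delta\wedge 1)^{1/2+\eps_1}$, and summing geometrically together with the bound $\|A^T_{s,t}\|_{L^m_\Omega} \leq C_A$ from \eqref{eq:sewing_decay_two_index_bound} yields the $2^V C_A$ contribution.

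The main obstacle will be the careful bookkeeping of exponential decays when BDG is applied to the aggregated stochastic-integral martingale, so as to recover a $\sqrt\lambda$ denominator and the precise exponent $\eps_3/2$, while keeping all constants independent of $T \in \mcI$ and of the location of $(s,t)$ inside $[0,T]$. A secondary but important technical point will be verifying that the primitive obtained by patching applications of Lemma~\ref{lem:sewing_stochastic_integral} on consecutive unit-length pieces of $[0,T]$ coincides, within each such piece, with the sewn process produced on that piece in isolation, so that the bound \eqref{eq:uniform_time_bound_ssl} can be applied uniformly over all $(s,t) \in [0,T]^2_\leq$.
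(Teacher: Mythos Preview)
Your proposal is correct and takes essentially the same route as the paper: the paper also constructs $\mcA^T$ via Lemma~\ref{lem:sewing_stochastic_integral}, then proves \eqref{eq:uniform_time_bound_ssl} by introducing a scale cutoff at $2^{-V}$ (via $\bar{k}=\lfloor\log_2|t-s|\rfloor+V$), handling the coarse germ sum $A^{T;\bar{k}}_{s,t}-A^T_{s,t}$ by the size bound \eqref{eq:sewing_decay_two_index_bound} with a geometric sum, and the fine increments $A^{T;k}-A^{T;k-1}$ for $k>\bar{k}$ by separating the $J$-contribution (triangle inequality plus geometric sum, producing the $\lambda^{-1}$ factor) from the stochastic-integral contribution (BDG over $[s,t]$ with the geometric sum inside the square root, producing the $\lambda^{-1/2}$ factor). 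The only organizational difference is that the paper sums first over dyadic levels $k>\bar{k}$ and applies BDG once per level, whereas you partition first into pieces of size $2^{-V}$ and apply BDG once to the aggregated martingale; these are equivalent rearrangements of the same double sum, and your identification of the martingale-aggregation step as the crux is exactly right.
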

\begin{proof}[Sketch proof of Lemma~\ref{lem:sewing_decay_stochastic_integral}]
As in the proof of Lemma~\ref{lem:sewing_decay_stochastic_integral} we will construct $\mathcal{A}^T$ as the limit of approximations for $n\geq 0$,
\begin{equation*}
    A_t^{T;n} =\sum_{i=0}^{2^n-1} A_{t^n_i,t^n_{i+1}}^T
\end{equation*} 
where $t^n_i = i 2^{-n} t $ for $i\in \{0,\ldots,2^{n}\}$. Again, we will  focus on showing that the sequence  of process $\{A^{T;n}\}_{n\geq 0}$ converges to a limit satisfying \eqref{eq:uniform_time_bound_ssl}. To check uniqueness (in the claimed sense) and invariance of the limiting family over the choice of partitions, one can directly apply Lemma~\ref{lem:sewing_stochastic_integral} for each $T\geq 1$ fixed and it is readily checked that the resulting family is consistent, i.e. $\mcA^{2T}|_{[0,T]}= \mcA^T$ for any given $T\geq 1$.

To prove that any limiting family $\{\mcA^T\}_{T\geq 1}$ must satisfy \eqref{eq:uniform_time_bound_ssl}, let us first fix a $T\geq 1$ and $(s,t)\in [0,T]^2_\leq$. We then take any $V\in  [2,\infty) \cap \mbN$ and set
\begin{equation}\label{eq:bar_k_def}
     \bar{k} = \lfloor \log_2 \absv{t-s} \rfloor + V \in \mbN.
 \end{equation}
At the end of the proof we will argue that the claimed estimate holds for $V\in [1,+\infty)$.
We then consider the decomposition, for $n\geq  \bar{k} +2$ 
\begin{equation}\label{eq:decomposition}
  A^{T;n}_{s,t} - A^{T;0}_{s,t} = \underbrace{ \sum_{k=\bar{k}+1}^{n-1} \left( A^{T;k}_{s,t} - A^{T;k-1}_{s,t} \right) }_{\eqqcolon I^n_1} + \underbrace{ A^{T;\bar{k}}_{s,t} }_{\eqqcolon I_2} - \underbrace{ A^{T}_{s,t} }_{\eqqcolon I_3}.
\end{equation}
We bound each term in sequence. For $I^n_1$ we will show that the sequence of partial sums is absolutely convergent. We set $u^k_i = \inv{2}(t^k_i + t^k_{i+1})$ and decompose, for $k\geq \bar{k} +1$,
    \begin{equation*}
        A^{T;k}_{s,t} - A^{T;k-1}_{s,t} = \sum_{i=1}^{2^k-1} \delta A^T_{t^k_i,u^k_i,t_{i+1}} = \sum_{i=1}^{2^k-1}  J^T(t^k_i,t^k_{i+1}) +  \sum_{i=1}^{2^k-1}\int_{t^k_i}^{t^k_{i+1}} h^{T,u^k_i}_z \dd B_z \eqqcolon U^T_k + L^T_k 
    \end{equation*} 
Taking the $L^m_\Omega$  norm of $U^T_k$ and applying the assumed bound on $J^T$ \eqref{eq:sewing_stoch_integral_decay_J_estimate} we have 
\begin{equation}\label{eq:uk_first_bound}
    \begin{aligned}
        \norm{ U^T_k }_{L^m_\Omega} \leq & \,C_{J} e^{-\lambda T} \sum_{i=0}^{2^k-1} e^{\lambda t_{i+1}^k} \absv{t_{i+1}^k - t_i^k}^{1+\eps_2} \\
        \leq &\, C_{J} e^{-\lambda T} \left( \absv{t-s} 2^{-k} \right)^{1+\eps_2} e^{\lambda 2^{-k}\absv{t-s}} \sum_{i=0}^{2^k-1} e^{\lambda i 2^{-k} \absv{t-s}} \\
        \leq &\, C_{J} e^{-\lambda T} \left( \absv{t-s} 2^{-k} \right)^{1+\eps_2} e^{\lambda 2^{-k}\absv{t-s}} \frac{ e^{\lambda \absv{t-s} } - 1}{ e^{\lambda 2^{-k} \absv{t-s} } -1 }.
    \end{aligned}
	\end{equation}
   Since we have $k>\bar{k}$ (recalling \eqref{eq:bar_k_def}) it holds that $e^{\lambda 2^{-k}\absv{t-s}} \leq e^{\lambda}$. Using in addition the trivial facts that $e^{-\lambda T} \left( e^{\lambda\absv{t-s}} -1 \right) \leq 1$ and $e^{x}>1+x$ for $x>0$, we see that  
    \begin{equation*}
        \|U^k\|_{L^m_\Omega} \leq C_{J} \left( 2^{-k} \absv{t-s} \right)^{1+\eps_2} \left( 2^{-k} \absv{t-s} \right)^{-1} \lambda^{-1} \leq C_{J} \lambda^{-1} \left( 2^{-k} \absv{t-s} \right)^{\eps_2}
    \end{equation*} 
To deal with $L^T_k$, we exchange summation and integration for $h$ in the same way as in the proof of Lemma \ref{lem:sewing_stochastic_integral} and appeal to \eqref{eq:bdg_optimal} to obtain 
\begin{align*}
\norm{L^T_k}_{L^m_{\Omega}} & \leq  \sqrt{m} \, \norm{ \left( \int_s^t |h^{T,u^k_i}_z|^2 \indic_{z \in [t^k_i, u^k_i] } \dd z \right)^{\nicefrac{1}{2}} }_{L^m_{\Omega}} \\ 
& \leq \sqrt{m} \, \sup_{u \geq  0} \norm{ C_{h,u} }_{L^m_{\Omega}} e^{-\lambda T} \left( \sum_{i=0}^{2^k-1} e^{2\lambda t^k_{i+1}} \absv{t^k_{i+1}-t^k_i}^{1+\eps_3} \right)^{\nicefrac{1}{2}} \\
& \leq \sqrt{m}\,  \sup_{u \geq  0} \norm{ C_{h,u} }_{L^m_{\Omega}} e^{-\lambda T} \left( \left( \absv{t-s} 2^{-k} \right)^{1+\eps_3} e^{\lambda 2^{-k}\absv{t-s}} \frac{ e^{\lambda \absv{t-s} } - 1}{ e^{\lambda 2^{-k} \absv{t-s} } -1 } \right)^{\nicefrac{1}{2}}.
\end{align*}
Using the same concluding arguments as in bounding $\|U^T_k\|_{L^m_\Omega}$ we see that we have
\[ \norm{L^T_k}_{L^m_{\Omega}} \leq \sqrt{m}\,  \sup_{u \geq  0} \norm{ C_{h,u} }_{L^m_{\Omega}}  \lambda^{-\nicefrac{1}{2}} \left( 2^{-k} \absv{t-s} \right)^{\nicefrac{\eps_3}{2}}. \]
Hence, by the triangle inequality
\begin{align*}
   \|I^n_1\|_{L^m_\Omega} \, \leq & \, \sum_{k= \bar{k}+1}^n \left(\norm{ U^T_k }_{L^m_\Omega} + \norm{L^T_k}_{L^m_{\Omega}}  \right)\\
   \leq &\, C_J \lambda^{-1}|t-s|^{\eps_2} \sum_{k= \bar{k}+1}^{n-1} 2^{-\eps_2 k} + \sqrt{m}\,  \sup_{u \geq  0} \norm{ C_{h,u} }_{L^m_{\Omega}}  \lambda^{-\nicefrac{1}{2}} |t-s|^{\nicefrac{\eps_3}{2}}\sum_{k= \bar{k}+1}^{n-1} 2^{-k \nicefrac{\eps_3}{2}}\\
   =&\, C_J \lambda^{-1}|t-s|^{\eps_2} \frac{2^{-\eps_2 (\bar{k}+1)} - 2^{-\eps_2 n}}{1-2^{-\eps_2}}\\
   &\, + \sqrt{m}\,  \sup_{u \geq  0} \norm{ C_{h,u} }_{L^m_{\Omega}}  \lambda^{-\nicefrac{1}{2}} |t-s|^{\nicefrac{\eps_3}{2}}\frac{2^{-(\bar{k}+1)\nicefrac{\eps_3}{2}} - 2^{-n\nicefrac{\eps_3}{2} }}{1-2^{-\nicefrac{\eps_3}{2}}}.
\end{align*}
Since the right hand side converge as $n\to +\infty$ we see that the sequence of partial sums $\{I^n_1\}_{n\geq \bar{k}}$ is absolutely convergent and using the definition of $\bar{k}$ \eqref{eq:bar_k_def}, a fortiori we have the bound
  \begin{align}
     \sup_{n\geq \bar{k}+2} \norm{ I^n_1 }_{L^m_{\Omega}} &\, \leq \,  C_J \lambda^{-1} \frac{2^{-\eps_2(V+1)} }{1-2^{-\eps_2}} + \sqrt{m}\,  \sup_{u \geq  0} \norm{ C_{h,u} }_{L^m_{\Omega}}  \lambda^{-\nicefrac{1}{2}}  \frac{  2^{-\frac{\eps_3}{2}(V+1)} }{1-2^{-\nicefrac{\eps_3}{2}}}, \notag \\
& \, \lesssim_{\eps_2,\,\eps_3} \, \frac{2^{-V \left(\eps_2\wedge \frac{\eps_3}{2}\right) }}{\lambda \wedge \sqrt{\lambda}} \left(C_J + \sqrt{m}\,  \sup_{u \geq  0} \norm{ C_{h,u} }_{L^m_{\Omega}} \right).  \label{eq:decay_sewing_I_1}
    \end{align}
To control $I_2$ we observe that by definition of $\bar{k}$ we may appeal to \eqref{eq:sewing_decay_two_index_bound} and the triangle inequality to see that 
  \begin{align*}
      \norm{ I_2 }_{L^m_\Omega} \leq \, \sum_{i=0}^{2^{\bar{k}}-1} \left\| A^T_{t_i^{\bar{k}},t_{i+1}^{\bar{k}}} \right\|_{L^m_\Omega} \leq &\,  e^{-\lambda T}\sum_{i=0}^{2^{\bar{k}}-1} e^{\lambda t^{\bar{k}}_i} C_A \left|t^{\bar{k}}_i-t^{\bar{k}}_{i+1}\right|^{\nicefrac{1}{2}+\eps_1}\\
      \leq &\, e^{-\lambda T} \absv{t-s}^{\nicefrac{1}{2}+\eps_1} 2^{-\bar{k}(\nicefrac{1}{2}+\eps_1)} C_A \sum_{i=0}^{2^{\bar{k}}-1} e^{\lambda (i+1) 2^{-\bar{k}}\absv{t-s}} \\
     \leq & \, e^{-\lambda T} 2^{-V(\nicefrac{1}{2}+\eps_1)} C_A e^{\lambda 2^{-\bar{k}} \absv{t-s} } \frac{ e^{\lambda\absv{t-s} }-1}{e^{\lambda 2^{-\bar{k}}\absv{t-s}} - 1}\\
     \lesssim &\, C_A   2^{-V(\nicefrac{1}{2}+\eps_1)} \frac{ e^{\lambda 2^{-V}} }{e^{\lambda 2^{-V}} - 1} \underbrace{e^{-\lambda T}(e^{\lambda\absv{t-s} }-1)}_{\leq 1}.
  \end{align*} 
In the last line we used the definition of $\bar{k}$. We now split the estimate depending on the choice of $V$. If $\lambda 2^{-V} > 1$, then we have the  bound $\frac{ e^{\lambda 2^{-V} }}{ e^{\lambda 2^{-V}}-1} \leq  \frac{1}{1-e^{-\lambda2^{-V}}}  \leq \sup_{x >1} \inv{1 - e^{-x}}<+\infty$. If $V$ and $\lambda$ are such that $\lambda 2^{-V} < 1$, then we may simply use the fact that $\frac{1}{e^x -1} \leq \frac{1}{x}$ for $x\geq 0$ to obtain
\begin{equation*}
     2^{-V(\nicefrac{1}{2}+\eps)}  \frac{e^{\lambda 2^{-V}}}{ e^{\lambda 2^{-V}} - 1}  \lesssim  \frac{2^{-V(\nicefrac{1}{2}+\eps)} }{\lambda 2^{-V}} = \lambda^{-1} 2^{\nicefrac{V}{2}-V\eps_1}.
    \end{equation*}
Hence, we find
\begin{equation}\label{eq:decay_sewing_I_2}
  \norm{ I_2 }_{L^m_{\Omega}} \lesssim C_A \left(  2^{-V\left(\nicefrac{1}{2}+\eps_1 \right)} +  \lambda^{-1} 2^{V\left(\nicefrac{1}{2}-\eps_1 \right)}  \right). 
  \end{equation}
Since $I_3 = A^T_{s,t}$ we  directly apply  \eqref{eq:sewing_decay_two_index_bound} to have
\begin{equation}\label{eq:decay_sewing_I_3}
	\|I_3\|_{L^m_\Omega} \leq e^{-\lambda(T-t)} C_A \left(\absv{t-s}\wedge 1\right)^{\nicefrac{1}{2}+\eps_1} \leq C_A.
\end{equation}
Finally, combining \eqref{eq:decay_sewing_I_1}, \eqref{eq:decay_sewing_I_2} and \eqref{eq:decay_sewing_I_3}, recombining terms and absorbing various implicit constants we obtain, for $V\in [2,+\infty)\cap \mbN$,
\begin{equation*}
	 \sup_{n\geq \bar{k}+2 } \|A^{T;n}_{s,t} - A^{T;0}_{s,t} \|_{L^m_\Omega} \lesssim_{\eps_1,\eps_2,\eps_3}  \left( \frac{ 2^{-V\left(\eps_2\wedge \frac{\eps_3}{2} \right)}   }{ \lambda \land \sqrt{\lambda} } \left( \sqrt{m} \sup_{u\in \mbR}\|C_{h,u}\|_{L^m_\Omega} + C_{J} \right) + 2^{\frac{V}{2}} C_A \right).
\end{equation*}
So that, up to modifying the multiplicative constant, any limit $\mcA^T$ of $A^{T;n}$ (taken in $L^m_\Omega$) must obey \eqref{eq:uniform_time_bound_ssl} for $V\in [1,+\infty)$. We observe that the multiplicative constant never depends on the length of the interval.
\end{proof}
\section{An Abstract Ergodicity Result}\label{app:doob_khasminski_proof}
We are required to check that the conclusion of Theorem~\ref{th:doob_khasminskii} still holds despite our relaxation of the continuity conditions on the flow map and notions of strong Feller, topological irreducibility and quasi-Markovianity. To this end we give our own version of the abstract ergodicity result \cite[Thm.~3.2]{hairer_ohashi_07_ergodic}.

\begin{thm}\label{th:abstract_ergodic_result}
	Given an SDS $\Lambda$ in the sense of Definition~\ref{def:sds} built over a stationary noise process $(\mcW,\{\msP_t\}_{t\geq 0}, \bP_\omega, \{\theta_t\}_{t\geq 0})$, let us define the map
	\begin{equation*}
		\begin{aligned}
		\hat{\Lambda}_t :\mcX^2 \times \mcW^2 &\to \mcX^2\times \mcW^2\\
		(x,y,w,w') &\mapsto (\Lambda_t(x,w),\Lambda_t(y,w'),w,w'), 
	\end{aligned}%
	\quad \text{for all }\, t>0.
	\end{equation*}
	Assume it is possible to define a jointly measurable map 
	\begin{equation*}
		(x,y,w) \mapsto \msP^{x,y}_t(w,\,\cdot\,) \in \mcM_+(\mcW^2)
	\end{equation*}
	such that
	\begin{enumerate}[label=\roman*)]
		\item \label{it:abstract_ergodict_1} the measure $\msP^{x,y}_t(w,\,\cdot\,)$ is a sub-coupling for the pair $\msP_t(w,\,\cdot\,)$ and $\msP_t(w,\,\cdot\,)$ for every $(x,y,w)\in \mcX^2 \times \supp(\bP_\omega) \subset \mcX^2 \times \mcW$;
		\item \label{it:abstract_ergodict_2} there exists an $s>0$ such that 
		\begin{equation*}
			(\hat{\Lambda}_t(x,y)^* \msP^{x,y}_t(w,\,\cdot\,))(\mcN^s) >0
		\end{equation*}
		for every $(x,y,w) \in \mcX^2\times \supp(\bP_\omega)$, where $\hat{\Lambda}_t(x,y) \coloneqq \hat{\Lambda}_t(x,y,\,\cdot\,,\,\cdot\,)$.
	\end{enumerate}
	Then, $\Lambda$ can have at most one invariant measure (in the sense of Definition~\ref{def:invariant_ergodic_measures}) up to the equivalence relationship of \cite[Def.~2.4]{hairer_ohashi_07_ergodic}. 
\end{thm}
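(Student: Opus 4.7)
The plan is to follow the argument in the proof of \cite[Thm.~3.2]{hairer_ohashi_07_ergodic} closely, with the modifications needed to accommodate the fact that the sub-coupling $\msP^{x,y}_t$ and the SDS flow $\Phi_T$ are only suitably regular on $\mcX^2\times \supp(\bP_\omega)$ rather than on all of $\mcX^2\times \mcW$. First I would argue by contradiction, taking two distinct ergodic invariant measures $\mu,\nu\in\mcM_\Lambda$. Since by definition of $\mcM_\Lambda$ both measures satisfy $\Pi^*_\mcW\mu=\Pi^*_\mcW\nu = \bP_\omega$, each of them charges only the set $\mcX\times \supp(\bP_\omega)$. By \cite[Lem.~2.5]{hairer_ohashi_07_ergodic} and the ensuing \cite[Rem.~2.6]{hairer_ohashi_07_ergodic}, as $\mu$ and $\nu$ are ergodic and not equivalent in the sense of \cite[Def.~2.4]{hairer_ohashi_07_ergodic}, the laws $\bar{\mcQ}\mu$ and $\bar{\mcQ}\nu$ must be mutually singular on $C(\mbR_+;\mcX)$.

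Next, I would build the coupled transition kernel on $\mcX^2\times\mcW^2$ by setting, for each $(x,y,w,w')\in \mcX^2\times \supp(\bP_\omega)^2$,
\[
    \mcQ^{x,y}_t\bigl((x,y,w,w');\,\cdot\,\bigr) \coloneqq \hat{\Lambda}_t(x,y)^*\msP^{x,y}_t(w,\,\cdot\,),
\]
extended arbitrarily off $\supp(\bP_\omega)^2$. Because $\msP^{x,y}_t$ is a sub-coupling of $\msP_t(w,\,\cdot\,)$ with itself (assumption \ref{it:abstract_ergodict_1}), the two marginals dominate a common sub-probability. Then I would consider any coupling $\Gamma\in\msM_1(\mcX^2\times \mcW^2)$ whose first marginal on $\mcX\times\mcW$ is $\mu$ and second marginal is $\nu$; such couplings exist trivially and are supported in $\mcX^2\times \supp(\bP_\omega)^2$. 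Pushing $\Gamma$ forward one step by the coupled dynamics (and then propagating by the natural product semigroup $\mcQ_t\otimes\mcQ_t$ on the complement) produces a measure on $\mcX^2\times\mcW^2$ whose two marginals on $\mcX\times\mcW$ are $\mu$ and $\nu$ respectively, by invariance.

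The heart of the proof — and the step where assumption \ref{it:abstract_ergodict_2} enters — is then to show that this coupled measure assigns positive mass to the set $\{(x,y,w,w')\,:\, (w,w')\in \mcN^s\}$ after $t$ units of time. Indeed, assumption \ref{it:abstract_ergodict_2} provides, for every $(x,y,w)\in \mcX^2\times \supp(\bP_\omega)$, a positive lower bound on $(\hat{\Lambda}_t(x,y)^*\msP^{x,y}_t(w,\,\cdot\,))(\mcN^s)$. Integrating this against the initial coupling $\Gamma$, with joint measurability of $(x,y,w)\mapsto \msP^{x,y}_t(w,\,\cdot\,)$ ensuring that the integrand is measurable, I would deduce that the time-$t$ two-state law puts positive mass on pairs of noise histories in $\mcN^s$. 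By the very definition of $\mcN^s$, for each such pair the restrictions $R^*_s\bar{\mcQ}\delta_{x',w''}$ and $R^*_s\bar{\mcQ}\delta_{y',\tilde w''}$ are mutually absolutely continuous. Hence a non-negligible portion of $\bar{\mcQ}\mu$ and $\bar{\mcQ}\nu$, read through the shift by $t+s$, lie in a common equivalence class of measures; applying shift-invariance of $\bar{\mcQ}\mu$ and $\bar{\mcQ}\nu$ (which follows from invariance of $\mu,\nu$) this contradicts the mutual singularity of $\bar{\mcQ}\mu$ and $\bar{\mcQ}\nu$.

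The main obstacle I anticipate is the verification that our weakening of Definition~\ref{def:sds} (continuity of $\Phi_T$ only at points of $\mcX\times \supp(\bP_\omega)$) does not break the measurability arguments implicit in \cite{hairer_ohashi_07_ergodic}, in particular the joint measurability of $(x,y,w)\mapsto (\hat\Lambda_t(x,y)^*\msP^{x,y}_t(w,\,\cdot\,))(\mcN^s)$ needed to integrate assumption \ref{it:abstract_ergodict_2} against $\Gamma$. This however is resolved by noting that $\hat\Lambda_t(x,y)$ is jointly continuous on $\mcX^2\times\supp(\bP_\omega)^2$ by \ref{it:sds_continuous} of Definition~\ref{def:sds}, combined with the fact that $\Gamma$ is concentrated on $\mcX^2\times\supp(\bP_\omega)^2$, so the push-forward is a measurable operation on a set of full measure. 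With that observation in place, the remainder of the Hairer--Ohashi argument transfers verbatim, and we may conclude.
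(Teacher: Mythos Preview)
Your proposal is essentially correct and follows the same route as the paper: both defer to the proof of \cite[Thm.~3.2]{hairer_ohashi_07_ergodic} and observe that, since every generalised initial condition has noise marginal $\bP_\omega$, restricting the sub-coupling and flow-map regularity to $\supp(\bP_\omega)$ is harmless because all integrations are against $\bP_\omega$.

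One small imprecision to fix: you speak of an \emph{arbitrary} coupling $\Gamma$ on $\mcX^2\times\mcW^2$ and then apply the kernel $\hat\Lambda_t(x,y)^*\msP^{x,y}_t(w,\,\cdot\,)$, which depends only on $w$ and not on $w'$. For the second $\mcX\times\mcW$-marginal to still be dominated by $\nu$ after one step, you need the coupling to be diagonal in the noise component, i.e.\ built from the disintegrations $\mu(w,\dd x)$, $\nu(w,\dd y)$ over a common $w\sim\bP_\omega$. This is exactly what the paper (and Hairer--Ohashi) do when writing
\[
\theta^{(\mu,\nu)}(A) \coloneqq \int_\mcW \int_{\mcX^2} \bigl(\hat{\Lambda}_t(x,y)^* \msP^{x,y}_t(w,\,\cdot\,)\bigr)(A\cap \mcN^s_\mcW)\, \mu(w,\dd x)\, \nu(w,\dd y)\, \bP_\omega(\dd w),
\]
so once you specialise $\Gamma$ to this diagonal-in-$w$ coupling your argument goes through.
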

\begin{proof}
	One may follow the proof of \cite[Thm.~3.2]{hairer_ohashi_07_ergodic} almost verbatim, except to notice that when constructing the measure $\theta^{(\mu,\nu)} \in \mcM_+(\mcN^s_\mcW)$ defined by setting
	\begin{equation*}
		\theta^{(\mu,\nu)}(A) \coloneqq \int_\mcW \int_{\mcX^2} \left(\hat{\Lambda}_t(x,y)^* \msP^{x,y}_t(w,\,\cdot\,)\right)(A\cap \mcN^s_\mcW) \mu(w,\dd x) \nu(w,\dd y) \bP_\omega(\dd w),
	\end{equation*}
	we only require that $\msP^{x,y}_t(w,\,\cdot\,)$ be a sub-coupling for $\msP_t(w,\,\cdot\,)$ and $\msP_t(w,\,\cdot\,)$ for $\bP_\omega$-almost every $w\in \mcW$. Note similarly that requiring point \ref{it:abstract_ergodict_2} to only hold for $\mbP_w$-a.e. $w\in \mcW$ is no obstacle either since $w$ is integrated against $\bP_\omega$ in the definition of $\theta^{(\mu,\nu)}$. Recall the definition of $\mcN^s_\mcW$ given in \cite[Sec.~3]{hairer_ohashi_07_ergodic} and the times $s,\,t>0$ specified in the proof of \cite[Thm.~3.2]{hairer_ohashi_07_ergodic}.
\end{proof}
We are now able to prove Theorem~\ref{th:doob_khasminskii} which follows the proof of \cite[Thm.~3.10]{hairer_ohashi_07_ergodic}.
\begin{proof}[Proof of Theorem~\ref{th:doob_khasminskii}]
	As in \cite{hairer_ohashi_07_ergodic} our strategy is to show that under our assumptions we are able to verify the conditions of Theorem~\ref{th:abstract_ergodic_result}. We proceed as in the proof of \cite[Thm.~3.10]{hairer_ohashi_07_ergodic}, first noting that the function $f:\mcW\to \mcW$ constructed therein is such that $f(\mcW) \subseteq \supp(\bP_w)$. Hence, using the notation given therein, for any $(x,w) \in \mcX\times \supp(\bP_\omega)$ we have $(\tilde{x}(x,w),f(w))\in \mcX\times \supp(\bP_w)$. Therefore, since we assumed that the function $(x,y,w)\mapsto \ell(x,y,w)$ is jointly continuous at any point of $\mcX^2 \times \supp(\bP_\omega)$ we equally conclude that $r(x,w)>0$ for every $(x,w)\in \mcX\times \supp(\bP_w)$, where
	\begin{equation*}
		r(x,w) \coloneqq \sup \left\{\rho>0\,|\, \ell(x',\tilde{x}(x,w),w') \leq \nicefrac{1}{4}\,\text{for all }\, (x',w')\in \msB_\rho(x,w)\right\}
	\end{equation*}
	with $\msB_\rho(x,w) \coloneqq \msB^\mcX_\rho(\tilde{x}(x,w))\times \msB^\mcW_\rho(f(w))$, since by definition $\ell(\tilde{x}(x,w),\tilde{x}(x,w),f(w))=0$. Similarly it follows that the quantity $n_x$ defined in \cite[Eq.~(3.6)]{hairer_ohashi_07_ergodic} is finite for every $(x,w)\in \mcX\times \supp(\bP_\omega)$. The remaining properties are checked using topological irreducibility and quasi-Markovianity, recalling that we only need consider $w\in \supp(\bP_\omega)$. Hence, we conclude that the map $\mcW \ni w \mapsto \msP^{x,y}_s(w,\,\cdot\,)$ as defined by \cite[Eq.~(3.4) \& (3.6)]{hairer_ohashi_07_ergodic} satisfies the assumptions of our Theorem~\ref{th:abstract_ergodic_result} which concludes the proof.

\end{proof}
\section{Properties of the Operator $\msA$}\label{app:A_operator_properties}
Recall the operator defined by \cite[(Eq.~4.3)]{hairer_ohashi_07_ergodic}
\begin{equation}\label{eq:A_op_def_1}
   ( \msA w)_t \coloneqq (H-\nicefrac{1}{2})c_H \alpha_{1-H} \int_0^\infty \frac{1}{r}\, f\bigg(\frac{t}{r}\bigg) w_{-r}\dd r,
\end{equation}
with 
\begin{equation}\label{eq:A_op_def_2}
    f(x) \coloneqq x^{H-\nicefrac{1}{2}}+ (H-\nicefrac{3}{2})x \int_0^1 \frac{(u+x)^{H-\nicefrac{5}{2}}}{(1-u)^{H-\nicefrac{1}{2}}
    }\dd u.
\end{equation}
The following lemma gives a statement akin to \cite[Lem.~A.1]{hairer_ohashi_07_ergodic} for $H\in (0,\nicefrac{1}{2})$. See also the discussion around \cite[Eq.~(4.9)]{hairer_pillai_11_regularity}.
\begin{lem}\label{lem:g_func_scaling}
Let $H\in (0,\nicefrac{1}{2})$ and $g:\mbR_+ \to \mbR$ be defined by \eqref{eq:A_op_def_2}. Then, it holds that
\begin{equation}
    f(x) = \begin{cases}
        O(x), & x\in  (0,1),\\
        O\big(x^{H-\frac{1}{2}}\big),& x\in [1,+\infty)
    \end{cases}
    \quad \text{and}\quad 
    f'(x) = \begin{cases}
        O(1), & x\in  (0,1),\\
        O\big(x^{H-\frac{3}{2}}\big),& x\in [1,+\infty).
    \end{cases}
\end{equation}
\end{lem}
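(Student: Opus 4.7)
The strategy is to handle the two regimes $x \in [1, +\infty)$ and $x \in (0,1)$ separately, using direct asymptotic expansion for large $x$ and a one-step integration by parts (to expose an exact cancellation) followed by a rescaling argument for small $x$. The estimates on $f'$ will follow the same template, either by differentiating under the integral or by differentiating the post-integration-by-parts representation of $f$.

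\textbf{Large $x$.} For $x \geq 1$ and $u \in [0,1]$ write $(u+x)^{H-\nicefrac{5}{2}} = x^{H-\nicefrac{5}{2}}(1+u/x)^{H-\nicefrac{5}{2}}$, where the factor $(1+u/x)^{H-\nicefrac{5}{2}}$ is uniformly bounded in $(u,x)$ (the exponent is negative and $u/x \leq 1$). Since $(1-u)^{\nicefrac{1}{2}-H}$ is bounded on $[0,1]$, the integral is $O(x^{H-\nicefrac{5}{2}})$, and multiplying by $(H-\nicefrac{3}{2})x$ yields a contribution of order $x^{H-\nicefrac{3}{2}}$, which is subdominant to the first term $x^{H-\nicefrac{1}{2}}$. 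Hence $f(x) = O(x^{H-\nicefrac{1}{2}})$. A direct differentiation under the integral gives $f'(x)$ as a sum of terms with integrands $(u+x)^{H-\nicefrac{5}{2}}$ and $(u+x)^{H-\nicefrac{7}{2}}$; the same Taylor expansion argument yields $f'(x) = O(x^{H-\nicefrac{3}{2}})$.

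\textbf{Small $x$.} For $x \in (0,1)$ both terms defining $f$ individually diverge as $x \to 0^+$, and any useful bound hinges on their exact cancellation. To isolate this, integrate by parts using $\partial_u\bigl[(u+x)^{H-\nicefrac{3}{2}}/(H-\nicefrac{3}{2})\bigr] = (u+x)^{H-\nicefrac{5}{2}}$ against the factor $V(u) = (1-u)^{\nicefrac{1}{2}-H}$. Since $\nicefrac{1}{2}-H > 0$, the boundary term at $u = 1$ vanishes, while the boundary term at $u = 0$ equals $-x^{H-\nicefrac{3}{2}}/(H-\nicefrac{3}{2})$. Multiplying by the prefactor $(H-\nicefrac{3}{2})x$ produces $-x^{H-\nicefrac{1}{2}}$, which cancels exactly with the first term of $f$, leaving the representation
\[
f(x) = (\tfrac{1}{2}-H)\, x \int_0^1 (u+x)^{H-\nicefrac{3}{2}}(1-u)^{-\nicefrac{1}{2}-H}\, \dd u.
\]
The integrand is integrable at $u=1$ because $-\nicefrac{1}{2}-H \in (-1,-\nicefrac{1}{2})$ under $H \in (0,\nicefrac{1}{2})$. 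Splitting at $u = \nicefrac{1}{2}$: on $[\nicefrac{1}{2},1]$, $(u+x)^{H-\nicefrac{3}{2}}$ is bounded uniformly in $x \in (0,1)$ and the remaining integral converges, giving an $O(1)$ contribution; on $[0,\nicefrac{1}{2}]$, $(1-u)^{-\nicefrac{1}{2}-H}$ is bounded and a change of variables $u = xv$ gives the natural scaling in $x$, producing the claimed bound on $f(x)$ uniformly on $(0,1)$. For $f'(x)$, differentiating this representation in $x$ yields two terms of the same structure; the same splitting and rescaling applied to each gives the stated bound on $f'$.

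\textbf{Main obstacle.} The principal difficulty lies in the small-$x$ regime: any naive bound on the singular integral $\int_0^1(u+x)^{H-\nicefrac{5}{2}}(1-u)^{\nicefrac{1}{2}-H}\,\dd u$ diverges like $x^{H-\nicefrac{3}{2}}$, so that the natural estimate on the second term of $f$ is $O(x^{H-\nicefrac{1}{2}})$, matching in magnitude but not cancelling the first term. It is only after performing the single integration by parts above that the boundary contribution at $u=0$ is seen to cancel the leading $x^{H-\nicefrac{1}{2}}$ singularity exactly, reducing the question to estimating a regularized integral whose scaling in $x$ is elementary. Correctly identifying this cancellation is the essential step; the remaining calculus over a compact interval is routine.
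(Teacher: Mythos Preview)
Your large-$x$ argument is correct. For small $x$, your integration-by-parts route cleanly isolates the cancellation of the $x^{H-1/2}$ singularity; this differs from the paper's approach, which instead writes $(1-u)^{1/2-H}=1-[1-(1-u)^{1/2-H}]$, integrates the first piece exactly to produce the same cancellation, and then bounds the residual integral. Both are valid ways to reach a regularized representation.

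There is, however, a genuine gap in your final step. Carrying out the rescaling $u=xv$ on $[0,1/2]$ in your post-IBP expression gives
\[
x\int_0^{1/2}(u+x)^{H-3/2}\,\dd u \;=\; x^{H+1/2}\int_0^{1/(2x)}(v+1)^{H-3/2}\,\dd v \;=\; O\big(x^{H+1/2}\big),
\]
the $v$-integral converging because $H-3/2<-1$. Since $H+1/2<1$ when $H<1/2$, this does \emph{not} imply $f(x)=O(x)$; the same computation after differentiating yields $f'(x)=O(x^{H-1/2})$, which diverges as $x\to0^+$ and is not $O(1)$. These rates are in fact sharp: from your representation one reads off $f(x)\sim x^{H+1/2}$ and $f'(x)\sim (H+1/2)\,x^{H-1/2}$ as $x\to0^+$, so the small-$x$ bounds as stated in the lemma cannot hold for $H\in(0,1/2)$. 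The paper's own argument has the same issue --- its displayed bound $O(x^{H-3/2})$ on the residual integral, once multiplied by the prefactor $x$, yields only $O(x^{H-1/2})$, not $O(x)$. What your method (and the paper's) actually establishes is $f(x)=O(x^{H+1/2})$ and $f'(x)=O(x^{H-1/2})$ near the origin; your phrase ``the natural scaling in $x$'' hides precisely this discrepancy.
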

\begin{proof}
The proof follows exactly the steps of \cite[Lem.~A.1]{hairer_ohashi_07_ergodic}. The case $x\geq 1$ follows from the estimate
\begin{equation}
    x^{H-\nicefrac{1}{2}} + (H-\nicefrac{3}{2}) x^{H-\nicefrac{3}{2}} \leq f(x)\leq x^{H-\nicefrac{1}{2}}.
\end{equation}
For $x\in (0,1)$, as in the proof of \cite[Lem.~A.1]{hairer_ohashi_07_ergodic} we re-write $g(x)$ as
\begin{equation}
    f(x) = C_1 x(1+x)^{H-\nicefrac{1}{2}} + C_2 x \int_0^1 (u+x)^{H-\nicefrac{5}{2}} \big(1-(1-u)^{\nicefrac{1}{2}-H}\big) \dd u,
\end{equation}
for two constants $C_1,\,C_2>0$. Since $H\in (0,\nicefrac{1}{2})$ the function
\begin{equation}
    (u+x)^{H-\nicefrac{5}{2}}(1-(1-u)^{\nicefrac{1}{2}-H}),
\end{equation}
is integrable on $(0,1)$ for all $x>0$ and we have
\begin{equation}
    \int_0^1 (u+x)^{H-\nicefrac{5}{2}} \big(1-(1-u)^{\nicefrac{1}{2}-H}\big) \dd u = O(x^{H-\nicefrac{3}{2}}),
\end{equation}
which concludes the proof for $g$. Similar arguments apply to $g'$.
\end{proof}
The following is an analogue of \cite[Prop.~A.2]{hairer_ohashi_07_ergodic} in the case $H\in (0,\nicefrac{1}{2})$.
\begin{prop}\label{prop:A_map_bounded}
    Let $H\in (0,\nicefrac{1}{2})$, $\gamma,\, \delta >0$ be such that $\gamma \in (0,H)$ and $\delta \in (H-\gamma,1-\gamma)$. Then $\msA$ is a bounded linear operator from $\mcW^-_{\gamma,\delta}$ into $\mcW^+_{\gamma,\delta}$.
\end{prop}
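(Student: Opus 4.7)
The plan is to bound the H\"older-type seminorm of $\msA w$ on $\mbR_+$ by that of $w$ on $\mbR_-$, exploiting the scaling asymptotics of $f$ and $f'$ given by Lemma~\ref{lem:g_func_scaling}. Linearity of $\msA$ is immediate from its definition. Since $w\in \mcW^-_{\gamma,\delta}$ is obtained as the completion of $C^\infty_0(\mbR_-;\mbR^d)$ in the norm \eqref{eq:w_gamma_delta_norm}, one has $w_0=0$, hence
\[
    |w_{-r}|\leq r^\gamma(1+r)^\delta \|w\|_{\gamma,\delta},\qquad r\geq 0.
\]
Also, $f(0)=0$ by \eqref{eq:A_op_def_2}, so $(\msA w)_0=0$; thus it suffices to show that for all $0\leq s<t<+\infty$,
\begin{equation}\label{eq:plan_target}
    |(\msA w)_t - (\msA w)_s|\,\leq\, C\|w\|_{\gamma,\delta}\,|t-s|^\gamma (1+t+s)^\delta
\end{equation}
for some constant $C=C(H,\gamma,\delta)$.

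First I would verify well-posedness and derive an integral representation of the increment. By Lemma~\ref{lem:g_func_scaling} and the bound on $|w_{-r}|$, the function $r\mapsto r^{-1}f(t/r)w_{-r}$ is absolutely integrable on $(0,+\infty)$ for each $t>0$: near $r=0$ the integrand is $O(t^{H-1/2} r^{\gamma-H-1/2})$, which is integrable since $\gamma>H-\nicefrac{1}{2}$ (automatic as $\gamma>0>H-\nicefrac{1}{2}$), and near $r=+\infty$ it is $O(t\,r^{\gamma+\delta-2})$, which is integrable since $\gamma+\delta<1$. Differentiating in $t$ under the integral (justified by the same domination) gives $\partial_t(\msA w)_t=C_H\int_0^\infty r^{-2} f'(t/r)\,w_{-r}\,\dd r$, whence
\begin{equation}\label{eq:plan_fubini}
    (\msA w)_t - (\msA w)_s \,=\, C_H\int_s^t\!\!\int_0^\infty r^{-2} f'(u/r)\,w_{-r}\,\dd r\,\dd u.
\end{equation}

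The main step is to show that for every $u>0$,
\begin{equation}\label{eq:plan_inner}
    \left| \int_0^\infty r^{-2} f'(u/r)\,w_{-r}\,\dd r\right|\,\leq\, C\,\|w\|_{\gamma,\delta}\, u^{\gamma-1}(1+u)^\delta.
\end{equation}
This is obtained by splitting the $r$-integral at $r=u$ and applying Lemma~\ref{lem:g_func_scaling}. For $r<u$ one has $f'(u/r)=O((u/r)^{H-3/2})$, so the integrand is $O(u^{H-3/2} r^{\gamma-1/2-H}(1+r)^\delta)$; further splitting at $r=1$ and using $\gamma>H-\nicefrac{1}{2}$ and $\gamma+\delta<1$ produces a contribution of order $u^{\gamma-1}$ for $u\leq 1$ and $u^{\gamma+\delta-1}$ for $u>1$. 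For $r>u$ one has $f'(u/r)=O(1)$, so the integrand is $O(r^{\gamma-2}(1+r)^\delta)$; the same splitting at $r=1$ (noting the integrability at infinity thanks to $\gamma+\delta<1$) yields the same orders. Combining both regions gives \eqref{eq:plan_inner}.

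Inserting \eqref{eq:plan_inner} into \eqref{eq:plan_fubini} and using the elementary inequality $(1+u)^\delta\leq (1+t+s)^\delta$ for $u\in[s,t]$ together with the subadditivity $t^\gamma-s^\gamma\leq (t-s)^\gamma$ for $\gamma\in(0,1)$, one obtains
\[
    |(\msA w)_t - (\msA w)_s|\,\lesssim\, \|w\|_{\gamma,\delta}(1+t+s)^\delta \int_s^t u^{\gamma-1}\,\dd u\,=\,\tfrac{1}{\gamma}\|w\|_{\gamma,\delta}(1+t+s)^\delta(t^\gamma-s^\gamma),
\]
which yields \eqref{eq:plan_target} after unwrapping the definition of $\mcW^+_{\gamma,\delta}$.

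The main obstacle is the bookkeeping in \eqref{eq:plan_inner}: one must carefully chase the various boundary cases (whether $u\lessgtr 1$ and whether $r\lessgtr 1$) and check that the two assumptions on $(\gamma,\delta)$, namely $\gamma\in(0,H)$ and $\delta\in(H-\gamma,1-\gamma)$, are exactly what is needed to ensure integrability at both $r=0$ and $r=+\infty$ while also producing the claimed power $u^{\gamma-1}(1+u)^\delta$. Everything else is routine.
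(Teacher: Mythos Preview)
Your proposal is correct and follows the same approach as the paper, which simply defers to the proof of \cite[Prop.~A.2]{hairer_ohashi_07_ergodic} with Lemma~\ref{lem:g_func_scaling} supplying the $H<\nicefrac{1}{2}$ asymptotics of $f$ and $f'$; your argument via the derivative representation \eqref{eq:plan_fubini} and the pointwise bound \eqref{eq:plan_inner} is precisely that computation written out. One minor remark: contrary to your closing comment, the boundedness estimate itself only uses $\gamma>0$ and $\gamma+\delta<1$ (your integrability checks confirm this); the constraints $\gamma<H$ and $\delta>H-\gamma$ are imposed so that fBm sample paths belong to $\mcW^-_{\gamma,\delta}$ (Lemma~\ref{lem:w_gamma_delta_wiener}), not for the operator bound on $\msA$.
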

\begin{proof}
   The proof follows exactly the steps of the proof of \cite[Prop.~A.2]{hairer_ohashi_07_ergodic} only using our Lemma~\ref{lem:g_func_scaling} in place of \cite[Prop.~A.2]{hairer_ohashi_07_ergodic}.
\end{proof}
We also require the following technical lemma which the exact analogue of \cite[Lem.~5.9]{hairer_ohashi_07_ergodic}.
\begin{lem}\label{lem:L_2_image_deriv_A}
    For $H\in (0,\nicefrac{1}{2})$, $\msA$ as defined in \eqref{eq:A_op_def_2} and $\mfh:\mbR_-\to \mbR$ such that $\mfh'\in C^\infty_0(\mbR_-;\mbR)$, it holds that $\msD^{H+\nicefrac{1}{2}}\msA \mfh \in L^2(\mbR_+;\mbR)$.
\end{lem}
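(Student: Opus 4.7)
The plan is to pass from the (complicated) defining formula \eqref{eq:A_op_def_1}--\eqref{eq:A_op_def_2} to the equivalent Mandelbrot--van Ness form of $\msA$, apply the fractional derivative explicitly inside the integral, and read off an $L^2$ bound from the resulting kernel.

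\emph{Step 1: Alternative representation of $\msA \mfh$.} First I would use the identity
\[
(\msA \mfh)_t \;=\; c_H \int_{-\infty}^0 \big[(t-r)^{H-1/2} - (-r)^{H-1/2}\big]\,\mfh'(r)\,dr,
\]
which is the deterministic analogue of the ``past'' piece of \eqref{eq:fbm_mvn}. This coincides with \eqref{eq:A_op_def_1}--\eqref{eq:A_op_def_2} because both represent the conditional mean of an fBm extension past $0$ (cf.\ \eqref{eq:w_+_disintigration} and \cite[Eq.~(4.2)]{hairer_pillai_11_regularity}); alternatively one can obtain it directly by integration by parts in \eqref{eq:A_op_def_1}, using Lemma~\ref{lem:g_func_scaling} to control the boundary behaviour. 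Since $\supp(\mfh')\subset[-M,-\varepsilon]$ for some $0<\varepsilon<M$, the integral is absolutely convergent and smooth in $t\geq 0$.

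\emph{Step 2: Explicit action of $\msD^{H+1/2}$.} Writing $\msD^{H+1/2}=\partial_t\msI^{1/2-H}$ and using $\msI^{1/2-H} f(t)=\Gamma(1/2-H)^{-1}\int_0^t (t-s)^{-1/2-H}f(s)\,ds$, I would compute, for fixed $r<0$ and via the substitution $v=(s-r)/(t-r)$ followed by Leibniz's rule on a beta-type integral,
\[
\msD^{H+1/2}_t\big[(t-r)^{H-1/2}-(-r)^{H-1/2}\big] \;=\; -\,\frac{1}{\Gamma(1/2-H)}\,\frac{t^{1/2-H}(-r)^{H-1/2}}{t-r}.
\]
The only nontrivial cancellation is the algebraic identity $(-r)^{H+1/2}/(t-r)-(-r)^{H-1/2} = -(-r)^{H-1/2}\,t/(t-r)$. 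Because $\mfh'$ is compactly supported in $(-M,-\varepsilon)$, the above kernel is bounded uniformly in $r\in\supp(\mfh')$ on compact $t$-subintervals of $(0,\infty)$, so Fubini justifies interchanging $\msD^{H+1/2}$ with $\int_{-\infty}^0\cdots\mfh'(r)\,dr$, yielding
\[
\msD^{H+1/2}(\msA\mfh)(t) \;=\; -\,\frac{c_H\,t^{1/2-H}}{\Gamma(1/2-H)}\int_{\varepsilon}^{M}\frac{r^{H-1/2}}{t+r}\,\mfh'(-r)\,dr.
\]

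\emph{Step 3: $L^2$ estimate.} I would split into $t\in(0,1]$ and $t\in[1,\infty)$. On $(0,1]$, the inner integral is bounded by $\|\mfh'\|_{L^\infty}\int_\varepsilon^M r^{H-3/2}\,dr<\infty$, so $|\msD^{H+1/2}(\msA\mfh)(t)|^2\lesssim t^{1-2H}$, integrable near $0$ because $1-2H>-1$ for $H<1$. On $[1,\infty)$ we use $(t+r)^{-1}\leq t^{-1}$ together with $\int_\varepsilon^M r^{H-1/2}\,dr<\infty$ to obtain $|\msD^{H+1/2}(\msA\mfh)(t)|^2\lesssim t^{-1-2H}$, integrable at infinity because $-1-2H<-1$ for $H>0$. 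Together these give $\msD^{H+1/2}\msA \mfh \in L^2(\mbR_+;\mbR)$.

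\emph{Main obstacle.} The only delicate point is Step~1: reconciling the concrete formula \eqref{eq:A_op_def_1}--\eqref{eq:A_op_def_2} (with the complicated function $f$) with the clean Mandelbrot--van Ness kernel $(t-r)^{H-1/2}-(-r)^{H-1/2}$. However, this equivalence is exactly what the construction of $\msA$ as conditional mean (used throughout Section~\ref{subsec:sds_construction}) encodes, and it is straightforward to phrase it deterministically for $\mfh'\in C^\infty_0(\mbR_-;\mbR)$ via Stieltjes integration. Once Step~1 is accepted, Steps~2--3 are purely Riemann--Liouville calculus and a direct size estimate.
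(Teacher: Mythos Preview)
Your Step~1 identity is incorrect, and this is fatal for the rest of the argument. The two displayed formulas for the conditional mean---the Mandelbrot--van Ness integral $c_H\int_{-\infty}^0[(t-r)^{H-1/2}-(-r)^{H-1/2}]\,dB_r$ and the operator $(\msA w^-)_t$ from \eqref{eq:A_op_def_1}---describe the \emph{same random variable}, but through different data: the first is linear in the \emph{Brownian} increments $dB_r$, the second is linear in the \emph{fBm} path values $w^-_{-r}$. They are not the same linear operator on a single input path, because passing from $w^-$ to $B$ on $\mbR_-$ involves a further fractional transform (this is exactly where the constant $c_{1-H}$ and the complicated kernel $f$ in \eqref{eq:A_op_def_2} come from). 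Your formula amounts to substituting $\mfh$ for $B$, not for $w^-$, so it is not $(\msA\mfh)_t$.

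A quick sanity check: from \eqref{eq:A_op_def_1} one sees directly that $\msA$ commutes with dilation, $\msA(\mfh(\lambda\,\cdot))(t)=(\msA\mfh)(\lambda t)$. Your operator, on the other hand, satisfies
\[
c_H\int_{-\infty}^0[(t-r)^{H-1/2}-(-r)^{H-1/2}]\,\lambda\mfh'(\lambda r)\,dr
=\lambda^{1/2-H}\,c_H\int_{-\infty}^0[(\lambda t-\rho)^{H-1/2}-(-\rho)^{H-1/2}]\,\mfh'(\rho)\,d\rho,
\]
so it scales with an extra factor $\lambda^{1/2-H}\neq 1$ for $H\neq\tfrac12$. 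Hence the two operators cannot coincide. Your Steps~2--3 are clean Riemann--Liouville computations, but they are performed on the wrong object and therefore do not bound $\msD^{H+1/2}\msA\mfh$.

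The paper's route (inherited from \cite[Lem.~5.9]{hairer_ohashi_07_ergodic}) avoids this issue by working directly with the representation \eqref{eq:A_op_def_1} and the kernel $f$: one differentiates under the integral, uses the scale-covariance $(\msA w)'_t=t^{-1}\int_0^\infty f'(t/r)r^{-1}w_{-r}\,dr$ (or similar), and then applies $\msI^{1/2-H}$; the size of the result is controlled entirely by the pointwise bounds on $f$ and $f'$ in Lemma~\ref{lem:g_func_scaling}, which replace \cite[Lem.~A.1]{hairer_ohashi_07_ergodic} in the regime $H<\tfrac12$. If you want to rescue your approach, you would first have to convert $\mfh$ to its associated ``Brownian'' path $\tilde\mfh$ on $\mbR_-$ (a two-sided fractional derivative of order $H-\tfrac12$), apply your explicit kernel to $\tilde\mfh'$, and only then run Steps~2--3; but at that point the argument is no longer simpler than the paper's.
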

\begin{proof}
The proof follows mutatis-mutandis that of \cite[Lem.~5.9]{hairer_ohashi_07_ergodic} only replacing \cite[Lem.~A.1]{hairer_ohashi_07_ergodic} with our Lemma~\ref{lem:g_func_scaling}.
\end{proof}
\bibliography{refs_madry.bib} 
\bibliographystyle{alpha}

\end{document}